\newtheorem{theorem}{Theorem}[section]
\newtheorem{corollary}{Corollary}[section]
\newtheorem{lemma}{Lemma}[section]
\newtheorem{proposition}{Proposition}[section]
\theoremstyle{definition}
\newtheorem{definition}{Definition}[section]
\theoremstyle{remark}
\newtheorem{remark}{Remark}[section]
\numberwithin{equation}{section}
\newcommand{\rth}[1]{Theorem~\ref{#1}}
\newcommand\blfootnote[1]{%
  \begingroup
  \renewcommand\thefootnote{}\footnote{#1}%
  \addtocounter{footnote}{-1}%
  \endgroup
}
\newcommand{\ov}{\overline}
\newcommand{\e}{\varepsilon}
\renewcommand{\O}{\Omega}
\renewcommand{\vec}[1]{\mathbf{#1}}
\newcommand{\field}[1]{\mathbb{#1}}
\newcommand{\C}{\field{C}}
\newcommand{\R}{\field{R}}
\newcommand{\er}{\eqref}
\DeclareMathOperator{\Div}{div} \DeclareMathOperator{\dist}{dist}
\DeclareMathOperator{\supp}{supp}
\renewcommand{\O}{\Omega}
\newcommand{\f}{\varphi}
\renewcommand{\vec}[1]{\boldsymbol{#1}}
\DeclareMathOperator{\essinf}{ess\,inf}
\date{}
\begin{document}
\title{Some remarks on a formula for Sobolev norms due to Brezis, Van
  Schaftingen and Yung}
\maketitle
\begin{center}
\textsc{Arkady Poliakovsky \footnote{E-mail:
poliakov@math.bgu.ac.il}
}\\[3mm]
Department of Mathematics, Ben Gurion University of the Negev,\\
P.O.B. 653, Be'er Sheva 84105, Israel
\\[2mm]
\end{center}

\begin{abstract}
    We provide answers to some questions raised in a recent work by H.\,Brezis, J.\,Van
    Schaftingen and Po-Lam\,Yung~\cite{hhh3,hhh2} concerning the  Gagliardo semi-norm $|u|_{W^{s,q}}$ computed at $s = 1$, when the strong $L^q$ is replaced by weak $L^q$.
     In particular, we address generalization of the results in \cite{hhh3,hhh2} for a general domain and non-smooth functions.
\end{abstract}

\section{Introduction}
The following two remarkable theorems were proved by H.~Brezis,
J.~Van Schaftingen and Po-Lam~Yung in \cite{hhh3,hhh2}:
\begin{theorem}\label{hjkgjkfhjffgggvggoopikhhhkjhhkjhjgjhhj}
Let $q\geq 1$. Then, for every dimension $N\geq 1$ there exist
constants $c_{N},C_{N}>0$  such that, for every $u\in
C_c^{\infty}(\R^N)$ we have
\begin{multline}\label{GMT'3jGHKKkkhjjhgzzZZzzZZzzbvq88nkhhjggjgjkpkjljluytytuguutloklljjgjgjhjklljjjkjkhjkkhkhhkhhjlkkhkjljljkjlkkhkllhjhjhhfyfppiooiououiuiuiuhjhjkhkhkjkhhkkhjjyhjggjuyyj}
c^q_{N}\,\int_{\R^N}\big|\nabla u(x)\big|^qdx\leq\\
\sup\limits_{s\in(0,+\infty)}\Bigg\{s\,\mathcal{L}^{2N}\Bigg(\bigg\{(x,y)\in
\R^N\times \R^N\;:\;\frac{
\big|u( y)-u(x)\big|^q}{|y-x|^{q+N}}> s\bigg\}\Bigg)\Bigg\}\\ \leq
C_{N}\,\int_{\R^N}\big|\nabla u(x)\big|^qdx \,.
\end{multline}
\end{theorem}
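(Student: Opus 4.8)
\medskip
\noindent\textbf{Proof strategy.} Put $F(x,y)=|u(y)-u(x)|^q/|y-x|^{q+N}$ and, for $\lambda>0$,
\[
E_\lambda:=\Big\{(x,y)\in\R^N\times\R^N:\ |u(y)-u(x)|>\lambda\,|y-x|^{(q+N)/q}\Big\}.
\]
Since $\{F>s\}=E_{s^{1/q}}$ and $s\,\mathcal L^{2N}(\{F>s\})=\lambda^q\mathcal L^{2N}(E_\lambda)$ when $s=\lambda^q$, the middle member of the asserted double inequality equals $\sup_{\lambda>0}\lambda^q\mathcal L^{2N}(E_\lambda)$, and I must prove $c_N^q\|\nabla u\|_{L^q}^q\le\sup_{\lambda}\lambda^q\mathcal L^{2N}(E_\lambda)\le C_N\|\nabla u\|_{L^q}^q$.

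\emph{Lower bound, via linearization.} The plan is to read this off from the regime $\lambda\to+\infty$. If $(x,y)\in E_\lambda$ then $\lambda|y-x|^{(q+N)/q}\le 2\|u\|_{L^\infty}$, so $|y-x|\le\delta_\lambda:=(2\|u\|_{L^\infty}/\lambda)^{q/(q+N)}\to0$; on such scales $u(x+h)-u(x)=\nabla u(x)\cdot h+O(\|D^2u\|_{L^\infty}|h|^2)$. Fixing $\varepsilon>0$ and an auxiliary scale $\eta_\lambda\downarrow0$ with $\lambda^q\eta_\lambda^N\to0$, chosen so that the quadratic remainder is $\le\varepsilon\lambda|h|^{(q+N)/q}$ once $|h|\ge\eta_\lambda$, one obtains for all large $\lambda$ that $E_\lambda$ contains $\{(x,x+h):\ \eta_\lambda<|h|<(|\nabla u(x)\cdot(h/|h|)|/(1+\varepsilon)\lambda)^{q/N}\}$. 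Passing to polar coordinates in $h$, integrating in $x$, and using dominated convergence,
\[
\liminf_{\lambda\to\infty}\lambda^q\mathcal L^{2N}(E_\lambda)\ \ge\ \frac{1}{N(1+\varepsilon)^q}\Big(\int_{S^{N-1}}|\omega\cdot e|^q\,d\omega\Big)\int_{\R^N}|\nabla u|^q ;
\]
letting $\varepsilon\to0$ and bounding $N^{-1}\int_{S^{N-1}}|\omega\cdot e|^q\,d\omega\ge c_N^q$ uniformly in $q\ge1$ (just restrict to the cap $\{|\omega\cdot e|\ge\tfrac12\}$) finishes this direction.

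\emph{Upper bound for $q>1$.} Here I would invoke the classical pointwise inequality $|u(x)-u(y)|\le C_N|x-y|\big(\mathcal M|\nabla u|(x)+\mathcal M|\nabla u|(y)\big)$, with $\mathcal M$ the Hardy--Littlewood maximal operator, so that $E_\lambda\subseteq A_\lambda\cup A_\lambda'$ where $A_\lambda=\{(x,y):\mathcal M|\nabla u|(x)>c_N\lambda|y-x|^{N/q}\}$ and $A_\lambda'$ is its image under the coordinate swap $(x,y)\mapsto(y,x)$. For fixed $x$ the section of $A_\lambda$ in $h=y-x$ is the ball $\{|h|<(\mathcal M|\nabla u|(x)/c_N\lambda)^{q/N}\}$, of measure $\omega_N(\mathcal M|\nabla u|(x)/c_N\lambda)^{q}$; integrating \emph{first in $h$ and then in $x$}, and invoking $L^q$-boundedness of $\mathcal M$ ($q>1$),
\[
\mathcal L^{2N}(A_\lambda)=\frac{\omega_N}{(c_N\lambda)^q}\int_{\R^N}(\mathcal M|\nabla u|)^q\ \le\ \frac{C_N}{\lambda^q}\int_{\R^N}|\nabla u|^q ,
\]
and the same for $A_\lambda'$ by symmetry; hence $\sup_\lambda\lambda^q\mathcal L^{2N}(E_\lambda)\le C_N\|\nabla u\|_{L^q}^q$. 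The order of integration is essential: for fixed $h$ the $x$-section is too large, and integrating in $x$ first leaves a divergent $\int|h|^{-N}\,dh$.

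\emph{The case $q=1$ is the step I expect to be the main obstacle}, since $\mathcal M$ is unbounded on $L^1$ and $\mathcal M g\notin L^1$ unless $g\equiv0$, so the above scheme collapses. Instead I would slice along lines: writing $u(x+h)-u(x)=\int_0^{|h|}\nabla u(x+t\tfrac{h}{|h|})\cdot\tfrac{h}{|h|}\,dt$, using polar coordinates $h=r\omega$, and Fubini in $x=x'+\tau\omega$ with $x'\in\omega^\perp$, the estimate reduces to the one-dimensional inequality
\[
\int_0^\infty r^{N-1}\,\mathcal L^1\!\Big(\Big\{\tau\in\R:\ \int_\tau^{\tau+r}\psi(\zeta)\,d\zeta>\lambda r^{N+1}\Big\}\Big)\,dr\ \le\ \frac{C_N}{\lambda}\int_\R\psi \qquad(0\le\psi\in L^1(\R)),
\]
applied to $\psi=|\nabla u(x'+\cdot\,\omega)|$ and integrated in $x'$ and $\omega$. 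Equivalently, with $d\mu=\psi\,d\zeta$ and $\tau'=\tau+r$, one must bound $\iint_{\tau<\tau'}(\tau'-\tau)^{N-1}\mathbf{1}\big[\mu([\tau,\tau'])>\lambda(\tau'-\tau)^{N+1}\big]\,d\tau\,d\tau'$ by $C_N\mu(\R)/\lambda$. A Vitali selection at a fixed length $r$ shows that only $r<(\mu(\R)/\lambda)^{1/(N+1)}$ occur and that the set of admissible left endpoints has measure $\le C\mu(\R)/(\lambda r^N)$; but inserting this crude bound into the $r$-integral is logarithmically divergent near $r=0$, so the actual work is to run a single multiscale covering argument in which the weight $r^{N-1}$ is matched with a scale-dependent sharpening of the endpoint bound — reflecting that $\mu$ cannot be over-concentrated at fine scales — to make the $r$-integral converge to $C_N\mu(\R)/\lambda$. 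Establishing this endpoint inequality is the crux; the rest is routine.
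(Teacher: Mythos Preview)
This theorem is not proved in the present paper at all: it is quoted as a result of Brezis--Van~Schaftingen--Yung \cite{hhh3,hhh2} and used as a black box (see Lemma~\ref{fhgolghoi}). The only place where the paper comes close to a proof of part of it is Proposition~\ref{fhfhfhfffhkjkj}, which establishes the \emph{limit} formula of Theorem~\ref{hjkgjkfhjffgggvggoopikhhhkjhhkjhjgjhhjggg} for Lipschitz compactly supported $u$ by exactly the blow-up/linearization argument you sketch; since the limit is a lower bound for the supremum, this yields your lower bound. So your lower bound is correct and matches the paper's method.

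Your upper bound for $q>1$ via the pointwise maximal inequality $|u(x)-u(y)|\le C_N|x-y|\big(\mathcal M|\nabla u|(x)+\mathcal M|\nabla u|(y)\big)$ and $L^q$-boundedness of $\mathcal M$ is correct and clean. It is \emph{not} the approach of \cite{hhh3,hhh2}: there the upper bound is obtained by the method of rotations plus a one-dimensional Vitali-type covering that works uniformly for all $q\ge1$ (and in particular gives a constant $C_N$ independent of $q$, as the notation suggests; your maximal-function route produces a constant that blows up as $q\to1^+$).

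The genuine gap is the $q=1$ upper bound. Your reduction to the one-dimensional inequality
\[
\int_0^\infty r^{N-1}\,\mathcal L^1\!\Big(\big\{\tau:\ \mu([\tau,\tau+r])>\lambda r^{N+1}\big\}\Big)\,dr\ \le\ \frac{C_N}{\lambda}\,\mu(\R)
\]
is correct, and you are right that the naive bound $|T_r|\le C\mu(\R)/(\lambda r^N)$ coming from a fixed-scale Vitali selection leads to a divergent $\int dr/r$. But you then stop, writing that ``the actual work is to run a single multiscale covering argument'' and that ``establishing this endpoint inequality is the crux''. That is precisely where the content of the theorem lies, and you have not supplied it. The inequality is true, but proving it requires organizing the covering across scales so that each piece of the mass $\mu$ is charged at essentially one scale rather than at all scales below $(\mu(\R)/\lambda)^{1/(N+1)}$; this is what \cite{hhh3,hhh2} carry out (in a form that simultaneously covers all $q\ge1$). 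As written, your proposal is a correct outline for $q>1$ together with a correct identification of the difficulty at $q=1$, but not a proof of the $q=1$ case.
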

\begin{theorem}\label{hjkgjkfhjffgggvggoopikhhhkjhhkjhjgjhhjggg}
Let $q\geq 1$. Then, for every $u\in C_c^{\infty}(\R^N)$ we have
\begin{multline}\label{GMT'3jGHKKkkhjjhgzzZZzzZZzzbvq88nkhhjggjgjkpkjljluytytuguutloklljjgjgjhjklljjjkjkhjkkhkhhkhhjlkkhkjljljkjlkkhkllhjhjhhfyfppiooiououiuiuiuhjhjkhkhkjkhhkkhjjyhjggjuyyjghfh}
\lim\limits_{s\to+\infty}\Bigg\{s\,\mathcal{L}^{2N}\Bigg(\bigg\{(x,y)\in
\R^N\times \R^N\;:\;\frac{
\big|u( y)-u(x)\big|^q}{|y-x|^{q+N}}>
s\bigg\}\Bigg)\Bigg\}\\=\frac{\int_{S^{N-1}}|z_1|^qd\mathcal{H}^{N-1}(z)}{N}\int_{\R^N}\big|\nabla
u(x)\big|^qdx \,.
\end{multline}
\end{theorem}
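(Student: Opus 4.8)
The plan is to establish the limit in \er{GMT'3jGHKKkkhjjhgzzZZzzZZzzbvq88nkhhjggjgjkpkjljluytytuguutloklljjgjgjhjklljjjkjkhjkkhkhhkhhjlkkhkjljljkjlkkhkllhjhjhhfyfppiooiououiuiuiuhjhjkhkhkjkhhkkhjjyhjggjuyyjghfh} by a change of variables that isolates, for each fixed direction, the first-order Taylor behaviour of $u$. Write $y-x=rz$ with $r=|y-x|>0$ and $z\in S^{N-1}$, so that $dy\,dx=r^{N-1}\,dr\,d\mathcal H^{N-1}(z)\,dx$. The superlevel set condition $|u(y)-u(x)|^q/|y-x|^{q+N}>s$ becomes $|u(x+rz)-u(x)|^q>s\,r^{q+N}$, i.e.
\[
\Big|\tfrac{u(x+rz)-u(x)}{r}\Big|^q>s\,r^{N}.
\]
Thus the quantity to analyse is
\[
s\,\mathcal L^{2N}(A_s)=s\int_{S^{N-1}}\!\!\int_{\R^N}\!\!\int_{\{r>0\,:\,|u(x+rz)-u(x)|^q>s\,r^{q+N}\}}\! r^{N-1}\,dr\,dx\,d\mathcal H^{N-1}(z),
\]
and the natural scaling is $r=t\,s^{-1/N}$, under which $s\,r^{N-1}dr=t^{N-1}dt$ and the defining inequality turns into $|u(x+t s^{-1/N}z)-u(x)|^q>t^{q+N}$.

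The next step is to let $s\to+\infty$ and pass the limit inside the integrals. For smooth compactly supported $u$ one has $u(x+ts^{-1/N}z)-u(x)=ts^{-1/N}\,\nabla u(x)\cdot z+O(t^2 s^{-2/N})$ uniformly, so the inner inequality becomes, after dividing by $t^{q}s^{-q/N}$,
\[
|\nabla u(x)\cdot z|^q + o(1) > t^{N}\,s^{-(q+N)/N}\cdot s^{q/N}=t^{N}\,s^{-1}\cdot s^{0}\cdot\big(s^{q/N}/s^{q/N}\big),
\]
so one must recheck the exponent bookkeeping; the correct outcome is that the rescaled inner set converges to $\{t>0:\ t^{N}<|\nabla u(x)\cdot z|^q\}$, whose measure (in $t^{N-1}dt$) equals $\tfrac1N|\nabla u(x)\cdot z|^{q}$. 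Integrating in $x$ and then in $z$, and using the rotational symmetry identity $\int_{S^{N-1}}|\nabla u(x)\cdot z|^q\,d\mathcal H^{N-1}(z)=|\nabla u(x)|^q\int_{S^{N-1}}|z_1|^q\,d\mathcal H^{N-1}(z)$, yields exactly the claimed constant times $\int_{\R^N}|\nabla u|^q\,dx$.

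To justify the interchange of limit and integration rigorously I would proceed as follows. \textbf{Lower bound:} fix $\delta>0$; on the region where $|\nabla u(x)\cdot z|>0$, uniform $C^2$ control gives, for $s$ large, that the rescaled superlevel set in $t$ contains $\{0<t^N<(1-\delta)|\nabla u(x)\cdot z|^q\}$, so Fatou's lemma gives $\liminf_{s\to\infty} s\,\mathcal L^{2N}(A_s)\geq \tfrac{1-\delta}N\int\!\!\int|\nabla u\cdot z|^q$, then let $\delta\to0$. \textbf{Upper bound:} this is the main obstacle, since near $t=0$ the term $u(x+ts^{-1/N}z)-u(x)$ is genuinely of first order and one needs a uniform dominating function. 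Using $|u(x+rz)-u(x)|\le \|\nabla u\|_\infty\min(r,\,\mathrm{diam}\,\supp u)$ together with the support being bounded, one bounds the inner $r$-integral by splitting at a fixed cutoff: for $r$ small the Taylor estimate gives $t^N<(1+\delta)|\nabla u(x)\cdot z|^q+C\delta$ on $\supp u$ (enlarged by a fixed amount), contributing at most $\tfrac{1+\delta}N|\nabla u(x)\cdot z|^q+C\delta\,\mathbf 1_{K}$, and for $r$ bounded below the condition $|u(y)-u(x)|^q>s r^{q+N}$ forces $r\le C s^{-1/(q+N)}\to0$, so that contribution vanishes; a fixed integrable majorant $\tfrac{1+\delta}N|\nabla u\cdot z|^q+C\delta\mathbf 1_K$ on the compact set $K$ then lets dominated convergence apply. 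Letting $\delta\to0$ matches the lower bound and completes the proof. The only delicate points are the uniformity of the Taylor remainder (immediate from $u\in C_c^\infty$) and verifying that the ``tail in $r$'' genuinely has negligible measure, which follows from the compact support bounding $r$ from above in the superlevel set.
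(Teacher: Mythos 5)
Your argument is correct and is essentially the approach the paper itself uses in Proposition~\ref{fhfhfhfffhkjkj} (which, specialized to $G_0(a,b,c,y,x)=|a|^q$, contains this theorem for Lipschitz compactly supported $u$): write $s=\e^{-N}$, rescale $z\mapsto\e z$, identify the limiting superlevel set $\{t>0:\,t^{N}<|\nabla u(x)\cdot z|^{q}\}$ via the first-order Taylor expansion, and compute in polar coordinates using the rotational symmetry identity. Your intermediate exponent display is garbled, but the conclusion you actually use — that the rescaled condition reads $\big|(u(x+ts^{-1/N}z)-u(x))/(ts^{-1/N})\big|^{q}>t^{N}$ — is the correct one, and your tail estimate ($r\le Cs^{-1/(q+N)}$ for $r$ bounded below) together with the compact-support domination is sound.
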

These results shed light on what happens when one replaces the
strong $L^q$ by weak $L^q$ in the expression for the Gagliardo
semi-norm $|u|_{W^{s,q}}$, computed at $s=1$.  Several interesting
open problems, related to Theorems
\ref{hjkgjkfhjffgggvggoopikhhhkjhhkjhjgjhhj} and
\ref{hjkgjkfhjffgggvggoopikhhhkjhhkjhjgjhhjggg}, were raised  in
\cite{hhh2}:
\begin{itemize}
\item[{\bf(i)}] If  $u\in L^q$ for some $q\ge1$ satisfies
\begin{equation*}
\sup\limits_{s\in(0,+\infty)}\Bigg\{s\,\mathcal{L}^{2N}\Bigg(\bigg\{(x,y)\in
\R^N\times \R^N\;:\;\frac{
\big|u( y)-u(x)\big|^q}{|y-x|^{q+N}}>
s\bigg\}\Bigg)\Bigg\}<+\infty\,,
\end{equation*}
does it imply that $u\in W^{1,q}$ (when $q>1$) or $u\in BV$ (when
$q=1$)?
\item[{\bf (ii)}] Does Theorem
\ref{hjkgjkfhjffgggvggoopikhhhkjhhkjhjgjhhj} hold in the cases $u\in
W^{1,q}$ (when $q>1$) and $u\in BV$ (when $q=1$)?

\item[{\bf (iii)}] Same question as in (ii), but for Theorem
\ref{hjkgjkfhjffgggvggoopikhhhkjhhkjhjgjhhjggg}.

\item[{\bf (iv)}] Given $u\in L^q$, does
\begin{equation*}
\lim\limits_{s\to+\infty}\Bigg\{s\,\mathcal{L}^{2N}\Bigg(\bigg\{(x,y)\in
\R^N\times \R^N\;:\;\frac{
\big|u( y)-u(x)\big|^q}{|y-x|^{q+N}}> s\bigg\}\Bigg)\Bigg\}=0\,,
\end{equation*}
imply that $u$ necessarily equals a constant a.e.\,in $\R^N$?

\item[\bf {(v)}] For $r\in(0,q)$  characterize the class of functions $u\in L^q$,
satisfying
\begin{equation}\label{hjgjgjggjg}
\sup\limits_{s\in(0,\infty)}\Bigg\{s\,\mathcal{L}^{2N}\Bigg(\bigg\{(x,y)\in
\R^N\times \R^N\;:\;\frac{
\big|u( y)-u(x)\big|^q}{|y-x|^{r+N}}>
s\bigg\}\Bigg)\Bigg\}<+\infty\,.
\end{equation}
In particular, determine how this class is related to an appropriate
Besov space.
\end{itemize}
In the current paper we give full affirmative answers to questions
{\bf (i)}, {\bf (ii)} and  {\bf (iv)}, see Theorem
\ref{hjkgjkfhjffgggvggoopikhhhkjh} and Corollary \ref{gbjgjgjgj}
bellow. Moreover, we give a partial answer to questions {\bf (iii)},
see Corollary \ref{hjkgjkfhjffgggvggoopikhhhkjhhkjhjgjhhjgggghhdf}
bellow (in particular, we completely resolve this question in the
case $q>1$). Concerning question {\bf (v)}, we give only some
partial information about the quantity appearing in
\eqref{hjgjgjggjg} that could be obtained by combining Theorem
\ref{hjkgjkfhjff}, treating the quantities
\begin{multline}\label{ghfghfhdf}
\limsup\limits_{s\to+\infty}\Bigg\{s\,\mathcal{L}^{2N}\Bigg(\bigg\{(x,y)\in
\R^N\times \R^N\;:\;\frac{
\big|u( y)-u(x)\big|^q}{|y-x|^{r+N}}> s\bigg\}\Bigg)\Bigg\}\\
\text{and}\quad\quad
\liminf\limits_{s\to+\infty}\Bigg\{s\,\mathcal{L}^{2N}\Bigg(\bigg\{(x,y)\in
\R^N\times \R^N\;:\;\frac{
\big|u( y)-u(x)\big|^q}{|y-x|^{r+N}}> s\bigg\}\Bigg)\Bigg\}\,,
\end{multline}
for general $r$, together with Proposition \ref{gjggjfhhffhfhjgghf}.

Our first main result, answering Questions {\bf (i)} and {\bf (ii)},
is:
\begin{theorem}\label{hjkgjkfhjffgggvggoopikhhhkjh}
Let $\Omega\subset\R^N$ be an open domain with Lipschitz boundary
and let $q\geq 1$. Then there exist constants $C_{\Omega}>0$ and
${\widetilde C}_{N}>0$ satisfying $C_{\Omega}=1$ if $\Omega=\R^N$,
such that for every $u\in L^q(\Omega,\R^m)$ we have:
\begin{enumerate}
    \item [(i)]
 When $q>1$,
\begin{multline}\label{GMT'3jGHKKkkhjjhgzzZZzzZZzzbvq88nkhhjggjgjkpkjljluytytuguutloklljjgjgjhjklljjjkjkhjkkhkhhkhhjlkkhkjljljkjlkkhkllhjhjhhfyfppiooiououiuiuiuhjhjkhkhkjkhhkkhj}
\frac{\int_{S^{N-1}}|z_1|^qd\mathcal{H}^{N-1}(z)}{(N+q)}\,\int_\Omega\big|\nabla
u(x)\big|^qdx \\
\leq\limsup\limits_{s\to+\infty}\Bigg\{s\,\mathcal{L}^{2N}\Bigg(\bigg\{(x,y)\in
\Omega\times \Omega\;:\;\frac{
\big|u( y)-u(x)\big|^q}{|y-x|^{q+N}}> s\bigg\}\Bigg)\Bigg\}\leq\\
\sup\limits_{s\in(0,+\infty)}\Bigg\{s\,\mathcal{L}^{2N}\Bigg(\bigg\{(x,y)\in
\Omega\times \Omega\;:\;\frac{
\big|u( y)-u(x)\big|^q}{|y-x|^{q+N}}> s\bigg\}\Bigg)\Bigg\} \leq
C^q_{\Omega}{\widetilde C}_{N}\,\int_\Omega\big|\nabla u(x)\big|^qdx
\,,
\end{multline}
with the convention that $\int_\Omega\big|\nabla
u(x)\big|^qdx=+\infty$ if $u\notin W^{1,q}(\Omega,\R^m)$.
\item[(ii)] When $q=1$,
\begin{multline}\label{GMT'3jGHKKkkhjjhgzzZZzzZZzzbvq88nkhhjggjgjkpkjljluytytuguutloklljjgjgjhjklljjjkjkhjkkhkhhkhhjlkkhkjljljkjlkkhkllhjhjhhfyfppiooiououiuiuiuhjhjkhkhkjkjjkkhjkhkhjhjhjljkjk}
\frac{\int_{S^{N-1}}|z_1|d\mathcal{H}^{N-1}(z)}{(N+1)}\,\|Du\|(\Omega)
\leq\\
\limsup\limits_{s\to+\infty}\Bigg\{s\,\mathcal{L}^{2N}\Bigg(\bigg\{(x,y)\in
\Omega\times \Omega\;:\;\frac{
\big|u( y)-u(x)\big|}{|y-x|^{1+N}}> s\bigg\}\Bigg)\Bigg\}\leq\\
\sup\limits_{s\in(0,+\infty)}\Bigg\{s\,\mathcal{L}^{2N}\Bigg(\bigg\{(x,y)\in
\Omega\times \Omega\;:\;\frac{
\big|u( y)-u(x)\big|}{|y-x|^{1+N}}> s\bigg\}\Bigg)\Bigg\} \leq
C_{\Omega}{\widetilde C}_{N}\,\|Du\|(\Omega)\,,
\end{multline}
with the convention $\|Du\|(\Omega)=+\infty$ if $u\notin
BV(\Omega,\R^m)$.
\end{enumerate}
\end{theorem}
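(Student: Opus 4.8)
For notational simplicity I describe the case $m=1$; the $\R^m$-valued case is identical. The asserted chain splits into an \emph{upper bound} for $\sup_{s>0}\{\cdots\}$ in terms of $\int_\Omega|\nabla u|^q\,dx$ (resp.\ $\|Du\|(\Omega)$) and a \emph{lower bound} for $\limsup_{s\to\infty}\{\cdots\}$, the latter encoding in particular the regularity assertion that finiteness of the supremum forces $u\in W^{1,q}(\Omega)$ (resp.\ $u\in BV(\Omega)$). I would prove the upper bound by reducing to Theorem~\rth{hjkgjkfhjffgggvggoopikhhhkjhhkjhjgjhhj} via extension and approximation, and the lower bound by first establishing local regularity and then running a blow-up at points of approximate differentiability.

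\textbf{Upper bound.} If $u\notin W^{1,q}(\Omega)$ (resp.\ $u\notin BV(\Omega)$), there is nothing to prove. Otherwise, since $\partial\Omega$ is Lipschitz, extend $u$ to $\bar u\in W^{1,q}(\R^N)$ (resp.\ $BV(\R^N)$) with $\|\nabla\bar u\|_{L^q(\R^N)}\le C_\Omega\|\nabla u\|_{L^q(\Omega)}$ (resp.\ $\|D\bar u\|(\R^N)\le C_\Omega\|Du\|(\Omega)$); one organizes the extension — subtracting constants on boundary charts, Poincaré on the bounded pieces, reflection on the unbounded ones — so that only the gradient, not the full Sobolev norm, is controlled, and no extension is needed (so $C_\Omega=1$) when $\Omega=\R^N$. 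Since $\bar u\equiv u$ on $\Omega$, the set $\{(x,y)\in\Omega\times\Omega:|u(y)-u(x)|^q>s|y-x|^{q+N}\}$ is contained in the corresponding set over $\R^N\times\R^N$ built from $\bar u$, so it suffices to bound the latter. Approximating $\bar u$ by $u_k\in C_c^\infty(\R^N)$ with $u_k\to\bar u$ in $W^{1,q}$ (resp.\ strictly in $BV$, with uniformly compact support) and $\mathcal L^N$-a.e.\ along a subsequence, the difference quotients converge $\mathcal L^{2N}$-a.e.\ on $\R^N\times\R^N$; applying Theorem~\rth{hjkgjkfhjffgggvggoopikhhhkjhhkjhjgjhhj} to each $u_k$ and using that $f\mapsto\sup_{s>0}s\,\mathcal L^{2N}(\{|f|>s\})$ does not increase under a.e.\ limits — if $|f|>s$ at a point then $|f_k|>s$ there for large $k$, so $\mathcal L^{2N}(\{|f|>s\})\le\liminf_k\mathcal L^{2N}(\{|f_k|>s\})$ by Fatou — one passes to the limit and obtains the bound with $\widetilde C_N=C_N$.

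\textbf{Lower bound and regularity.} Assume $A:=\sup_{s>0}s\,\mathcal L^{2N}(\{(x,y)\in\Omega\times\Omega:|u(y)-u(x)|^q>s|y-x|^{q+N}\})<\infty$, otherwise the two middle quantities are $+\infty$ and the inequalities hold trivially. \emph{Step 1 (local regularity):} show $u\in BV_{\mathrm{loc}}(\Omega)$ when $q=1$ and $u\in W^{1,q}_{\mathrm{loc}}(\Omega)$ when $q>1$. Writing the distributional derivative of $u$ as a limit of difference quotients, testing against $\varphi\in C^1_c(\Omega)$ and averaging over directions, matters reduce to bounding $\liminf_{r\to0^+}r^{-q}\int_{\supp\varphi}\int_{S^{N-1}}|u(y-r\omega)-u(y)|^q\,d\mathcal H^{N-1}(\omega)\,dy$ (exponent $1$ already suffices when $q=1$). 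One controls this by passing to polar coordinates $z=\rho\omega$ in the definition of $A$, restricting $\rho$ to a dyadic annulus, and decomposing the resulting integral by layers — but first upgrading the integrability of $u$: the difference quotient $(x,z)\mapsto|u(x+z)-u(x)|/|z|^{1+N/q}$ lies in weak-$L^q$, hence (Besov embedding) locally in fractional Sobolev spaces $W^{s,p}$ for $s<1$ and $p<q$, and therefore in $L^p_{\mathrm{loc}}$ up to the Sobolev exponent of $W^{1,q}$, which makes the high-level tail in the layer-cake split summable. \emph{Step 2 (blow-up):} then $u$ is approximately differentiable at $\mathcal L^N$-a.e.\ $x_0$, with derivative $\nabla u(x_0)$. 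Fix $\delta>0$; by Egorov's theorem (after intersecting with a large ball if $\Omega$ is unbounded) there is $\Omega_\delta\subset\Omega$ with $|\Omega\setminus\Omega_\delta|<\delta$ on which the approximate differentiability is uniform and $|\nabla u|$ is bounded. For $x_0\in\Omega_\delta$ and $s$ large, $z\mapsto\nabla u(x_0)\cdot z$ underestimates $u(x_0+z)-u(x_0)$ by at most $\varepsilon|z|$ for all $z$ in the ball of radius $\asymp s^{-1/N}$ — the only relevant scale — off an exceptional set of measure at most $\eta$ times that ball, so the $x_0$-slice of the superlevel set has $\mathcal L^N$-measure at least $\tfrac1{Ns}\int_{S^{N-1}}(|\nabla u(x_0)\cdot z|-\varepsilon)_+^q\,d\mathcal H^{N-1}(z)$ minus that error. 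Integrating over $x_0\in\Omega_\delta$, multiplying by $s$, and letting $s\to\infty$, then $\eta,\varepsilon\to0$, then $\delta\to0$ (and the ball $\to\R^N$) yields $\limsup_{s\to\infty}s\,\mathcal L^{2N}(\{\cdots\})\ge\tfrac1N\int_\Omega\int_{S^{N-1}}|\nabla u(x)\cdot z|^q\,d\mathcal H^{N-1}(z)\,dx=\tfrac1N\big(\int_{S^{N-1}}|z_1|^q\,d\mathcal H^{N-1}(z)\big)\int_\Omega|\nabla u|^q\,dx\ge\tfrac1{N+q}\big(\int_{S^{N-1}}|z_1|^q\,d\mathcal H^{N-1}(z)\big)\int_\Omega|\nabla u|^q\,dx$; running this estimate on $\Omega_\delta$, where $|\nabla u|$ is bounded, and letting $\delta\to0$ forces $\int_\Omega|\nabla u|^q\,dx<\infty$, i.e.\ $u\in W^{1,q}(\Omega)$ when $q>1$. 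For $q=1$ one supplements this by blowing up along the singular part $D^su=D^ju+D^cu$ of $Du$, where the profile is a one-sided jump (resp.\ rank-one map) living at the coarser scale $\rho\asymp s^{-1/(N+1)}$ and contributing $\tfrac1{N+1}\big(\int_{S^{N-1}}|z_1|\,d\mathcal H^{N-1}(z)\big)\,|D^su|(\Omega)$ to the $\limsup$; together with the absolutely continuous contribution $\tfrac1N\big(\int_{S^{N-1}}|z_1|\,d\mathcal H^{N-1}(z)\big)\int_\Omega|\nabla u|\,dx$ and $\tfrac1N\ge\tfrac1{N+1}$ this gives $\limsup_{s\to\infty}s\,\mathcal L^{2N}(\{\cdots\})\ge\tfrac1{N+1}\big(\int_{S^{N-1}}|z_1|\,d\mathcal H^{N-1}(z)\big)\|Du\|(\Omega)$, so $u\in BV(\Omega)$; for $q>1$ the corresponding singular contributions blow up like $s^{(q-1)/(N+q)}$, so $A<\infty$ already forbids them.

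\textbf{Main difficulty.} The crux is Step 1. The hypothesis provides only a \emph{weak}-$L^1$ (Marcinkiewicz) control of $(x,z)\mapsto|u(x+z)-u(x)|^q/|z|^{q+N}$, and a naive integration over scales to extract a genuine difference bound $\int_K|u(\cdot+h)-u(\cdot)|^q\lesssim|h|^q$ loses a logarithmic factor $\log(1/|h|)$ — the signature of $L^{1,\infty}\not\hookrightarrow L^1$. Repairing this, chiefly in the borderline case $q=1$ (where truncating $u$ is unavoidable and one must combine the integrability bootstrap, a possible passage to the level sets $\{u>t\}$ and the coarea formula, and a precise balancing of the two ranges in the layer-cake split), is the real technical content. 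A subsidiary point is that the extension used for the upper bound must be arranged so that only $\|\nabla u\|_{L^q(\Omega)}$, not $\|u\|_{W^{1,q}(\Omega)}$, appears on the right-hand side, which is exactly where the Lipschitz regularity of $\partial\Omega$ is used.
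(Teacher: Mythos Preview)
Your upper bound is essentially the paper's: extend to $\R^N$, approximate by $C_c^\infty$, apply Theorem~\ref{hjkgjkfhjffgggvggoopikhhhkjhhkjhjgjhhj}, and pass to the limit via Fatou on the distribution function (this is Lemma~\ref{fhgolghoi}).

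For the lower bound, your Step~1 has a real gap. You correctly isolate the $\log(1/|h|)$ loss as the crux, but the Besov/integrability bootstrap you sketch does not repair it: higher $L^{p^*}$-integrability of $u$ (with $p^*>q$) controls only the tail $t>T$ in the layer-cake split, at cost $T^{q-p^*}$, and forcing this to be $O(r^q)$ requires $T\gtrsim r^{-q/(p^*-q)}$, while the weak bound is usable only for $t\gtrsim r$; the intermediate range still contributes $\log(T/r)\sim|\log r|$, and no finite iteration closes it. The paper's route is entirely different and bypasses this difficulty. It proves directly (Theorem~\ref{hjkgjkfhjff} at $r=q$, after a truncation to reduce to bounded $u$ on bounded subdomains) that
\[
\liminf_{\e\to0}\int_{S^{N-1}}\int_G\chi_G(x+\e\vec n)\,\frac{|u(x+\e\vec n)-u(x)|^q}{\e^q}\,dx\,d\mathcal H^{N-1}(\vec n)\le(N+q)\limsup_{s\to\infty}s\,\mathcal L^{2N}(\{\cdots\}),
\]
upgrades this $\liminf$ to a $\sup_{\e}$ on $G\subset\subset\Omega$ by subadditivity of the $q$-difference quotient in the shift (Lemma~\ref{gughfgfhfgdgddffddfKKzzbvqhig}), and then invokes the BBM/D\'avila formula, which delivers \emph{simultaneously} the regularity $u\in W^{1,q}$ (resp.\ $BV$) and the quantitative lower bound. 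The key device behind Theorem~\ref{hjkgjkfhjff} is the logarithmic mollifier $\eta_\e(t)=\tfrac{1}{\alpha|\ln\e|\,t^N}\chi_{[\e^{1+\alpha},\e]}(t)$: since $\int_{\e^{1+\alpha}}^\e t^{-1}\,dt=\alpha|\ln\e|$, the scale-average exactly matches the $ds/s$ weight on the distribution-function side, the logarithms cancel, and sending $\alpha\to\infty$ yields the constant $N+q$ with no loss. A secondary point: for $q=1$, your blow-up along $D^c u$ presumes a clean rank-one limiting profile at a single scale, which is not generally available; the D\'avila formula handles the full $Du$ at once.
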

\begin{remark}
    Setting ${\tilde c}_N=\frac{\int_{S^{N-1}}|z_1|d\mathcal{H}^{N-1}(z)}{(N+1)\big(\mathcal{H}^{N-1}(S^{N-1})+1\big)}$ it is easy to deduce by H\"older's inequality that
$$
{\tilde c}_N^q\le
\frac{\int_{S^{N-1}}|z_1|^qd\mathcal{H}^{N-1}(z)}{(N+q)}\,.
$$
Therefore, the lower-bound in inequality
\eqref{GMT'3jGHKKkkhjjhgzzZZzzZZzzbvq88nkhhjggjgjkpkjljluytytuguutloklljjgjgjhjklljjjkjkhjkkhkhhkhhjlkkhkjljljkjlkkhkllhjhjhhfyfppiooiououiuiuiuhjhjkhkhkjkhhkkhj}
can be also written, analogously to
\eqref{GMT'3jGHKKkkhjjhgzzZZzzZZzzbvq88nkhhjggjgjkpkjljluytytuguutloklljjgjgjhjklljjjkjkhjkkhkhhkhhjlkkhkjljljkjlkkhkllhjhjhhfyfppiooiououiuiuiuhjhjkhkhkjkhhkkhjjyhjggjuyyj},
as
$$
{\tilde c}_N^q\int_\Omega\big|\nabla u(x)\big|^qdx\le
\limsup\limits_{s\to+\infty}\Bigg\{s\,\mathcal{L}^{2N}\Bigg(\bigg\{(x,y)\in
\Omega\times \Omega\;:\;\frac{
    \big|u( y)-u(x)\big|^q}{|y-x|^{q+N}}> s\bigg\}\Bigg)\Bigg\}\,.
$$
\end{remark}
\begin{remark}\label{hkhjggjhgh}
Only the lower bound in Theorem \ref{hjkgjkfhjffgggvggoopikhhhkjh}
requires a non-trivial proof. Indeed, the upper bound in this
Theorem follows quite easily from Theorem
\ref{hjkgjkfhjffgggvggoopikhhhkjhhkjhjgjhhj},  by an extension of
$u\in W^{1,q}$ or $u\in BV$ from $\Omega$ to $\R^N$, followed by a
standard approximation of its gradient seminorm by smooth functions,
see the technical Lemma \ref{fhgolghoi} for details.
\end{remark}

The proof of Theorem\,\ref{hjkgjkfhjffgggvggoopikhhhkjh} is given in
Section\,\ref{sec:r=q} below. From Theorem
\ref{hjkgjkfhjffgggvggoopikhhhkjh} we deduce the next corollary,
that provides a positive answer to  Question {\bf (iv)} (see
\cite[Open Problem\,1]{hhh2}):
\begin{corollary}\label{gbjgjgjgj}
Let $\Omega\subset\R^N$ be an open domain, $q\geq 1$ and $u\in
L^q(\Omega,\R^m)$. If
\begin{equation}\label{GMT'3jGHKKkkhjjhgzzZZzzZZzzbvq88nkhhjggjgjkpkjljluytytuguutloklljjgjgjhjklljjjkjkhjkkhkhhkhhjlkkhkjljljkjlkkhkllhjhjhhfyfppiooiououiuiuiuhjhjkhkhkjkhhkkhjjkhkjkk}
\lim\limits_{s\to+\infty}\Bigg\{s\,\mathcal{L}^{2N}\Bigg(\bigg\{(x,y)\in
\Omega\times \Omega\;:\;\frac{
\big|u( y)-u(x)\big|^q}{|y-x|^{q+N}}> s\bigg\}\Bigg)\Bigg\}=0\,,
\end{equation}
then $u(x)$ necessarily equals a constant a.e. in $\Omega$.
\end{corollary}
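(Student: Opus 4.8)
The plan is to reduce Corollary \ref{gbjgjgjgj} to the lower bounds in Theorem \ref{hjkgjkfhjffgggvggoopikhhhkjh} by a localization argument. The obstacle is that Theorem \ref{hjkgjkfhjffgggvggoopikhhhkjh} is stated for domains with Lipschitz boundary, whereas here $\Omega$ is merely open; so the first step is to observe that the hypothesis \er{GMT'3jGHKKkkhjjhgzzZZzzZZzzbvq88nkhhjggjgjkpkjljluytytuguutloklljjgjgjhjklljjjkjkhjkkhkhhkhhjlkkhkjljljkjlkkhkllhjhjhhfyfppiooiououiuiuiuhjhjkhkhkjkhhkkhjjkhkjkk} is monotone under restriction of the domain: if $\Omega'\subset\Omega$ is open, then the superlevel set for $\Omega'\times\Omega'$ is contained in the one for $\Omega\times\Omega$, hence the quantity computed on $\Omega'$ is bounded by the one computed on $\Omega$. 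Therefore, if the limit vanishes on $\Omega$ it vanishes on every open subset $\Omega'$.

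Next I would apply this to balls: for every ball $B=B_r(x_0)$ with $\overline B\subset\Omega$, the restriction $u|_B$ lies in $L^q(B,\R^m)$, the ball has Lipschitz boundary, and by the previous step
$$
\lim\limits_{s\to+\infty}\Bigg\{s\,\mathcal{L}^{2N}\Bigg(\bigg\{(x,y)\in B\times B\;:\;\frac{\big|u(y)-u(x)\big|^q}{|y-x|^{q+N}}> s\bigg\}\Bigg)\Bigg\}=0\,.
$$
In particular the $\sup$ over $s\in(0,\infty)$ on $B\times B$ is finite — indeed, the limit being $0$ forces $s\,\mathcal L^{2N}(\cdots)$ to be bounded for large $s$, and it is automatically bounded for small $s$ since $\mathcal L^{2N}$ of the superlevel set is at most $\mathcal L^{2N}(B)^2<\infty$. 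Hence by the upper/lower inequality chain in Theorem \ref{hjkgjkfhjffgggvggoopikhhhkjh} applied on $B$ (case $q>1$ gives $u|_B\in W^{1,q}(B,\R^m)$, case $q=1$ gives $u|_B\in BV(B,\R^m)$), and since the $\limsup$ is squeezed between the nonnegative multiple of $\int_B|\nabla u|^q$ (resp. $\|Du\|(B)$) and the vanishing limit, we conclude $\int_B|\nabla u(x)|^q\,dx=0$, resp. $\|Du\|(B)=0$. Thus $\nabla u=0$ a.e. on $B$ (resp. $Du=0$ as a measure on $B$).

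Finally I would globalize: since every point of $\Omega$ has such a ball around it, $u\in W^{1,q}_{loc}(\Omega,\R^m)$ (resp. $BV_{loc}$) with vanishing distributional gradient on $\Omega$. A distribution with zero gradient on a \emph{connected} open set is constant; on a general open set it is constant on each connected component. If one wants the literal statement ``constant a.e.\ in $\Omega$'' one should either assume $\Omega$ connected or note that connectedness is implicit; in any case $\nabla u\equiv 0$ gives that $u$ is (componentwise) locally constant, which on each connected component of $\Omega$ means constant. This completes the proof. The only mildly delicate point is the passage from ``limit $=0$'' to ``$\sup<\infty$'' needed to invoke Theorem \ref{hjkgjkfhjffgggvggoopikhhhkjh}, but as noted this is immediate because the measure of the superlevel set is uniformly bounded by $\mathcal L^{2N}(B)^2$ for small $s$ and the limit hypothesis controls large $s$; alternatively one uses directly the lower bound $\frac{\int_{S^{N-1}}|z_1|^q\,d\mathcal H^{N-1}(z)}{N+q}\int_B|\nabla u|^q \le \limsup_{s\to\infty}\{\cdots\}=0$ together with the convention that the left-hand integral is $+\infty$ when $u\notin W^{1,q}$, which forces both $u|_B\in W^{1,q}(B)$ and $\nabla u|_B=0$ in one stroke.
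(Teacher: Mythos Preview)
Your proof is correct. The paper does not spell out a proof of Corollary~\ref{gbjgjgjgj}; it simply declares it a consequence of Theorem~\ref{hjkgjkfhjffgggvggoopikhhhkjh}. However, the paper's internal structure suggests a slightly more direct route than yours: Corollary~\ref{hjkgjkfhjffgggvggoopikhh} establishes the lower bound
\[
\frac{\int_{S^{N-1}}|z_1|^q\,d\mathcal H^{N-1}(z)}{N+q}\int_\Omega|\nabla u|^q\,dx
\;\le\;
\limsup_{s\to+\infty}\Big\{s\,\mathcal L^{2N}\big(\{(x,y)\in\Omega\times\Omega:\ldots>s\}\big)\Big\}
\]
for an \emph{arbitrary} open domain $\Omega$, with no Lipschitz assumption (the Lipschitz hypothesis in Theorem~\ref{hjkgjkfhjffgggvggoopikhhhkjh} is needed only for the upper bound, via the extension theorem in Lemma~\ref{fhgolghoi}). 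So one can apply the lower bound directly to $\Omega$ and conclude $\nabla u\equiv 0$ (resp.\ $Du=0$) globally, without localizing to balls.

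Your localization-to-balls argument is nonetheless a perfectly valid way to bridge from the \emph{stated} Theorem~\ref{hjkgjkfhjffgggvggoopikhhhkjh} (Lipschitz boundary) to the general open case, and it has the merit of using only the theorem as stated in the introduction. Your final remark---that the lower bound alone, together with the convention $\int|\nabla u|^q=+\infty$ when $u\notin W^{1,q}$, simultaneously forces membership in $W^{1,q}$ and vanishing gradient---is exactly the right observation, and renders the separate ``$\sup<\infty$'' discussion unnecessary. Your handling of connectedness is also appropriate: in the paper ``domain'' is understood to mean connected, which is what makes ``constant a.e.'' the correct conclusion.
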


Regarding  Question {\bf (iii)}, the following result
 provides a  positive answer to it for
$q>1$, and in the case $q=1$ under the additional assumption, $u\in
W^{1,1}\subsetneq BV$.
\begin{corollary}\label{hjkgjkfhjffgggvggoopikhhhkjhhkjhjgjhhjgggghhdf}
Let $q\geq 1$ and let $\Omega\subset\R^N$ be an open set. Then, for
every $u\in W^{1,q}(\R^N,\R^m)$ we have
\begin{multline}\label{GMT'3jGHKKkkhjjhgzzZZzzZZzzbvq88nkhhjggjgjkpkjljluytytuguutloklljjgjgjhjklljjjkjkhjkkhkhhkhhjlkkhkjljljkjlkkhkllhjhjhhfyfppiooiououiuiuiuhjhjkhkhkjkhhkkhjjyhjggjuyyjghfhjhjghhhzz}
\lim\limits_{s\to+\infty}\Bigg\{s\,\mathcal{L}^{2N}\Bigg(\bigg\{(x,y)\in
\Omega\times \Omega\;:\;\frac{
\big|u( y)-u(x)\big|^q}{|y-x|^{q+N}}>
s\bigg\}\Bigg)\Bigg\}\\=\frac{\int_{S^{N-1}}|z_1|^qd\mathcal{H}^{N-1}(z)}{N}\int_{\Omega}\big|\nabla
u(x)\big|^qdx \,.
\end{multline}
\end{corollary}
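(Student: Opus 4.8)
The plan is to reduce Corollary~\ref{hjkgjkfhjffgggvggoopikhhhkjhhkjhjgjhhjgggghhdf} to the case $\Omega=\R^N$, where the limit was already computed by Brezis, Van Schaftingen and Yung in Theorem~\ref{hjkgjkfhjffgggvggoopikhhhkjhhkjhjgjhhjggg} for $u\in C_c^\infty$, and then to pass from smooth compactly supported functions to general $u\in W^{1,q}(\R^N,\R^m)$ by an approximation argument using the already-established bounds. The essential point that forces working with $W^{1,q}$ rather than merely $L^q$ here (and that explains why the case $q=1$, $u\in BV\setminus W^{1,1}$, is excluded) is that we need the full limit, not just the $\limsup$, so we must upgrade convergence along a dense class to convergence for all of $W^{1,q}$, and this requires a stability/continuity statement for the functional $u\mapsto \lim_s\{\cdots\}$ with respect to the $W^{1,q}$ norm.

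First I would treat $\Omega=\R^N$. Write $F_s(u)=s\,\mathcal L^{2N}(\{(x,y): |u(y)-u(x)|^q/|y-x|^{q+N}>s\})$ and $\kappa_N=\tfrac1N\int_{S^{N-1}}|z_1|^q\,d\mathcal H^{N-1}$. Theorem~\ref{hjkgjkfhjffgggvggoopikhhhkjhhkjhjgjhhjggg} gives $\lim_{s\to\infty}F_s(\phi)=\kappa_N\int_{\R^N}|\nabla\phi|^q$ for $\phi\in C_c^\infty(\R^N,\R^m)$ (componentwise, this is exactly the cited scalar statement, or one notes the proof there is insensitive to the target dimension). For general $u\in W^{1,q}(\R^N,\R^m)$, pick $\phi_k\in C_c^\infty$ with $\phi_k\to u$ in $W^{1,q}$. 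The key inequality is a ``quasi-triangle'' bound for $F_s$: since $\{A+B>s\}\subset\{A>s/2\}\cup\{B>s/2\}$, for any measurable $v,w$,
\begin{equation*}
F_s(v+w)^{1/q}\le 2^{1/q}\bigl(F_{s/2}(v)^{1/q}+F_{s/2}(w)^{1/q}\bigr),
\end{equation*}
after using $|v(y)+w(y)-v(x)-w(x)|^q\le 2^{q-1}(|v(y)-v(x)|^q+|w(y)-w(x)|^q)$ and the subadditivity of Lebesgue measure; an equivalent and more convenient form, combined with $\sup_s F_s(u-\phi_k)\le C_N\int|\nabla(u-\phi_k)|^q$ from Theorem~\ref{hjkgjkfhjffgggvggoopikhhhkjh}(i) (with $\Omega=\R^N$, $C_\Omega=1$), yields
\begin{equation*}
\bigl|\limsup_{s}F_s(u)^{1/q}-\kappa_N^{1/q}\|\nabla\phi_k\|_{L^q}\bigr|
\le 2^{1/q}\Bigl(\sup_s F_s(u-\phi_k)\Bigr)^{1/q}
\le 2^{1/q}C_N^{1/q}\|\nabla(u-\phi_k)\|_{L^q},
\end{equation*}
and similarly with $\liminf$ in place of $\limsup$. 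Letting $k\to\infty$ the right-hand side tends to $0$ and $\|\nabla\phi_k\|_{L^q}\to\|\nabla u\|_{L^q}$, so $\limsup_s F_s(u)=\liminf_s F_s(u)=\kappa_N\int_{\R^N}|\nabla u|^q$, i.e. the limit exists and has the stated value.

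The main obstacle is the localization to an arbitrary open set $\Omega\subset\R^N$: the displayed statement of the corollary takes $u\in W^{1,q}(\R^N,\R^m)$ but restricts the measure to $(x,y)\in\Omega\times\Omega$. One natural route is to note that the difference between integrating over $\R^N\times\R^N$ and over $\Omega\times\Omega$ involves pairs with at least one point outside $\Omega$; since $u\in W^{1,q}(\R^N)$ globally, one can run the same approximation argument with the \emph{localized} functionals $F_s^\Omega(u)=s\,\mathcal L^{2N}(\{(x,y)\in\Omega\times\Omega:\cdots\})$, for which the quasi-triangle estimate and the upper bound $\sup_s F_s^\Omega(u-\phi_k)\le C_N\int_{\R^N}|\nabla(u-\phi_k)|^q$ still hold (monotonicity in the domain), so it suffices to know $\lim_s F_s^\Omega(\phi)=\kappa_N\int_\Omega|\nabla\phi|^q$ for $\phi\in C_c^\infty(\R^N,\R^m)$. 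This last fact should follow by inspecting the proof of Theorem~\ref{hjkgjkfhjffgggvggoopikhhhkjhhkjhjgjhhjggg}, which is a blow-up/Fubini computation local in $x$ and $y$; alternatively one invokes the lower bound in Theorem~\ref{hjkgjkfhjffgggvggoopikhhhkjh} to get ``$\ge$'' and an argument splitting $\Omega\times\Omega$ into diagonal-near and far pieces (the far piece contributing $o(1/s)$ after multiplication by $s$, since there $|u(y)-u(x)|$ is bounded and $|y-x|$ bounded below on $\supp\phi$) together with the full-space result to get ``$\le$''. I would expect a careful but routine verification here; the genuinely new content of the corollary is the passage from $C_c^\infty$ to $W^{1,q}$, which the stability estimate above handles cleanly.
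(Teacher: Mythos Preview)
Your overall strategy---approximate $u$ by $\phi_k\in C_c^\infty$, invoke the limit formula for smooth functions, and control the difference via the upper bound of Theorem~\ref{hjkgjkfhjffgggvggoopikhhhkjh}---is exactly the paper's. However, the displayed stability estimate
\[
\bigl|\limsup_{s}F_s(u)^{1/q}-\kappa_N^{1/q}\|\nabla\phi_k\|_{L^q}\bigr|\le 2^{1/q}\bigl(\sup_s F_s(u-\phi_k)\bigr)^{1/q}
\]
does \emph{not} follow from the symmetric inclusion $\{A+B>s\}\subset\{A>s/2\}\cup\{B>s/2\}$ that you wrote. That inclusion (together with $|a+b|^q\le 2^{q-1}(|a|^q+|b|^q)$) only gives $F_s(v+w)\le 2^q\bigl(F_{s/2^q}(v)+F_{s/2^q}(w)\bigr)$, hence after taking $\limsup$ a factor $2^q$ multiplies the \emph{main} term as well as the error term. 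Sending $k\to\infty$ then yields only $2^{-q}\kappa_N\|\nabla u\|_{L^q}^q\le\liminf_s F_s(u)\le\limsup_s F_s(u)\le 2^q\kappa_N\|\nabla u\|_{L^q}^q$, which neither gives the sharp constant nor shows that the limit exists. The weak-$L^q$ quasi-norm is genuinely not a norm; there is no ``equivalent and more convenient form'' of the symmetric quasi-triangle bound that places the constant $1$ in front of the main term.

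The paper's remedy is an \emph{asymmetric} splitting with a free parameter: for every $\alpha>1$ one has $|a+b|\le\max\{\alpha|a|,\tfrac{\alpha}{\alpha-1}|b|\}$, so the superlevel set of $u=\phi_k+(u-\phi_k)$ is contained in the union of the superlevel sets for $\alpha\phi_k$ and $\tfrac{\alpha}{\alpha-1}(u-\phi_k)$, giving
\[
\limsup_s F_s(u)\;\le\;\alpha^q\,\lim_s F_s(\phi_k)\;+\;\Bigl(\tfrac{\alpha}{\alpha-1}\Bigr)^{q}\widetilde C_N\,\|\nabla(u-\phi_k)\|_{L^q}^q,
\]
and symmetrically for the $\liminf$ with the roles of $u$ and $\phi_k$ exchanged. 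The order of limits is then: first $k\to\infty$ (the coefficient $(\alpha/(\alpha-1))^q$ is large but fixed, and the error term vanishes), \emph{then} $\alpha\to1^+$ to remove the factor $\alpha^q$ on the main term. This two-parameter device is the missing ingredient in your argument. The localization to a general open $\Omega$ is a side issue; the paper handles it by proving the smooth-case limit directly on $\Omega\times\Omega$ from the outset (Proposition~\ref{fhfhfhfffhkjkj}), so no separate reduction to $\R^N$ is needed.
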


Actually,
Corollary\,\ref{hjkgjkfhjffgggvggoopikhhhkjhhkjhjgjhhjgggghhdf} is
just a special case of the following more general result in which we
replace the quantity appearing on the L.H.S.\,of
\eqref{GMT'3jGHKKkkhjjhgzzZZzzZZzzbvq88nkhhjggjgjkpkjljluytytuguutloklljjgjgjhjklljjjkjkhjkkhkhhkhhjlkkhkjljljkjlkkhkllhjhjhhfyfppiooiououiuiuiuhjhjkhkhkjkhhkkhjjyhjggjuyyjghfhjhjghhhzz}
by a more general one; the case appearing in
Corollary\,\ref{hjkgjkfhjffgggvggoopikhhhkjhhkjhjgjhhjgggghhdf}
 corresponds
to the special case $F(a,y,x):=|a|^q$:
\begin{theorem}\label{hjkgjkfhjffgggvggoopikhhhkjhhkjhjgjhhjgggghhdf11}
Let $q\geq 1$ and let $\Omega\subset\R^N$ be an open set. Let
$F:\R\times\R^N\times\R^N\to[0,+\infty)$ be a continuous function,
such that there exists $C>0$ satisfying $0\leq F(a,y,x)\leq C|a|^q$
for every $a\in\R$ and every $x,y\in\R^N$. Moreover, assume that
$F(a,y,x)$ is  non-decreasing in the $a$--variable on $[0,+\infty)$
for every fixed $(y,x)\in\R^N\times\R^N$. Then, for every $u\in
W^{1,q}(\R^N,\R^m)$ we have
\begin{multline}\label{GMT'3jGHKKkkhjjhgzzZZzzZZzzbvq88nkhhjggjgjkpkjljluytytuguutloklljjgjgjhjklljjjkjkhjkkhkhhkhhjlkkhkjljljkjlkkhkllhjhjhhfyfppiooiououiuiuiuhjhjkhkhkjkhhkkhjjyhjggjuyyjghfhjhjghhhzz11}
\lim\limits_{s\to +\infty}s\mathcal{L}^{2N}\Bigg(\Bigg\{(x,y)\in
\Omega\times\Omega\;:\;
F\bigg(\frac{\big|u(y)-u(x)\big|}{|y-x|},y,x\bigg)\,\frac{1}{|y-x|^{N}}>
s\Bigg\}\Bigg)\\
=\frac{1}{N}\int\limits_{\Omega}\Bigg(\int\limits_{S^{N-1}}F\bigg(\big|\nabla
u(x)\big||z_1|,x,x\bigg)d\mathcal{H}^{N-1}(z)\Bigg)dx\,.
\end{multline}
\end{theorem}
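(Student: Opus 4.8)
The strategy is to reduce the general statement to a pointwise (fiberwise) computation near the diagonal, exactly as one expects the $r=q$ case of Theorem~\ref{hjkgjkfhjffgggvggoopikhhhkjh} to have been handled. First I would fix $u\in W^{1,q}(\R^N,\R^m)$ and observe that, since $0\le F(a,y,x)\le C|a|^q$, the level set in \eqref{GMT'3jGHKKkkhjjhgzzZZzzZZzzbvq88nkhhjggjgjkpkjljluytytuguutloklljjgjgjhjklljjjkjkhjkkhkhhkhhjlkkhkjljljkjlkkhkllhjhjhhfyfppiooiououiuiuiuhjhjkhkhkjkhhkkhjjyhjggjuyyjghfhjhjghhhzz11} is contained in the corresponding level set for $C|u(y)-u(x)|^q/|y-x|^{q+N}$, which by Theorem~\ref{hjkgjkfhjffgggvggoopikhhhkjh}(i) has $s\cdot\mathcal L^{2N}$-measure bounded uniformly in $s$. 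This gives an a priori bound on the quantity and, more importantly, lets me localize: the tail contribution from pairs $(x,y)$ with $|y-x|\ge\delta$ is $O_\delta(1/s)$ and hence vanishes as $s\to\infty$, so only an arbitrarily thin neighborhood of the diagonal matters.

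**Reduction to the diagonal.** On the thin neighborhood I would substitute $y=x+th$ with $t=|y-x|>0$ and $h\in S^{N-1}$, using the co-area/polar decomposition $\mathcal L^{2N}$ on $\Omega\times\Omega$ near the diagonal $\approx dx\,t^{N-1}\,dt\,d\mathcal H^{N-1}(h)$. For a Lebesgue point $x$ of $\nabla u$ with $u$ approximately differentiable there, $|u(x+th)-u(x)|/t\to|\nabla u(x)\,h|$ as $t\to0$; by continuity and monotonicity of $F$ in its first slot, and continuity in $(y,x)$, the expression $F(|u(x+th)-u(x)|/t,\,x+th,\,x)\cdot t^{-N}$ behaves like $F(|\nabla u(x)\,h|,x,x)\cdot t^{-N}$. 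The condition $F(\cdots)t^{-N}>s$ then becomes $t< (F(|\nabla u(x)\,h|,x,x)/s)^{1/N}$, whose $t^{N-1}\,dt$-measure is $\frac1N\,F(|\nabla u(x)\,h|,x,x)/s$. Multiplying by $s$, integrating in $x$ over $\Omega$ and in $h$ over $S^{N-1}$, and noting $|\nabla u(x)\,h|$ has the same distribution over $h\in S^{N-1}$ as $|\nabla u(x)|\,|z_1|$ (rotate so $\nabla u(x)$ aligns with $e_1$; here for $m>1$ one interprets $|\nabla u(x)\,h|$ as the Euclidean norm of the vector $\sum_j h_j\partial_j u(x)$, and the rotational reduction still applies by choosing coordinates adapted to the image of the linear map $\nabla u(x)$ — more precisely one first reduces to $m=1$ by a further argument, or simply keeps the map and observes the relevant one-dimensional distribution), yields the claimed right-hand side $\frac1N\int_\Omega\int_{S^{N-1}}F(|\nabla u(x)||z_1|,x,x)\,d\mathcal H^{N-1}(z)\,dx$.

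**Making the limit rigorous.** The delicate point is that the pointwise convergence $|u(x+th)-u(x)|/t\to|\nabla u(x)\,h|$ holds only for a.e.\ $x$ and only in a suitable averaged sense in $h$; one cannot simply pass to the limit inside the measure. I would handle this by a double approximation. On one side (upper bound for the $\limsup$), approximate $u$ in $W^{1,q}$ by $u_\varepsilon\in C^\infty$; for smooth functions the convergence is uniform on compacta and the above heuristic becomes a genuine dominated-convergence argument, giving the exact limit for $u_\varepsilon$; then control the difference using Theorem~\ref{hjkgjkfhjffgggvggoopikhhhkjh}(i) applied to $u-u_\varepsilon$ together with the elementary inequality $|F(a,y,x)-F(b,y,x)|\le C(|a|^q+|b|^q)$ and a splitting of the level set — this is where the sublinear-growth/monotonicity hypotheses on $F$ and the quasi-triangle inequality for weak-$L^q$ type quantities ($\mathcal L^{2N}$ of the level set of a sum is controlled by those of the summands) do the work. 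On the other side (lower bound for the $\liminf$), use Fatou's lemma on the diagonal integrand after the polar substitution, using the almost-everywhere differentiability of the Sobolev function $u$ directly (Calderón–Zygmund / approximate differentiability) rather than passing through smooth approximants. The main obstacle, then, is not the formal computation but this uniform control of the error term as $s\to\infty$: quantifying that replacing $u$ by a smooth approximant perturbs the $s$-weighted measure of the level set by something small uniformly in $s$, which requires exactly the weak-type bound of Theorem~\ref{hjkgjkfhjffgggvggoopikhhhkjh} together with a careful treatment of the "bad" set where the difference quotient of $u$ is large. I expect that combining these two bounds — $\limsup\le\mathrm{RHS}\le\liminf$ — closes the argument and gives the stated equality.
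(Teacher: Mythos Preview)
Your strategy matches the paper's: first establish the formula for compactly supported Lipschitz functions by the polar change of variables and dominated convergence (the paper's Proposition~4.1), then pass to general $u\in W^{1,q}$ by smooth approximation, controlling the error via the weak-type bound of Theorem~\ref{hjkgjkfhjffgggvggoopikhhhkjh}.

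The one place where your outline is too vague is the error control in the approximation step. The inequality you cite, $|F(a,y,x)-F(b,y,x)|\le C(|a|^q+|b|^q)$, is trivially true but gives no smallness when $a-b$ is small; it cannot drive the error to zero. The paper's actual mechanism uses the monotonicity of $F$ in a sharper way: for any $\alpha>1$, writing $a+b=\tfrac{1}{\alpha}(\alpha a)+\tfrac{\alpha-1}{\alpha}\big(\tfrac{\alpha}{\alpha-1}b\big)$ and using that $F$ is non-decreasing gives
\[
F\big(|a+b|,y,x\big)\le \max\Big\{F\big(\alpha|a|,y,x\big),\,F\big(\tfrac{\alpha}{\alpha-1}|b|,y,x\big)\Big\},
\]
hence a set inclusion $A\subset B_1\cup B_2$ at the level of superlevel sets. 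Applying this with $(a,b)=(u_n,u-u_n)$ and then with roles swapped, bounding the $B_2$-term via $F\le C|\cdot|^q$ and Theorem~\ref{hjkgjkfhjffgggvggoopikhhhkjh}, and finally sending $n\to\infty$ and then $\alpha\to 1^+$ (using dominated convergence on the right-hand side) gives both the $\limsup\le$ and $\liminf\ge$ bounds symmetrically. Your separate Fatou argument for the lower bound is not needed once this $\alpha$-trick is in place.
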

The proof of
Theorem\,\ref{hjkgjkfhjffgggvggoopikhhhkjhhkjhjgjhhjgggghhdf11} is
given in Section\,\ref{limtem} below.

In the proof of the lower bound in Theorem
\ref{hjkgjkfhjffgggvggoopikhhhkjh} we essentially use the so called
``BBM formula'' due to J.~Bourgain, H.~Brezis,
P.~Mironescu~\cite{hhh1}  for $q>1$ (and under some limitations for
$q=1$). For $q=1$ the formula in the general case of BV functions
is due to J. D\'{a}vila~\cite{hhh5}. This formula states, in
particular, that given an open domain with Lipschitz boundary
$\Omega\subset\R^N$, a family of radial mollifiers
$\rho_\e\big(|z|\big):\R^N\to[0,+\infty)$, satisfying
$\int_{\R^N}\rho_\e\big(|z|\big)dz=1$ and such that for every $r>0$
there exists $\delta:=\delta_r>0$, satisfying
$\supp{(\rho_\e)}\subset B_r(0)$ for every $\e\in(0,\delta_r)$, the
following holds true:
\begin{enumerate}
    \item[(i)]
For any $q>1$ and any  $u\in L^q(\Omega,\R^m)$ we have
\begin{equation}
\label{eq:1jjjint} \lim_{\e\to 0^+} \int_{\Omega}\int_{\Omega}
\frac{|u(x)-u(y)|^q}{|x-y|^q}\,\rho_\e\big(|x-y|\big)\,dx\,dy=K_{q,N}\int_{\Omega}\big|\nabla
u(x)\big|^qdx\,,
\end{equation}
with the convention that $\int_{\Omega}\big|\nabla
u(x)\big|^qdx=+\infty$ if $u\notin W^{1,q}(\Omega,\R^m)$ and with
$K_{q,N}$ given by
\begin{equation}\label{fgyufghfghjgghgjkhkkGHGHKKggkhhjoozzbvqkkint}
K_{q,N}:=\frac{1}{\mathcal{H}^{N-1}(S^{N-1})}\int_{S^{N-1}}|z_1|^qd\mathcal{H}^{N-1}(z)\quad\quad\forall
q\geq 1\,.
\end{equation}
\item[(ii)]  In the case $q=1$, for any $u\in L^1(\Omega,\R^m)$
we have
\begin{equation}
\label{eq:1jjj1int} \lim_{\e\to 0^+} \int_{\Omega}\int_{\Omega}
\frac{|u(x)-u(y)|}{|x-y|}\,\rho_\e\big(|x-y|\big)\,dx\,dy=K_{1,N}\,\|Du\|(\Omega)\,,
\end{equation}
with the convention that  $\|Du\|(\Omega)=+\infty$ if $u\notin
BV(\Omega,\R^m)$.
\end{enumerate}In particular, taking
$$\rho_\e\big(|z|\big):=\frac{1}{2\sigma_\e\mathcal{H}^{N-1}(S^{N-1})|z|^{N-1}}\,\chi_{[\e-\sigma_\e,\e+\sigma_\e]}(|z|)\quad\quad\forall z\in\R^N$$
with sufficiently small $0<\sigma_\e\ll\e$, we deduce the following
variant of the ``BBM formula'':
\begin{multline}\label{GMT'3jGHKKkkhjjhgzzZZzzZZzzbvq88nkhhjggjgjkpkjljluytytl;klljkljojkojjo;k;kklklklkiljluikljjhkjhjhjhjh}
\frac{1}{\mathcal{H}^{N-1}(S^{N-1})}\,\lim\limits_{\e\to
0^+}\Bigg(\int\limits_{S^{N-1}}\int\limits_{\Omega}\chi_{\Omega}(x+\e\vec
n)\frac{\big|u( x+\e\vec
n)-u(x)\big|^q}{\e^q}\,dx\,d\mathcal{H}^{N-1}(\vec n)\Bigg)
\\=
K_{q,N}\int_{\Omega}\big|\nabla
u(x)\big|^qdx\quad\quad\quad\quad\text{for}\quad q>1\,,
\end{multline}
and
\begin{multline}\label{GMT'3jGHKKkkhjjhgzzZZzzZZzzbvq88nkhhjggjgjkpkjljluytytl;klljkljojkojjo;k;kklklklkiljluikljjhkjhjhjhjhjhgjghhf}
\frac{1}{\mathcal{H}^{N-1}(S^{N-1})}\,\lim\limits_{\e\to
0^+}\Bigg(\int\limits_{S^{N-1}}\int\limits_{\Omega}\chi_{\Omega}(x+\e\vec
n)\frac{\big|u( x+\e\vec
n)-u(x)\big|}{\e}\,dx\,d\mathcal{H}^{N-1}(\vec n)\Bigg) \\=
K_{1,N}\,\|Du\|(\Omega)\quad\quad\quad\quad\text{for}\quad q=1\,,
\end{multline}
where we denote
\begin{equation}\label{higuffykljk}
\chi_{\Omega}(z):=\begin{cases} 1\quad\quad z\in\Omega\,,\\
0\quad\quad z\in\R^N\setminus\Omega\,.
\end{cases}
\end{equation}
In the spirit of
\er{GMT'3jGHKKkkhjjhgzzZZzzZZzzbvq88nkhhjggjgjkpkjljluytytl;klljkljojkojjo;k;kklklklkiljluikljjhkjhjhjhjh}
and
\er{GMT'3jGHKKkkhjjhgzzZZzzZZzzbvq88nkhhjggjgjkpkjljluytytl;klljkljojkojjo;k;kklklklkiljluikljjhkjhjhjhjhjhgjghhf}
we prove the following Theorem. The special case  $r=q$ provides the
{\em key ingredient} in the proof of Theorem
\ref{hjkgjkfhjffgggvggoopikhhhkjh}:
\begin{theorem}\label{hjkgjkfhjff} Let
$\Omega\subset\R^N$ be a bounded domain, $q\geq 1$, $r\geq 0$ and
$u\in L^\infty(\Omega,\R^m)
$. Then,
\begin{multline}\label{GMT'3jGHKKkkhjjhgzzZZzzZZzzbvq88nkhhjggjgjkpkjljluytytuguutloklljjgjgjhjklljjjkjkhjkkhkhhkhhjlkkhkjljljkjlkkhkllhjhj1}
\liminf\limits_{\e\to
0^+}\Bigg(\int\limits_{S^{N-1}}\int\limits_{\Omega}\chi_{\Omega}(x+\e\vec
n)\frac{\big|u( x+\e\vec
n)-u(x)\big|^q}{\e^r}\,dx\,d\mathcal{H}^{N-1}(\vec n)\Bigg)\\
\leq
(N+r)\limsup\limits_{s\to+\infty}\Bigg\{s\,\mathcal{L}^{2N}\Bigg(\bigg\{(x,y)\in
\Omega\times\Omega\;:\;\frac{
\big|u( y)-u(x)\big|^q}{|y-x|^{r+N}}> s\bigg\}\Bigg)\Bigg\},
\end{multline}
and
\begin{multline}\label{GMT'3jGHKKkkhjjhgzzZZzzZZzzbvq88nkhhjggjgjkpkjljluytytuguutloklljjgjgjhjklljjjkjkhhiigjhjgjjljklkhhkghfhk;llkljklljjhjbjjbljjhuyuhyulkjjkhhggh1}
\limsup\limits_{\e\to
0^+}\Bigg(\int\limits_{S^{N-1}}\int\limits_{\Omega}\chi_{\Omega}(x+\e\vec
n)\frac{\big|u( x+\e\vec
n)-u(x)\big|^q}{\e^r}\,dx\,d\mathcal{H}^{N-1}(\vec n)\Bigg)\\
\geq N\,
\liminf\limits_{s\to+\infty}\Bigg\{s\,\mathcal{L}^{2N}\Bigg(\bigg\{(x,y)\in
\Omega\times\Omega\;:\;\frac{
\big|u( y)-u(x)\big|^q}{|y-x|^{r+N}}> s\bigg\}\Bigg)\Bigg\}.
\end{multline}
\end{theorem}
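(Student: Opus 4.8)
The plan is to connect the spherical-average integral $I_\e(u):=\int_{S^{N-1}}\int_\Omega \chi_\Omega(x+\e\vec n)\,|u(x+\e\vec n)-u(x)|^q\,\e^{-r}\,dx\,d\mathcal H^{N-1}(\vec n)$ to the ``super-level-set'' functional $\Lambda(s):=s\,\mathcal L^{2N}\big(\{(x,y)\in\Omega\times\Omega : |u(y)-u(x)|^q/|y-x|^{r+N}>s\}\big)$ by passing through a common intermediate object: the radial decomposition of the double integral on $\Omega\times\Omega$. Writing $y=x+\rho\vec n$ with $\rho=|y-x|>0$ and $\vec n\in S^{N-1}$, one has for any measurable $g\ge 0$ on $\Omega\times\Omega$ that $\int_\Omega\int_\Omega g(x,y)\,dx\,dy=\int_{S^{N-1}}\int_0^\infty\int_\Omega \chi_\Omega(x+\rho\vec n)\,g(x,x+\rho\vec n)\,\rho^{N-1}\,dx\,d\rho\,d\mathcal H^{N-1}(\vec n)$. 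Applying this to $g(x,y)=\chi_{\{|u(y)-u(x)|^q/|y-x|^{r+N}>s\}}$ and integrating in $s$ via the layer-cake formula, one gets an exact identity expressing $\int_0^\infty \mathcal L^{2N}(\dots)\,ds$-type quantities in terms of $\int_0^\infty \rho^{-r-1}\phi_\rho(u)\,d\rho$ where $\phi_\rho(u):=\int_{S^{N-1}}\int_\Omega\chi_\Omega(x+\rho\vec n)\,|u(x+\rho\vec n)-u(x)|^q\,dx\,d\mathcal H^{N-1}(\vec n)$ (so $I_\e(u)=\e^{-r}\phi_\e(u)$). Indeed, $\mathcal L^{2N}(\{g>s\})=\int_{S^{N-1}}\int_0^\infty\int_\Omega\chi_\Omega(x+\rho\vec n)\,\chi_{\{|u(x+\rho\vec n)-u(x)|^q>s\rho^{r+N}\}}\,\rho^{N-1}\,dx\,d\rho\,d\mathcal H^{N-1}(\vec n)$, and fixing $x,\vec n$ and integrating in $\rho$ first, the inner $\rho$-integral of $\rho^{N-1}\chi_{\{\rho^{r+N}<|\Delta u|^q/s\}}$ equals $\frac{1}{N}\big(|\Delta u|^q/s\big)^{N/(N+r)}$, where $\Delta u=\Delta u(x,\rho,\vec n):=u(x+\rho\vec n)-u(x)$.

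The core of the argument is then a Hardy-type / monotonicity comparison relating $\sup_s\Lambda(s)$, $\limsup_s\Lambda(s)$, $\liminf_s\Lambda(s)$ to $\limsup_{\e\to0^+}I_\e$ and $\liminf_{\e\to0^+}I_\e$. For the first inequality \eqref{GMT'3jGHKKkkhjjhgzzZZzzZZzzbvq88nkhhjggjgjkpkjljluytytuguutloklljjgjgjhjklljjjkjkhjkkhkhhkhhjlkkhkjljljkjlkkhkllhjhj1}, I would fix a threshold $s_0$, restrict attention to the super-level set $\{(x,y): |u(y)-u(x)|^q/|y-x|^{r+N}>s\}$, and for each pair $(x,\vec n)$ identify the set of radii $\rho$ for which $(x,x+\rho\vec n)$ lies in this set; because $|\Delta u|^q$ is bounded (here $u\in L^\infty$, so $|\Delta u|\le 2\|u\|_\infty$) the admissible $\rho$ are confined to $\rho\lesssim (\|u\|_\infty^q/s)^{1/(r+N)}\to 0$ as $s\to\infty$, which is exactly what forces the limit $\e\to0^+$ to appear on the left. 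Concretely, for small $\e$ one wants to bound $\e^{-r}\phi_\e(u)$ from above by $(N+r)\,\Lambda(s)$ for a suitable $s=s(\e)\to\infty$: this should come from writing $|\Delta u|^q\e^{-r}=(N+r)\int_0^{|\Delta u|^q\e^{-r-N}\cdot(\text{const})}\chi_{\{\dots\}}\,ds\cdot(\dots)$, i.e. peeling $|\Delta u|^q\e^{-r}$ as an integral over $s$ of the indicator that $(x,x+\e\vec n)$ belongs to the $s$-super-level set, with the factor $(N+r)$ emerging from the exponent $N/(N+r)$ in the $\rho$-integration computed above. Taking $\liminf_{\e\to0}$ on the left and recognizing that only large $s$ contribute, the right side becomes $(N+r)\limsup_{s\to\infty}\Lambda(s)$. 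The second inequality \eqref{GMT'3jGHKKkkhjjhgzzZZzzZZzzbvq88nkhhjggjgjkpkjljluytytuguutloklljjgjgjhjklljjjkjkhhiigjhjgjjljklkhhkghfhk;llkljklljjhjbjjbljjhuyuhyulkjjkhhggh1} is the reverse estimate, with the worse constant $N$ instead of $N+r$; here I expect to go the other direction: given $s$ large, the super-level set's $\rho$-slices are intervals $(0,\rho_*(x,\vec n))$ with $\rho_*$ small, and one estimates $\Lambda(s)=s\,\mathcal L^{2N}(\dots)\le \frac{1}{N}\int_{S^{N-1}}\int_\Omega (\,|\Delta u(x,\rho_*,\vec n)|^q/\rho_*^{r})\,dx\,d\mathcal H^{N-1}$ (using $\rho_*^{r+N}\le|\Delta u|^q/s$), and then one must dominate this by $\limsup_{\e\to0}I_\e(u)$; the slight loss of the constant reflects that $\rho_*$ varies with $(x,\vec n)$ while $\e$ does not, so one cannot match exponents exactly and instead uses a covering/averaging argument over a range of scales $\rho\in[\e/2,\e]$ (or a Chebyshev-type bound), picking up $N$ rather than $N+r$.

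The main obstacle I anticipate is precisely this scale-mismatch in the second inequality: the left-hand side of \eqref{GMT'3jGHKKkkhjjhgzzZZzzZZzzbvq88nkhhjggjgjkpkjljluytytuguutloklljjgjgjhjklljjjkjkhhiigjhjgjjljklkhhkghfhk;llkljklljjhjbjjbljjhuyuhyulkjjkhhggh1} is a limsup over a \emph{single} scale $\e$, whereas the super-level-set quantity aggregates all scales $\rho$ simultaneously, so to produce a lower bound on $\limsup_\e I_\e$ from below-information about $\liminf_s \Lambda(s)$ one needs to argue that if $I_\e$ were small along \emph{every} sequence $\e\to0$, then the contributions of all small-radius slices to $\mathcal L^{2N}(\{\dots>s\})$ would be correspondingly small for all large $s$, contradicting a lower bound on $\liminf_s\Lambda(s)$. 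Making this rigorous requires a careful Fubini bookkeeping: integrate the radius variable $\rho$ over a dyadic window, use the sublevel-set computation to turn $\int_0^R \rho^{N-1}\,d\rho$-weights into the exponent $N/(N+r)$, and absorb the discrepancy into the constant $N$. Throughout, the boundedness $u\in L^\infty$ is used only to guarantee that the relevant radii are small (so that $\e\to0$ and $s\to\infty$ are genuinely coupled) and that all integrals are finite; no smoothness of $u$ or $\partial\Omega$ is needed at this stage, since the passage to $W^{1,q}$/$BV$ happens later through the BBM formula.
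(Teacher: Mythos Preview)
Your proposal has a genuine gap at the core computation. You write that the inner $\rho$-integral of $\rho^{N-1}\chi_{\{\rho^{r+N}<|\Delta u|^q/s\}}$ equals $\frac{1}{N}(|\Delta u|^q/s)^{N/(N+r)}$, but $\Delta u=u(x+\rho\vec n)-u(x)$ \emph{depends on $\rho$}, so this integral cannot be evaluated as stated; likewise the claim that the $\rho$-slices of the super-level set are intervals $(0,\rho_*(x,\vec n))$ is false for a general $u\in L^\infty$. Your derivations of both constants $(N+r)$ and $N$ rest on this structure, and without it the mechanism collapses. More fundamentally, $I_\e$ samples only the single radius $\rho=\e$ while $\Lambda(s)$ integrates over all radii, so ``peeling'' $|\Delta u|^q\e^{-r}$ via layer-cake at fixed $\e$ cannot by itself produce $\Lambda(s)$ on the right: you get the radial \emph{slice} of the level set at $\rho=\e$, not its full $\mathcal L^{2N}$-measure.

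The paper's argument resolves this by a logarithmic multi-scale averaging. It introduces $\eta_\e(t)=\frac{1}{\alpha|\ln\e|\,t^N}\chi_{[\e^{1+\alpha},\e]}(t)$ with a free parameter $\alpha>0$ and writes $\int_\Omega\int_\Omega\eta_\e(|y-x|)\,|u(y)-u(x)|^q\,|y-x|^{-r}\,dy\,dx$ in two ways: as the log-average $\int_{\e^{1+\alpha}}^{\e}\frac{I_t\,dt}{\alpha|\ln\e|\,t}$, and (via Fubini and the level-set decomposition) in terms of $\Lambda(s)$ integrated against $\frac{ds}{\alpha|\ln\e|\,s}$ over a window $s\in[1/\e^d,1/\e^{d+\gamma}]$, the remaining tails being $O(1/(\alpha|\ln\e|))$ for suitable $d,\gamma$ (this is where $u\in L^\infty$ and $\Omega$ bounded enter). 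Comparing and sending $\e\to0^+$ gives $\liminf_\e I_\e\le(\gamma/\alpha)\limsup_s\Lambda$ under the constraint $\gamma>r+\alpha(N+r)$, and $\limsup_\e I_\e\ge(\gamma/\alpha)\liminf_s\Lambda$ under $\gamma<N\alpha-r$; the exponents $r+N$ and $N$ in these constraints come respectively from the substitution $\tau=st^{r+N}$ (controlling the tail in $s$) and from the volume bound $\int_0^{\e^{1+\alpha}}t^{N-1}\,dt$ (controlling the tail in $t$). Optimizing $\gamma$ and then letting $\alpha\to\infty$ yields exactly $(N+r)$ and $N$. The scale-mismatch you correctly identify is handled not by a dyadic or Chebyshev argument at a single scale but by this log-averaging over $[\e^{1+\alpha},\e]$ with $\alpha\to\infty$.
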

We refer the reader to  Lemma \ref{gjyfyfuyyfifgyify} in Appendix
for the significance  of the quantity
$$\lim\limits_{\e\to
0^+}\Bigg(\int\limits_{S^{N-1}}\int\limits_{\Omega}\chi_{\Omega}(x+\e\vec
n)\frac{\big|u( x+\e\vec
n)-u(x)\big|^q}{\e^r}\,dx\,d\mathcal{H}^{N-1}(\vec n)\Bigg)\,,$$
appearing in Theorem \ref{hjkgjkfhjff} for general $r$.
\begin{remark}\label{hjhjghgh}
Although we stated Theorem \ref{hjkgjkfhjff} for every $r\geq 0$, it
is useful only for $r\in(0,q]$, since in the case $r>q$ we have
\begin{multline}\label{GMT'3jGHKKkkhjjhgzzZZzzZZzzbvq88nkhhjggjgjkpkjljluytytuguutloklljjgjgjhjklljjjkjkhjkkhkhhkhhjlkkhkjljljkjlkkhkllhjhjkyjh}
\liminf\limits_{\e\to
0^+}\Bigg(\int\limits_{S^{N-1}}\int\limits_{\Omega}\chi_{\Omega}(x+\e\vec
n)\frac{\big|u( x+\e\vec
n)-u(x)\big|^q}{\e^r}\,dx\,d\mathcal{H}^{N-1}(\vec
n)\Bigg)<+\infty\quad\quad\text{impplies}\\  \liminf\limits_{\e\to
0^+}\Bigg(\int\limits_{S^{N-1}}\int\limits_{\Omega}\chi_{\Omega}(x+\e\vec
n)\frac{\big|u( x+\e\vec
n)-u(x)\big|^q}{\e^q}\,dx\,d\mathcal{H}^{N-1}(\vec n)\Bigg)=0\,,
\end{multline}
and thus, by the ``BBM formula", $u$ must be a constant. On the
other hand, in the case $r=0$ we obviously have
\begin{equation}\label{GMT'3jGHKKkkhjjhgzzZZzzZZzzbvq88nkhhjggjgjkpkjljluytytuguutloklljjgjgjhjklljjjkjkhjkkhkhhkhhjlkkhkjljljkjlkkhkllhjhjkyjhkuhh}
\lim\limits_{\e\to
0^+}\Bigg(\int\limits_{S^{N-1}}\int\limits_{\Omega}\chi_{\Omega}(x+\e\vec
n)\big|u( x+\e\vec n)-u(x)\big|^q\,dx\,d\mathcal{H}^{N-1}(\vec
n)\Bigg)=0\,.
\end{equation}
\end{remark}
Next we recall the definition of the Besov Spaces $B_{q,\infty}^s$
with $s\in(0,1)$:
\begin{definition}\label{gjghghghjgghGHKKhjhjhjhhzzbvqkkl,.,.}
Given $q\geq 1$ and $s\in(0,1)$, we say that $u\in
L^q(\mathbb{R}^N,\R^m)$ belongs to the Besov space
$B_{q,\infty}^s(\mathbb{R}^N,\R^m)$ if
\begin{equation}\label{gjhgjhfghffh}
\sup\limits_{\rho\in(0,\infty)}\Bigg(\sup_{|h|\leq\rho}\int_{\mathbb{R}^N}
\frac{|u(x+h)-u(x)\big|^q}{\rho^{sq}}dx\Bigg)<+\infty.
\end{equation}
Moreover, for every open $\Omega\subset\R^N$ we say that $u\in
L^q_{loc}(\Omega,\R^m)$ belongs to Besov space
$\big(B_{q,\infty}^s\big)_{loc}(\Omega,\R^m)$ if for every compact
$K\subset\subset\Omega$ there exists $u_K\in
B_{q,\infty}^s(\mathbb{R}^N,\R^d)$ such that $u_K(x)= u(x)$ for
every $x\in K$.
\end{definition}
The following technical proposition makes the connection between
Besov spaces and the quantities appearing in the statement of
Theorem\,\ref{hjkgjkfhjff}. This proposition is a direct consequence
of Corollary \ref{gjggjfhhffhfh} and Lemma \ref{hjgjg}, see in the
Appendix, whose proofs are based on similar arguments to those used
in  \cite{jmp}.
\begin{proposition}\label{gjggjfhhffhfhjgghf}
If $q\geq 1$, $r\in(0,q)$ and $u\in L^q(\R^N,\R^m)$, then,
$u\in\big(B_{q,\infty}^{r/q}\big)(\R^N,\R^m)$ if and only if we have
\begin{equation}\label{gghgjhfgggjfgfhughGHGHKKzzjkjkyuyuybvqjhgfhfhgjgjjlhkhkh}
\limsup\limits_{\e\to
0^+}\Bigg\{\int_{S^{N-1}}\int_{\R^N}\frac{\big|u( x+\e\vec
n)-u(x)\big|^q}{\e^r}\,dx\,d\mathcal{H}^{N-1}(\vec
n)\Bigg\}<+\infty\,.
\end{equation}
Moreover, if $\Omega\subset\R^N$ be an open set, $q\geq 1$,
$r\in(0,q)$ and $u\in L^q_{loc}(\Omega,\R^m)$, then,
$u\in\big(B_{q,\infty}^{r/q}\big)_{loc}(\Omega,\R^m)$ if and only if
for every open set $G\subset\subset\Omega$ we have
\begin{equation}\label{gghgjhfgggjfgfhughGHGHKKzzjkjkyuyuybvqjhgfhfhgjgjjlhk}
\limsup\limits_{\e\to 0^+}\Bigg\{\int_{S^{N-1}}\int_{G}\chi_G(
x+\e\vec n)\frac{\big|u( x+\e\vec
n)-u(x)\big|^q}{\e^r}\,dx\,d\mathcal{H}^{N-1}(\vec
n)\Bigg\}<+\infty\,.
\end{equation}
\end{proposition}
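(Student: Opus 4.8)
The plan is to reduce the statement to a ``one-direction'' equivalence between the supremum over $h$ of the translation-difference quotients (the Besov seminorm) and the $\limsup$ over $\e$ of the spherical average of the same quotients, and then separately handle the localization to $\Omega$. The first observation is that for any $\e>0$,
$$
\int_{S^{N-1}}\int_{\R^N}\frac{\big|u(x+\e\vec n)-u(x)\big|^q}{\e^r}\,dx\,d\mathcal{H}^{N-1}(\vec n)
\;\le\; \mathcal{H}^{N-1}(S^{N-1})\,\sup_{|h|=\e}\int_{\R^N}\frac{\big|u(x+h)-u(x)\big|^q}{\e^r}\,dx\,,
$$
so membership in $B_{q,\infty}^{r/q}$ (with $\rho=\e$) immediately forces \eqref{gghgjhfgggjfgfhughGHGHKKzzjkjkyuyuybvqjhgfhfhgjgjjlhkhkh}; this is the easy implication. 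For the converse I would invoke Corollary \ref{gjggjfhhffhfh} and Lemma \ref{hjgjg} from the Appendix, which — following the circle of ideas in \cite{jmp} — show that control of the spherical average along a sequence $\e_k\to0$ propagates, via an averaging/interpolation argument over dyadic scales, to control of $\sup_{|h|\le\rho}\rho^{-sq}\int|u(x+h)-u(x)|^q\,dx$ uniformly in $\rho$. The key mechanism there is that the difference $u(x+h)-u(x)$ over a generic vector $h$ of length $\rho$ can be resolved into a controlled number of steps each of length comparable to a scale where the spherical average is small, using the triangle inequality along a polygonal path together with rotational averaging to pass from ``almost every direction'' to ``every direction''.

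The second and more delicate point is the localized statement on an open $\Omega$. Here the ``if and only if'' is with respect to $\big(B_{q,\infty}^{r/q}\big)_{loc}(\Omega,\R^m)$, which by Definition \ref{gjghghghjgghGHKKhjhjhjhhzzbvqkkl,.,.} means: for every $K\subset\subset\Omega$ there is a global Besov function $u_K$ agreeing with $u$ on $K$. The forward direction (local Besov $\Rightarrow$ the localized $\limsup$ condition on every $G\subset\subset\Omega$) is handled by picking a cutoff: choose $G\subset\subset K\subset\subset\Omega$, replace $u$ by $u_K$ on a neighbourhood of $\overline G$, and then for $\e$ small enough the integrand $\chi_G(x+\e\vec n)\,|u(x+\e\vec n)-u(x)|^q$ on $G$ only sees values of $u$ inside $K$, hence equals $\chi_G(x+\e\vec n)\,|u_K(x+\e\vec n)-u_K(x)|^q$, which is dominated by the global Besov bound for $u_K$ via the first part. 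Conversely, given the localized $\limsup$ condition, to build $u_K$ I would fix an open $G$ with $K\subset\subset G\subset\subset\Omega$, take a smooth cutoff $\eta$ with $\eta\equiv1$ on $K$ and $\supp\eta\subset G$, and set $u_K:=\eta u$; one then checks that $\eta u\in B_{q,\infty}^{r/q}(\R^N,\R^m)$ by splitting $\eta(x+h)u(x+h)-\eta(x)u(x)=\eta(x+h)\big(u(x+h)-u(x)\big)+\big(\eta(x+h)-\eta(x)\big)u(x)$: the first term is controlled by the localized condition on $G$ (since $\eta$ is bounded and supported in $G$, and for $|h|$ small the relevant points stay in $G$) together with the converse implication from the first part of the proposition, while the second term is controlled by $\|\nabla\eta\|_\infty |h|\,\|u\|_{L^q(G)}$, which since $r/q<1$ is even better than the required Besov scaling for $|h|\le1$ and trivially bounded for $|h|\ge1$ by $\|\eta u\|_{L^q}$.

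The main obstacle I anticipate is \emph{not} the soft reductions above but the quantitative heart hidden in Corollary \ref{gjggjfhhffhfh} and Lemma \ref{hjgjg}: namely, upgrading a bound on the \emph{spherically averaged} difference quotient at scale $\e$ to a bound on the \emph{worst-direction} difference quotient at \emph{every} scale $\rho$. Passing from an average over $\vec n\in S^{N-1}$ to a pointwise-in-direction estimate is exactly where one must exploit $r/q<1$ (so that the geometric series of scale-errors converges) and where the polygonal-path decomposition of a translation, combined with a Fubini/rotation argument, does the real work; this is the step I would present most carefully, citing the analogous arguments in \cite{jmp}. Once that is in place, everything else is bookkeeping with cutoffs and the elementary inequality relating a spherical average to a supremum over directions.
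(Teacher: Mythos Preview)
Your plan is correct and follows the paper's approach: the proposition is indeed reduced to Corollary~\ref{gjggjfhhffhfh} and Lemma~\ref{hjgjg}, and your cutoff argument for the local case (splitting $\eta(x+h)u(x+h)-\eta(x)u(x)$ and using the Lipschitz bound on $\eta$) is exactly what the proof of Lemma~\ref{hjgjg} does. One minor correction to your narrative: the actual mechanism in Corollary~\ref{gjggjfhhffhfh} is not a polygonal-path or dyadic-scale decomposition but a single ball-averaging inequality (Lemma~\ref{hjhhjKKzzbvq}) followed by polar coordinates, and the restriction $r<q$ enters only in the cutoff step of Lemma~\ref{hjgjg} (so that $\e^{q-r}\to 0$), not to sum a geometric series of scale errors.
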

Combining Theorem \ref{hjkgjkfhjff} in the case $r\in(0,q)$ with
Proposition\,\ref{gjggjfhhffhfhjgghf} might be  a first step towards
an  answer to open question {\bf (v)}.

Our last result links the quantities in \er{ghfghfhdf} for $r=1,q>1$
and $u\in BV\cap L^\infty$ with the ``jump in the power $q$'' of
$u$:
\begin{theorem}\label{hjkgjkfhjffjhmgg7}
Let $\Omega$ be an open set with bounded Lipschitz boundary, $q>1$
and $u\in BV(\Omega,\R^m)\cap L^\infty(\Omega,\R^m)$. Then,
\begin{multline}\label{GMT'3jGHKKkkhjjhgzzZZzzZZzzbvq88nkhhjggjgjkpkjljluytytuguutloklljjgjgjhjklljjjkjkhjkkhkhhkhhjlkkhkjljljkjlkkhkllhjhj7}
N\,
\liminf\limits_{s\to+\infty}\Bigg\{s\,\mathcal{L}^{2N}\Bigg(\bigg\{(x,y)\in
\Omega\times\Omega\;:\;\frac{
\big|u( y)-u(x)\big|^q}{|y-x|^{N+1}}> s\bigg\}\Bigg)\Bigg\}\leq
\\
\bigg(\int_{S^{N-1}}|z_1|d\mathcal{H}^{N-1}(z)\bigg)\Bigg(\int_{J_u\cap
\Omega}\Big|u^+(x)-u^-(x)\Big|^qd\mathcal{H}^{N-1}(x)\Bigg)\\
\leq
(N+r)\limsup\limits_{s\to+\infty}\Bigg\{s\,\mathcal{L}^{2N}\Bigg(\bigg\{(x,y)\in
\Omega\times\Omega\;:\;\frac{
\big|u( y)-u(x)\big|^q}{|y-x|^{N+1}}> s\bigg\}\Bigg)\Bigg\}\,.
\end{multline}
Here $J_u$ denotes the jump set of $u\in BV$  and $u^+,u^-$ are the
approximate one-side limits of $u$.
\end{theorem}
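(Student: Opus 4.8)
\medskip
\noindent\emph{Strategy of the proof.}
We may assume $\Omega$ is bounded, so that \rth{hjkgjkfhjff} applies. The plan is to derive the statement from \rth{hjkgjkfhjff} with $r=1$: that result sandwiches
\[
\Theta:=\Big(\int_{S^{N-1}}|z_1|\,d\mathcal{H}^{N-1}(z)\Big)\int_{J_u\cap\Omega}\big|u^+(x)-u^-(x)\big|^q\,d\mathcal{H}^{N-1}(x)
\]
between $N\,\liminf_{s\to+\infty}\big\{s\,\mathcal{L}^{2N}\big(\{(x,y)\in\Omega\times\Omega:|u(y)-u(x)|^q/|y-x|^{N+1}>s\}\big)\big\}$ and $(N+1)\,\limsup_{s\to+\infty}\big\{s\,\mathcal{L}^{2N}(\cdots)\big\}$ -- which is exactly the asserted chain (note that here $r=1$, so $N+r=N+1$) -- \emph{provided} we prove that
\[
L(\e):=\int_{S^{N-1}}\int_{\Omega}\chi_\Omega(x+\e\vec n)\,\frac{\big|u(x+\e\vec n)-u(x)\big|^q}{\e}\,dx\,d\mathcal{H}^{N-1}(\vec n)
\]
converges, as $\e\to 0^+$, to $\Theta$. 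Indeed, granting $\lim_{\e\to0^+}L(\e)=\Theta$, the first inequality of \rth{hjkgjkfhjff} (case $r=1$) gives $\Theta=\liminf_{\e\to0^+}L(\e)\le(N+1)\limsup_{s}\{\cdots\}$, while the second gives $\Theta=\limsup_{\e\to0^+}L(\e)\ge N\liminf_{s}\{\cdots\}$. So the whole theorem reduces to computing $\lim_{\e\to0^+}L(\e)$, a ``$BV$ analogue of the BBM formula at the exponent $r=1$''.

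\medskip

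To compute this limit I would fix $\vec n\in S^{N-1}$ and slice $\Omega$ in the direction $\vec n$. For $\mathcal{H}^{N-1}$-a.e.\ $y$ in the hyperplane $\vec n^\perp$, the Lipschitz regularity of $\partial\Omega$ makes $\Omega_{\vec n,y}:=\{t\in\R:y+t\vec n\in\Omega\}$ a finite union of bounded open intervals, and the slice $u_{\vec n,y}(t):=u(y+t\vec n)$ lies in $BV(\Omega_{\vec n,y})\cap L^\infty$ with $\|u_{\vec n,y}\|_\infty\le\|u\|_\infty$. The analytic core is the following one-dimensional fact, valid because $q>1$: \emph{for $v\in BV(\R)$,}
\[
\lim_{\e\to0^+}\frac1\e\int_{\R}\big|v(t+\e)-v(t)\big|^q\,dt=\sum_{t\in J_v}\big|v(t^+)-v(t^-)\big|^q,
\]
its version on a bounded interval $(a,b)$ (integral over $(a,b-\e)$, sum over $J_v\cap(a,b)$) being obtained by extending $v$ by its one-sided limits $v(a^+),v(b^-)$ outside $(a,b)$, which creates no new jump and makes the two endpoint windows contribute $o(\e)$. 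One proves the fact by writing $v=v_d+v_j$, $v_d$ the continuous (diffuse) part and $v_j$ the pure-jump part, so that $D_dv$ is atomless and $\|D_jv\|=\sum_{t\in J_v}|v(t^+)-v(t^-)|\,\delta_t$. Since $D_dv$ is a finite atomless measure, $v_d$ is uniformly continuous with modulus $\omega_d(\e):=\sup_a\|D_dv\|([a,a+\e])\to0$; hence, by the elementary bound $\int_{\R}|w(t+\e)-w(t)|\,dt\le\e\,\|Dw\|(\R)$,
\[
\frac1\e\int_{\R}\big|v_d(t+\e)-v_d(t)\big|^q\,dt\ \le\ \omega_d(\e)^{q-1}\cdot\frac1\e\int_{\R}\big|v_d(t+\e)-v_d(t)\big|\,dt\ \le\ \omega_d(\e)^{q-1}\,\|D_dv\|(\R),
\]
which tends to $0$ as $\e\to0^+$ because $q>1$. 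For $v_j$: given $\eta>0$, isolate a finite $F\subset J_v$ with $\sum_{t\in J_v\setminus F}|v(t^+)-v(t^-)|<\eta$ and split $v_j=v_j^F+v_j^{\mathrm{rem}}$; for $\e$ small enough that the points of $F$ are $\e$-separated one has $\frac1\e\int_{\R}|v_j^F(t+\e)-v_j^F(t)|^q\,dt=\sum_{t\in F}|v(t^+)-v(t^-)|^q$ \emph{exactly}, whereas $\frac1\e\int_{\R}|v_j^{\mathrm{rem}}(t+\e)-v_j^{\mathrm{rem}}(t)|^q\,dt\le(2\eta)^{q-1}\eta$ uniformly in $\e$ (since $\|v_j^{\mathrm{rem}}\|_\infty\le\eta$ and $\|Dv_j^{\mathrm{rem}}\|(\R)<\eta$); the triangle inequality in $L^q$, then $\eta\downarrow0$ and $F\uparrow J_v$, yield the limit for $v_j$, and one further application of the $L^q$-triangle inequality combines $v_d$ with $v_j$.

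\medskip

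Given the one-dimensional fact, I would reassemble in three integrations. Summing over the (finitely many) components of $\Omega_{\vec n,y}$, for $\mathcal{H}^{N-1}$-a.e.\ $y$,
\[
\lim_{\e\to0^+}\frac1\e\int_{\Omega_{\vec n,y}\cap(\Omega_{\vec n,y}-\e)}\big|u_{\vec n,y}(t+\e)-u_{\vec n,y}(t)\big|^q\,dt=\sum_{t\in J_{u_{\vec n,y}}}\big|u_{\vec n,y}(t^+)-u_{\vec n,y}(t^-)\big|^q.
\]
Integrating in $y$ by dominated convergence -- the $\e$-integrand being $\le(2\|u\|_\infty)^{q-1}\|Du_{\vec n,y}\|(\Omega_{\vec n,y})$, an $\mathcal{H}^{N-1}$-integrable function of $y$ with integral $\le(2\|u\|_\infty)^{q-1}\|Du\|(\Omega)$ by the $BV$ slicing identity -- and then invoking the $BV$ slicing theorem (which identifies $J_{u_{\vec n,y}}$ with $\{t:y+t\vec n\in J_u,\ \nu_u(y+t\vec n)\cdot\vec n\ne0\}$ and the slice jump with $|u^+-u^-|$ at the corresponding point of $J_u$) together with the area formula for the orthogonal projection of the rectifiable set $J_u$ onto $\vec n^\perp$ (Jacobian $|\nu_u\cdot\vec n|$; recall $J_u\subset\Omega$), I obtain
\[
\lim_{\e\to0^+}\frac1\e\int_{\Omega}\chi_\Omega(x+\e\vec n)\big|u(x+\e\vec n)-u(x)\big|^q\,dx=\int_{J_u\cap\Omega}\big|\nu_u(x)\cdot\vec n\big|\,\big|u^+(x)-u^-(x)\big|^q\,d\mathcal{H}^{N-1}(x).
\]
Finally I integrate over $\vec n\in S^{N-1}$ (again by dominated convergence, the $\e$-integrand being bounded by the constant $(2\|u\|_\infty)^{q-1}\|Du\|(\Omega)$ on the compact set $S^{N-1}$) and apply Tonelli together with the rotational invariance $\int_{S^{N-1}}|\nu\cdot\vec n|\,d\mathcal{H}^{N-1}(\vec n)=\int_{S^{N-1}}|z_1|\,d\mathcal{H}^{N-1}(z)$, valid for every unit vector $\nu$; this yields $\lim_{\e\to0^+}L(\e)=\Theta$, which, combined with \rth{hjkgjkfhjff} as above, proves the theorem.

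\medskip

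The main obstacle is the one-dimensional fact, specifically the verification that the diffuse ($W^{1,1}$-plus-Cantor) part of $Dv$ contributes nothing at the exponent $r=1$: this is exactly where the hypotheses $q>1$ and $u\in L^\infty$ are used, and where the atomlessness of $D_dv$ must be turned into a uniform modulus of continuity. The rest is essentially bookkeeping -- taming the accumulation of small jumps (via the $L^q$-triangle inequality and the tail bound $\sum_{t\in J_v\setminus F}|v(t^+)-v(t^-)|<\eta$), and the slicing near $\partial\Omega$, i.e.\ checking that the interior jumps of the one-dimensional slices are counted, so that one sees $J_u\cap\Omega$ rather than $J_u\cap\overline\Omega$, and that jumps of $u$ whose normal is orthogonal to $\vec n$ are handled correctly (they give $0$ after integration in $\vec n$).
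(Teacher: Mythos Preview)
Your strategy is correct and matches the paper's: the theorem follows by combining \rth{hjkgjkfhjff} (case $r=1$) with the identity $\lim_{\e\to0^+}L(\e)=\Theta$, which the paper records as Proposition~\ref{hgugghghhffhfhKKzzbvqhkjjgg}. Where you differ is in how this limit is established. The paper invokes Proposition~\ref{hgugghghhffhfhKKzzbvq} (quoted from \cite{jmp}) as a black box for the directional limit $\lim_{\e\to0^+}\frac{1}{\e}\int_K|u(x+\e\vec k)-u(x)|^q\,dx=\int_{J_u\cap K}|u^+-u^-|^q|\vec k\cdot\vec\nu|\,d\mathcal{H}^{N-1}$ on compacts $K$ with $\|Du\|(\partial K)=0$, then integrates over $\vec k\in S^{N-1}$ by dominated convergence, extends $u$ to $\R^N$ with $\|Du\|(\partial\Omega)=0$, and exhausts $\Omega$ by such compacts to absorb the boundary layer. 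You instead derive the directional limit from scratch by one-dimensional $BV$ slicing: splitting each slice into its continuous and pure-jump parts, killing the continuous part via the atomlessness of $D_dv$ (this is exactly where $q>1$ and $u\in L^\infty$ enter), and computing the jump part by isolating finitely many large jumps. Your argument is more self-contained and makes the mechanism transparent; the paper's is shorter because the analytic core is outsourced to \cite{jmp}. Both approaches require the same care near $\partial\Omega$, which you handle by working on interior intervals of the slices, while the paper handles it by the extension with $\|Du\|(\partial\Omega)=0$ and the estimate $\|Du\|(U_n)\to0$.
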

To conclude, we list some interesting open problems for  future
research:
\begin{itemize}
\item[ {\bf (a)}] Does a \underline{complete} version of Theorem
\ref{hjkgjkfhjff} hold, where we replace $\liminf$ by $\limsup$ in
\er{GMT'3jGHKKkkhjjhgzzZZzzZZzzbvq88nkhhjggjgjkpkjljluytytuguutloklljjgjgjhjklljjjkjkhjkkhkhhkhhjlkkhkjljljkjlkkhkllhjhj}
and
\er{GMT'3jGHKKkkhjjhgzzZZzzZZzzbvq88nkhhjggjgjkpkjljluytytuguutloklljjgjgjhjklljjjkjkhhiigjhjgjjljklkhhkghfhk;llkljklljjhjbjjbljjhuyuhyulkjjkhhggh}.
In particular, following Proposition \ref{gjggjfhhffhfhjgghf}, this
 would provide a full answer to Question {\bf (v)} in the case $u\in
L^\infty$.

\item[ {\bf (b)}] In the spirit of Theorem
\ref{hjkgjkfhjffgggvggoopikhhhkjhhkjhjgjhhjggg}, does
\er{GMT'3jGHKKkkhjjhgzzZZzzZZzzbvq88nkhhjggjgjkpkjljluytytuguutloklljjgjgjhjklljjjkjkhjkkhkhhkhhjlkkhkjljljkjlkkhkllhjhj}
in Theorem \ref{hjkgjkfhjff} hold with the constant $N$, instead of
$(N+r)$?

\item[ {\bf (c)}]
Does Corollary \ref{hjkgjkfhjffgggvggoopikhhhkjhhkjhjgjhhjgggghhdf}
hold for $q=1$ and $u\in BV\setminus W^{1,1}$?
\end{itemize}

\subsubsection*{Acknowledgments.} I am indebted to Prof.\,Haim Brezis
for providing me the preprints \cite{hhh3,hhh2} and for suggesting
to me the research directions that served as a basis for the study
carried out in the present manuscript.

\section{Proof of Theorem \ref{hjkgjkfhjff}}
This section is devoted to the proof of Theorem \ref{hjkgjkfhjff}.
Its special case $r=q$  is essential for the proof of the main
results Theorem\,\ref{hjkgjkfhjffgggvggoopikhhhkjh} and
Corollary\,\ref{gbjgjgjgj}. Theorem \ref{hjkgjkfhjff} is a
particular case of the following more general statment.
\begin{proposition}\label{hjkgjkfhjff1} Let
$\Omega\subset\R^N$ be a bounded domain,
$r\geq 0$ and
$F\in L^\infty\Big(\Omega\times\Omega\,,\,[0,+\infty)\Big)
$. Then,
\begin{multline}\label{GMT'3jGHKKkkhjjhgzzZZzzZZzzbvq88nkhhjggjgjkpkjljluytytuguutloklljjgjgjhjklljjjkjkhjkkhkhhkhhjlkkhkjljljkjlkkhkllhjhj}
\liminf\limits_{\e\to
0^+}\Bigg(\int\limits_{S^{N-1}}\int\limits_{\Omega}\chi_{\Omega}(x+\e\vec
n)\frac{F\big(x+\e\vec n,x\big)
}{\e^r}\,dx\,d\mathcal{H}^{N-1}(\vec n)\Bigg)\\
\leq
(N+r)\limsup\limits_{s\to+\infty}\Bigg\{s\,\mathcal{L}^{2N}\Bigg(\bigg\{(x,y)\in
\Omega\times\Omega\;:\;\frac{
F(y,x)}{|y-x|^{r+N}}> s\bigg\}\Bigg)\Bigg\},
\end{multline}
and
\begin{multline}\label{GMT'3jGHKKkkhjjhgzzZZzzZZzzbvq88nkhhjggjgjkpkjljluytytuguutloklljjgjgjhjklljjjkjkhhiigjhjgjjljklkhhkghfhk;llkljklljjhjbjjbljjhuyuhyulkjjkhhggh}
\limsup\limits_{\e\to
0^+}\Bigg(\int\limits_{S^{N-1}}\int\limits_{\Omega}\chi_{\Omega}(x+\e\vec
n)\frac{F\big(x+\e\vec n,x\big)}{\e^r}\,dx\,d\mathcal{H}^{N-1}(\vec n)\Bigg)\\
\geq N\,
\liminf\limits_{s\to+\infty}\Bigg\{s\,\mathcal{L}^{2N}\Bigg(\bigg\{(x,y)\in
\Omega\times\Omega\;:\;\frac{
F(y,x)}{|y-x|^{r+N}}> s\bigg\}\Bigg)\Bigg\}.
\end{multline}
\end{proposition}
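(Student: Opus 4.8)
The plan is to convert both the spherical-average integral and the weak-type quantity into integrals of a single auxiliary "distribution function" in the variable $t$, and then compare them via elementary Fubini/layer-cake manipulations together with a change to polar coordinates. Concretely, for $t>0$ set
\[
\mu(t):=\mathcal{L}^{2N}\Bigg(\bigg\{(x,y)\in\Omega\times\Omega\;:\;\frac{F(y,x)}{|y-x|^{r+N}}>t\bigg\}\Bigg),
\]
so that $\limsup_{s\to\infty}\{s\,\mu(s)\}$ and $\liminf_{s\to\infty}\{s\,\mu(s)\}$ are exactly the two right-hand sides. For fixed $\e>0$, introduce the slice quantity
\[
A(\e):=\int_{S^{N-1}}\int_{\Omega}\chi_{\Omega}(x+\e\vec n)\,\frac{F(x+\e\vec n,x)}{\e^r}\,dx\,d\mathcal{H}^{N-1}(\vec n).
\]
First I would write $A(\e)$ via the layer-cake formula in the target variable: with $G_\e(x,\vec n):=\e^{-r}F(x+\e\vec n,x)\chi_\Omega(x+\e\vec n)$, one has $A(\e)=\int_0^\infty \mathcal{H}^{N-1}\times\mathcal{L}^N\big(\{(\vec n,x):G_\e(x,\vec n)>t\}\big)\,dt$, and since $F\in L^\infty$ the integrand vanishes for $t$ larger than a fixed constant times $\e^{-r}$.

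Next I would relate the measures appearing for $A(\e)$ and for $\mu$ by polar coordinates in the difference variable $z=y-x$. Writing $z=\e\vec n$ with $\e=|z|$, $\vec n\in S^{N-1}$, we have $dz=\e^{N-1}\,d\e\,d\mathcal{H}^{N-1}(\vec n)$, hence
\[
\mu(t)=\int_{\Omega}\int_0^\infty \e^{N-1}\,\mathcal{H}^{N-1}\big(\{\vec n: \e^{-r}F(x+\e\vec n,x)\chi_\Omega(x+\e\vec n)>t\}\big)\,d\e\,dx.
\]
The key point is that the inner measure is exactly the one building $A(\e)$, so $\mu(t)=\int_0^\infty \e^{N-1}\,a_t(\e)\,d\e$ where $a_t(\e):=\mathcal{H}^{N-1}\times\mathcal{L}^N\big(\{(\vec n,x):G_\e(x,\vec n)>t\}\big)$ and $A(\e)=\int_0^\infty a_t(\e)\,dt$. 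Thus the two quantities are two different "marginals" (weighted by $\e^{N-1}\,d\e$ versus by $d\e$ after integrating in $t$) of the single kernel $a_t(\e)$. The upper bound \eqref{GMT'3jGHKKkkhjjhgzzZZzzZZzzbvq88nkhhjggjgjkpkjljluytytuguutloklljjgjgjhjklljjjkjkhjkkhkhhkhhjlkkhkjljljkjlkkhkllhjhj} should then follow by bounding $\liminf_{\e\to0}A(\e)$ from above: choose a sequence $\e_j\downarrow 0$ realizing the $\liminf$, and estimate $A(\e_j)$ against $\sup_{t}\{t\,\mu(t)\}$ by splitting the $t$-integral at the threshold $t\sim \e_j^{-(r+N)}\cdot(\text{something})$ and using that for the relevant range of $t$ the dominant contribution to $\mu(t)$ comes from $\e\lesssim$ (a fixed multiple of) $t^{-1/(r+N)}$, which produces the factor $(N+r)$ from integrating $\e^{N-1}$ up to that scale (i.e. $\int_0^{c}\e^{N-1}d\e=c^N/N$ versus $\int_0^\infty a_t(\e)d\e$, the discrepancy in the exponent $N$ vs.\ $r+N$ being where $N+r$ enters). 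The lower bound \eqref{GMT'3jGHKKkkhjjhgzzZZzzZZzzbvq88nkhhjggjgjkpkjljluytytuguutloklljjgjgjhjklljjjkjkhhiigjhjgjjljklkhhkghfhk;llkljklljjhjbjjbljjhuyuhyulkjjkhhggh} is dual: bound $\limsup_{\e\to0}A(\e)$ from below by restricting the $\e$-integral defining $\mu(t)$ to a small interval $[0,\d]$, on which $\e^{N-1}\le \d^{N-1}$, so that $\mu(t)\le \d^{N-1}\int_0^\d a_t(\e)\,d\e$ plus a tail, and then optimize in $\d$ as $t\to\infty$; here the weight $\e^{N-1}$ is bounded crudely, giving the cleaner constant $N$.

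The main obstacle I expect is making the two-sided comparison between the continuous average over $\e$ (which is what $\mu$ encodes) and the pointwise-in-$\e$ limits $\liminf_{\e\to0}A(\e)$, $\limsup_{\e\to0}A(\e)$ rigorous: one does not get to evaluate $A$ at a single scale, so the argument must extract information about $A$ near $\e=0$ from an integral average of $a_t(\e)$ over all $\e$. The standard device is a dyadic decomposition $\e\in[2^{-k-1},2^{-k}]$ together with a Chebyshev/pigeonhole argument — if $A(\e)\ge \lambda$ for all small $\e$ (resp.\ $A(\e)\le\Lambda$ along a sequence), one sums the contributions of the dyadic annuli to $\mu(t)$ for $t\sim 2^{k(r+N)}$ and reads off $t\,\mu(t)$. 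Getting the sharp constants $N$ and $N+r$ (rather than some non-optimal multiple) will require being careful that in the upper-bound direction essentially all the mass of the relevant superlevel set sits at the smallest scales, while in the lower-bound direction one deliberately throws away the large-$\e$ part; I would handle the $L^\infty$ bound on $F$ precisely to control these truncation errors and to ensure all integrals in $t$ are over bounded ranges. Once Proposition~\ref{hjkgjkfhjff1} is established, Theorem~\ref{hjkgjkfhjff} follows immediately by taking $F(y,x):=|u(y)-u(x)|^q$, which indeed lies in $L^\infty(\Omega\times\Omega,[0,+\infty))$ when $u\in L^\infty$.
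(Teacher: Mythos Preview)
Your overall philosophy---layer-cake plus polar coordinates to see both $A(\e)$ and $\mu(s)$ as different weightings of one common kernel---is the right one, and is exactly how the paper begins. However there are two concrete problems.

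First, the polar-coordinate identity you write for $\mu$ is off by a factor of $\e^N$: in $\mu(t)$ the condition is $F/|y-x|^{r+N}>t$, which becomes $\e^{-(r+N)}F>t$, not $\e^{-r}F>t$. Consequently your formula $\mu(t)=\int_0^\infty \e^{N-1}a_t(\e)\,d\e$ is false; the correct relation is $\mu(t)=\int_0^\infty \e^{N-1}a_{t\e^N}(\e)\,d\e$, so the ``two marginals of one kernel'' picture is not as clean as you suggest. This $\e$-dependent shift in the level is precisely what generates the asymmetry between the constants $N$ and $N+r$, and has to be handled, not swept under the rug.

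Second, and more seriously, the passage from the integral-over-scales quantity $\mu$ to the pointwise-in-$\e$ quantity $\liminf A(\e)$ (or $\limsup A(\e)$) needs a genuine averaging device, and your ``dyadic decomposition plus pigeonhole'' is too coarse to yield the stated constants. The paper's key tool is a \emph{logarithmic} mollifier $\eta_\e(t)=\frac{1}{\alpha|\ln\e|\,t^N}\chi_{[\e^{1+\alpha},\e]}(t)$ depending on a free parameter $\alpha>0$: one computes $\int_\Omega\int_\Omega\eta_\e(|y-x|)\frac{F(y,x)}{|y-x|^r}\,dy\,dx$ two ways, once as a logarithmic average $\int_{\e^{1+\alpha}}^{\e}\frac{A(t)}{\alpha|\ln\e|\,t}\,dt$ (sandwiched between $\inf_{t<\e}A(t)$ and $\sup_{t<\e}A(t)$), and once---after the level-set manipulation and careful splitting of the $s$-range using $F\in L^\infty$---as essentially $\frac{\gamma}{\alpha}$ times a logarithmic average of $s\mu(s)$ over $s\in[\e^{-d},\e^{-(d+\gamma)}]$, up to $O(1/(\alpha|\ln\e|))$ errors. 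The constraints linking $d,\gamma,\alpha$ force $\gamma/\alpha\to N+r$ (for the upper bound) or $\gamma/\alpha\to N$ (for the lower bound) only after sending $\alpha\to+\infty$, i.e.\ after averaging over \emph{asymptotically infinitely many} scales. A dyadic decomposition with fixed ratio $2$ corresponds to a fixed finite $\alpha$ and would give a strictly worse constant; you need the parameter and the limit $\alpha\to\infty$ to reach $N$ and $N+r$.
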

\begin{proof}
Given $\alpha>0$, consider
\begin{equation}\label{GMT'3jGHKKkkhjjhgzzZZzzZZzzbvq88jkhkhhk88jhkhhjhhhjhiyhijkkjkkhkhhhuhhjhjjhiihhjlhkhkjohkhk1}
\eta_\e\big(t\big):=\frac{1}{\alpha\big|\ln{\e}\big||t|^N}\,\chi_{[\e^{1+\alpha},\e]}(t)\quad\quad\forall
t\in \mathbb{R}.
\end{equation}
In particular, we have
\begin{equation}\label{GMT'3jGHKKkkhjjhgzzZZzzZZzzbvq88jkhkhhk88jhkhhjhhhjhiyhijkkjkkhkhhhuhhjhjjhiihh1}
\eta_\e\big(t\big)t^N=\frac{1}{\alpha\big|\ln{\e}\big|}\chi_{[\e^{1+\alpha},\e]}(t).
\end{equation}
Then, for every $z\in\R^N$ every $h\geq 0$ and every $s\geq 0$
considering
\begin{multline}\label{jghjgghghhg}
K_{\e,u}\big(z,s,h\big):=\mathcal{L}^N\Bigg(\bigg\{x\in
\Omega\;:\;\frac{\chi_{\Omega}(x+z)F\big(
x+z,x\big)}{|z|^h}>s\bigg\}\Bigg)\\=K_{\e,u}\bigg(z,\frac{s}{|z|^l},h+l\bigg)\quad\quad\forall
l\geq 0\,,
\end{multline}
by Fubini Theorem we deduce:
\begin{multline}\label{GMT'3jGHKKkkhjjhgzzZZzzZZzzbvq88nkhhjggjgjkpkjljluytyt1}
\int\limits_{\Omega}\int\limits_{\Omega}\eta_\e\Big(|y-x|\Big)\frac{F(y,x)}{|y-x|^r}dydx=\int\limits_{\mathbb{R}^N}\int\limits_{\Omega}\eta_\e\Big(|z|\Big)
\chi_{\Omega}(x+z)\frac{F\big( x+z,x\big)}{|z|^r}dxdz\\=
\int\limits_{\mathbb{R}^+}\int\limits_{\mathbb{R}^N}\eta_\e\Big(|z|\Big)K_{\e,u}\big(z,s,r\big)
dzds=
\int\limits_{S^{N-1}}\int\limits_{\mathbb{R}^+}\int\limits_{\mathbb{R}^+}\eta_\e\big(t\big)t^{N-1}K_{\e,u}\big(t\vec
n,s,r\big) dtdsd\mathcal{H}^{N-1}(\vec n)
\\=
\int\limits_{S^{N-1}}\int\limits_{\mathbb{R}^+}\int\limits_{\mathbb{R}^+}\eta_\e\big(t\big)t^{N-1}K_{\e,u}\bigg(t\vec
n,\frac{s}{t^N},r+N\bigg) dtdsd\mathcal{H}^{N-1}(\vec n)
 \\=
\int\limits_{S^{N-1}}\int\limits_{\mathbb{R}^+}\int\limits_{\mathbb{R}^+}\eta_\e\big(t\big)t^N\,t^{N
-1}K_{\e,u}\big(t\vec n,s,r+N\big) dtdsd\mathcal{H}^{N-1}(\vec n).
\end{multline}
%
%
%
%
%
%
Thus, by
\er{GMT'3jGHKKkkhjjhgzzZZzzZZzzbvq88nkhhjggjgjkpkjljluytyt1} and
\er{GMT'3jGHKKkkhjjhgzzZZzzZZzzbvq88jkhkhhk88jhkhhjhhhjhiyhijkkjkkhkhhhuhhjhjjhiihh1}
we have
\begin{equation}\label{GMT'3jGHKKkkhjjhgzzZZzzZZzzbvq88nkhhjggjgjkpkjljluytytuguutloklljjgjgjhj1}
\int\limits_{\Omega}\int\limits_{\Omega}\eta_\e\Big(|y-x|\Big)\frac{F(y,x)}{|y-x|^r}dydx=I_{\e,u,\alpha}\Big([0,+\infty]\Big).
\end{equation}
where, for every $0\leq a\leq b\leq +\infty$ we denote:
\begin{multline}\label{GMT'3jGHKKkkhjjhgzzZZzzZZzzbvq88nkhhjggjgjkpkjljluytytuguutloklljjgjgjhjklljjjkjkh1jkhjhjj}
I_{\e,u,\alpha}\Big([a,b]\Big):=
\int\limits_{\mathbb{R}}\int\limits_{\mathbb{R}}\int\limits_{S^{N-1}}\frac{1}{\alpha\big|\ln{\e}\big|s}\,\chi_{[\e^{1+\alpha},\e]}(t)\,
\chi_{[a,b]}(s)t^{N -1}sK_{\e,u}\big(t\vec
n,s,r+N\big)d\mathcal{H}^{N-1}(\vec n) dtds
\\
=\int\limits_{\mathbb{R}}\int\limits_{\mathbb{R}}\int\limits_{S^{N-1}}\frac{1}{\alpha\big|\ln{\e}\big|s}\,\chi_{[\e^{1+\alpha},\e]}(t)\,
\chi_{[a,b]}(s)\,
\times\\ \times\,t^{N -1}s\mathcal{L}^N\Bigg(\bigg\{x\in
\Omega\;:\;\frac{\chi_{\Omega}(x+t\vec n)F\big(x+\e\vec
n,x\big)}{|t|^{r+N}}> s\bigg\}\Bigg)d\mathcal{H}^{N-1}(\vec n) dtds.
\end{multline}
So, for every $d>0$ and $\gamma>0$ we have
\begin{multline}\label{GMT'3jGHKKkkhjjhgzzZZzzZZzzbvq88nkhhjggjgjkpkjljluytytuguutloklljjgjgjhjklljjjkjkh1}
\int\limits_{\Omega}\int\limits_{\Omega}\eta_\e\Big(|y-x|\Big)\frac{F(y,x)}{|y-x|^r}dydx\\=I_{\e,u,\alpha}\Bigg(\bigg[\frac{1}{\e^d},\frac{1}{\e^{d+\gamma}}\bigg]\Bigg)+I_{\e,u,\alpha}\Bigg(\bigg[0,\frac{1}{\e^d}\bigg]\Bigg)+I_{\e,u,\alpha}\Bigg(\bigg[\frac{1}{\e^{d+\gamma}},+\infty\bigg]\Bigg).
\end{multline}
Furthermore, since $\Omega$ is bonded, by
\er{GMT'3jGHKKkkhjjhgzzZZzzZZzzbvq88nkhhjggjgjkpkjljluytytuguutloklljjgjgjhjklljjjkjkh1jkhjhjj}
we can obtain
\begin{equation}\label{GMT'3jGHKKkkhjjhgzzZZzzZZzzbvq88nkhhjggjgjkpkjljluytytuguutloklljjgjgjhjklljjjkjkhjhkhgjljjhkhkhjggghgnkhk}
I_{\e,u,\alpha}\Bigg(\bigg[0,\frac{1}{\e^d}\bigg]\Bigg)\leq
\frac{C}{\alpha\big|\ln{\e}\big|}\,\e^{N-d},
\end{equation}
and
\begin{multline}\label{GMT'3jGHKKkkhjjhgzzZZzzZZzzbvq88nkhhjggjgjkpkjljluytytuguutloklljjgjgjhjklljjjkjkhkhhgjnkhhkjohkjioojjojjiojjojl}
I_{\e,u,\alpha}\Bigg(\bigg[\frac{1}{\e^{d+\gamma}},+\infty\bigg]\Bigg)
=\int\limits_{\mathbb{R}}\int\limits_{\mathbb{R}}\int\limits_{S^{N-1}}\frac{1}{\alpha\big|\ln{\e}\big|}\,\frac{1}{
t^{r+1}}\,\chi_{[\e^{1+\alpha},\e]}(t)\,
\chi_{[1/\e^{d+\gamma},+\infty)}(s)\,
\times\\ \times\,t^{r+N}K_{\e,u}\big(t\vec n,s
t^{r+N},0\big)d\mathcal{H}^{N-1}(\vec n) dtds\\
=\int\limits_{\mathbb{R}}\int\limits_{\mathbb{R}}\int\limits_{S^{N-1}}\frac{1}{\alpha\big|\ln{\e}\big|}\,\frac{1}{
t^{r+1}}\,\chi_{[\e^{1+\alpha},\e]}(t)\,
\chi_{[t^{r+N}/\e^{d+\gamma},+\infty)}(\tau)K_{\e,u}\big(t\vec
n,\tau,0\big)d\mathcal{H}^{N-1}(\vec n) dtd\tau\\
\leq
\int\limits_{\frac{1}{\e^{(d+\gamma)-(1+\alpha)(r+N)}}}^{+\infty}\int\limits_{\mathbb{R}}\int\limits_{S^{N-1}}\frac{1}{\alpha\big|\ln{\e}\big|}\,\frac{1}{
t^{r+1}}\,\chi_{[\e^{1+\alpha},\e]}(t)\, K_{\e,u}\big(t\vec
n,\tau,0\big)d\mathcal{H}^{N-1}(\vec n) dtd\tau\leq
\\
\int\limits_{\frac{1}{\e^{(d+\gamma)-(1+\alpha)(r+N)}}}^{+\infty}\int\limits_{\mathbb{R}}\int\limits_{S^{N-1}}\frac{1}{\alpha\big|\ln{\e}\big|}\,\frac{1}{
t^{r+1}}\,\chi_{[\e^{1+\alpha},\e]}(t)K_{\e,u}\bigg(t\vec
n,\frac{1}{\e^{(d+\gamma)-(1+\alpha)(r+N)}},0\bigg)d\mathcal{H}^{N-1}(\vec
n) dtd\tau\,.
\end{multline}
Thus, since $F\in L^\infty$, in the case $d\leq N$ and
$\gamma>(1+\alpha)(r+N)-d
$ for sufficiently small
$\e>0$ we have
\begin{multline}\label{ghhvufioufufljhkh}
K_{\e,u}\bigg(t\vec
n,\frac{1}{\e^{(d+\gamma)-(1+\alpha)(r+N)}},0\bigg):=\\ \bigg\{x\in
\Omega\;:\;\chi_{\Omega}(x+t\vec n)F\big(x+\e\vec n,x\big)>
1/\e^{((d+\gamma)-(1+\alpha)(r+N))}\bigg\}=\emptyset,
\end{multline}
and thus by
\er{GMT'3jGHKKkkhjjhgzzZZzzZZzzbvq88nkhhjggjgjkpkjljluytytuguutloklljjgjgjhjklljjjkjkhjhkhgjljjhkhkhjggghgnkhk}
and
\er{GMT'3jGHKKkkhjjhgzzZZzzZZzzbvq88nkhhjggjgjkpkjljluytytuguutloklljjgjgjhjklljjjkjkhkhhgjnkhhkjohkjioojjojjiojjojl}
we rewrite
\er{GMT'3jGHKKkkhjjhgzzZZzzZZzzbvq88nkhhjggjgjkpkjljluytytuguutloklljjgjgjhjklljjjkjkh1},
in the case $d\leq N$, $\gamma>(1+\alpha)(r+N)-d
$ and
sufficiently small $\e>0$, as:
\begin{multline}\label{GMT'3jGHKKkkhjjhgzzZZzzZZzzbvq88nkhhjggjgjkpkjljluytytuguutloklljjgjgjhjklljjjkjkhjkkhkh}
\int\limits_{\Omega}\int\limits_{\Omega}\eta_\e\Big(|y-x|\Big)\frac{
F(y,x)}{|y-x|^r}dydx=O\bigg(\frac{1}{\alpha\big|\ln{\e}\big|}\bigg)+I_{\e,u,\alpha}\Bigg(\bigg[\frac{1}{\e^d},\frac{1}{\e^{d+\gamma}}\bigg]\Bigg)\\
\leq O\bigg(\frac{1}{\alpha\big|\ln{\e}\big|}\bigg)+
\int\limits_{\mathbb{R}}\int\limits_{\mathbb{R}}\int\limits_{S^{N-1}}\frac{1}{\alpha\big|\ln{\e}\big|s}\,\chi_{(0,+\infty)}(t)\,
\chi_{[1/\e^d,1/\e^{d+\gamma}]}(s)\,
\times\\ \times\,t^{N -1}s\mathcal{L}^N\Bigg(\bigg\{x\in
\Omega\;:\;\frac{\chi_{\Omega}(x+t\vec n)F\big(x+\e\vec
n,x\big)}{|t|^{r+N}}> s\bigg\}\Bigg)d\mathcal{H}^{N-1}(\vec n) dtds=
\\
O\bigg(\frac{1}{\alpha\big|\ln{\e}\big|}\bigg)+
\int\limits_{\mathbb{R}}\frac{1}{\alpha\big|\ln{\e}\big|s}\,
\chi_{[1/\e^d,1/\e^{d+\gamma}]}(s)\,
s\mathcal{L}^{2N}\Bigg(\bigg\{(x,y)\in \Omega\times\Omega\;:\;\frac{
F(y,x)}{|y-x|^{r+N}}> s\bigg\}\Bigg)ds.
\end{multline}
On the other hand, since, we have
\er{GMT'3jGHKKkkhjjhgzzZZzzZZzzbvq88jkhkhhk88jhkhhjhhhjhiyhijkkjkkhkhhhuhhjhjjhiihh1},
then we deduce:
\begin{multline}\label{GMT'3jGHKKkkhjjhgzzZZzzZZzzbvq88nkhhjggjgjkpkjljluytytl;klljkljojkojjo1}
\int\limits_{\Omega}\int\limits_{\Omega}\eta_\e\Big(|y-x|\Big)\frac{F(y,x)}{|y-x|^r}dydx=\int\limits_{\mathbb{R}^N}\int\limits_{\Omega}\eta_\e\Big(|z|\Big)\chi_{\Omega}(x+z)\frac{F\big(
x+z,x\big)}{|z|^r}dxdz\\=
\int\limits_{S^{N-1}}\int\limits_{\mathbb{R}^+}\int\limits_{\Omega}t^{N-1}\eta_\e\big(t\big)\chi_{\Omega}(x+t\vec
n)\frac{F\big(x+\e\vec n,x\big)}{t^r}dxdtd\mathcal{H}^{N-1}(\vec n)
\\=
\int\limits_{\e^{1+\alpha}}^{\e}\frac{1}{\alpha\big|\ln{\e}\big|t}\Bigg(\int\limits_{S^{N-1}}\int\limits_{\Omega}\chi_{\Omega}(x+t\vec
n)\frac{F\big(x+\e\vec n,x\big)}{t^r}dxd\mathcal{H}^{N-1}(\vec
n)\Bigg)dt.
\end{multline}
Thus, by
\er{GMT'3jGHKKkkhjjhgzzZZzzZZzzbvq88nkhhjggjgjkpkjljluytytl;klljkljojkojjo1}
and
\er{GMT'3jGHKKkkhjjhgzzZZzzZZzzbvq88nkhhjggjgjkpkjljluytytuguutloklljjgjgjhjklljjjkjkhjkkhkh}
for $d=N$, $\gamma>r+\alpha (N+r)$ and
sufficiently small $\e>0$ we have
\begin{multline}\label{GMT'3jGHKKkkhjjhgzzZZzzZZzzbvq88nkhhjggjgjkpkjljluytytuguutloklljjgjgjhjklljjjkjkhjkkhkhhkhhjlkkhk}
\int\limits_{\e^{1+\alpha}}^{\e}\frac{1}{\alpha\big|\ln{\e}\big|t}\Bigg(\int\limits_{S^{N-1}}\int\limits_{\Omega}\chi_{\Omega}(x+t\vec
n)\frac{F\big(x+\e\vec n,x\big)}{t^r}dxd\mathcal{H}^{N-1}(\vec
n)\Bigg)dt\\=\int\limits_{\Omega}\int\limits_{\Omega}\eta_\e\Big(|y-x|\Big)\frac{F(y,x)}{|y-x|^r}dydx
\leq
O\bigg(\frac{1}{\alpha\big|\ln{\e}\big|}\bigg)+\\
\int\limits_{\mathbb{R}}\,\frac{1}{\alpha\big|\ln{\e}\big|s}\,
\chi_{[1/\e^N,1/\e^{N+\gamma}]}(s)\,
s\mathcal{L}^{2N}\Bigg(\bigg\{(x,y)\in \Omega\times\Omega\;:\;\frac{
F(y,x)}{|y-x|^{r+N}}> s\bigg\}\Bigg)ds.
\end{multline}
On the other hand,
we have
\begin{multline}\label{GMT'3jGHKKkkhjjhgzzZZzzZZzzbvq88nkhhjggjgjkpkjljluytytl;klljkljojkojjo;k;kklklklkiljluikljhkjhjhmhhgn}
\inf\limits_{t\in(0,\e)}\Bigg(\int\limits_{S^{N-1}}\int\limits_{\Omega}\chi_{\Omega}(x+t\vec
n)\frac{F\big(x+\e\vec n,x\big)}{t^r}dxd\mathcal{H}^{N-1}(\vec
n)\Bigg)
\leq\int\limits_{\Omega}\int\limits_{\Omega}\eta_\e\Big(|y-x|\Big)\frac{F(y,x)}{|y-x|^r}dydx\\=
\int\limits_{\e^{1+\alpha}}^{\e}\frac{1}{\alpha\big|\ln{\e}\big|t}\Bigg(\int\limits_{S^{N-1}}\int\limits_{\Omega}\chi_{\Omega}(x+t\vec
n)\frac{F\big(x+\e\vec n,x\big)}{t^r}dxd\mathcal{H}^{N-1}(\vec
n)\Bigg)dt
\\
\leq\sup\limits_{t\in(0,\e)}\Bigg(\int\limits_{S^{N-1}}\int\limits_{\Omega}\chi_{\Omega}(x+t\vec
n)\frac{F\big(x+\e\vec n,x\big)}{t^r}dxd\mathcal{H}^{N-1}(\vec
n)\Bigg),
\end{multline}
and
\begin{multline}\label{GMT'3jGHKKkkhjjhgzzZZzzZZzzbvq88nkhhjggjgjkpkjljluytytuguutloklljjgjgjhjklljjjkjkhjkkhkhhkhhjlkkhkjljjljjhhk}
\inf\limits_{s>(1/\e^N)}\Bigg\{s\,\mathcal{L}^{2N}\Bigg(\bigg\{(x,y)\in
\Omega\times\Omega\;:\;\frac{
F(y,x)}{|y-x|^{r+N}}> s\bigg\}\Bigg)\Bigg\}\leq\\
\int\limits_{\mathbb{R}}\,\frac{1}{\gamma\big|\ln{\e}\big|s}\,
\chi_{[1/\e^N,1/\e^{N+\gamma}]}(s)\,
s\mathcal{L}^{2N}\Bigg(\bigg\{(x,y)\in \Omega\times\Omega\;:\;\frac{
F(y,x)}{|y-x|^{r+N}}> s\bigg\}\Bigg)ds\\
\leq
\sup\limits_{s>(1/\e^N)}\Bigg\{s\,\mathcal{L}^{2N}\Bigg(\bigg\{(x,y)\in
\Omega\times\Omega\;:\;\frac{
F(y,x)}{|y-x|^{r+N}}> s\bigg\}\Bigg)\Bigg\}.
\end{multline}
Therefore, inserting these into
\er{GMT'3jGHKKkkhjjhgzzZZzzZZzzbvq88nkhhjggjgjkpkjljluytytuguutloklljjgjgjhjklljjjkjkhjkkhkhhkhhjlkkhk}
gives that  for $d=N$, $\gamma>r+\alpha (N+r)$ and sufficiently
small $\e>0$ we have
\begin{multline}\label{GMT'3jGHKKkkhjjhgzzZZzzZZzzbvq88nkhhjggjgjkpkjljluytytuguutloklljjgjgjhjklljjjkjkhjkkhkhhkhhjlkkhkjljljkjlkkh}
\inf\limits_{t\in(0,\e)}\Bigg(\int\limits_{S^{N-1}}\int\limits_{\Omega}\chi_{\Omega}(x+t\vec
n)\frac{F\big(x+\e\vec n,x\big)}{t^r}dxd\mathcal{H}^{N-1}(\vec
n)\Bigg) \leq
O\bigg(\frac{1}{\alpha\big|\ln{\e}\big|}\bigg)+\\
\frac{\gamma}{\alpha}\,\sup\limits_{s>(1/\e^N)}\Bigg\{s\,\mathcal{L}^{2N}\Bigg(\bigg\{(x,y)\in
\Omega\times\Omega\;:\;\frac{
F(y,x)}{|y-x|^{r+N}}> s\bigg\}\Bigg)\Bigg\}.
\end{multline}
Thus, letting $\e\to 0^+$ in
\er{GMT'3jGHKKkkhjjhgzzZZzzZZzzbvq88nkhhjggjgjkpkjljluytytuguutloklljjgjgjhjklljjjkjkhjkkhkhhkhhjlkkhkjljljkjlkkh}
gives that for $d=N$ and $\gamma>r+\alpha (N+r)$ we have
\begin{multline}\label{GMT'3jGHKKkkhjjhgzzZZzzZZzzbvq88nkhhjggjgjkpkjljluytytuguutloklljjgjgjhjklljjjkjkhhiigjhjgjjljklkhhkghfhk;llkljklljjhjbjjbljjhuyuhyulkjjkrrjkhkhklkljk}
\liminf\limits_{\e\to
0^+}\Bigg(\int\limits_{S^{N-1}}\int\limits_{\Omega}\chi_{\Omega}(x+\e\vec
n)\frac{F\big(x+\e\vec n,x\big)}{\e^r}dxd\mathcal{H}^{N-1}(\vec n)\Bigg)\\
\leq \frac{\gamma}{\alpha}\,
\limsup\limits_{s\to+\infty}\Bigg\{s\,\mathcal{L}^{2N}\Bigg(\bigg\{(x,y)\in
\Omega\times\Omega\;:\;\frac{
F(y,x)}{|y-x|^{r+N}}> s\bigg\}\Bigg)\Bigg\}.
\end{multline}
Therefore, letting $\gamma\to\big(r+\alpha (N+r)\big)^+$ in
\er{GMT'3jGHKKkkhjjhgzzZZzzZZzzbvq88nkhhjggjgjkpkjljluytytuguutloklljjgjgjhjklljjjkjkhhiigjhjgjjljklkhhkghfhk;llkljklljjhjbjjbljjhuyuhyulkjjkrrjkhkhklkljk}
we deduce for $d=N$ and $\gamma=r+\alpha (N+r)$:
\begin{multline}\label{GMT'3jGHKKkkhjjhgzzZZzzZZzzbvq88nkhhjggjgjkpkjljluytytuguutloklljjgjgjhjklljjjkjkhhiigjhjgjjljklkhhkghfhk;llkljklljjhjbjjbljjhuyuhyulkjjkrr}
\liminf\limits_{\e\to
0^+}\Bigg(\int\limits_{S^{N-1}}\int\limits_{\Omega}\chi_{\Omega}(x+\e\vec
n)\frac{F\big(x+\e\vec n,x\big)}{\e^r}dxd\mathcal{H}^{N-1}(\vec n)\Bigg)\\
\leq \Bigg(\frac{r}{\alpha}+(N+r)\Bigg)\,
\limsup\limits_{s\to+\infty}\Bigg\{s\,\mathcal{L}^{2N}\Bigg(\bigg\{(x,y)\in
\Omega\times\Omega\;:\;\frac{
F(y,x)}{|y-x|^{r+N}}> s\bigg\}\Bigg)\Bigg\}.
\end{multline}
Letting $\alpha\to+\infty$ in
\er{GMT'3jGHKKkkhjjhgzzZZzzZZzzbvq88nkhhjggjgjkpkjljluytytuguutloklljjgjgjhjklljjjkjkhhiigjhjgjjljklkhhkghfhk;llkljklljjhjbjjbljjhuyuhyulkjjkrr}
we finally deduce
\er{GMT'3jGHKKkkhjjhgzzZZzzZZzzbvq88nkhhjggjgjkpkjljluytytuguutloklljjgjgjhjklljjjkjkhjkkhkhhkhhjlkkhkjljljkjlkkhkllhjhj}.
Next, by
\er{GMT'3jGHKKkkhjjhgzzZZzzZZzzbvq88nkhhjggjgjkpkjljluytytuguutloklljjgjgjhjklljjjkjkh1}
for every $d>0$ and $\gamma>0$ we have
\begin{equation}\label{GMT'3jGHKKkkhjjhgzzZZzzZZzzbvq88nkhhjggjgjkpkjljluytytuguutloklljjgjgjhjklljjjkjkhhiigj}
\int\limits_{\Omega}\int\limits_{\Omega}\eta_\e\Big(|y-x|\Big)\frac{F(y,x)}{|y-x|^r}dydx\geq
I_{\e,u,\alpha}\Bigg(\bigg[\frac{1}{\e^d},\frac{1}{\e^{d+\gamma}}\bigg]\Bigg).
\end{equation}
Furthermore, by \er{jghjgghghhg} we have
\begin{multline}\label{GMT'3jGHKKkkhjjhgzzZZzzZZzzbvq88nkhhjggjgjkpkjljluytytuguutloklljjgjgjhjklljjjkjkhkhhkjggjkjhhkhohkhjklj}
\int\limits_{\mathbb{R}}\int\limits_{\mathbb{R}}\int\limits_{S^{N-1}}\frac{1}{\alpha\big|\ln{\e}\big|s}\,\chi_{[\e,+\infty)}(t)\,
\chi_{[1/\e^d,1/\e^{d+\gamma}]}(s)t^{N -1}sK_{\e,u}\big(t\vec n,s
,r+N\big)d\mathcal{H}^{N-1}(\vec n)
dtds=\\
\int\limits_{\mathbb{R}}\int\limits_{\mathbb{R}}\int\limits_{S^{N-1}}\frac{1}{\alpha\big|\ln{\e}\big|}\,\chi_{[\e,+\infty)}(t)\,
\chi_{[1/\e^d,1/\e^{d+\gamma}]}(s)\frac{1}{t^{r+1}}\,t^{N
+r}K_{\e,u}\big(t\vec n,s t^{r+N},0\big)d\mathcal{H}^{N-1}(\vec n)
dtds
\\=
\int\limits_{\mathbb{R}}\int\limits_{\mathbb{R}}\int\limits_{S^{N-1}}\frac{1}{\alpha\big|\ln{\e}\big|}\,\chi_{[\e,+\infty)}(t)\,
\chi_{[t^{r+N}/\e^d,t^{r+N}/\e^{d+\gamma}]}(\tau)\frac{1}{t^{r+1}}K_{\e,u}\big(t\vec
n,\tau,0\big)d\mathcal{H}^{N-1}(\vec n) dtd\tau\\ \leq
\int\limits_{\mathbb{R}}\int\limits_{\mathbb{R}}\int\limits_{S^{N-1}}\frac{1}{\alpha\big|\ln{\e}\big|}\,\chi_{[\e,+\infty)}(t)\,
\chi_{[1/\e^{d-(r+N)},t^{r+N}/\e^{d+\gamma}]}(\tau)\frac{1}{t^{r+1}}K_{\e,u}\big(t\vec
n,\tau,0\big)d\mathcal{H}^{N-1}(\vec n) dtd\tau\leq\\
\int\limits_{\mathbb{R}}\int\limits_{\mathbb{R}}\int\limits_{S^{N-1}}\frac{1}{\alpha\big|\ln{\e}\big|}\,\chi_{[\e,+\infty)}(t)\,
\chi_{[1/\e^{d-(r+N)},t^{r+N}/\e^{d+\gamma}]}(\tau)\frac{1}{t^{r+1}}K_{\e,u}\bigg(t\vec
n,\frac{1}{\e^{d-(r+N)}},0\bigg)d\mathcal{H}^{N-1}(\vec n) dtd\tau.
\end{multline}
On the other hand, since $F\in L^\infty$, in the case $d>(N+r)$ for
sufficiently small $\e>0$ we have
\begin{equation}\label{ghhvufioufufljhkhbhggg}
K_{\e,u}\bigg(t\vec n,\frac{1}{\e^{d-(r+N)}},0\bigg)=\bigg\{x\in
\Omega\;:\;\chi_{\Omega}(x+t\vec n)F\big(x+\e\vec n,x\big)>
1/\e^{d-(r+N)}\bigg\}=\emptyset,
\end{equation}
and thus by
\er{GMT'3jGHKKkkhjjhgzzZZzzZZzzbvq88nkhhjggjgjkpkjljluytytuguutloklljjgjgjhjklljjjkjkhkhhkjggjkjhhkhohkhjklj},
in the case $d>(N+r)$ for sufficiently small $\e>0$ we have
\begin{multline}\label{GMT'3jGHKKkkhjjhgzzZZzzZZzzbvq88nkhhjggjgjkpkjljluytytuguutloklljjgjgjhjklljjjkjkhkhhkjggjkjhhkhohkhjkljjkhhkhjljkjj}
\int\limits_{\mathbb{R}}\int\limits_{\mathbb{R}}\int\limits_{S^{N-1}}\frac{1}{\alpha\big|\ln{\e}\big|s}\,\chi_{[\e,+\infty)}(t)\,
\chi_{[1/\e^d,1/\e^{d+\gamma}]}(s)\,
\times\\ \times\,t^{N -1}s\mathcal{L}^N\Bigg(\bigg\{x\in
\Omega\;:\;\frac{\chi_{\Omega}(x+t\vec n)F\big(x+\e\vec
n,x\big)}{|t|^{r+N}}> s\bigg\}\Bigg)d\mathcal{H}^{N-1}(\vec n)
dtds=0.
\end{multline}
In particular, by
\er{GMT'3jGHKKkkhjjhgzzZZzzZZzzbvq88nkhhjggjgjkpkjljluytytuguutloklljjgjgjhjklljjjkjkhhiigj}
and
\er{GMT'3jGHKKkkhjjhgzzZZzzZZzzbvq88nkhhjggjgjkpkjljluytytuguutloklljjgjgjhjklljjjkjkhkhhkjggjkjhhkhohkhjkljjkhhkhjljkjj}
in the case $d>(N+r)$ for sufficiently small $\e>0$ we have
\begin{multline}\label{GMT'3jGHKKkkhjjhgzzZZzzZZzzbvq88nkhhjggjgjkpkjljluytytuguutloklljjgjgjhjklljjjkjkhhiigjhjgjjljklkhhk}
\int\limits_{\Omega}\int\limits_{\Omega}\eta_\e\Big(|y-x|\Big)\frac{F(y,x)}{|y-x|^r}dydx\geq\\
\int\limits_{\mathbb{R}}\int\limits_{\mathbb{R}}\int\limits_{S^{N-1}}\frac{1}{\alpha\big|\ln{\e}\big|s}\,\chi_{[\e^{1+\alpha},+\infty)}(t)\,
\chi_{[1/\e^d,1/\e^{d+\gamma}]}(s)\,
\times\\ \times\,t^{N -1}s\mathcal{L}^N\Bigg(\bigg\{x\in
\Omega\;:\;\frac{\chi_{\Omega}(x+t\vec n)F\big(x+\e\vec
n,x\big)}{|t|^{r+N}}> s\bigg\}\Bigg)d\mathcal{H}^{N-1}(\vec n) dtds.
\end{multline}
On the other hand, since $\Omega$ is bonded, we have
\begin{multline}\label{GMT'3jGHKKkkhjjhgzzZZzzZZzzbvq88nkhhjggjgjkpkjljluytytuguutloklljjgjgjhjklljjjkjkhhiigjhjgjjljklkhhklhkhkgjgg}
\int\limits_{\mathbb{R}}\int\limits_{\mathbb{R}}\int\limits_{S^{N-1}}\frac{1}{\alpha\big|\ln{\e}\big|s}\,\chi_{[0,\e^{1+\alpha}]}(t)\,
\chi_{[1/\e^d,1/\e^{d+\gamma}]}(s)\,
\times\\ \times\,t^{N -1}s\mathcal{L}^N\Bigg(\bigg\{x\in
\Omega\;:\;\frac{\chi_{\Omega}(x+t\vec n)F\big(x+\e\vec
n,x\big)}{|t|^{r+N}}> s\bigg\}\Bigg)d\mathcal{H}^{N-1}(\vec n)
dtds\\
\leq
\int\limits_{\mathbb{R}}\int\limits_{\mathbb{R}}\frac{1}{\alpha\big|\ln{\e}\big|}\,t^{N
-1}\,\chi_{[0,\e^{1+\alpha}]}(t)\,
\chi_{[1/\e^d,1/\e^{d+\gamma}]}(s)\,
\,C dtds\leq
C\frac{\e^{N(1+\alpha)-(d+\gamma)}}{N\alpha\big|\ln{\e}\big|}.
\end{multline}
Moreover, by
\er{GMT'3jGHKKkkhjjhgzzZZzzZZzzbvq88jkhkhhk88jhkhhjhhhjhiyhijkkjkkhkhhhuhhjhjjhiihh1}
we deduce:
\begin{multline}\label{GMT'3jGHKKkkhjjhgzzZZzzZZzzbvq88nkhhjggjgjkpkjljluytytl;klljkljojkojjo}
\int\limits_{\Omega}\int\limits_{\Omega}\eta_\e\Big(|y-x|\Big)\frac{F(y,x)}{|y-x|^r}dydx=\int\limits_{\mathbb{R}^N}\int\limits_{\Omega}\eta_\e\Big(|z|\Big)\chi_{\Omega}(x+z)
\frac{F\big( x+z,x\big)}{|z|^r}dxdz\\=
\int\limits_{S^{N-1}}\int\limits_{\mathbb{R}^+}\int\limits_{\Omega}t^{N-1}\eta_\e\big(t\big)\chi_{\Omega}(x+t\vec
n)\frac{F\big(x+\e\vec n,x\big)}{t^r}dxdtd\mathcal{H}^{N-1}(\vec n)
\\=
\int\limits_{\e^{1+\alpha}}^{\e}\frac{1}{\alpha\big|\ln{\e}\big|t}\Bigg(\int\limits_{S^{N-1}}\int\limits_{\Omega}\chi_{\Omega}(x+t\vec
n)\frac{F\big(x+\e\vec n,x\big)}{t^r}dxd\mathcal{H}^{N-1}(\vec
n)\Bigg)dt.
\end{multline}
Thus, by
\er{GMT'3jGHKKkkhjjhgzzZZzzZZzzbvq88nkhhjggjgjkpkjljluytytuguutloklljjgjgjhjklljjjkjkhhiigjhjgjjljklkhhk},
\er{GMT'3jGHKKkkhjjhgzzZZzzZZzzbvq88nkhhjggjgjkpkjljluytytuguutloklljjgjgjhjklljjjkjkhhiigjhjgjjljklkhhklhkhkgjgg}
and
\er{GMT'3jGHKKkkhjjhgzzZZzzZZzzbvq88nkhhjggjgjkpkjljluytytl;klljkljojkojjo},
in the case $d>(N+r)$ and $N(1+\alpha)-(d+\gamma)>0$ for
sufficiently small $\e>0$ we deduce
\begin{multline}\label{GMT'3jGHKKkkhjjhgzzZZzzZZzzbvq88nkhhjggjgjkpkjljluytytuguutloklljjgjgjhjklljjjkjkhhiigjhjgjjljklkhhkghfh}
\int\limits_{\e^{1+\alpha}}^{\e}\frac{1}{\alpha\big|\ln{\e}\big|t}\Bigg(\int\limits_{S^{N-1}}\int\limits_{\Omega}\chi_{\Omega}(x+t\vec
n)\frac{F\big(x+\e\vec n,x\big)}{t^r}dxd\mathcal{H}^{N-1}(\vec
n)\Bigg)dt\\=
\int\limits_{\Omega}\int\limits_{\Omega}\eta_\e\Big(|y-x|\Big)\frac{F(y,x)}{|y-x|^r}dydx\geq
O\bigg(\frac{1}{\alpha\big|\ln{\e}\big|}\bigg)\\+
\int\limits_{\mathbb{R}}\int\limits_{\mathbb{R}}\int\limits_{S^{N-1}}\frac{\gamma}{\alpha}\,\frac{1}{\gamma\big|\ln{\e}\big|s}\,\chi_{[0,+\infty)}(t)\,
\chi_{[1/\e^d,1/\e^{d+\gamma}]}(s)t^{N -1}sK_{\e,u}\big(t\vec
n,s,r+N\big)d\mathcal{H}^{N-1}(\vec n) dtds=\\
O\bigg(\frac{1}{\alpha\big|\ln{\e}\big|}\bigg)+
\int\limits_{\frac{1}{\e^d}}^{\frac{1}{\e^{d+\gamma}}}\frac{\gamma}{\alpha}\,\frac{1}{\gamma\big|\ln{\e}\big|}\,
\mathcal{L}^{2N}\Bigg(\bigg\{(x,z)\in
\Omega\times\R^N\,:\,\frac{\chi_{\Omega}(x+z)F\big(
x+z,x\big)}{|z|^{r+N}}> s\bigg\}\Bigg)ds
\\=
O\bigg(\frac{1}{\alpha\big|\ln{\e}\big|}\bigg)+
\int\limits_{\frac{1}{\e^d}}^{\frac{1}{\e^{d+\gamma}}}\frac{\gamma}{\alpha}\,\frac{1}{\gamma\big|\ln{\e}\big|}\,
\mathcal{L}^{2N}\Bigg(\bigg\{(x,y)\in
\Omega\times\Omega\,:\,\frac{F(y,x)}{|y-x|^{r+N}}> s\bigg\}\Bigg)ds
.
\end{multline}
Then, by
\er{GMT'3jGHKKkkhjjhgzzZZzzZZzzbvq88nkhhjggjgjkpkjljluytytuguutloklljjgjgjhjklljjjkjkhhiigjhjgjjljklkhhkghfh}
we infer
\begin{multline}\label{GMT'3jGHKKkkhjjhgzzZZzzZZzzbvq88nkhhjggjgjkpkjljluytytuguutloklljjgjgjhjklljjjkjkhhiigjhjgjjljklkhhkghfhk;llkljklljj}
\int\limits_{\e^{1+\alpha}}^{\e}\frac{1}{\alpha\big|\ln{\e}\big|t}\Bigg(\int\limits_{S^{N-1}}\int\limits_{\Omega}\chi_{\Omega}(x+t\vec
n)\frac{F\big(x+\e\vec n,x\big)}{t^r}dxd\mathcal{H}^{N-1}(\vec
n)\Bigg)dt\\=
\int\limits_{\Omega}\int\limits_{\Omega}\eta_\e\Big(|y-x|\Big)\frac{F(y,x)}{|y-x|^r}dydx\geq O\bigg(\frac{1}{\alpha\big|\ln{\e}\big|}\bigg)+\\
\int\limits_{\mathbb{R}}\frac{\gamma}{\alpha}\,\frac{1}{\gamma
s\big|\ln{\e}\big|}\, \chi_{[1/\e^d,1/\e^{d+\gamma}]}(s)\,
s\mathcal{L}^{2N}\Bigg(\bigg\{(x,y)\in
\Omega\times\Omega\;:\;\frac{F(y,x)}{|y-x|^{r+N}}> s\bigg\}\Bigg)ds.
\end{multline}
On the other hand, we have
\begin{multline}\label{GMT'3jGHKKkkhjjhgzzZZzzZZzzbvq88nkhhjggjgjkpkjljluytytl;klljkljojkojjo;k;kklklklkiljluikljhkjhjhmhhgnuhjhjggg}
\inf\limits_{t\in(0,\e)}\Bigg(\int\limits_{S^{N-1}}\int\limits_{\Omega}\chi_{\Omega}(x+t\vec
n)\frac{F\big(x+\e\vec n,x\big)}{t^r}dxd\mathcal{H}^{N-1}(\vec
n)\Bigg)
\leq\int\limits_{\Omega}\int\limits_{\Omega}\eta_\e\Big(|y-x|\Big)\frac{F(y,x)}{|y-x|^r}dydx\\=
\int\limits_{\e^{1+\alpha}}^{\e}\frac{1}{\alpha\big|\ln{\e}\big|t}\Bigg(\int\limits_{S^{N-1}}\int\limits_{\Omega}\chi_{\Omega}(x+t\vec
n)\frac{F\big(x+\e\vec n,x\big)}{t^r}dxd\mathcal{H}^{N-1}(\vec
n)\Bigg)dt
\\
\leq\sup\limits_{t\in(0,\e)}\Bigg(\int\limits_{S^{N-1}}\int\limits_{\Omega}\chi_{\Omega}(x+t\vec
n)\frac{F\big(x+\e\vec n,x\big)}{t^r}dxd\mathcal{H}^{N-1}(\vec
n)\Bigg),
\end{multline}
and
\begin{multline}\label{GMT'3jGHKKkkhjjhgzzZZzzZZzzbvq88nkhhjggjgjkpkjljluytytuguutloklljjgjgjhjklljjjkjkhjkkhkhhkhhjlkkhkjljjljjhhkhihhjhh}
\inf\limits_{s>(1/\e^d)}\Bigg\{s\,\mathcal{L}^{2N}\Bigg(\bigg\{(x,y)\in
\Omega\times\Omega\;:\;\frac{
F(y,x)}{|y-x|^{r+N}}> s\bigg\}\Bigg)\Bigg\}\leq\\
\int\limits_{\mathbb{R}}\,\frac{1}{\gamma\big|\ln{\e}\big|s}\,
\chi_{[1/\e^d,1/\e^{d+\gamma}]}(s)\,
s\mathcal{L}^{2N}\Bigg(\bigg\{(x,y)\in \Omega\times\Omega\;:\;\frac{
F(y,x)}{|y-x|^{r+N}}> s\bigg\}\Bigg)ds\\
\leq
\sup\limits_{s>(1/\e^d)}\Bigg\{s\,\mathcal{L}^{2N}\Bigg(\bigg\{(x,y)\in
\Omega\times\Omega\;:\;\frac{
F(y,x)}{|y-x|^{r+N}}> s\bigg\}\Bigg)\Bigg\}.
\end{multline}
Therefore, inserting these into
\er{GMT'3jGHKKkkhjjhgzzZZzzZZzzbvq88nkhhjggjgjkpkjljluytytuguutloklljjgjgjhjklljjjkjkhhiigjhjgjjljklkhhkghfhk;llkljklljj}
gives, that in the case $d>(N+r)$ and $N(1+\alpha)-(d+\gamma)> 0$
for sufficiently small $\e>0$ we have:
\begin{multline}\label{GMT'3jGHKKkkhjjhgzzZZzzZZzzbvq88nkhhjggjgjkpkjljluytytuguutloklljjgjgjhjklljjjkjkhhiigjhjgjjljklkhhkghfhk;llkljklljjhjbjjbljj}
\sup\limits_{t\in(0,\e)}\Bigg(\int\limits_{S^{N-1}}\int\limits_{\Omega}\chi_{\Omega}(x+t\vec
n)\frac{F\big(x+\e\vec n,x\big)}{t^r}dxd\mathcal{H}^{N-1}(\vec n)\Bigg)\\
\geq O\bigg(\frac{1}{\alpha\big|\ln{\e}\big|}\bigg)+
\frac{\gamma}{\alpha}\inf\limits_{s>(1/\e^d)}\Bigg\{s\,\mathcal{L}^{2N}\Bigg(\bigg\{(x,y)\in
\Omega\times\Omega\;:\;\frac{
F(y,x)}{|y-x|^{r+N}}> s\bigg\}\Bigg)\Bigg\}.
\end{multline}
Thus, letting $\e\to 0^+$ in
\er{GMT'3jGHKKkkhjjhgzzZZzzZZzzbvq88nkhhjggjgjkpkjljluytytuguutloklljjgjgjhjklljjjkjkhhiigjhjgjjljklkhhkghfhk;llkljklljjhjbjjbljj}
gives in the case $d>(N+r)$ and $N(1+\alpha)-(d+\gamma)>0$:
\begin{multline}\label{GMT'3jGHKKkkhjjhgzzZZzzZZzzbvq88nkhhjggjgjkpkjljluytytuguutloklljjgjgjhjklljjjkjkhhiigjhjgjjljklkhhkghfhk;llkljklljjhjbjjbljjhuyuhyu}
\limsup\limits_{\e\to
0^+}\Bigg(\int\limits_{S^{N-1}}\int\limits_{\Omega}\chi_{\Omega}(x+\e\vec
n)\frac{F\big(x+\e\vec n,x\big)}{\e^r}dxd\mathcal{H}^{N-1}(\vec n)\Bigg)\\
\geq \frac{\gamma}{\alpha}\,
\liminf\limits_{s\to+\infty}\Bigg\{s\,\mathcal{L}^{2N}\Bigg(\bigg\{(x,y)\in
\Omega\times\Omega\;:\;\frac{
F(y,x)}{|y-x|^{r+N}}> s\bigg\}\Bigg)\Bigg\}.
\end{multline}
In particular, given $\delta>0$ sufficiently small,
\er{GMT'3jGHKKkkhjjhgzzZZzzZZzzbvq88nkhhjggjgjkpkjljluytytuguutloklljjgjgjhjklljjjkjkhhiigjhjgjjljklkhhkghfhk;llkljklljjhjbjjbljjhuyuhyu}
holds for $\gamma= N(1+\alpha)-d-\delta$ and  $d=(N+r)+\delta$ in
the case of sufficiently large $\alpha>0$. Thus, letting $\delta\to
0^+$,
we deduce by
\er{GMT'3jGHKKkkhjjhgzzZZzzZZzzbvq88nkhhjggjgjkpkjljluytytuguutloklljjgjgjhjklljjjkjkhhiigjhjgjjljklkhhkghfhk;llkljklljjhjbjjbljjhuyuhyu}:
\begin{multline}\label{GMT'3jGHKKkkhjjhgzzZZzzZZzzbvq88nkhhjggjgjkpkjljluytytuguutloklljjgjgjhjklljjjkjkhhiigjhjgjjljklkhhkghfhk;llkljklljjhjbjjbljjhuyuhyulkjjk}
\limsup\limits_{\e\to
0^+}\Bigg(\int\limits_{S^{N-1}}\int\limits_{\Omega}\chi_{\Omega}(x+\e\vec
n)\frac{F\big(x+\e\vec n,x\big)}{\e^r}dxd\mathcal{H}^{N-1}(\vec n)\Bigg)\\
\geq \frac{N\alpha-r}{\alpha}\,
\liminf\limits_{s\to+\infty}\Bigg\{s\,\mathcal{L}^{2N}\Bigg(\bigg\{(x,y)\in
\Omega\times\Omega\;:\;\frac{
F(y,x)}{|y-x|^{r+N}}> s\bigg\}\Bigg)\Bigg\}.
\end{multline}
Then letting $\alpha\to +\infty$ in
\er{GMT'3jGHKKkkhjjhgzzZZzzZZzzbvq88nkhhjggjgjkpkjljluytytuguutloklljjgjgjhjklljjjkjkhhiigjhjgjjljklkhhkghfhk;llkljklljjhjbjjbljjhuyuhyulkjjk}
we infer
\er{GMT'3jGHKKkkhjjhgzzZZzzZZzzbvq88nkhhjggjgjkpkjljluytytuguutloklljjgjgjhjklljjjkjkhhiigjhjgjjljklkhhkghfhk;llkljklljjhjbjjbljjhuyuhyulkjjkhhggh}.
\end{proof}

\section{Some consequences of Theorem \ref{hjkgjkfhjff} in the case $r=q$}
\label{sec:r=q} This section is devoted to the proof of
\rth{hjkgjkfhjffgggvggoopikhhhkjh}, that will follow
 from Corollary \ref{hjkgjkfhjffgggvggoopikhh} and Lemma
\ref{fhgolghoi} after proving some intermediate results. We start
with the following Proposition:
\begin{proposition}\label{hjkgjkfhjffgggvggoop}
Let $\Omega\subset\R^N$ be an open set, $q\geq 1$ and $u\in
L^q(\Omega,\R^m)
$. Furthermore, let $G\subset\Omega$ be an open subset, such that
$\mathcal{L}^N(\partial G)=0$ and either $G$ is convex or $\ov
G\subset\subset\Omega$. Then,
\begin{multline}\label{GMT'3jGHKKkkhjjhgzzZZzzZZzzbvq88nkhhjggjgjkpkjljluytytuguutloklljjgjgjhjklljjjkjkhjkkhkhhkhhjlkkhkjljljkjlkkhkllhjhjhhfyfp2}
\limsup\limits_{\e\to
0^+}\Bigg(\int\limits_{S^{N-1}}\int\limits_{G}\frac{\big|u( x+\e\vec
n)-u(x)\big|^q}{\e^q}\chi_{G}(x+\e\vec
n)dxd\mathcal{H}^{N-1}(\vec n)\Bigg)\\
\leq\sup\limits_{\e\in(0,h)}\Bigg(\int\limits_{S^{N-1}}\int\limits_{G}\frac{\big|u(
x+\e\vec n)-u(x)\big|^q}{\e^q}\chi_{G}(x+\e\vec
n)dxd\mathcal{H}^{N-1}(\vec n)\Bigg)\\
\leq
(N+q)\limsup\limits_{s\to+\infty}\Bigg\{s\,\mathcal{L}^{2N}\Bigg(\bigg\{(x,y)\in
\Omega\times\Omega\;:\;\frac{
\big|u( y)-u(x)\big|^q}{|y-x|^{q+N}}> s\bigg\}\Bigg)\Bigg\},
\end{multline}
where
\begin{equation}\label{fhfuyfyufuyurressgstr2}
h:=\begin{cases} +\infty\quad\quad\text{if $G$ is convex}\,,\\
\dist(G,\R^N\setminus\Omega)\quad\quad\text{otherwise}\,.
\end{cases}
\end{equation}
\end{proposition}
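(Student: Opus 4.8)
The first inequality is trivial (a $\limsup$ is at most a $\sup$); to obtain the second, $\sup_{\e\in(0,h)}g_G(\e)\le(N+q)L$, I would proceed as follows. Here, for an open set $A\subseteq\R^N$,
\[
g_A(\e):=\int_{S^{N-1}}\int_A\big|u(x+\e\vec n)-u(x)\big|^q\,\e^{-q}\chi_A(x+\e\vec n)\,dx\,d\mathcal{H}^{N-1}(\vec n),
\]
and $L:=\limsup_{s\to+\infty}\big\{s\,\mathcal{L}^{2N}\big(\{(x,y)\in\Omega\times\Omega:|u(y)-u(x)|^q>s|y-x|^{q+N}\}\big)\big\}$. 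If $L=+\infty$ there is nothing to do, so assume $L<+\infty$. First two reductions. (i) Componentwise truncation $u_T$ of $u$ gives $g^{(T)}_G\le g_G$ with $g^{(T)}_G\uparrow g_G$ (monotone convergence) and superlevel sets contained in those of $u$; so it suffices to prove the bound when $u\in L^\infty$, which is what makes Theorem \ref{hjkgjkfhjff} available. (ii) Writing $\bar u:=u\chi_G\in L^q(\R^N)$, one has $\e^q g_G(\e)=\int_{S^{N-1}}\int_{\R^N}\chi_G(x)\chi_G(x+\e\vec n)|\bar u(x+\e\vec n)-\bar u(x)|^q\,dx\,d\mathcal{H}^{N-1}(\vec n)$, and since $\mathcal{L}^N(\partial G)=0$ the weight $\chi_G(x)\chi_G(x+\e\vec n)$ varies continuously in $\e$ for a.e.\ $x$; with $L^q$–continuity of translations and dominated convergence this proves $\e\mapsto g_G(\e)$ is continuous on $(0,+\infty)$. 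Finally, if $G$ is convex and unbounded I would apply everything to $G\cap B_R$ and let $R\to\infty$ (the right–hand side is $R$-independent), so I may take $G$ bounded.

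The key ingredient is a multiplicative monotonicity of $g_G$. I fix a bounded open set $A$ with $G\subseteq A\subseteq\Omega$ and a threshold $\e_1\in(0,+\infty]$: take $A=G$, $\e_1=+\infty$ if $G$ is convex, and $A=\hat G_{\e_1}:=\{y:\dist(y,G)<\e_1\}$ for a fixed $\e_1\in(0,h)$ if $\ov G\subset\subset\Omega$ (then $\hat G_{\e_1}\subseteq\Omega$). I then want
\[
g_G(\e)\le g_A(\e/m)\qquad\text{for all integers }m\ge1\text{ and all }\e\in(0,\e_1).
\]
For $x\in G$ with $x+\e\vec n\in G$, each point $x+j(\e/m)\vec n$ ($0\le j\le m$) lies in $A$ — by convexity if $A=G$, and because its distance to $G$ is $\le\e<\e_1$ if $A=\hat G_{\e_1}$ — so Jensen's inequality gives $|u(x+\e\vec n)-u(x)|^q\le m^{q-1}\sum_{j=0}^{m-1}|u(x+(j+1)\tfrac{\e}{m}\vec n)-u(x+j\tfrac{\e}{m}\vec n)|^q$. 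Integrating in $x$ against $\chi_G(x+\e\vec n)$, substituting $y=x+j(\e/m)\vec n$ in the $j$-th summand, and checking that on the support of the resulting integrand both $y$ and $y+(\e/m)\vec n$ lie in $A$, each summand is $\le(\e/m)^q g_{A,\vec n}(\e/m)$, where $g_{A,\vec n}(t):=t^{-q}\int_A|u(y+t\vec n)-u(y)|^q\chi_A(y+t\vec n)\,dy$ is the $\vec n$–slice of $g_A$. Summing the $m$ summands, multiplying by $m^{q-1}$, dividing by $\e^q$ and integrating over $\vec n$ gives the displayed inequality.

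To finish, I use that $A$ is a bounded open subset of $\Omega$, so Theorem \ref{hjkgjkfhjff} (i.e.\ Proposition \ref{hjkgjkfhjff1} with $F(y,x)=|u(y)-u(x)|^q$ and $r=q$) gives $\liminf_{t\to0^+}g_A(t)\le(N+q)L$, the measure term over $A\times A$ being dominated by the one over $\Omega\times\Omega$. Then I argue by contradiction: if $g_G(\e_0)>(N+q)L$ for some $\e_0\in(0,h)$, pick $\eta>0$ with $g_G(\e_0)>(N+q)L+3\eta$ and, when $\ov G\subset\subset\Omega$, $\e_1\in(\e_0,h)$. By continuity of $g_G$ there is $\delta'\in(0,\e_0)$ with $(\e_0-\delta',\e_0+\delta')\subseteq(0,\e_1)$ and $g_G>(N+q)L+2\eta$ there; by the multiplicative monotonicity, $g_A(\e/m)\ge g_G(\e)>(N+q)L+2\eta$ for every $m\ge1$ and every $\e\in(\e_0-\delta',\e_0+\delta')$, so $g_A>(N+q)L+2\eta$ on $\bigcup_{m\ge1}\big(\tfrac{\e_0-\delta'}{m},\tfrac{\e_0+\delta'}{m}\big)$. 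For $m\ge\lceil(\e_0-\delta')/(2\delta')\rceil$ consecutive intervals in this union overlap, so it contains some $(0,\delta)$; hence $\liminf_{t\to0^+}g_A(t)\ge(N+q)L+2\eta$, contradicting the line above. Therefore $\sup_{\e\in(0,h)}g_G(\e)\le(N+q)L$.

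The step I expect to be the real obstacle is the telescoping estimate $g_G(\e)\le g_A(\e/m)$ in the case $\ov G\subset\subset\Omega$: since $G$ need not be convex, the substitution $y=x+j(\e/m)\vec n$ carries mass out of $G$, so one must pass through the enlargement $\hat G_{\e_1}$ and track the interplay of the characteristic functions $\chi_G(x+\e\vec n)$, $\chi_A(y)$ and $\chi_A(y+(\e/m)\vec n)$ carefully to see that every translated summand is genuinely controlled by a $g_A(\e/m)$–slice, all while keeping $\e$ below the enlargement scale $\e_1<\dist(G,\R^N\setminus\Omega)$. The convex case, where $A=G$ works directly for every $\e>0$, is by comparison routine, and so is the remaining packaging (monotonicity $+$ continuity $+$ Theorem \ref{hjkgjkfhjff}).
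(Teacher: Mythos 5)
Your proof is correct, and its engine is the same as the paper's: a telescoping/Jensen estimate that controls the scale-$\e$ quantity over $G$ by the quantity at a much smaller scale over a slightly larger admissible set, followed by an appeal to Theorem~\ref{hjkgjkfhjff} with $r=q$. The paper packages the telescoping as a two-term subadditivity lemma (Lemma~\ref{gughfgfhfgdgddffddfKKzzbvq}, the ``max'' form of \eqref{gghgjhfgggjfgfhughGHGHGHKKjhggjhggjjhjgghjhhjhkoiojioijuoui}) iterated by induction, exactly your $m$-fold Jensen step done pairwise; your enlargement $\hat G_{\e_1}$ plays the role of the paper's comparison with $\Omega$ under the constraint $t_1<\dist(G,\R^N\setminus\Omega)$. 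Where the two arguments genuinely diverge is in reconciling the special sequence of scales produced by the subdivision with the $\liminf_{\e\to0^+}$ coming from Theorem~\ref{hjkgjkfhjff}: the paper adapts the subdivision to a minimizing sequence $\e_k$, writing $t=(j_k+r_k)\e_k$ and passing to the limit $j_k\e_k\to t$ in the scale-$t$ functional (this is where $\mathcal{L}^N(\partial G)=0$ and $L^q$-continuity of translation enter, in \eqref{gghgjhfgggjfgfhughGHGHGHKKjhggjhggjjhjgghjhhjhkoiojikljjlhjghgiojjj}); you instead keep the equal subdivision $\e/m$, prove continuity of $\e\mapsto g_G(\e)$, and use the overlapping-intervals covering of a neighbourhood of $0$ to contradict the $\liminf$ bound. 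Both routes use the same continuity ingredient, just at different places, and both handle the general case by the same truncation/exhaustion reductions. A small bonus of your route: the continuity of $g_G(\e)$ can in fact be obtained from $L^1$-continuity of translation applied to $\chi_G$ alone, so your argument makes the hypothesis $\mathcal{L}^N(\partial G)=0$ essentially dispensable for this step, whereas the paper's limit $j_k\e_k\to t$ is stated as relying on it. The one step you flagged as delicate --- tracking the characteristic functions through the change of variables $y=x+j(\e/m)\vec n$ in the non-convex case --- is handled correctly: on the support one has $x\in G$ and $x+\e\vec n\in G$ with $\e<\e_1$, so every intermediate point lies in $\hat G_{\e_1}\subset\Omega$, and each translated summand is indeed dominated by the corresponding slice of $g_{A}(\e/m)$.
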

\begin{proof}
In the case of bounded $\Omega$ and $u\in L^\infty(\Omega,\R^m)$,
\er{GMT'3jGHKKkkhjjhgzzZZzzZZzzbvq88nkhhjggjgjkpkjljluytytuguutloklljjgjgjhjklljjjkjkhjkkhkhhkhhjlkkhkjljljkjlkkhkllhjhjhhfyfp2}
follow from Theorem \ref{hjkgjkfhjff}
together with either Corollary
\ref{gughfgfhfgdgddffddfKKzzbvqhigygygtyuu2} or Corollary
\ref{gughfgfhfgdgddffddfKKzzbvqhigygygtyuu} from the Appendix. So it
remains to prove
\er{GMT'3jGHKKkkhjjhgzzZZzzZZzzbvq88nkhhjggjgjkpkjljluytytuguutloklljjgjgjhjklljjjkjkhjkkhkhhkhhjlkkhkjljljkjlkkhkllhjhjhhfyfp2}
in the case of unbounded $\Omega$ and/or $u\notin
L^\infty(\Omega,\R^m)$. Thus for every $k\in \mathbb{N}$ consider a
bounded open sets $G_k\subset\Omega_k\subset\Omega$ with
$\mathcal{L}^N(\partial G_k)=0$, defined by
\begin{equation}\label{hkjkgkfjfkkkghjggfhfhfhjgjghlkhigiukpoihghgfhihj2}
G_k:=G\cap B_k(0)\quad\text{and}\quad\Omega_k:=\Omega\cap B_k(0)\,,
\end{equation}
and consider
$u^{(k)}(x):=\big(u^{(k)}_1(x),\ldots,u^{(k)}_m(x)\big)\in
L^\infty(\Omega,\R^m)\cap L^q(\Omega,\R^m)$, defined by
\begin{equation}\label{hkjkgkfjfkkkghjggfhfhfhjgjghlkhigiukpoihghgfhihjghffhkhgh2}
u^{(k)}_j(x):=\begin{cases}-k\quad\text{if}\quad u(x)\leq -k,\\
u(x)\quad\text{if}\quad u(x)\in [-k,k],\\
k\quad\text{if}\quad u(x)\geq k,
\end{cases}\quad\quad\quad\forall
x\in\Omega\quad\quad\forall\, j\in\{1,\ldots, m\}\,.
\end{equation}
Then obviously,
\begin{equation}\label{hkjkgkfjfkkkghjggfhfhfhjgjghlkhigiukpoihghgfhihjghffhkhghihhjjb2}
\Big|u^{(k)}(y)-u^{(k)}(x)\Big|^q\leq
\Big|u(y)-u(x)\Big|^q\quad\quad\forall (x,y)\in
\Omega\times\Omega\,,\quad\quad\forall\, k\in \mathbb{N}\,.
\end{equation}
and
\begin{equation}\label{hkjkgkfjfkkkghjggfhfhfhjgjghlkhigiukpoihghgfhihjghffhkhghihhjjbhiihhhh2}
\lim\limits_{k\to+\infty}u^{(k)}(x)=u(x)\quad\quad\forall x\in
\Omega\,.
\end{equation}
Moreover, if $G$ is convex then $G_k$ is also convex. Otherwise,
$G_k=G$ for sufficiently large $k$ and
$\dist(G_k,\R^N\setminus\Omega_k)=\dist(G,\R^N\setminus\Omega)$ for
sufficiently large $k$. Thus, by
\er{GMT'3jGHKKkkhjjhgzzZZzzZZzzbvq88nkhhjggjgjkpkjljluytytuguutloklljjgjgjhjklljjjkjkhjkkhkhhkhhjlkkhkjljljkjlkkhkllhjhjhhfyfp2}
with $\Omega_k$ instead of $\Omega$ and $u^{(k)}$ instead of $u$,
for sufficiently large $k$ we have
\begin{multline}\label{GMT'3jGHKKkkhjjhgzzZZzzZZzzbvq88nkhhjggjgjkpkjljluytytuguutloklljjgjgjhjklljjjkjkhjkkhkhhkhhjlkkhkjljljkjlkkhkllhjhjhhfyfphiih2}
\sup\limits_{\e\in(0,h)}\Bigg(\int\limits_{S^{N-1}}\int\limits_{G_k}\frac{\Big|u^{(k)}(
x+\e\vec n)-u^{(k)}(x)\Big|^q}{\e^q}\chi_{G_k}(x+\e\vec
n)dxd\mathcal{H}^{N-1}(\vec n)\Bigg)\\ \leq
(N+q)\limsup\limits_{s\to+\infty}\Bigg\{s\,\mathcal{L}^{2N}\Bigg(\bigg\{(x,y)\in
\Omega_k\times\Omega_k\;:\;\frac{
\big|u^{(k)}( y)-u^{(k)}(x)\big|^q}{|y-x|^{q+N}}>
s\bigg\}\Bigg)\Bigg\}\quad\quad\forall k\in\mathbb{N}.
\end{multline}
Thus, since $\Omega_k\subset\Omega$ by
\er{hkjkgkfjfkkkghjggfhfhfhjgjghlkhigiukpoihghgfhihjghffhkhghihhjjb2}
we deduce from
\er{GMT'3jGHKKkkhjjhgzzZZzzZZzzbvq88nkhhjggjgjkpkjljluytytuguutloklljjgjgjhjklljjjkjkhjkkhkhhkhhjlkkhkjljljkjlkkhkllhjhjhhfyfphiih2}
that, for sufficiently large $k$ we have
\begin{multline}\label{GMT'3jGHKKkkhjjhgzzZZzzZZzzbvq88nkhhjggjgjkpkjljluytytuguutloklljjgjgjhjklljjjkjkhjkkhkhhkhhjlkkhkjljljkjlkkhkllhjhjhhfyfphiihgfggffhjhg2}
\int\limits_{S^{N-1}}\int\limits_{G_k}\frac{\Big|u^{(k)}( x+\e\vec
n)-u^{(k)}(x)\Big|^q}{\e^q}\chi_{G_k}(x+\e\vec
n)dxd\mathcal{H}^{N-1}(\vec n)\leq \\
(N+q)\limsup\limits_{s\to+\infty}\Bigg\{s\,\mathcal{L}^{2N}\Bigg(\bigg\{(x,y)\in
\Omega\times\Omega\;:\;\frac{
\big|u( y)-u(x)\big|^q}{|y-x|^{q+N}}>
s\bigg\}\Bigg)\Bigg\}\quad\quad\forall\,\e\in(0,h)\,,\quad\quad\forall
k\in\mathbb{N}\,.
\end{multline}
Then, letting $k\to+\infty$ in
\er{GMT'3jGHKKkkhjjhgzzZZzzZZzzbvq88nkhhjggjgjkpkjljluytytuguutloklljjgjgjhjklljjjkjkhjkkhkhhkhhjlkkhkjljljkjlkkhkllhjhjhhfyfphiihgfggffhjhg2},
using
\er{hkjkgkfjfkkkghjggfhfhfhjgjghlkhigiukpoihghgfhihjghffhkhghihhjjb2},
\er{hkjkgkfjfkkkghjggfhfhfhjgjghlkhigiukpoihghgfhihjghffhkhghihhjjbhiihhhh2}
and the Dominated Convergence Theorem, gives
\begin{multline}\label{GMT'3jGHKKkkhjjhgzzZZzzZZzzbvq88nkhhjggjgjkpkjljluytytuguutloklljjgjgjhjklljjjkjkhjkkhkhhkhhjlkkhkjljljkjlkkhkllhjhjhhfyfphiihgfggffhjhghjj2}
\int\limits_{S^{N-1}}\int\limits_{G}\frac{\Big|u( x+\e\vec
n)-u(x)\Big|^q}{\e^q}\chi_{G}(x+\e\vec
n)dxd\mathcal{H}^{N-1}(\vec n)\leq \\
(N+q)\limsup\limits_{s\to+\infty}\Bigg\{s\,\mathcal{L}^{2N}\Bigg(\bigg\{(x,y)\in
\Omega\times\Omega\;:\;\frac{
\big|u( y)-u(x)\big|^q}{|y-x|^{q+N}}>
s\bigg\}\Bigg)\Bigg\}\quad\quad\quad\quad\forall\,\e\in(0,h)\,.
\end{multline}
In particular,
\begin{multline}\label{GMT'3jGHKKkkhjjhgzzZZzzZZzzbvq88nkhhjggjgjkpkjljluytytuguutloklljjgjgjhjklljjjkjkhjkkhkhhkhhjlkkhkjljljkjlkkhkllhjhjhhfyfphgg2}
\sup\limits_{\e\in(0,h)}\Bigg(\int\limits_{S^{N-1}}\int\limits_{G}\frac{\big|u(
x+\e\vec n)-u(x)\big|^q}{\e^q}\chi_{G}(x+\e\vec
n)dxd\mathcal{H}^{N-1}(\vec n)\Bigg)\\ \leq
(N+q)\limsup\limits_{s\to+\infty}\Bigg\{s\,\mathcal{L}^{2N}\Bigg(\bigg\{(x,y)\in
\Omega\times\Omega\;:\;\frac{
\big|u( y)-u(x)\big|^q}{|y-x|^{q+N}}> s\bigg\}\Bigg)\Bigg\}.
\end{multline}
Thus, by
\er{GMT'3jGHKKkkhjjhgzzZZzzZZzzbvq88nkhhjggjgjkpkjljluytytuguutloklljjgjgjhjklljjjkjkhjkkhkhhkhhjlkkhkjljljkjlkkhkllhjhjhhfyfphgg2}
we finally deduce
\er{GMT'3jGHKKkkhjjhgzzZZzzZZzzbvq88nkhhjggjgjkpkjljluytytuguutloklljjgjgjhjklljjjkjkhjkkhkhhkhhjlkkhkjljljkjlkkhkllhjhjhhfyfp2}.
\end{proof}

\begin{corollary}\label{hjkgjkfhjffgggvggoop2}
Let $\Omega\subset\R^N$ be a convex open domain, such that
$\mathcal{L}^N(\partial\Omega)=0$, $q\geq 1$ and $u\in
L^q(\Omega,\R^m)
$. Then,
\begin{multline}\label{GMT'3jGHKKkkhjjhgzzZZzzZZzzbvq88nkhhjggjgjkpkjljluytytuguutloklljjgjgjhjklljjjkjkhjkkhkhhkhhjlkkhkjljljkjlkkhkllhjhjhhfyfp}
\sup\limits_{\e\in(0,+\infty)}\Bigg(\int\limits_{S^{N-1}}\int\limits_{\Omega}\frac{\big|u(
x+\e\vec n)-u(x)\big|^q}{\e^q}\chi_{\Omega}(x+\e\vec
n)dxd\mathcal{H}^{N-1}(\vec n)\Bigg)\\=\lim\limits_{\e\to
0^+}\Bigg(\int\limits_{S^{N-1}}\int\limits_{\Omega}\chi_{\Omega}(x+\e\vec
n)\frac{\big|u( x+\e\vec
n)-u(x)\big|^q}{\e^q}dxd\mathcal{H}^{N-1}(\vec n)\Bigg)\\
\leq
(N+q)\limsup\limits_{s\to+\infty}\Bigg\{s\,\mathcal{L}^{2N}\Bigg(\bigg\{(x,y)\in
\Omega\times\Omega\;:\;\frac{
\big|u( y)-u(x)\big|^q}{|y-x|^{q+N}}> s\bigg\}\Bigg)\Bigg\}.
\end{multline}
Moreover, in the case of bounded $\Omega$ and $u\in
L^\infty(\Omega,\R^m)$ we also have
\begin{multline}\label{GMT'3jGHKKkkhjjhgzzZZzzZZzzbvq88nkhhjggjgjkpkjljluytytuguutloklljjgjgjhjklljjjkjkhhiigjhjgjjljklkhhkghfhk;llkljklljjhjbjjbljjhuyuhyulkjjkhhgghijjffgyfghjhopp}
\sup\limits_{\e\in(0,+\infty)}\Bigg(\int\limits_{S^{N-1}}\int\limits_{\Omega}\frac{\big|u(
x+\e\vec n)-u(x)\big|^q}{\e^q}\chi_{\Omega}(x+\e\vec
n)dxd\mathcal{H}^{N-1}(\vec n)\Bigg)\\=\lim\limits_{\e\to
0^+}\Bigg(\int\limits_{S^{N-1}}\int\limits_{\Omega}\chi_{\Omega}(x+\e\vec
n)\frac{\big|u( x+\e\vec
n)-u(x)\big|^q}{\e^q}dxd\mathcal{H}^{N-1}(\vec n)\Bigg)\\
\geq N\,
\liminf\limits_{s\to+\infty}\Bigg\{s\,\mathcal{L}^{2N}\Bigg(\bigg\{(x,y)\in
\Omega\times\Omega\;:\;\frac{
\big|u( y)-u(x)\big|^q}{|y-x|^{q+N}}> s\bigg\}\Bigg)\Bigg\}.
\end{multline}
\end{corollary}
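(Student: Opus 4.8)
The plan is to deduce the first chain of (in)equalities by combining Proposition~\ref{hjkgjkfhjffgggvggoop} with a one-dimensional slicing argument that uses the convexity of $\Omega$ in an essential way, and then to read off the ``Moreover'' part from Theorem~\ref{hjkgjkfhjff}. Write
$$\Phi_\Omega(\e):=\int_{S^{N-1}}\int_\Omega\frac{\big|u(x+\e\vec n)-u(x)\big|^q}{\e^q}\,\chi_\Omega(x+\e\vec n)\,dx\,d\mathcal H^{N-1}(\vec n),\qquad W(s):=\mathcal L^{2N}\Big(\big\{(x,y)\in\Omega\times\Omega:|u(y)-u(x)|^q>s|y-x|^{q+N}\big\}\Big).$$
Since $\Omega$ is convex, Proposition~\ref{hjkgjkfhjffgggvggoop} applies with $G=\Omega$ and $h=+\infty$, and gives at once $\limsup_{\e\to0^+}\Phi_\Omega(\e)\le\sup_{\e\in(0,+\infty)}\Phi_\Omega(\e)\le(N+q)\limsup_{s\to+\infty}\{s\,W(s)\}$, i.e.\ the last inequality of the first display. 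It remains to establish $\sup_{\e\in(0,+\infty)}\Phi_\Omega(\e)=\lim_{\e\to0^+}\Phi_\Omega(\e)$.

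For this I would slice. Fix $\vec n\in S^{N-1}$, write $\R^N=\R\vec n\oplus\vec n^{\perp}$, and for $y'\in\vec n^{\perp}$ set $I_{y',\vec n}:=\{t\in\R:\,y'+t\vec n\in\Omega\}$; by convexity of $\Omega$ this is an interval, and $g_{y',\vec n}(t):=u(y'+t\vec n)$ belongs to $W^{1,q}(I_{y',\vec n},\R^m)$ for $\mathcal H^{N-1}$-a.e.\ $y'$ when $u\in W^{1,q}(\Omega,\R^m)$ (resp.\ to $BV$ when $q=1$ and $u\in BV(\Omega,\R^m)$), with $g_{y',\vec n}'(t)=\nabla u(y'+t\vec n)\cdot\vec n$. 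By Fubini, $\Phi_\Omega(\e)=\int_{S^{N-1}}\int_{\vec n^{\perp}}\psi_{y',\vec n}(\e)\,dy'\,d\mathcal H^{N-1}(\vec n)$ with $\psi_{y',\vec n}(\e):=\e^{-q}\int_{I_{y',\vec n}}|g_{y',\vec n}(t+\e)-g_{y',\vec n}(t)|^q\chi_{I_{y',\vec n}}(t+\e)\,dt$. Because an interval contains the segment between any two of its points, Jensen's inequality (resp.\ the bound $|g(t+\e)-g(t)|\le|Dg|((t,t+\e))$ for $q=1$) yields $\psi_{y',\vec n}(\e)\le B_{y',\vec n}$ for every $\e>0$, where $B_{y',\vec n}:=\int_{I_{y',\vec n}}|g_{y',\vec n}'|^q\,dt$ (resp.\ $|Dg_{y',\vec n}|(I_{y',\vec n})$); moreover $\psi_{y',\vec n}(\e)\to B_{y',\vec n}$ as $\e\to0^+$, by the classical one-dimensional limits $\e^{-q}\int|g(t+\e)-g(t)|^q\to\int|g'|^q$ and $\e^{-1}\int|g(t+\e)-g(t)|\to|Dg|$. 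Setting $B:=\int_{S^{N-1}}\int_{\vec n^{\perp}}B_{y',\vec n}\,dy'\,d\mathcal H^{N-1}(\vec n)\in[0,+\infty]$ we get $\Phi_\Omega(\e)\le B$ for all $\e>0$, while Fatou's lemma gives $\liminf_{\e\to0^+}\Phi_\Omega(\e)\ge B$; hence $\lim_{\e\to0^+}\Phi_\Omega(\e)=B$, so $\sup_{\e\in(0,+\infty)}\Phi_\Omega(\e)\le B=\lim_{\e\to0^+}\Phi_\Omega(\e)$, the opposite inequality being trivial. (For $m=1$ a rotation-invariance computation gives $B=\int_{S^{N-1}}|z_1|^q\,d\mathcal H^{N-1}(z)\int_\Omega|\nabla u|^q\,dx$, recovering the ``BBM formula'', with the reading $B=+\infty$ when $u\notin W^{1,q}$, resp.\ $u\notin BV$.) If $\Omega$ is unbounded the same argument runs verbatim, or one reduces to bounded $\Omega$ via $\Omega\cap B_k(0)$ and $k\to\infty$ as in the proof of Proposition~\ref{hjkgjkfhjffgggvggoop}. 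Together with the previous step this yields the whole first display.

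For the ``Moreover'' part, assume $\Omega$ bounded and $u\in L^\infty$ and invoke the second inequality of Theorem~\ref{hjkgjkfhjff} (case $r=q$), $\limsup_{\e\to0^+}\Phi_\Omega(\e)\ge N\liminf_{s\to+\infty}\{s\,W(s)\}$; since the slicing step shows the left-hand side is a genuine limit equal to $\sup_{\e\in(0,+\infty)}\Phi_\Omega(\e)$, this is exactly the second display. The step I expect to demand the most care is the slicing one — the a.e.\ interval structure of one-dimensional sections of a convex set, the ACL/slicing theory of $W^{1,q}$ and, for $q=1$, of $BV$ (including vector-valued $u$ and the finiteness of the sliced total variations), and the justification of interchanging $\int_{S^{N-1}}$, Fubini and Fatou — all routine, but the only genuinely new ingredient beyond quoting Proposition~\ref{hjkgjkfhjffgggvggoop} and Theorem~\ref{hjkgjkfhjff}.
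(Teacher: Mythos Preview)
Your invocations of Proposition~\ref{hjkgjkfhjffgggvggoop} (with $G=\Omega$, $h=+\infty$) and of Theorem~\ref{hjkgjkfhjff} for the ``Moreover'' part match the paper exactly. The divergence is in how you obtain the equality $\sup_{\e>0}\Phi_\Omega(\e)=\lim_{\e\to0^+}\Phi_\Omega(\e)$. The paper does \emph{not} slice: it invokes Corollary~\ref{gughfgfhfgdgddffddfKKzzbvqhigygygtyuu} from the Appendix, which in turn rests on the subadditivity inequality of Lemma~\ref{gughfgfhfgdgddffddfKKzzbvq} (a pure triangle-inequality/convexity estimate showing, roughly, that $\Phi_G(t_1+t_2)\le\max\{\Phi_\Omega(t_1),\Phi_\Omega(t_2)\}$) and its iterated consequence Lemma~\ref{gughfgfhfgdgddffddfKKzzbvqhig}, which gives $\Phi_G(t)\le\liminf_{\e\to0^+}\Phi_\Omega(\e)$ for \emph{every} $t>0$. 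This works for \emph{all} $u\in L^q$, with no regularity assumption.

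Your slicing argument, by contrast, has a genuine gap precisely here: the bounds $\psi_{y',\vec n}(\e)\le B_{y',\vec n}$ via Jensen and the convergence $\psi_{y',\vec n}(\e)\to B_{y',\vec n}$ both presuppose $g_{y',\vec n}\in W^{1,q}$ (or $BV$ when $q=1$), i.e.\ $u\in W^{1,q}(\Omega,\R^m)$ (resp.\ $BV$). But the corollary assumes only $u\in L^q$, and the equality $\sup=\lim$ is asserted unconditionally --- including the case where both sides are $+\infty$, for which you still owe a proof that the limit exists. Your parenthetical ``with the reading $B=+\infty$'' does not supply this: when $u\notin W^{1,q}$ your $B_{y',\vec n}$ is simply undefined, and nothing in your argument shows $\liminf_{\e\to0^+}\Phi_\Omega(\e)=+\infty$ whenever $\sup_{\e>0}\Phi_\Omega(\e)=+\infty$. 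One cannot easily rescue this by a case split, since ruling out ``$\sup=+\infty$ but $\liminf<+\infty$'' is exactly the content of Lemma~\ref{gughfgfhfgdgddffddfKKzzbvqhig}. The clean fix is to replace the Jensen/FTC step on each slice by the one-dimensional version of the paper's subadditivity argument (which holds for every $g\in L^q(I)$ on an interval), but at that point you are essentially reproducing the paper's proof with an unnecessary slicing layer on top.
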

\begin{proof}
In the case of bounded $\Omega$ and $u\in L^\infty(\Omega,\R^m)$,
\er{GMT'3jGHKKkkhjjhgzzZZzzZZzzbvq88nkhhjggjgjkpkjljluytytuguutloklljjgjgjhjklljjjkjkhjkkhkhhkhhjlkkhkjljljkjlkkhkllhjhjhhfyfp}
and
\er{GMT'3jGHKKkkhjjhgzzZZzzZZzzbvq88nkhhjggjgjkpkjljluytytuguutloklljjgjgjhjklljjjkjkhhiigjhjgjjljklkhhkghfhk;llkljklljjhjbjjbljjhuyuhyulkjjkhhgghijjffgyfghjhopp}
follow from Theorem \ref{hjkgjkfhjff}
together with Corollary \ref{gughfgfhfgdgddffddfKKzzbvqhigygygtyuu}
from the Appendix. On the other hand, in in the case of unbounded
$\Omega$ and/or $u\notin L^\infty(\Omega,\R^m)$, in order to prove
\er{GMT'3jGHKKkkhjjhgzzZZzzZZzzbvq88nkhhjggjgjkpkjljluytytuguutloklljjgjgjhjklljjjkjkhjkkhkhhkhhjlkkhkjljljkjlkkhkllhjhjhhfyfp}
we use Proposition \ref{hjkgjkfhjffgggvggoop} with $G=\Omega$,
together with Corollary \ref{gughfgfhfgdgddffddfKKzzbvqhigygygtyuu}
from the Appendix.
\end{proof}

\begin{corollary}\label{hjkgjkfhjffgggvggoopikhh}
Let $\Omega\subset\R^N$ be an open domain, $q\geq 1$ and $u\in
L^q(\Omega,\R^m)
$. Then, in the case $q>1$ we have
\begin{multline}\label{GMT'3jGHKKkkhjjhgzzZZzzZZzzbvq88nkhhjggjgjkpkjljluytytuguutloklljjgjgjhjklljjjkjkhjkkhkhhkhhjlkkhkjljljkjlkkhkllhjhjhhfyfppiooiououiuiuiuhjhjkhkhkjkhhk}
\frac{\int_{S^{N-1}}|z_1|^qd\mathcal{H}^{N-1}(z)}{(N+q)}\,\int_\Omega\big|\nabla
u(x)\big|^qdx\leq \\
\limsup\limits_{s\to+\infty}\Bigg\{s\,\mathcal{L}^{2N}\Bigg(\bigg\{(x,y)\in
\Omega\times \Omega\;:\;\frac{
\big|u( y)-u(x)\big|^q}{|y-x|^{q+N}}> s\bigg\}\Bigg)\Bigg\}\,,
\end{multline}
with the convention that $\int_\Omega\big|\nabla
u(x)\big|^qdx=+\infty$ if $u\notin W^{1,q}(\Omega,\R^m)$. On the
other hand, in the case $q=1$ we have:
\begin{multline}\label{GMT'3jGHKKkkhjjhgzzZZzzZZzzbvq88nkhhjggjgjkpkjljluytytuguutloklljjgjgjhjklljjjkjkhjkkhkhhkhhjlkkhkjljljkjlkkhkllhjhjhhfyfppiooiououiuiuiuhjhjkhkhkjkjjkkhjkhkhjhjh}
\frac{\int_{S^{N-1}}|z_1|d\mathcal{H}^{N-1}(z)}{(N+1)}\,\|Du\|(\Omega)
\leq\\
\limsup\limits_{s\to+\infty}\Bigg\{s\,\mathcal{L}^{2N}\Bigg(\bigg\{(x,y)\in
\Omega\times \Omega\;:\;\frac{
\big|u( y)-u(x)\big|}{|y-x|^{1+N}}> s\bigg\}\Bigg)\Bigg\}\,,
\end{multline}
with the convention $\|Du\|(\Omega)=+\infty$ if $u\notin
BV(\Omega,\R^m)$.
\end{corollary}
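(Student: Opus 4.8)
The plan is to localise the estimate to nice subdomains of $\Omega$ and combine Proposition~\ref{hjkgjkfhjffgggvggoop} with the ``BBM formula''. Set
\[
\mathcal S:=\limsup\limits_{s\to+\infty}\Bigg\{s\,\mathcal{L}^{2N}\Bigg(\bigg\{(x,y)\in\Omega\times\Omega\;:\;\frac{\big|u(y)-u(x)\big|^q}{|y-x|^{q+N}}>s\bigg\}\Bigg)\Bigg\}.
\]
If $\mathcal S=+\infty$ there is nothing to prove, so assume $\mathcal S<+\infty$.

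First I would fix an arbitrary bounded open set $G$ with smooth boundary such that $\ov G\subset\subset\Omega$; then $\mathcal L^N(\partial G)=0$, $G$ is a Lipschitz domain, and $h:=\dist(G,\R^N\setminus\Omega)>0$. Since $u\in L^q(\Omega,\R^m)\subset L^q(G,\R^m)$, Proposition~\ref{hjkgjkfhjffgggvggoop} applied to $G$ gives
\[
\limsup\limits_{\e\to 0^+}\Bigg(\int\limits_{S^{N-1}}\int\limits_{G}\frac{\big|u(x+\e\vec n)-u(x)\big|^q}{\e^q}\,\chi_{G}(x+\e\vec n)\,dx\,d\mathcal{H}^{N-1}(\vec n)\Bigg)\le (N+q)\,\mathcal S.
\]
Then I would identify this left-hand side through the variant of the ``BBM formula'' obtained above, namely \er{GMT'3jGHKKkkhjjhgzzZZzzZZzzbvq88nkhhjggjgjkpkjljluytytl;klljkljojkojjo;k;kklklklkiljluikljjhkjhjhjhjh} when $q>1$, and \er{GMT'3jGHKKkkhjjhgzzZZzzZZzzbvq88nkhhjggjgjkpkjljluytytl;klljkljojkojjo;k;kklklklkiljluikljjhkjhjhjhjhjhgjghhf} when $q=1$ (which in turn rests on D\'avila's extension of the BBM identity to $BV$ functions): there the limit exists, so the above $\limsup$ is in fact a limit, and, using $\mathcal H^{N-1}(S^{N-1})K_{q,N}=\int_{S^{N-1}}|z_1|^qd\mathcal H^{N-1}(z)$ from \er{fgyufghfghjgghgjkhkkGHGHKKggkhhjoozzbvqkkint}, it equals $\big(\int_{S^{N-1}}|z_1|^qd\mathcal H^{N-1}(z)\big)\int_G|\nabla u|^q$ if $q>1$ and $\big(\int_{S^{N-1}}|z_1|d\mathcal H^{N-1}(z)\big)\|Du\|(G)$ if $q=1$, with the conventions $\int_G|\nabla u|^q=+\infty$ when $u\notin W^{1,q}(G,\R^m)$ and $\|Du\|(G)=+\infty$ when $u\notin BV(G,\R^m)$. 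Combining the two displays gives, for every such $G$,
\[
\Big(\int_{S^{N-1}}|z_1|^qd\mathcal H^{N-1}(z)\Big)\int_G\big|\nabla u\big|^q\le (N+q)\,\mathcal S\qquad(q>1),
\]
and $\big(\int_{S^{N-1}}|z_1|d\mathcal H^{N-1}(z)\big)\|Du\|(G)\le(N+1)\,\mathcal S$ when $q=1$.

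Finally I would pass from $G$ to $\Omega$. As $\mathcal S<+\infty$ and $\int_{S^{N-1}}|z_1|^qd\mathcal H^{N-1}(z)>0$, this forces $\int_G|\nabla u|^q<+\infty$ (resp.\ $\|Du\|(G)<+\infty$) for every bounded smooth $G$ with $\ov G\subset\subset\Omega$; hence $u\in W^{1,q}_{loc}(\Omega,\R^m)$ (resp.\ $u\in BV_{loc}(\Omega,\R^m)$), and the distributional gradient $\nabla u$ (resp.\ the $\R^{m\times N}$--valued measure $Du$) is globally defined on $\Omega$, the local objects patching together in the usual way. Exhausting $\Omega$ by bounded smooth open sets and using monotone convergence (equivalently, inner regularity of the measure $|\nabla u|^q\,dx$, resp.\ of the total variation $\|Du\|$), and noting that the bound above is uniform in $G$, one obtains
\[
\int_\Omega\big|\nabla u\big|^q\le\frac{N+q}{\int_{S^{N-1}}|z_1|^qd\mathcal H^{N-1}(z)}\,\mathcal S<+\infty,
\]
whence $u\in W^{1,q}(\Omega,\R^m)$ (as also $u\in L^q$); rearranging is the claimed inequality. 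The case $q=1$ is identical with $\|Du\|$ in place of $\int|\nabla u|^q$, yielding $u\in BV(\Omega,\R^m)$ and the corresponding inequality.

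The main obstacle is this last step. The point is that one must estimate on \emph{large} smooth subdomains $G$ approaching $\Omega$, for which the constant $(N+q)$ in Proposition~\ref{hjkgjkfhjffgggvggoop} is absolute; covering a compact subset of $\Omega$ by finitely many balls instead would yield a bound whose constant grows with the number of balls and hence would not integrate up. Given the uniform bound for each single $G$, letting $G\nearrow\Omega$ is then routine, as are the existence of a smooth exhaustion of an arbitrary open set and the gluing of the local Sobolev (resp.\ $BV$) structures into a global one; for $q=1$ one additionally invokes D\'avila's form of the BBM identity for $BV$ functions.
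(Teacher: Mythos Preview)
Your proposal is correct and follows essentially the same route as the paper's proof: localise to a nice $G\subset\subset\Omega$, apply Proposition~\ref{hjkgjkfhjffgggvggoop} to bound the $\limsup$ of the spherical integral by $(N+q)\mathcal S$, identify that $\limsup$ with the gradient norm via the BBM formula, and exhaust $\Omega$. The only cosmetic difference is that you invoke the spherical variant \er{GMT'3jGHKKkkhjjhgzzZZzzZZzzbvq88nkhhjggjgjkpkjljluytytl;klljkljojkojjo;k;kklklklkiljluikljjhkjhjhjhjh}--\er{GMT'3jGHKKkkhjjhgzzZZzzZZzzbvq88nkhhjggjgjkpkjljluytytl;klljkljojkojjo;k;kklklklkiljluikljjhkjhjhjhjhjhgjghhf} directly, whereas the paper rederives the needed inequality on the spot via Lemma~\ref{gjyfyfuyyfifgyify} (which shows $\limsup$ of the mollifier double integral is dominated by $\tfrac{1}{\mathcal H^{N-1}(S^{N-1})}$ times the $\limsup$ of the spherical integral) and then applies the standard BBM identity \er{eq:1jjj}--\er{eq:1jjj1}; this is precisely how \er{GMT'3jGHKKkkhjjhgzzZZzzZZzzbvq88nkhhjggjgjkpkjljluytytl;klljkljojkojjo;k;kklklklkiljluikljjhkjhjhjhjh}--\er{GMT'3jGHKKkkhjjhgzzZZzzZZzzbvq88nkhhjggjgjkpkjljluytytl;klljkljojkojjo;k;kklklklkiljluikljjhkjhjhjhjhjhgjghhf} were obtained in the Introduction, so the two arguments coincide.
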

\begin{proof}
Let $\rho_\e\big(|z|\big):\R^N\to[0,+\infty)$ be radial mollifiers,
so that $\int_{\R^N}\rho_\e\big(|z|\big)dz=1$ and for every $r>0$
there exits $\delta:=\delta_r>0$, such that $\supp{(\rho_\e)}\subset
B_r(0)$ for every $\e\in(0,\delta_r)$. Next fix an open subset
$G\subset\subset\Omega$ with Lipschitz boundary. Since, by
Proposition \ref{hjkgjkfhjffgggvggoop} we have
\begin{multline}\label{GMT'3jGHKKkkhjjhgzzZZzzZZzzbvq88nkhhjggjgjkpkjljluytytuguutloklljjgjgjhjklljjjkjkhjkkhkhhkhhjlkkhkjljljkjlkkhkllhjhjhhfyfppiooiououiuiuiuhjh}
\limsup\limits_{\e\to
0^+}\Bigg(\int\limits_{S^{N-1}}\int\limits_{G}\chi_{G}(x+\e\vec
n)\frac{\big|u( x+\e\vec
n)-u(x)\big|^q}{\e^q}dxd\mathcal{H}^{N-1}(\vec n)\Bigg)\\
\leq
(N+q)\limsup\limits_{s\to+\infty}\Bigg\{s\,\mathcal{L}^{2N}\Bigg(\bigg\{(x,y)\in
\Omega\times\Omega\;:\;\frac{
\big|u( y)-u(x)\big|^q}{|y-x|^{q+N}}> s\bigg\}\Bigg)\Bigg\}\,,
\end{multline}
and at the same time by Lemma \ref{gjyfyfuyyfifgyify} we have:
\begin{multline}\label{GMT'3jGHKKkkhjjhgzzZZzzZZzzbvq88nkhhjggjgjkpkjljluytytl;klljkljojkojjo;k;kklklklkiljluikljjhkjhhjjhpoijgggjgljj}
\frac{1}{\mathcal{H}^{N-1}(S^{N-1})}\,\limsup\limits_{\e\to
0^+}\Bigg(\int\limits_{S^{N-1}}\int\limits_{G}\chi_{G}(x+\e\vec
n)\frac{\big|u( x+\e\vec
n)-u(x)\big|^q}{\e^q}dxd\mathcal{H}^{N-1}(\vec n)\Bigg)\\
\geq \limsup\limits_{\e\to
0^+}\int\limits_{G}\int\limits_{G}\rho_\e\Big(|y-x|\Big)\frac{\big|u(
y)-u(x)\big|^q}{|y-x|^q}dydx\,,
\end{multline}
we deduce from
\er{GMT'3jGHKKkkhjjhgzzZZzzZZzzbvq88nkhhjggjgjkpkjljluytytuguutloklljjgjgjhjklljjjkjkhjkkhkhhkhhjlkkhkjljljkjlkkhkllhjhjhhfyfppiooiououiuiuiuhjh}
and
\er{GMT'3jGHKKkkhjjhgzzZZzzZZzzbvq88nkhhjggjgjkpkjljluytytl;klljkljojkojjo;k;kklklklkiljluikljjhkjhhjjhpoijgggjgljj}
that
\begin{multline}\label{GMT'3jGHKKkkhjjhgzzZZzzZZzzbvq88nkhhjggjgjkpkjljluytytuguutloklljjgjgjhjklljjjkjkhjkkhkhhkhhjlkkhkjljljkjlkkhkllhjhjhhfyfppiooiououiuiuiuhjh1}
\limsup\limits_{\e\to
0^+}\int\limits_{G}\int\limits_{G}\rho_\e\Big(|y-x|\Big)\frac{\big|u(
y)-u(x)\big|^q}{|y-x|^q}dydx\\
\leq
\frac{(N+q)}{\mathcal{H}^{N-1}(S^{N-1})}\,\limsup\limits_{s\to+\infty}\Bigg\{s\,\mathcal{L}^{2N}\Bigg(\bigg\{(x,y)\in
\Omega\times\Omega\;:\;\frac{
\big|u( y)-u(x)\big|^q}{|y-x|^{q+N}}> s\bigg\}\Bigg)\Bigg\}.
\end{multline}
On the other hand, since  bounded $G\subset{\mathbb R}^N$ has a
Lipschitz boundary, the so called ``BBM formula'' (see \cite{hhh1}
and \cite{hhh5} for the details) states that for $q>1$ we have
\begin{equation}
\label{eq:1jjj} \lim_{\e\to 0^+} \int_{G}\int_{G}
\frac{|u(x)-u(y)|^q}{|x-y|^q}\,\rho_\e\big(|x-y|\big)\,dx\,dy=K_{q,N}\int_{G}\big|\nabla
u(x)\big|^qdx\,,
\end{equation}
with the convention that $\int_{G}\big|\nabla u(x)\big|^qdx=+\infty$
if $u\notin W^{1,q}(G,\R^m)$ and with $K_{q,N}$ given by
\begin{equation}\label{fgyufghfghjgghgjkhkkGHGHKKggkhhjoozzbvqkk}
K_{q,N}:=\frac{1}{\mathcal{H}^{N-1}(S^{N-1})}\int_{S^{N-1}}|z_1|^qd\mathcal{H}^{N-1}(z)\quad\quad\forall
q\geq 1\,,
\end{equation}
where we denote $z:=(z_1,\ldots, z_N)\in\R^N$. Moreover, for $q=1$
we have
\begin{equation}
\label{eq:1jjj1} \lim_{\e\to 0^+} \int_{G}\int_{G}
\frac{|u(x)-u(y)|}{|x-y|}\,\rho_\e\big(|x-y|\big)\,dx\,dy=K_{1,N}\,\|Du\|(G)\,,
\end{equation}
with the convention $\|Du\|(G)=+\infty$ if $u\notin BV(G,\R^m)$.
Inserting it into
\er{GMT'3jGHKKkkhjjhgzzZZzzZZzzbvq88nkhhjggjgjkpkjljluytytuguutloklljjgjgjhjklljjjkjkhjkkhkhhkhhjlkkhkjljljkjlkkhkllhjhjhhfyfppiooiououiuiuiuhjh1}
gives for $q>1$:
\begin{multline}\label{GMT'3jGHKKkkhjjhgzzZZzzZZzzbvq88nkhhjggjgjkpkjljluytytuguutloklljjgjgjhjklljjjkjkhjkkhkhhkhhjlkkhkjljljkjlkkhkllhjhjhhfyfppiooiououiuiuiuhjhjkhkhk}
\frac{\int_{S^{N-1}}|z_1|^qd\mathcal{H}^{N-1}(z)}{(N+q)}\,\int_{G}\big|\nabla
u(x)\big|^qdx=\frac{K_{q,N}\,\mathcal{H}^{N-1}(S^{N-1})}{(N+q)}\,\int_{G}\big|\nabla
u(x)\big|^qdx\\
\leq
\limsup\limits_{s\to+\infty}\Bigg\{s\,\mathcal{L}^{2N}\Bigg(\bigg\{(x,y)\in
\Omega\times\Omega\;:\;\frac{
\big|u( y)-u(x)\big|^q}{|y-x|^{q+N}}> s\bigg\}\Bigg)\Bigg\}\,,
\end{multline}
and for $q=1$:
\begin{multline}\label{GMT'3jGHKKkkhjjhgzzZZzzZZzzbvq88nkhhjggjgjkpkjljluytytuguutloklljjgjgjhjklljjjkjkhjkkhkhhkhhjlkkhkjljljkjlkkhkllhjhjhhfyfppiooiououiuiuiuhjhjkhkhkjkjjkkhjkhk}
\frac{\int_{S^{N-1}}|z_1|d\mathcal{H}^{N-1}(z)}{(N+1)}\,\|Du\|(G)=
\frac{K_{1,N}\,\mathcal{H}^{N-1}(S^{N-1})}{(N+1)}\,\|Du\|(G)\\
\leq
\limsup\limits_{s\to+\infty}\Bigg\{s\,\mathcal{L}^{2N}\Bigg(\bigg\{(x,y)\in
\Omega\times\Omega\;:\;\frac{
\big|u( y)-u(x)\big|}{|y-x|^{1+N}}> s\bigg\}\Bigg)\Bigg\}\,.
\end{multline}
Thus, taking the supremum of the left hand side of
\er{GMT'3jGHKKkkhjjhgzzZZzzZZzzbvq88nkhhjggjgjkpkjljluytytuguutloklljjgjgjhjklljjjkjkhjkkhkhhkhhjlkkhkjljljkjlkkhkllhjhjhhfyfppiooiououiuiuiuhjhjkhkhk}
over all open $G\subset\subset\Omega$ with Lipschitz boundary, we
deduce
\er{GMT'3jGHKKkkhjjhgzzZZzzZZzzbvq88nkhhjggjgjkpkjljluytytuguutloklljjgjgjhjklljjjkjkhjkkhkhhkhhjlkkhkjljljkjlkkhkllhjhjhhfyfppiooiououiuiuiuhjhjkhkhkjkhhk}
and taking the supremum of the left hand side of
\er{GMT'3jGHKKkkhjjhgzzZZzzZZzzbvq88nkhhjggjgjkpkjljluytytuguutloklljjgjgjhjklljjjkjkhjkkhkhhkhhjlkkhkjljljkjlkkhkllhjhjhhfyfppiooiououiuiuiuhjhjkhkhkjkjjkkhjkhk}
over all open $G\subset\subset\Omega$ with Lipschitz boundary, we
deduce
\er{GMT'3jGHKKkkhjjhgzzZZzzZZzzbvq88nkhhjggjgjkpkjljluytytuguutloklljjgjgjhjklljjjkjkhjkkhkhhkhhjlkkhkjljljkjlkkhkllhjhjhhfyfppiooiououiuiuiuhjhjkhkhkjkjjkkhjkhkhjhjh}.
\end{proof}

\begin{lemma}\label{fhgolghoi}
Let $q\geq 1$ and let $\Omega\subset\R^N$ be a domain with Lipschiz
boundary. Then there exist constants $C:=C_{\Omega}>0$ and
${\widetilde C}_{N}>0$, satisfying $C_{\Omega}=1$ if $\Omega=\R^N$,
such that, in the case $q>1$, for every $u\in W^{1,q}(\Omega,\R^m)$
we have
\begin{multline}\label{GMT'3jGHKKkkhjjhgzzZZzzZZzzbvq88nkhhjggjgjkpkjljluytytuguutloklljjgjgjhjklljjjkjkhjkkhkhhkhhjlkkhkjljljkjlkkhkllhjhjhhfyfppiooiououiuiuiuhjhjkhkhkjkhhkhkkjyukky}
\sup\limits_{s\in(0,+\infty)}\Bigg\{s\,\mathcal{L}^{2N}\Bigg(\bigg\{(x,y)\in
\Omega\times \Omega\;:\;\frac{
\big|u( y)-u(x)\big|^q}{|y-x|^{q+N}}> s\bigg\}\Bigg)\Bigg\} \leq
C^q_{\Omega}{\widetilde C}_{N}\,\int_\Omega\big|\nabla
u(x)\big|^qdx\,,
\end{multline}
and, in the case $q=1$, for every $u\in BV(\Omega,\R^m)$ we have:
\begin{multline}\label{GMT'3jGHKKkkhjjhgzzZZzzZZzzbvq88nkhhjggjgjkpkjljluytytuguutloklljjgjgjhjklljjjkjkhjkkhkhhkhhjlkkhkjljljkjlkkhkllhjhjhhfyfppiooiououiuiuiuhjhjkhkhkjkjjkkhjkhkhjhjhkuy}
\sup\limits_{s\in(0,+\infty)}\Bigg\{s\,\mathcal{L}^{2N}\Bigg(\bigg\{(x,y)\in
\Omega\times \Omega\;:\;\frac{
\big|u( y)-u(x)\big|}{|y-x|^{1+N}}> s\bigg\}\Bigg)\Bigg\} \leq
C_{\Omega}{\widetilde C}_{N}\,\|Du\|(\Omega)\,.
\end{multline}
\end{lemma}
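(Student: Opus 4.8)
The plan is to follow the route sketched in Remark~\ref{hkhjggjhgh}: reduce to the case $\Omega=\R^N$ by an extension argument, and in that case deduce the estimate from Theorem~\ref{hjkgjkfhjffgggvggoopikhhhkjhhkjhjgjhhj} by approximation. For $v\in L^q_{loc}(\R^N,\R^m)$ and an open set $U\subset\R^N$ write
\[
\Phi_U(v):=\sup\limits_{s\in(0,+\infty)}\Bigg\{s\,\mathcal{L}^{2N}\Bigg(\bigg\{(x,y)\in U\times U\;:\;\frac{|v(y)-v(x)|^q}{|y-x|^{q+N}}>s\bigg\}\Bigg)\Bigg\},
\]
which is the $q$-th power of the $L^{q,\infty}(U\times U)$ quasinorm of $(x,y)\mapsto|v(y)-v(x)|\,|y-x|^{-1-N/q}$. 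Two elementary remarks will be used repeatedly: (a) if $\tilde u\in L^q_{loc}(\R^N,\R^m)$ agrees with $u$ a.e.\ on $\Omega$, then the superlevel set defining $\Phi_\Omega(u)$ is, for each $s$, contained in the corresponding one for $\Phi_{\R^N}(\tilde u)$, so $\Phi_\Omega(u)\le\Phi_{\R^N}(\tilde u)$; and (b) $\Phi_U(v)=\Phi_U(v+c)$ for every constant $c\in\R^m$.

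First I would settle the case $\Omega=\R^N$, i.e.\ prove $\Phi_{\R^N}(u)\le\widetilde C_N\int_{\R^N}|\nabla u|^q\,dx$ for $u\in W^{1,q}(\R^N,\R^m)$ when $q>1$, and $\Phi_{\R^N}(u)\le\widetilde C_N\|Du\|(\R^N)$ for $u\in BV(\R^N,\R^m)$ when $q=1$; here $\widetilde C_N$ is the constant of Theorem~\ref{hjkgjkfhjffgggvggoopikhhhkjhhkjhjgjhhj}, up to a factor depending on $N$, $m$, $q$ coming from reducing the $\R^m$-valued case to its $m$ scalar components one superlevel set at a time. The crucial structural fact is that $\Phi_{\R^N}$ is sequentially lower semicontinuous under a.e.\ convergence: if $v_k\to v$ a.e.\ on $\R^N$, then $g_k(x,y):=|v_k(y)-v_k(x)|\,|y-x|^{-1-N/q}$ converges $\mathcal{L}^{2N}$-a.e.\ on $\R^N\times\R^N$ to $g(x,y):=|v(y)-v(x)|\,|y-x|^{-1-N/q}$, and for each fixed $\lambda>0$ the set $\{g>\lambda\}$ is contained, modulo an $\mathcal{L}^{2N}$-null set, in $\liminf_k\{g_k>\lambda\}$; Fatou's lemma applied to the indicator functions then gives $\lambda^q\mathcal{L}^{2N}(\{g>\lambda\})\le\liminf_k\Phi_{\R^N}(v_k)$, and taking the supremum over $\lambda$ yields $\Phi_{\R^N}(v)\le\liminf_k\Phi_{\R^N}(v_k)$. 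I would then choose $u_k\in C_c^\infty(\R^N,\R^m)$ with $u_k\to u$ in $W^{1,q}(\R^N)$ (hence, along a subsequence, a.e.) when $q>1$, respectively $u_k\to u$ in $L^1(\R^N)$ with $\int_{\R^N}|\nabla u_k|\,dx\to\|Du\|(\R^N)$ (again a.e.\ along a subsequence) when $q=1$, produced by the standard mollification-and-truncation procedure together with lower semicontinuity of the total variation. Applying Theorem~\ref{hjkgjkfhjffgggvggoopikhhhkjhhkjhjgjhhj} to each $u_k$ and passing to the $\liminf$ gives the claim with $C_\Omega=1$.

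For a general domain $\Omega$ with Lipschitz boundary I would invoke a bounded linear extension operator $E$ taking $W^{1,q}(\Omega,\R^m)$ into $W^{1,q}(\R^N,\R^m)$ (and $BV(\Omega,\R^m)$ into $BV(\R^N,\R^m)$ when $q=1$) with $Eu=u$ a.e.\ on $\Omega$. There is nothing to prove unless $u\in W^{1,q}(\Omega,\R^m)$ (resp.\ $u\in BV$); assuming this, put $v:=u-\bar u_\Omega$ (the mean over $\Omega$) when $\Omega$ is bounded and $v:=u$ otherwise. Using the Poincar\'e--Wirtinger inequality on the bounded Lipschitz domain $\Omega$ — and a reflection/localisation argument when $\Omega\neq\R^N$ is unbounded — one controls $\|Ev\|_{W^{1,q}(\R^N)}$ (resp.\ $\|D(Ev)\|(\R^N)$) by $C_\Omega\big(\int_\Omega|\nabla u|^q\,dx\big)^{1/q}$ (resp.\ $C_\Omega\|Du\|(\Omega)$), with no lower-order term surviving. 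Then, by (b), (a) and the case $\Omega=\R^N$,
\[
\Phi_\Omega(u)=\Phi_\Omega(v)\le\Phi_{\R^N}(Ev)\le\widetilde C_N\int_{\R^N}|\nabla(Ev)|^q\,dx\le\widetilde C_N\,C_\Omega^q\int_\Omega|\nabla u|^q\,dx,
\]
which is the desired inequality for $q>1$; the case $q=1$ is identical and gives $\Phi_\Omega(u)\le\widetilde C_N\,C_\Omega\,\|Du\|(\Omega)$.

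The only genuinely non-routine ingredient is the lower semicontinuity of the weak-type functional $\Phi_{\R^N}$ under a.e.\ (equivalently, $L^q$) convergence; this is precisely what upgrades Theorem~\ref{hjkgjkfhjffgggvggoopikhhhkjhhkjhjgjhhj}, stated only for $C_c^\infty$ functions, to all of $W^{1,q}$ and $BV$. The extension operators, the Poincar\'e--Wirtinger inequality, and the smooth/truncated approximations are classical, and the scalar-to-$\R^m$-valued passage is a minor point absorbed into the constant $\widetilde C_N$.
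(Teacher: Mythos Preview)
Your proof is correct and follows essentially the same route as the paper's: extend $u$ to $\R^N$ via a Lipschitz-boundary extension operator, approximate the extension by $C_c^\infty$ functions with converging gradient (semi)norms, apply Theorem~\ref{hjkgjkfhjffgggvggoopikhhhkjhhkjhjgjhhj} to the approximants, and pass to the limit for each fixed level $s$. The only differences are expository: you make the Fatou/lower-semicontinuity step for the weak-type functional explicit, and you use Poincar\'e--Wirtinger to justify the gradient-by-gradient bound on the extension, whereas the paper simply invokes an extension theorem already stated in that form.
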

\begin{proof}
By Extension Theorem for Sobolev and $BV$ functions there exist a
constant $C:=C_{\Omega}>0$ such that, in the case $q>1$ for every
$u\in W^{1,q}(\Omega,\R^m)$ there exists its extension $\tilde u\in
W^{1,q}(\R^N,\R^m)$ with the property
\begin{equation}\label{gjfguhjfghfhkjh}
\begin{cases}
\tilde u(x)=u(x)\quad\quad\quad\quad\forall\,x\in\Omega\,,
\\
\int_{\R^N}\big|\nabla \tilde u(x)\big|^qdx\leq
C^q_{\Omega}\,\int_\Omega\big|\nabla u(x)\big|^qdx\,,
\end{cases}
\end{equation}
and in the case $q=1$ for every $u\in BV(\Omega,\R^m)$ there exists
its extension $\tilde u\in BV(\R^N,\R^m)$ with the property
\begin{equation}\label{gjfguhjfghfhkjhyutujgjgh}
\begin{cases}
\tilde u(x)=u(x)\quad\quad\quad\quad\forall\,x\in\Omega\,,
\\
\big\|D\tilde u\big\|(\R^N)\leq C_{\Omega}\,\|Du\|(\Omega)\,.
\end{cases}
\end{equation}
Moreover, in the trivial case $\Omega=\R^N$ we obviously can
consider $C_{\Omega}=1$. Next, by the standard properties of the
Sobolev and the $BV$ functions, there exist a sequence
$\big\{\varphi_n(x)\big\}_{n=1}^{+\infty}\subset
C^\infty_c(\R^N,\R^m)$ such that in the case $q>1$ we have
\begin{equation}\label{gjfguhjfghfhkjhhikh}
\begin{cases}
\varphi_n\to \tilde u\quad\quad\text{strongly in}\quad
L^q(\R^N,\R^m)\,,
\\
\lim\limits_{n\to+\infty}\int_{\R^N}\big|\nabla
\varphi_n(x)\big|^qdx=\int_{\R^N}\big|\nabla \tilde u(x)\big|^qdx\,,
\end{cases}
\end{equation}
and in the case $q=1$ we have
\begin{equation}\label{gjfguhjfghfhkjhhkjhgjgj}
\begin{cases}
\varphi_n\to \tilde u\quad\quad\text{strongly in}\quad
L^1(\R^N,\R^m)\,,
\\
\lim\limits_{n\to+\infty}\int_{\R^N}\big|\nabla
\varphi_n(x)\big|dx=\big\|D \tilde u\big\|(\R^N)\,.
\end{cases}
\end{equation}
On the other hand,  H.~Brezis, J.~Van Schaftingen and Po-Lam~Yung in
\cite{hhh3} or \cite{hhh2} proved that for every $q\geq 1$ there
exists a constant ${\widetilde C}:={\widetilde C}_{N}>0$ such that
for every $\varphi(x)\in C^\infty_c(\R^N,\R^m)$ we have
\begin{multline}\label{GMT'3jGHKKkkhjjhgzzZZzzZZzzbvq88nkhhjggjgjkpkjljluytytuguutloklljjgjgjhjklljjjkjkhjkkhkhhkhhjlkkhkjljljkjlkkhkllhjhjhhfyfppiooiououiuiuiuhjhjkhkhkjkhhkhkkjyukkyjgggjjggjg}
\sup\limits_{s\in(0,+\infty)}\Bigg\{s\,\mathcal{L}^{2N}\Bigg(\bigg\{(x,y)\in
\R^N\times \R^N\;:\;\frac{
\big|\varphi( y)-\varphi(x)\big|^q}{|y-x|^{q+N}}>
s\bigg\}\Bigg)\Bigg\} \leq {\widetilde
C}_{N}\,\int_{\R^N}\big|\nabla \varphi(x)\big|^qdx\,.
\end{multline}
In particular, for every $s>0$ and every $n\in\mathbb{N}$ we have:
\begin{equation}\label{GMT'3jGHKKkkhjjhgzzZZzzZZzzbvq88nkhhjggjgjkpkjljluytytuguutloklljjgjgjhjklljjjkjkhjkkhkhhkhhjlkkhkjljljkjlkkhkllhjhjhhfyfppiooiououiuiuiuhjhjkhkhkjkhhkhkkjyukkyjgggjjggjgkyhj}
s\,\mathcal{L}^{2N}\Bigg(\bigg\{(x,y)\in \R^N\times \R^N\;:\;\frac{
\big|\varphi_n( y)-\varphi_n(x)\big|^q}{|y-x|^{q+N}}> s\bigg\}\Bigg)
\leq {\widetilde C}_{N}\,\int_{\R^N}\big|\nabla
\varphi_n(x)\big|^qdx\,.
\end{equation}
Thus letting $n\to +\infty$ in
\er{GMT'3jGHKKkkhjjhgzzZZzzZZzzbvq88nkhhjggjgjkpkjljluytytuguutloklljjgjgjhjklljjjkjkhjkkhkhhkhhjlkkhkjljljkjlkkhkllhjhjhhfyfppiooiououiuiuiuhjhjkhkhkjkhhkhkkjyukkyjgggjjggjgkyhj}
and using either \er{gjfguhjfghfhkjhhikh} (for $q>1$) or
\er{gjfguhjfghfhkjhhkjhgjgj} (for $q=1$), in the case $q>1$ for
every $s>0$ we deduce
\begin{equation}\label{GMT'3jGHKKkkhjjhgzzZZzzZZzzbvq88nkhhjggjgjkpkjljluytytuguutloklljjgjgjhjklljjjkjkhjkkhkhhkhhjlkkhkjljljkjlkkhkllhjhjhhfyfppiooiououiuiuiuhjhjkhkhkjkhhkhkkjyukkyjgggjjggjgkyhjhkjhjhljkjl}
s\,\mathcal{L}^{2N}\Bigg(\bigg\{(x,y)\in \R^N\times \R^N\;:\;\frac{
\big|\tilde u( y)-\tilde u(x)\big|^q}{|y-x|^{q+N}}> s\bigg\}\Bigg)
\leq {\widetilde C}_{N}\int_{\R^N}\big|\nabla \tilde
u(x)\big|^qdx\,.
\end{equation}
and in the case $q=1$ for every $s>0$ we deduce
\begin{equation}\label{GMT'3jGHKKkkhjjhgzzZZzzZZzzbvq88nkhhjggjgjkpkjljluytytuguutloklljjgjgjhjklljjjkjkhjkkhkhhkhhjlkkhkjljljkjlkkhkllhjhjhhfyfppiooiououiuiuiuhjhjkhkhkjkhhkhkkjyukkyjgggjjggjgkyhjhkjhjhjhkhilkl}
s\,\mathcal{L}^{2N}\Bigg(\bigg\{(x,y)\in \R^N\times \R^N\;:\;\frac{
\big|\tilde u( y)-\tilde u(x)\big|}{|y-x|^{1+N}}> s\bigg\}\Bigg)
\leq {\widetilde C}_{N}\,\big\|D \tilde u\big\|(\R^N)\,.
\end{equation}
Thus, by \er{gjfguhjfghfhkjh} and
\er{GMT'3jGHKKkkhjjhgzzZZzzZZzzbvq88nkhhjggjgjkpkjljluytytuguutloklljjgjgjhjklljjjkjkhjkkhkhhkhhjlkkhkjljljkjlkkhkllhjhjhhfyfppiooiououiuiuiuhjhjkhkhkjkhhkhkkjyukkyjgggjjggjgkyhjhkjhjhljkjl},
in the case $q>1$ for every $s>0$ we deduce
\begin{equation}\label{GMT'3jGHKKkkhjjhgzzZZzzZZzzbvq88nkhhjggjgjkpkjljluytytuguutloklljjgjgjhjklljjjkjkhjkkhkhhkhhjlkkhkjljljkjlkkhkllhjhjhhfyfppiooiououiuiuiuhjhjkhkhkjkhhkhkkjyukkyjgggjjggjgkyhjhkjhjhljkjlhkh}
s\,\mathcal{L}^{2N}\Bigg(\bigg\{(x,y)\in \Omega\times
\Omega\;:\;\frac{ \big|u( y)-u(x)\big|^q}{|y-x|^{q+N}}>
s\bigg\}\Bigg) \leq
{\widetilde C}_{N}\,C^q_{\Omega}\,\int_{\Omega}\big|\nabla
u(x)\big|^qdx\,,
\end{equation}
and by \er{gjfguhjfghfhkjhyutujgjgh} and
\er{GMT'3jGHKKkkhjjhgzzZZzzZZzzbvq88nkhhjggjgjkpkjljluytytuguutloklljjgjgjhjklljjjkjkhjkkhkhhkhhjlkkhkjljljkjlkkhkllhjhjhhfyfppiooiououiuiuiuhjhjkhkhkjkhhkhkkjyukkyjgggjjggjgkyhjhkjhjhjhkhilkl}
in the case $q=1$ for every $s>0$ we deduce
\begin{equation}\label{GMT'3jGHKKkkhjjhgzzZZzzZZzzbvq88nkhhjggjgjkpkjljluytytuguutloklljjgjgjhjklljjjkjkhjkkhkhhkhhjlkkhkjljljkjlkkhkllhjhjhhfyfppiooiououiuiuiuhjhjkhkhkjkhhkhkkjyukkyjgggjjggjgkyhjhkjhjhjhkhilklhkyj}
s\,\mathcal{L}^{2N}\Bigg(\bigg\{(x,y)\in \Omega\times
\Omega\;:\;\frac{ \big|u( y)-u(x)\big|}{|y-x|^{1+N}}> s\bigg\}\Bigg)
\leq {\widetilde C}_{N}\,C_{\Omega}\,\big\|D u\big\|(\Omega)\,.
\end{equation}
Finally, taking the supremum of ether
\er{GMT'3jGHKKkkhjjhgzzZZzzZZzzbvq88nkhhjggjgjkpkjljluytytuguutloklljjgjgjhjklljjjkjkhjkkhkhhkhhjlkkhkjljljkjlkkhkllhjhjhhfyfppiooiououiuiuiuhjhjkhkhkjkhhkhkkjyukkyjgggjjggjgkyhjhkjhjhljkjlhkh}
or
\er{GMT'3jGHKKkkhjjhgzzZZzzZZzzbvq88nkhhjggjgjkpkjljluytytuguutloklljjgjgjhjklljjjkjkhjkkhkhhkhhjlkkhkjljljkjlkkhkllhjhjhhfyfppiooiououiuiuiuhjhjkhkhkjkhhkhkkjyukkyjgggjjggjgkyhjhkjhjhjhkhilklhkyj}
over the set $s\in(0,+\infty)$ completes the proof.
\end{proof}

\begin{proof}[Proof of \rth{hjkgjkfhjffgggvggoopikhhhkjh}]
It suffices to combine Corollary \ref{hjkgjkfhjffgggvggoopikhh} with
Lemma \ref{fhgolghoi}.
\end{proof}

\section{Proof of Theorem
\ref{hjkgjkfhjffgggvggoopikhhhkjhhkjhjgjhhjgggghhdf11}}\label{limtem}
First, we prove the following proposition.
\begin{proposition}\label{fhfhfhfffhkjkj}
Let $\Omega\subset\R^N$ be an open set and let $u\in Lip(\R^N,\R^m)$
be such that there exists $R>0$ satisfying $u(x)=0$ for every
$x\in\R^N\setminus B_R(0)$. Next let $G_0:
\R\times\R^m\times\R^m\times\R^N\times\R^N\to[0,+\infty)$ be a
continuous function, such that $G_0(0,0,0,x,y)=0$ for every
$x,y\in\R^N$ and $G_0$ is bounded on every set of the type
$[\alpha,\beta]\times K\times K\times\R^N\times\R^N$ with any
$K\subset\subset\R^N$ and any $\alpha<\beta\in\R$. Then,
\begin{multline}\label{GMT'3jGHKKkkhjjhgzzZZzzZZzzbvq88nkhhjggjgjkpkjljluytytuguutloklljjgjgjhjklljjjkjkhhiigjhjgjjljklkhhkghfhk;llkljklljjhjbjjbljjhuyuhyulkjjkrr88mhjhhjhjhhhjy1iyyuliyhigugu}
\lim\limits_{s\to +\infty}s\mathcal{L}^{2N}\Bigg(\Bigg\{(x,y)\in
\Omega\times\Omega\;:\;
G_0\bigg(\frac{\big|u(y)-u(x)\big|}{|y-x|},u(y),u(x),y,x\bigg)\,\frac{1}{|y-x|^{N}}>
s\Bigg\}\Bigg)\\
=\frac{1}{N}\int\limits_{\Omega}\Bigg(\int\limits_{S^{N-1}}G_0\bigg(\big|\nabla
u(x)\big||z_1|,u(x),u(x),x,x\bigg)d\mathcal{H}^{N-1}(z)\Bigg)dx.
\end{multline}
\end{proposition}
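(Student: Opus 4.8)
The plan is to rewrite the $2N$--dimensional measure appearing in Proposition~\ref{fhfhfhfffhkjkj} in polar coordinates, reduce the computation to a one–dimensional integral, and then pass to the limit $s\to+\infty$ by an elementary squeezing argument combined with dominated convergence.

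\medskip

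\emph{Step 1 (reduction to a one–dimensional integral).} Put $L:=\mathrm{Lip}(u)$ and pick $K\subset\subset\R^m$ with $u(\R^N)\subset K$. By the boundedness hypothesis on $G_0$ there is $M>0$ with
\begin{equation*}
0\le H(x,y):=G_0\!\left(\frac{|u(y)-u(x)|}{|y-x|},\,u(y),\,u(x),\,y,\,x\right)\le M\qquad\text{for all }x\neq y,
\end{equation*}
and, since $G_0(0,0,0,\cdot,\cdot)\equiv0$ and $u\equiv0$ off $B_R(0)$, we have $H(x,y)=0$ whenever $x,y\notin B_R(0)$. Substituting $y=x+z$ and then writing $z=t\omega$ with $t>0$, $\omega\in S^{N-1}$, so that $dz=t^{N-1}\,dt\,d\mathcal{H}^{N-1}(\omega)$, Tonelli's theorem gives, for every $s>0$,
\begin{equation*}
\mathcal{L}^{2N}\Big(\big\{(x,y)\in\Omega\times\Omega:\ H(x,y)\,|y-x|^{-N}>s\big\}\Big)=\int_{S^{N-1}}\int_{\Omega}P_s(\omega,x)\,dx\,d\mathcal{H}^{N-1}(\omega),
\end{equation*}
where $P_s(\omega,x):=\int_0^{+\infty}t^{N-1}\,\chi_{\Omega}(x+t\omega)\,\chi_{\{H(x,x+t\omega)>s\,t^{N}\}}\,dt$. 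Since $H\le M$, the integrand of $P_s$ vanishes for $t\ge(M/s)^{1/N}$, hence $0\le s\,P_s(\omega,x)\le M/N$; and, by the support property of $H$, $P_s(\omega,x)=0$ unless $x\in B_{R+(M/s)^{1/N}}(0)$. Therefore, for all $s\ge M$,
\begin{equation*}
0\le s\,P_s(\omega,x)\le \frac{M}{N}\,\chi_{B_{R+1}(0)}(x)\qquad\text{on }S^{N-1}\times\Omega,
\end{equation*}
and the majorant on the right is integrable over $S^{N-1}\times\Omega$.

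\medskip

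\emph{Step 2 (pointwise limit).} Fix $(\omega,x)$ with $x\in\Omega$ and $u$ differentiable at $x$; by Rademacher's theorem this holds for $(\mathcal{H}^{N-1}\otimes\mathcal{L}^N)$--a.e.\ $(\omega,x)\in S^{N-1}\times\Omega$. As $\Omega$ is open, $x+t\omega\in\Omega$ for small $t$; as $u$ is differentiable at $x$, $\frac{|u(x+t\omega)-u(x)|}{t}\to|\nabla u(x)\,\omega|$, while $u(x+t\omega)\to u(x)$ and $x+t\omega\to x$ as $t\to0^+$. Continuity of $G_0$ then shows that $t\mapsto H(x,x+t\omega)$ extends continuously to $t=0$ with value $\ell:=G_0\big(|\nabla u(x)\,\omega|,u(x),u(x),x,x\big)$. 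The rest is one–dimensional: given $\varepsilon>0$ choose $\delta>0$ with $x+t\omega\in\Omega$ and $|H(x,x+t\omega)-\ell|<\varepsilon$ for $0<t<\delta$. For $s$ large enough that $s\,\delta^N>M$ and $((\ell+\varepsilon)/s)^{1/N}<\delta$, the condition $H(x,x+t\omega)>s\,t^N$ fails for every $t\ge\delta$, while for $0<t<\delta$ it holds whenever $s\,t^N<\ell-\varepsilon$ and fails whenever $s\,t^N>\ell+\varepsilon$; integrating $t^{N-1}$ over the corresponding intervals yields
\begin{equation*}
\frac{(\ell-\varepsilon)_{+}}{N\,s}\ \le\ P_s(\omega,x)\ \le\ \frac{\ell+\varepsilon}{N\,s}.
\end{equation*}
Multiplying by $s$, letting $s\to+\infty$ and then $\varepsilon\to0^+$ gives $\lim_{s\to+\infty}s\,P_s(\omega,x)=\ell/N$ (only the upper bound is used when $\ell=0$).

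\medskip

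\emph{Step 3 (conclusion).} By Step 1 the family $\{s\,P_s\}_{s\ge M}$ is dominated by a fixed integrable function, and by Step 2 it converges pointwise a.e.; the dominated convergence theorem therefore gives
\begin{equation*}
\lim_{s\to+\infty}s\,\mathcal{L}^{2N}\Big(\big\{(x,y)\in\Omega\times\Omega:\ H(x,y)\,|y-x|^{-N}>s\big\}\Big)=\frac1N\int_{S^{N-1}}\int_{\Omega}G_0\big(|\nabla u(x)\,\omega|,u(x),u(x),x,x\big)\,dx\,d\mathcal{H}^{N-1}(\omega).
\end{equation*}
Interchanging the order of integration (Tonelli) and using, for each fixed $x$, the rotational invariance of $\mathcal{H}^{N-1}$ on $S^{N-1}$ — the arguments $u(x),u(x),x,x$ being independent of $\omega$ — to replace $\int_{S^{N-1}}G_0\big(|\nabla u(x)\,\omega|,\dots\big)\,d\mathcal{H}^{N-1}(\omega)$ by $\int_{S^{N-1}}G_0\big(|\nabla u(x)|\,|z_1|,\dots\big)\,d\mathcal{H}^{N-1}(z)$, we arrive at exactly the right–hand side of the asserted identity.

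\medskip

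\emph{Main obstacle.} The delicate step is Step 2: the quantity $H(x,x+t\omega)$ whose superlevel set is measured itself depends on the scale $t$, so it cannot simply be inverted. The resolution is that, for large $s$, only scales $t$ of order $s^{-1/N}\to0$ contribute, and on such scales the continuity of $G_0$ — via differentiability of $u$ at $x$ — forces $H(x,x+t\omega)$ to be essentially the constant $\ell$, which is precisely what the squeeze exploits. The second, more routine, point requiring care is producing a single integrable majorant for $s\,P_s$ over the possibly unbounded $\Omega$; this is where the hypotheses that $u$ is Lipschitz and compactly supported and that $G_0$ is bounded on the relevant slabs (together with $G_0(0,0,0,\cdot,\cdot)=0$) are used.
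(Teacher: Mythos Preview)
Your proof is correct and follows essentially the same architecture as the paper's: substitute $y=x+z$, use Rademacher's theorem together with the continuity of $G_0$ to identify the pointwise limit, apply dominated convergence (with the same majorant coming from the Lipschitz bound, the compact support of $u$, and $G_0(0,0,0,\cdot,\cdot)=0$), and finish with rotational invariance on $S^{N-1}$. The one genuine technical difference is in how the inner limit is computed. The paper sets $s=1/\e^N$, rescales $z\mapsto\e z$, and argues that the integrand converges pointwise in $z$, hence in measure on the bounded ball $B_M(0)$, hence the measures of the superlevel sets $\{z:\cdots>1\}$ converge; this last implication tacitly uses that the level set $\{z:|z|^N=G_0(\ldots)\}$ is $\mathcal L^N$-null, a point the paper does not make explicit. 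You instead pass to polar coordinates immediately and compute $\lim_{s\to\infty}sP_s(\omega,x)$ by a direct $\varepsilon$--$\delta$ squeeze on the one-dimensional radial integral, which is slightly more elementary and sidesteps that measure-zero issue entirely.
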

\begin{proof}
Let $\Omega\subset\R^N$ be an open set and let $u\in Lip(\R^N,\R^m)$
be such that there exists $R>0$ satisfying $u(x)=0$ for every
$x\in\R^N\setminus B_R(0)$. Next let $G:
\R^m\times\R^m\times\R^m\times\R^N\times\R^N\to[0,+\infty)$ be a
continuous function, such that $G(0,0,0,x,y)=0$ for every
$x,y\in\R^N$ and $G$ is bounded on every set of the type $K\times
K\times K\times\R^N\times\R^N$ with any $K\subset\subset\R^N$. Then
for every $s>0$ we have:
\begin{multline}\label{GMT'3jGHKKkkhjjhgzzZZzzZZzzbvq88nkhhjggjgjkpkjljluytytuguutloklljjgjgjhjklljjjkjkhhiigjhjgjjljklkhhkghfhk;llkljklljjhjbjjbljjhuyuhyulkjjkrr88mhjhhjhjhh}
\mathcal{L}^{2N}\Bigg(\Bigg\{(x,y)\in \Omega\times\Omega\;:\;
G\bigg(\frac{u(y)-u(x)}{|y-x|},u(y),u(x),y,x\bigg)\,\frac{1}{|y-x|^{N}}>
s\Bigg\}\Bigg)=\\
\mathcal{L}^{2N}\Bigg(\Bigg\{(x,z)\in \Omega\times\R^N\,:\,
\frac{\chi_{\Omega}(x+z)
}{|z|^{N}} G\bigg(\frac{u(x+z)-u(x)}{|z|},u(x+z),u(x),x+z,x\bigg)>
s\Bigg\}\Bigg)\\= \int\limits_\Omega\mathcal{L}^{N}\Bigg(\Bigg\{z\in
\R^N\,:\, \chi_{\Omega}(x+z)
G\bigg(\frac{u(x+z)-u(x)}{|z|},u(x+z),u(x),x+z,x\bigg)\frac{1}{|z|^{N}}>
s\Bigg\}\Bigg)dx .
\end{multline}
Therefore for every $\e>0$ we have
\begin{multline}\label{GMT'3jGHKKkkhjjhgzzZZzzZZzzbvq88nkhhjggjgjkpkjljluytytuguutloklljjgjgjhjklljjjkjkhhiigjhjgjjljklkhhkghfhk;llkljklljjhjbjjbljjhuyuhyulkjjkrr88mhjhhjhjhhhjy}
\frac{1}{\e^N}\mathcal{L}^{2N}\Bigg(\Bigg\{(x,y)\in
\Omega\times\Omega\;:\;
G\bigg(\frac{u(y)-u(x)}{|y-x|},u(y),u(x),y,x\bigg)\,\frac{1}{|y-x|^{N}}>
\frac{1}{\e^N}\Bigg\}\Bigg)=\\
\int\limits_\Omega\frac{1}{\e^N}\mathcal{L}^{N}\Bigg(\Bigg\{z\in
\R^N\,:\, \chi_{\Omega}(x+z)
G\bigg(\frac{u(x+z)-u(x)}{|z|},u(x+z),u(x),x+z,x\bigg)\frac{1}{|z|^{N}}>
\frac{1}{\e^N}\Bigg\}\Bigg)dx\\=
\int\limits_\Omega\mathcal{L}^{N}\Bigg(\Bigg\{z\in \R^N\,:\,
\chi_{\Omega}(x+\e z) G\bigg(\frac{u(x+\e z)-u(x)}{|\e z|},u(x+\e
z),u(x),x+\e z,x\bigg)\frac{1}{|z|^{N}}> 1\Bigg\}\Bigg)dx .
\end{multline}
Moreover, since $G(0,0,0,x,y)=0$, by the Lipschitz and the compact
support conditions for $u$ we obviously deduce that there exists
$M>0$ such that
\begin{align}\label{GMT'3jGHKKkkhjjhgzzZZzzZZzzbvq88nkhhjggjgjkpkjljluytytuguutloklljjgjgjhjklljjjkjkhhiigjhjgjjljklkhhkghfhk;llkljklljjhjbjjbljjhuyuhyulkjjkrr88mhjhhjhjhhhjyuiyyj}
0\leq G\bigg(\frac{u(y)-u(x)}{|y-x|},u(y),u(x),y,x\bigg)\leq
\Big(\frac{M}{2}\Big)^N\quad\forall
(y,x)\in\R^N\times\R^N\quad\quad\text{and}\quad\quad\\
G\bigg(\frac{u(y)-u(x)}{|y-x|},u(y),u(x),y,x\bigg)=0\quad\quad\text{whenever}\quad\quad
|x|\geq \frac{M}{2}\;\;\text{and}\;\;|y|\geq \frac{M}{2}\,.
\end{align}
In particular, for every $\e\in(0,1)$ we have
\begin{multline}\label{GMT'3jGHKKkkhjjhgzzZZzzZZzzbvq88nkhhjggjgjkpkjljluytytuguutloklljjgjgjhjklljjjkjkhhiigjhjgjjljklkhhkghfhk;llkljklljjhjbjjbljjhuyuhyulkjjkrr88mhjhhjhjhhhjyhkhhhjhh}
\mathcal{L}^{N}\Bigg(\Bigg\{z\in \R^N\,:\, \chi_{\Omega}(x+\e z)
G\bigg(\frac{u(x+\e z)-u(x)}{|\e z|},u(x+\e z),u(x),x+\e
z,x\bigg)\frac{1}{|z|^{N}}> 1\Bigg\}\Bigg)=\\
\mathcal{L}^{N}\Bigg(\Bigg\{z\in B_M(0)\,:\, \chi_{\Omega}(x+\e z)
G\bigg(\frac{u(x+\e z)-u(x)}{|\e z|},u(x+\e z),u(x),x+\e
z,x\bigg)\frac{1}{|z|^{N}}> 1\Bigg\}\Bigg).
\end{multline}
Moreover, for every $\e\in(0,1)$ we have
\begin{multline}\label{GMT'3jGHKKkkhjjhgzzZZzzZZzzbvq88nkhhjggjgjkpkjljluytytuguutloklljjgjgjhjklljjjkjkhhiigjhjgjjljklkhhkghfhk;llkljklljjhjbjjbljjhuyuhyulkjjkrr88mhjhhjhjhhhjyhkhhhjhhyiyyu}
\mathcal{L}^{N}\Bigg(\Bigg\{z\in \R^N\,:\, \chi_{\Omega}(x+\e z)
G\bigg(\frac{u(x+\e z)-u(x)}{|\e z|},u(x+\e z),u(x),x+\e
z,x\bigg)\frac{1}{|z|^{N}}> 1\Bigg\}\Bigg)=0\\
\quad\quad\quad\quad \forall\,x\in\R^N\setminus B_M(0)\,.
\end{multline}
Thus, for every $\e\in(0,1)$ we rewrite
\er{GMT'3jGHKKkkhjjhgzzZZzzZZzzbvq88nkhhjggjgjkpkjljluytytuguutloklljjgjgjhjklljjjkjkhhiigjhjgjjljklkhhkghfhk;llkljklljjhjbjjbljjhuyuhyulkjjkrr88mhjhhjhjhhhjy}
as:
\begin{multline}\label{GMT'3jGHKKkkhjjhgzzZZzzZZzzbvq88nkhhjggjgjkpkjljluytytuguutloklljjgjgjhjklljjjkjkhhiigjhjgjjljklkhhkghfhk;llkljklljjhjbjjbljjhuyuhyulkjjkrr88mhjhhjhjhhhjy1}
\frac{1}{\e^N}\mathcal{L}^{2N}\Bigg(\Bigg\{(x,y)\in
\Omega\times\Omega\;:\;
G\bigg(\frac{u(y)-u(x)}{|y-x|},u(y),u(x),y,x\bigg)\,\frac{1}{|y-x|^{N}}>
\frac{1}{\e^N}\Bigg\}\Bigg)=\\ \int\limits_{\Omega\cap
B_M(0)}\mathcal{L}^{N}\Bigg\{ z\in B_M(0)\,:\,
\frac{\chi_{\Omega}(x+\e z)}{|z|^{N}} G\bigg(\frac{u(x+\e
z)-u(x)}{|\e z|},u(x+\e z),u(x),x+\e z,x\bigg)>1\Bigg\}dx.
\end{multline}
However, since $G$ is continuous and since $u$ is a Lipschitz
function, by Rademacher's Theoreom for a.e. $x\in\R^N$ we have
\begin{multline}\label{GMT'3jGHKKkkhjjhgzzZZzzZZzzbvq88nkhhjggjgjkpkjljluytytuguutloklljjgjgjhjklljjjkjkhhiigjhjgjjljklkhhkghfhk;llkljklljjhjbjjbljjhuyuhyulkjjkrr88mhjhhjhjhhhjyjhjhhjhhjhj}
\lim\limits_{\e\to 0^+}G\bigg(\frac{u(x+\e z)-u(x)}{|\e z|},u(x+\e
z),u(x),x+\e z,x\bigg)\\=G\bigg(\nabla u(x)\cdot\frac{z}{|
z|},u(x),u(x),x,x\bigg)\quad\forall z\in\R^N\setminus\{0\}\,.
\end{multline}
Therefore, since the pointwise convergence implies the convergence
by a measure, we deduce from
\er{GMT'3jGHKKkkhjjhgzzZZzzZZzzbvq88nkhhjggjgjkpkjljluytytuguutloklljjgjgjhjklljjjkjkhhiigjhjgjjljklkhhkghfhk;llkljklljjhjbjjbljjhuyuhyulkjjkrr88mhjhhjhjhhhjyjhjhhjhhjhj}
that
\begin{multline}\label{GMT'3jGHKKkkhjjhgzzZZzzZZzzbvq88nkhhjggjgjkpkjljluytytuguutloklljjgjgjhjklljjjkjkhhiigjhjgjjljklkhhkghfhk;llkljklljjhjbjjbljjhuyuhyulkjjkrr88mhjhhjhjhhhjyjkhjjjy}
\lim\limits_{\e\to 0^+}\mathcal{L}^{N}\Bigg(\Bigg\{z\in B_M(0)\,:\,
\frac{\chi_{\Omega}(x+\e z)}{|z|^{N}} G\bigg(\frac{u(x+\e
z)-u(x)}{|\e z|},u(x+\e z),u(x),x+\e z,x\bigg)> 1\Bigg\}\Bigg)\\=
\mathcal{L}^{N}\Bigg(\Bigg\{z\in B_M(0)\,:\,
\frac{\chi_{\Omega}(x)}{|z|^{N}} G\bigg(\nabla u(x)\cdot\frac{z}{|
z|},u(x),u(x),x,x\bigg)> 1\Bigg\}\Bigg)
\\=
\mathcal{L}^{N}\Bigg(\Bigg\{z\in \R^N\,:\,
\frac{\chi_{\Omega}(x)}{|z|^{N}} G\bigg(\nabla u(x)\cdot\frac{z}{|
z|},u(x),u(x),x,x\bigg)> 1\Bigg\}\Bigg) \quad\text{for
a.e.}\;\;x\,\in\R^N\,.
\end{multline}
Therefore, by a dominated convergence Theorem we deduce from
\er{GMT'3jGHKKkkhjjhgzzZZzzZZzzbvq88nkhhjggjgjkpkjljluytytuguutloklljjgjgjhjklljjjkjkhhiigjhjgjjljklkhhkghfhk;llkljklljjhjbjjbljjhuyuhyulkjjkrr88mhjhhjhjhhhjy1}
and
\er{GMT'3jGHKKkkhjjhgzzZZzzZZzzbvq88nkhhjggjgjkpkjljluytytuguutloklljjgjgjhjklljjjkjkhhiigjhjgjjljklkhhkghfhk;llkljklljjhjbjjbljjhuyuhyulkjjkrr88mhjhhjhjhhhjyjkhjjjy}:
\begin{multline}\label{GMT'3jGHKKkkhjjhgzzZZzzZZzzbvq88nkhhjggjgjkpkjljluytytuguutloklljjgjgjhjklljjjkjkhhiigjhjgjjljklkhhkghfhk;llkljklljjhjbjjbljjhuyuhyulkjjkrr88mhjhhjhjhhhjy1iyyu}
\lim\limits_{s\to +\infty}s\mathcal{L}^{2N}\Bigg(\Bigg\{(x,y)\in
\Omega\times\Omega\;:\;
G\bigg(\frac{u(y)-u(x)}{|y-x|},u(y),u(x),y,x\bigg)\,\frac{1}{|y-x|^{N}}>
s\Bigg\}\Bigg)=\\ \lim\limits_{\e\to
0^+}\frac{1}{\e^N}\mathcal{L}^{2N}\Bigg(\Bigg\{(x,y)\in
\Omega\times\Omega\;:\;
G\bigg(\frac{u(y)-u(x)}{|y-x|},u(y),u(x),y,x\bigg)\,\frac{1}{|y-x|^{N}}>
\frac{1}{\e^N}\Bigg\}\Bigg)=\\ \int\limits_{\Omega\cap
B_M(0)}\mathcal{L}^{N}\Bigg(\Bigg\{z\in \R^N\,:\,
|z|^{N}<G\bigg(\nabla u(x)\cdot\frac{z}{| z|},u(x),u(x),x,x\bigg)
\Bigg\}\Bigg)dx\\
=\int\limits_{\Omega}\mathcal{L}^{N}\Bigg(\Bigg\{z\in \R^N\,:\,
|z|^{N}<G\bigg(\nabla u(x)\cdot\frac{z}{| z|},u(x),u(x),x,x\bigg)
\Bigg\}\Bigg)dx.
\end{multline}
However, in the case
$G\big(a,b,c,y,x\big):=G_0\big(|a|,b,c,y,x\big)$ with
$G_0:\R\times\R^m\times\R^m\times\R^N\times\R^N\to[0,+\infty)$, for
every $x\in\R^N$ we have
\begin{multline}\label{GMT'3jGHKKkkhjjhgzzZZzzZZzzbvq88nkhhjggjgjkpkjljluytytuguutloklljjgjgjhjklljjjkjkhhiigjhjgjjljklkhhkghfhk;llkljklljjhjbjjbljjhuyuhyulkjjkrr88mhjhhjhjhhhjy1iyyuhjggjjgghhfhhhgj}
\mathcal{L}^{N}\Bigg(\Bigg\{z\in \R^N\,:\, |z|^{N}<G\bigg(\nabla
u(x)\cdot\frac{z}{| z|},u(x),u(x),x,x\bigg) \Bigg\}\Bigg)=\\
\mathcal{L}^{N}\Bigg(\Bigg\{z\in \R^N\,:\,
|z|^{N}<G_0\bigg(\big|\nabla u(x)\big|\frac{|z_1|}{|
z|},u(x),u(x),x,x\bigg) \Bigg\}\Bigg)=
\\
\int_{S^{N-1}}\int\limits_{\Big\{t\in (0,+\infty)\;:\;
t^{N}<G_0\big(|\nabla u(x)||z_1|,u(x),u(x),x,x\big)
\Big\}}t^{N-1}dtd\mathcal{H}^{N-1}(z)\\=
\frac{1}{N}\int_{S^{N-1}}G_0\bigg(\big|\nabla
u(x)\big||z_1|,u(x),u(x),x,x\bigg)d\mathcal{H}^{N-1}(z).
\end{multline}
Thus inserting
\er{GMT'3jGHKKkkhjjhgzzZZzzZZzzbvq88nkhhjggjgjkpkjljluytytuguutloklljjgjgjhjklljjjkjkhhiigjhjgjjljklkhhkghfhk;llkljklljjhjbjjbljjhuyuhyulkjjkrr88mhjhhjhjhhhjy1iyyuhjggjjgghhfhhhgj}
into
\er{GMT'3jGHKKkkhjjhgzzZZzzZZzzbvq88nkhhjggjgjkpkjljluytytuguutloklljjgjgjhjklljjjkjkhhiigjhjgjjljklkhhkghfhk;llkljklljjhjbjjbljjhuyuhyulkjjkrr88mhjhhjhjhhhjy1iyyu}
finally gives
\er{GMT'3jGHKKkkhjjhgzzZZzzZZzzbvq88nkhhjggjgjkpkjljluytytuguutloklljjgjgjhjklljjjkjkhhiigjhjgjjljklkhhkghfhk;llkljklljjhjbjjbljjhuyuhyulkjjkrr88mhjhhjhjhhhjy1iyyuliyhigugu}.
\end{proof}

\begin{proof}
[Proof of Theorem
\ref{hjkgjkfhjffgggvggoopikhhhkjhhkjhjgjhhjgggghhdf11}]
Let
$\big\{u_n\big\}_{n=1}^{+\infty}\subset C^\infty_c(\R^N,\R^m)$ be
such that
\begin{equation}\label{GMT'3jGHKKkkhjjhgzzZZzzZZzzbvq88nkhhjggjgjkpkjljluytytuguutloklljjgjgjhj188hkjjggjhhjjh}
\lim\limits_{n\to+\infty}\int_{\R^N}\bigg(\Big|\nabla u_n(x)-\nabla
u(x)\Big|^q+\big| u_n(x)- u(x)\big|^q\bigg)\,=\,0.
\end{equation}
Then,
by the particular case of Proposition
\ref{fhfhfhfffhkjkj} with $G_0(a,b,c,y,x)=F(\sigma a,y,x)$, for
every fixed $n\in\mathbb{N}$ and every $\sigma>0$ we have
\begin{multline}\label{GMT'3jGHKKkkhjjhgzzZZzzZZzzbvq88nkhhjggjgjkpkjljluytytuguutloklljjgjgjhjklljjjkjkhjkkhkhhkhhjlkkhkjljljkjlkkhkllhjhjhhfyfppiooiououiuiuiuhjhjkhkhkjkhhkkhjjyhjggjuyyjghfhjhjghhh}
\lim\limits_{s\to +\infty}s\mathcal{L}^{2N}\Bigg(\Bigg\{(x,y)\in
\Omega\times\Omega\;:\;
F\bigg(\sigma\,\frac{\big|u_n(y)-u_n(x)\big|}{|y-x|},y,x\bigg)\,\frac{1}{|y-x|^{N}}>
s\Bigg\}\Bigg)\\
=\frac{1}{N}\int\limits_{\Omega}\Bigg(\int\limits_{S^{N-1}}F\bigg(\sigma\,\big|\nabla
u_n(x)\big||z_1|,x,x\bigg)d\mathcal{H}^{N-1}(z)\Bigg)dx\,.
\end{multline}
On the other hand, since $F(a,y,x)$ is non-decreasing on
$a\in[0,+\infty)$, given arbitrary $v\in W^{1,q}(\R^N,\R^m)$, $w\in
W^{1,q}(\R^N,\R^m)$, $y\neq x\in\R^N$, $s>0$ and $\alpha>1$, by
triangle inequality we obviously have
\begin{multline}\label{GMT'3jGHKKkkhjjhgzzZZzzZZzzbvq88nkhhjggjgjkpkjljluytytuguutloklljjgjgjhjklljjjkjkhjkkhkhhkhhjlkkhkjljljkjlkkhkllhjhjhhfyfppiooiououiuiuiuhjhjkhkhkjkhhkkhjjyhjggjuyyjghfhjhjghhhhjgjghkhjh}
F\bigg(\frac{\Big|\big(v(y)+w(y)\big)-\big(v(x)+w(x)\big)\Big|}{|y-x|},y,x\bigg)\leq
F\bigg(\frac{\big|v(y)-v(x)\big|}{|y-x|}+\frac{
\big|w(y)-w(x)\big|}{|y-x|},y,x\bigg)\\
=F\Bigg(\frac{1}{\alpha}\bigg(\alpha\frac{\big|v(y)-v(x)\big|}{|y-x|}\bigg)+\frac{\alpha-1}{\alpha}\bigg(\frac{\alpha}{\alpha-1}\frac{
\big|w(y)-w(x)\big|}{|y-x|}\bigg),y,x\Bigg)\\
\leq
F\Bigg(\max\bigg\{\alpha\frac{\big|v(y)-v(x)\big|}{|y-x|},\frac{\alpha}{\alpha-1}\frac{
\big|w(y)-w(x)\big|}{|y-x|}\bigg\},y,x\Bigg)\\=
\max\Bigg\{F\bigg(\alpha\frac{\big|v(y)-v(x)\big|}{|y-x|},y,x\bigg),F\bigg(\frac{\alpha}{\alpha-1}\frac{
\big|w(y)-w(x)\big|}{|y-x|},y,x\bigg)\Bigg\}\,.
\end{multline}
Therefore, by
\er{GMT'3jGHKKkkhjjhgzzZZzzZZzzbvq88nkhhjggjgjkpkjljluytytuguutloklljjgjgjhjklljjjkjkhjkkhkhhkhhjlkkhkjljljkjlkkhkllhjhjhhfyfppiooiououiuiuiuhjhjkhkhkjkhhkkhjjyhjggjuyyjghfhjhjghhhhjgjghkhjh}
we have
\begin{multline}\label{GMT'3jGHKKkkhjjhgzzZZzzZZzzbvq88nkhhjggjgjkpkjljluytytuguutloklljjgjgjhjklljjjkjkhjkkhkhhkhhjlkkhkjljljkjlkkhkllhjhjhhfyfppiooiououiuiuiuhjhjkhkhkjkhhkkhjjyhjggjuyyjghfhjhjghhhhjgjghkhjhhgjg}
F\bigg(\frac{\Big|\big(v(y)+w(y)\big)-\big(v(x)+w(x)\big)\Big|}{|y-x|},y,x\bigg)\,\frac{1}{|y-x|^{N}}>s\quad\quad\text{implies}\quad\quad\\
\quad\quad\text{either}\quad\quad
F\bigg(\alpha\frac{\big|v(y)-v(x)\big|}{|y-x|},y,x\bigg)\,\frac{1}{|y-x|^{N}}>s\\
\quad\quad\text{or}\quad\quad F\bigg(\frac{\alpha}{\alpha-1}\frac{
\big|w(y)-w(x)\big|}{|y-x|},y,x\bigg)\,\frac{1}{|y-x|^{N}}>s\,.
\end{multline}
Thus, denoting the sets:
\begin{align*}
A:=\Bigg\{(x,y)\in \Omega\times \Omega\;:\;F\bigg(\frac{\Big|\big(v(y)+w(y)\big)-\big(v(x)+w(x)\big)\Big|}{|y-x|},y,x\bigg)\,\frac{1}{|y-x|^{N}}>s\Bigg\}\\
B_1:=\Bigg\{(x,y)\in \Omega\times \Omega\;:\;F\bigg(\alpha\frac{\big|v(y)-v(x)\big|}{|y-x|},y,x\bigg)\,\frac{1}{|y-x|^{N}}>s\Bigg\}\\
B_2:= \Bigg\{(x,y)\in \Omega\times
\Omega\;:\;F\bigg(\frac{\alpha}{\alpha-1}\frac{
\big|w(y)-w(x)\big|}{|y-x|},y,x\bigg)\,\frac{1}{|y-x|^{N}}>s\Bigg\}\,,
\end{align*}
by
\er{GMT'3jGHKKkkhjjhgzzZZzzZZzzbvq88nkhhjggjgjkpkjljluytytuguutloklljjgjgjhjklljjjkjkhjkkhkhhkhhjlkkhkjljljkjlkkhkllhjhjhhfyfppiooiououiuiuiuhjhjkhkhkjkhhkkhjjyhjggjuyyjghfhjhjghhhhjgjghkhjhhgjg}
we obviously deduce:
\begin{equation}\label{GMT'3jGHKKkkhjjhgzzZZzzZZzzbvq88nkhhjggjgjkpkjljluytytuguutloklljjgjgjhj188hkjjggjhhjjhkojojjk}
A\subset B_1\cup B_2\,,
\end{equation}
and so,
\begin{equation}\label{GMT'3jGHKKkkhjjhgzzZZzzZZzzbvq88nkhhjggjgjkpkjljluytytuguutloklljjgjgjhj188hkjjggjhhjjhkojojjkhkhh}
\mathcal{L}^{2N}(A)\leq\mathcal{L}^{2N}(B_1)+\mathcal{L}^{2N}(B_2)\,,
\end{equation}
i.e. for every $v\in W^{1,q}(\R^N,\R^m)$ and $w\in
W^{1,q}(\R^N,\R^m)$, every $s>0$ and every $\alpha>1$ we have
\begin{multline}\label{GMT'3jGHKKkkhjjhgzzZZzzZZzzbvq88nkhhjggjgjkpkjljluytytuguutloklljjgjgjhjklljjjkjkhjkkhkhhkhhjlkkhkjljljkjlkkhkllhjhjhhfyfppiooiououiuiuiuhjhjkhkhkjkhhkkhjjyhjggjuyyjghfhjhjghhhjkhhkkjhkh}
\mathcal{L}^{2N}\Bigg(\Bigg\{(x,y)\in \Omega\times
\Omega\;:\;F\bigg(\frac{\Big|\big(v(y)+w(y)\big)-\big(v(x)+w(x)\big)\Big|}{|y-x|},y,x\bigg)\,\frac{1}{|y-x|^{N}}>s\Bigg\}\Bigg)\leq\\
\mathcal{L}^{2N}\Bigg\{(x,y)\in \Omega\times \Omega\;:\;F\bigg(\alpha\frac{\big|v(y)-v(x)\big|}{|y-x|},y,x\bigg)\,\frac{1}{|y-x|^{N}}>s\Bigg\}\Bigg)+\\
\mathcal{L}^{2N}\Bigg(\Bigg\{(x,y)\in \Omega\times
\Omega\;:\;F\bigg(\frac{\alpha}{\alpha-1}\frac{
\big|w(y)-w(x)\big|}{|y-x|},y,x\bigg)\,\frac{1}{|y-x|^{N}}>s\Bigg\}\Bigg)\,.
\end{multline}
Then, by
\er{GMT'3jGHKKkkhjjhgzzZZzzZZzzbvq88nkhhjggjgjkpkjljluytytuguutloklljjgjgjhjklljjjkjkhjkkhkhhkhhjlkkhkjljljkjlkkhkllhjhjhhfyfppiooiououiuiuiuhjhjkhkhkjkhhkkhjjyhjggjuyyjghfhjhjghhhjkhhkkjhkh}
we deduce that for every $v\in W^{1,q}(\R^N,\R^m)$, $w\in
W^{1,q}(\R^N,\R^m)$ and every $\alpha>1$ we have
\begin{multline}\label{GMT'3jGHKKkkhjjhgzzZZzzZZzzbvq88nkhhjggjgjkpkjljluytytuguutloklljjgjgjhjklljjjkjkhjkkhkhhkhhjlkkhkjljljkjlkkhkllhjhjhhfyfppiooiououiuiuiuhjhjkhkhkjkhhkkhjjyhjggjuyyjghfhjhjghhhjull33}
\limsup\limits_{s\to+\infty}s\mathcal{L}^{2N}\Bigg(\Bigg\{(x,y)\in \Omega\times \Omega\;:\;F\bigg(\frac{\big|v(y)-v(x)\big|}{|y-x|},y,x\bigg)\,\frac{1}{|y-x|^{N}}>s\Bigg\}\Bigg)\leq\\
\limsup\limits_{s\to+\infty}s\mathcal{L}^{2N}\Bigg(\Bigg\{(x,y)\in \Omega\times \Omega\;:\;F\bigg(\alpha\frac{\big|w(y)-w(x)\big|}{|y-x|},y,x\bigg)\,\frac{1}{|y-x|^{N}}>s\Bigg\}\Bigg)+\\
\limsup\limits_{s\to+\infty}s\,\mathcal{L}^{2N}\Bigg(\Bigg\{(x,y)\in
\Omega\times \Omega\;:\;\\
F\bigg(\frac{\alpha}{\alpha-1}\frac{\Big|\big(v(y)-w(y)\big)-\big(v(x)-w(x)\big)\Big|}{|y-x|},y,x\bigg)\,\frac{1}{|y-x|^{N}}>s\Bigg\}\Bigg).
\end{multline}
and
\begin{multline}\label{GMT'3jGHKKkkhjjhgzzZZzzZZzzbvq88nkhhjggjgjkpkjljluytytuguutloklljjgjgjhjklljjjkjkhjkkhkhhkhhjlkkhkjljljkjlkkhkllhjhjhhfyfppiooiououiuiuiuhjhjkhkhkjkhhkkhjjyhjggjuyyjghfhjhjghhhjull33mod}
\liminf\limits_{s\to+\infty}s\mathcal{L}^{2N}\Bigg(\Bigg\{(x,y)\in \Omega\times \Omega\;:\;F\bigg(\frac{\big|v(y)-v(x)\big|}{|y-x|},y,x\bigg)\,\frac{1}{|y-x|^{N}}>s\Bigg\}\Bigg)\leq\\
\liminf\limits_{s\to+\infty}s\mathcal{L}^{2N}\Bigg(\Bigg\{(x,y)\in \Omega\times \Omega\;:\;F\bigg(\alpha\frac{\big|w(y)-w(x)\big|}{|y-x|},y,x\bigg)\,\frac{1}{|y-x|^{N}}>s\Bigg\}\Bigg)+\\
\limsup\limits_{s\to+\infty}s\,\mathcal{L}^{2N}\Bigg(\Bigg\{(x,y)\in
\Omega\times \Omega\;:\;\\
F\bigg(\frac{\alpha}{\alpha-1}\frac{\Big|\big(v(y)-w(y)\big)-\big(v(x)-w(x)\big)\Big|}{|y-x|},y,x\bigg)\,\frac{1}{|y-x|^{N}}>s\Bigg\}\Bigg).
\end{multline}
Therefore, since $F(a,y,x)\leq C|a|^q$ for every $a\in\R$ and every
$x,y\in\R^N$, by
\er{GMT'3jGHKKkkhjjhgzzZZzzZZzzbvq88nkhhjggjgjkpkjljluytytuguutloklljjgjgjhjklljjjkjkhjkkhkhhkhhjlkkhkjljljkjlkkhkllhjhjhhfyfppiooiououiuiuiuhjhjkhkhkjkhhkkhjjyhjggjuyyjghfhjhjghhhjull33},
for every $v\in W^{1,q}(\R^N,\R^m)$, $w\in W^{1,q}(\R^N,\R^m)$ and
every $\alpha>1$ we deduce
\begin{multline}\label{GMT'3jGHKKkkhjjhgzzZZzzZZzzbvq88nkhhjggjgjkpkjljluytytuguutloklljjgjgjhjklljjjkjkhjkkhkhhkhhjlkkhkjljljkjlkkhkllhjhjhhfyfppiooiououiuiuiuhjhjkhkhkjkhhkkhjjyhjggjuyyjghfhjhjghhhjullyy}
\limsup\limits_{s\to+\infty}s\mathcal{L}^{2N}\Bigg(\Bigg\{(x,y)\in \Omega\times \Omega\;:\;F\bigg(\frac{\big|v(y)-v(x)\big|}{|y-x|},y,x\bigg)\,\frac{1}{|y-x|^{N}}>s\Bigg\}\Bigg)\leq\\
\limsup\limits_{s\to+\infty}s\mathcal{L}^{2N}\Bigg(\Bigg\{(x,y)\in \Omega\times \Omega\;:\;F\bigg(\alpha\frac{\big|w(y)-w(x)\big|}{|y-x|},y,x\bigg)\,\frac{1}{|y-x|^{N}}>s\Bigg\}\Bigg)+\\
\limsup\limits_{s\to+\infty}s\,\mathcal{L}^{2N}\Bigg(\Bigg\{(x,y)\in
\Omega\times
\Omega\;:\;C\frac{\alpha^q}{(\alpha-1)^q}\frac{\Big|\big(v(y)-w(y)\big)-\big(v(x)-w(x)\big)\Big|^q}{|y-x|^{q+N}}>s\Bigg\}\Bigg)
\\=\limsup\limits_{s\to+\infty}s\mathcal{L}^{2N}\Bigg(\Bigg\{(x,y)\in \Omega\times \Omega\;:\;F\bigg(\alpha\frac{\big|w(y)-w(x)\big|}{|y-x|},y,x\bigg)\,\frac{1}{|y-x|^{N}}>s\Bigg\}\Bigg)+\\C\frac{\alpha^q}{(\alpha-1)^q}\limsup\limits_{s\to+\infty}s\,\mathcal{L}^{2N}\Bigg(\Bigg\{(x,y)\in
\Omega\times
\Omega\;:\;\frac{\Big|\big(v(y)-w(y)\big)-\big(v(x)-w(x)\big)\Big|^q}{|y-x|^{q+N}}>s\Bigg\}\Bigg),
\end{multline}
and similarly by
\er{GMT'3jGHKKkkhjjhgzzZZzzZZzzbvq88nkhhjggjgjkpkjljluytytuguutloklljjgjgjhjklljjjkjkhjkkhkhhkhhjlkkhkjljljkjlkkhkllhjhjhhfyfppiooiououiuiuiuhjhjkhkhkjkhhkkhjjyhjggjuyyjghfhjhjghhhjull33mod}
we obtain
\begin{multline}\label{GMT'3jGHKKkkhjjhgzzZZzzZZzzbvq88nkhhjggjgjkpkjljluytytuguutloklljjgjgjhjklljjjkjkhjkkhkhhkhhjlkkhkjljljkjlkkhkllhjhjhhfyfppiooiououiuiuiuhjhjkhkhkjkhhkkhjjyhjggjuyyjghfhjhjghhhjullyymod}
\liminf\limits_{s\to+\infty}s\mathcal{L}^{2N}\Bigg(\Bigg\{(x,y)\in \Omega\times \Omega\;:\;F\bigg(\frac{\big|v(y)-v(x)\big|}{|y-x|},y,x\bigg)\,\frac{1}{|y-x|^{N}}>s\Bigg\}\Bigg)\leq\\
\liminf\limits_{s\to+\infty}s\mathcal{L}^{2N}\Bigg(\Bigg\{(x,y)\in
\Omega\times
\Omega\;:\;F\bigg(\alpha\frac{\big|w(y)-w(x)\big|}{|y-x|},y,x\bigg)\,\frac{1}{|y-x|^{N}}>s\Bigg\}\Bigg)+\\C\frac{\alpha^q}{(\alpha-1)^q}\limsup\limits_{s\to+\infty}s\,\mathcal{L}^{2N}\Bigg(\Bigg\{(x,y)\in
\Omega\times
\Omega\;:\;\frac{\Big|\big(v(y)-w(y)\big)-\big(v(x)-w(x)\big)\Big|^q}{|y-x|^{q+N}}>s\Bigg\}\Bigg),
\end{multline}
Therefore, using Theorem \ref{hjkgjkfhjffgggvggoopikhhhkjh}, by
\er{GMT'3jGHKKkkhjjhgzzZZzzZZzzbvq88nkhhjggjgjkpkjljluytytuguutloklljjgjgjhjklljjjkjkhjkkhkhhkhhjlkkhkjljljkjlkkhkllhjhjhhfyfppiooiououiuiuiuhjhjkhkhkjkhhkkhjjyhjggjuyyjghfhjhjghhhjullyy}
and
\er{GMT'3jGHKKkkhjjhgzzZZzzZZzzbvq88nkhhjggjgjkpkjljluytytuguutloklljjgjgjhjklljjjkjkhjkkhkhhkhhjlkkhkjljljkjlkkhkllhjhjhhfyfppiooiououiuiuiuhjhjkhkhkjkhhkkhjjyhjggjuyyjghfhjhjghhhjullyymod},
for every given  $v\in W^{1,q}(\R^N,\R^m)$, $w\in
W^{1,q}(\R^N,\R^m)$ and every $\alpha>1$ we infer
\begin{multline}\label{GMT'3jGHKKkkhjjhgzzZZzzZZzzbvq88nkhhjggjgjkpkjljluytytuguutloklljjgjgjhjklljjjkjkhjkkhkhhkhhjlkkhkjljljkjlkkhkllhjhjhhfyfppiooiououiuiuiuhjhjkhkhkjkhhkkhjjyhjggjuyyjghfhjhjghhhjull}
\limsup\limits_{s\to+\infty}s\mathcal{L}^{2N}\Bigg(\Bigg\{(x,y)\in \Omega\times \Omega\;:\;F\bigg(\frac{\big|v(y)-v(x)\big|}{|y-x|},y,x\bigg)\,\frac{1}{|y-x|^{N}}>s\Bigg\}\Bigg)\leq\\
\limsup\limits_{s\to+\infty}s\mathcal{L}^{2N}\Bigg(\Bigg\{(x,y)\in
\Omega\times
\Omega\;:\;F\bigg(\alpha\frac{\big|w(y)-w(x)\big|}{|y-x|},y,x\bigg)\,\frac{1}{|y-x|^{N}}>s\Bigg\}\Bigg)\\+
C\,\widetilde C_{N}\,\frac{\alpha^q}{(\alpha-1)^q}
\int_{\R^N}\Big|\nabla v(x)-\nabla w(x)\Big|^qdx,
\end{multline}
and
\begin{multline}\label{GMT'3jGHKKkkhjjhgzzZZzzZZzzbvq88nkhhjggjgjkpkjljluytytuguutloklljjgjgjhjklljjjkjkhjkkhkhhkhhjlkkhkjljljkjlkkhkllhjhjhhfyfppiooiououiuiuiuhjhjkhkhkjkhhkkhjjyhjggjuyyjghfhjhjghhhjullmod}
\liminf\limits_{s\to+\infty}s\mathcal{L}^{2N}\Bigg(\Bigg\{(x,y)\in \Omega\times \Omega\;:\;F\bigg(\frac{\big|v(y)-v(x)\big|}{|y-x|},y,x\bigg)\,\frac{1}{|y-x|^{N}}>s\Bigg\}\Bigg)\leq\\
\liminf\limits_{s\to+\infty}s\mathcal{L}^{2N}\Bigg(\Bigg\{(x,y)\in
\Omega\times
\Omega\;:\;F\bigg(\alpha\frac{\big|w(y)-w(x)\big|}{|y-x|},y,x\bigg)\,\frac{1}{|y-x|^{N}}>s\Bigg\}\Bigg)\\+
C\,\widetilde C_{N}\,\frac{\alpha^q}{(\alpha-1)^q}
\int_{\R^N}\Big|\nabla v(x)-\nabla w(x)\Big|^qdx.
\end{multline}
In particular, taking firstly $v=u$ and $w=u_n$ in
\er{GMT'3jGHKKkkhjjhgzzZZzzZZzzbvq88nkhhjggjgjkpkjljluytytuguutloklljjgjgjhjklljjjkjkhjkkhkhhkhhjlkkhkjljljkjlkkhkllhjhjhhfyfppiooiououiuiuiuhjhjkhkhkjkhhkkhjjyhjggjuyyjghfhjhjghhhjull}
and secondly $v=u_n$ and $w=u$ in
\er{GMT'3jGHKKkkhjjhgzzZZzzZZzzbvq88nkhhjggjgjkpkjljluytytuguutloklljjgjgjhjklljjjkjkhjkkhkhhkhhjlkkhkjljljkjlkkhkllhjhjhhfyfppiooiououiuiuiuhjhjkhkhkjkhhkkhjjyhjggjuyyjghfhjhjghhhjullmod},
for every $\alpha>1$ we deduce:
\begin{multline}\label{GMT'3jGHKKkkhjjhgzzZZzzZZzzbvq88nkhhjggjgjkpkjljluytytuguutloklljjgjgjhjklljjjkjkhjkkhkhhkhhjlkkhkjljljkjlkkhkllhjhjhhfyfppiooiououiuiuiuhjhjkhkhkjkhhkkhjjyhjggjuyyjghfhjhjghhhjulld1}
\limsup\limits_{s\to+\infty}s\mathcal{L}^{2N}\Bigg(\Bigg\{(x,y)\in \Omega\times \Omega\;:\;F\bigg(\frac{\big|u(y)-u(x)\big|}{|y-x|},y,x\bigg)\,\frac{1}{|y-x|^{N}}>s\Bigg\}\Bigg)\leq\\
\limsup\limits_{s\to+\infty}s\mathcal{L}^{2N}\Bigg(\Bigg\{(x,y)\in
\Omega\times
\Omega\;:\;F\bigg(\alpha\frac{\big|u_n(y)-u_n(x)\big|}{|y-x|},y,x\bigg)\,\frac{1}{|y-x|^{N}}>s\Bigg\}\Bigg)\\+
C\,\widetilde C_{N}\,\frac{\alpha^q}{(\alpha-1)^q}
\int_{\R^N}\Big|\nabla u_n(x)-\nabla u(x)\Big|^qdx,
\end{multline}
and
\begin{multline}\label{GMT'3jGHKKkkhjjhgzzZZzzZZzzbvq88nkhhjggjgjkpkjljluytytuguutloklljjgjgjhjklljjjkjkhjkkhkhhkhhjlkkhkjljljkjlkkhkllhjhjhhfyfppiooiououiuiuiuhjhjkhkhkjkhhkkhjjyhjggjuyyjghfhjhjghhhjulld2}
\liminf\limits_{s\to+\infty}s\mathcal{L}^{2N}\Bigg(\Bigg\{(x,y)\in \Omega\times \Omega\;:\;F\bigg(\frac{\big|u_n(y)-u_n(x)\big|}{|y-x|},y,x\bigg)\,\frac{1}{|y-x|^{N}}>s\Bigg\}\Bigg)\leq\\
\liminf\limits_{s\to+\infty}s\mathcal{L}^{2N}\Bigg(\Bigg\{(x,y)\in
\Omega\times
\Omega\;:\;F\bigg(\alpha\frac{\big|u(y)-u(x)\big|}{|y-x|},y,x\bigg)\,\frac{1}{|y-x|^{N}}>s\Bigg\}\Bigg)\\+
C\,\widetilde C_{N}\,\frac{\alpha^q}{(\alpha-1)^q}
\int_{\R^N}\Big|\nabla u_n(x)-\nabla u(x)\Big|^qdx.
\end{multline}
Thus by combining
\er{GMT'3jGHKKkkhjjhgzzZZzzZZzzbvq88nkhhjggjgjkpkjljluytytuguutloklljjgjgjhjklljjjkjkhjkkhkhhkhhjlkkhkjljljkjlkkhkllhjhjhhfyfppiooiououiuiuiuhjhjkhkhkjkhhkkhjjyhjggjuyyjghfhjhjghhh}
with
\er{GMT'3jGHKKkkhjjhgzzZZzzZZzzbvq88nkhhjggjgjkpkjljluytytuguutloklljjgjgjhjklljjjkjkhjkkhkhhkhhjlkkhkjljljkjlkkhkllhjhjhhfyfppiooiououiuiuiuhjhjkhkhkjkhhkkhjjyhjggjuyyjghfhjhjghhhjulld1}
we deduce
\begin{multline}\label{GMT'3jGHKKkkhjjhgzzZZzzZZzzbvq88nkhhjggjgjkpkjljluytytuguutloklljjgjgjhjklljjjkjkhjkkhkhhkhhjlkkhkjljljkjlkkhkllhjhjhhfyfppiooiououiuiuiuhjhjkhkhkjkhhkkhjjyhjggjuyyjghfhjhjghhhhkhj13}
\limsup\limits_{s\to+\infty}s\mathcal{L}^{2N}\Bigg\{(x,y)\in \Omega\times \Omega\;:\;F\bigg(\frac{\big|u(y)-u(x)\big|}{|y-x|},y,x\bigg)\,\frac{1}{|y-x|^{N}}>s\Bigg\}\Bigg)\leq\\
\frac{1}{N}\int\limits_{\Omega}\Bigg(\int\limits_{S^{N-1}}F\bigg(\alpha\,\big|\nabla
u_n(x)\big||z_1|,x,x\bigg)d\mathcal{H}^{N-1}(z)\Bigg)dx+
C\,\widetilde C_{N}\,\frac{\alpha^q}{(\alpha-1)^q}
\int_{\R^N}\Big|\nabla u_n(x)-\nabla u(x)\Big|^qdx,
\end{multline}
and by combining
\er{GMT'3jGHKKkkhjjhgzzZZzzZZzzbvq88nkhhjggjgjkpkjljluytytuguutloklljjgjgjhjklljjjkjkhjkkhkhhkhhjlkkhkjljljkjlkkhkllhjhjhhfyfppiooiououiuiuiuhjhjkhkhkjkhhkkhjjyhjggjuyyjghfhjhjghhh}
with
\er{GMT'3jGHKKkkhjjhgzzZZzzZZzzbvq88nkhhjggjgjkpkjljluytytuguutloklljjgjgjhjklljjjkjkhjkkhkhhkhhjlkkhkjljljkjlkkhkllhjhjhhfyfppiooiououiuiuiuhjhjkhkhkjkhhkkhjjyhjggjuyyjghfhjhjghhhjulld2}
we deduce
\begin{multline}\label{GMT'3jGHKKkkhjjhgzzZZzzZZzzbvq88nkhhjggjgjkpkjljluytytuguutloklljjgjgjhjklljjjkjkhjkkhkhhkhhjlkkhkjljljkjlkkhkllhjhjhhfyfppiooiououiuiuiuhjhjkhkhkjkhhkkhjjyhjggjuyyjghfhjhjghhhhkhj24}
\frac{1}{N}\int\limits_{\Omega}\Bigg(\int\limits_{S^{N-1}}F\bigg(\big|\nabla
u_n(x)\big||z_1|,x,x\bigg)d\mathcal{H}^{N-1}(z)\Bigg)dx\leq
C\,\widetilde C_{N}\,\frac{\alpha^q}{(\alpha-1)^q}
\int_{\R^N}\Big|\nabla u_n(x)-\nabla u(x)\Big|^qdx\\+
\liminf\limits_{s\to+\infty}s\mathcal{L}^{2N}\Bigg(\Bigg\{(x,y)\in
\Omega\times
\Omega\;:\;F\bigg(\alpha\frac{\big|u(y)-u(x)\big|}{|y-x|},y,x\bigg)\,\frac{1}{|y-x|^{N}}>s\Bigg\}\Bigg).
\end{multline}
Therefore, letting $n\to+\infty$ in
\er{GMT'3jGHKKkkhjjhgzzZZzzZZzzbvq88nkhhjggjgjkpkjljluytytuguutloklljjgjgjhjklljjjkjkhjkkhkhhkhhjlkkhkjljljkjlkkhkllhjhjhhfyfppiooiououiuiuiuhjhjkhkhkjkhhkkhjjyhjggjuyyjghfhjhjghhhhkhj13}
and
\er{GMT'3jGHKKkkhjjhgzzZZzzZZzzbvq88nkhhjggjgjkpkjljluytytuguutloklljjgjgjhjklljjjkjkhjkkhkhhkhhjlkkhkjljljkjlkkhkllhjhjhhfyfppiooiououiuiuiuhjhjkhkhkjkhhkkhjjyhjggjuyyjghfhjhjghhhhkhj24}
and using
\er{GMT'3jGHKKkkhjjhgzzZZzzZZzzbvq88nkhhjggjgjkpkjljluytytuguutloklljjgjgjhj188hkjjggjhhjjh}
together with the Dominated Convergence Theorem, we deduce:
\begin{multline}\label{GMT'3jGHKKkkhjjhgzzZZzzZZzzbvq88nkhhjggjgjkpkjljluytytuguutloklljjgjgjhjklljjjkjkhjkkhkhhkhhjlkkhkjljljkjlkkhkllhjhjhhfyfppiooiououiuiuiuhjhjkhkhkjkhhkkhjjyhjggjuyyjghfhjhjghhhhkhj135}
\limsup\limits_{s\to+\infty}s\mathcal{L}^{2N}\Bigg\{(x,y)\in \Omega\times \Omega\;:\;F\bigg(\frac{\big|u(y)-u(x)\big|}{|y-x|},y,x\bigg)\,\frac{1}{|y-x|^{N}}>s\Bigg\}\Bigg)\leq\\
\frac{1}{N}\int\limits_{\Omega}\Bigg(\int\limits_{S^{N-1}}F\bigg(\alpha\,\big|\nabla
u(x)\big||z_1|,x,x\bigg)d\mathcal{H}^{N-1}(z)\Bigg)dx\,,
\end{multline}
and
\begin{multline}\label{GMT'3jGHKKkkhjjhgzzZZzzZZzzbvq88nkhhjggjgjkpkjljluytytuguutloklljjgjgjhjklljjjkjkhjkkhkhhkhhjlkkhkjljljkjlkkhkllhjhjhhfyfppiooiououiuiuiuhjhjkhkhkjkhhkkhjjyhjggjuyyjghfhjhjghhhhkhj247kk}
\frac{1}{N}\int\limits_{\Omega}\Bigg(\int\limits_{S^{N-1}}F\bigg(\big|\nabla
u(x)\big||z_1|,x,x\bigg)d\mathcal{H}^{N-1}(z)\Bigg)dx\leq
\\
\liminf\limits_{s\to+\infty}s\mathcal{L}^{2N}\Bigg(\Bigg\{(x,y)\in
\Omega\times
\Omega\;:\;F\bigg(\alpha\frac{\big|u(y)-u(x)\big|}{|y-x|},y,x\bigg)\,\frac{1}{|y-x|^{N}}>s\Bigg\}\Bigg)\,.
\end{multline}
In particular, taking
\er{GMT'3jGHKKkkhjjhgzzZZzzZZzzbvq88nkhhjggjgjkpkjljluytytuguutloklljjgjgjhjklljjjkjkhjkkhkhhkhhjlkkhkjljljkjlkkhkllhjhjhhfyfppiooiououiuiuiuhjhjkhkhkjkhhkkhjjyhjggjuyyjghfhjhjghhhhkhj247kk}
for $\frac{1}{\alpha}\,u$ instead of $u$ we deduce:
\begin{multline}\label{GMT'3jGHKKkkhjjhgzzZZzzZZzzbvq88nkhhjggjgjkpkjljluytytuguutloklljjgjgjhjklljjjkjkhjkkhkhhkhhjlkkhkjljljkjlkkhkllhjhjhhfyfppiooiououiuiuiuhjhjkhkhkjkhhkkhjjyhjggjuyyjghfhjhjghhhhkhj247}
\frac{1}{N}\int\limits_{\Omega}\Bigg(\int\limits_{S^{N-1}}F\bigg(\frac{1}{\alpha}\big|\nabla
u(x)\big||z_1|,x,x\bigg)d\mathcal{H}^{N-1}(z)\Bigg)dx\leq
\\
\liminf\limits_{s\to+\infty}s\mathcal{L}^{2N}\Bigg(\Bigg\{(x,y)\in
\Omega\times
\Omega\;:\;F\bigg(\frac{\big|u(y)-u(x)\big|}{|y-x|},y,x\bigg)\,\frac{1}{|y-x|^{N}}>s\Bigg\}\Bigg)\,.
\end{multline}
Finally, letting $\alpha\to 1^+$ in
\er{GMT'3jGHKKkkhjjhgzzZZzzZZzzbvq88nkhhjggjgjkpkjljluytytuguutloklljjgjgjhjklljjjkjkhjkkhkhhkhhjlkkhkjljljkjlkkhkllhjhjhhfyfppiooiououiuiuiuhjhjkhkhkjkhhkkhjjyhjggjuyyjghfhjhjghhhhkhj135}
and
\er{GMT'3jGHKKkkhjjhgzzZZzzZZzzbvq88nkhhjggjgjkpkjljluytytuguutloklljjgjgjhjklljjjkjkhjkkhkhhkhhjlkkhkjljljkjlkkhkllhjhjhhfyfppiooiououiuiuiuhjhjkhkhkjkhhkkhjjyhjggjuyyjghfhjhjghhhhkhj247}
and using again the Dominated Convergence Theorem, we infer
\begin{multline}\label{GMT'3jGHKKkkhjjhgzzZZzzZZzzbvq88nkhhjggjgjkpkjljluytytuguutloklljjgjgjhjklljjjkjkhjkkhkhhkhhjlkkhkjljljkjlkkhkllhjhjhhfyfppiooiououiuiuiuhjhjkhkhkjkhhkkhjjyhjggjuyyjghfhjhjghhhhkhjhjggjkgkf}
\frac{1}{N}\int\limits_{\Omega}\Bigg(\int\limits_{S^{N-1}}F\bigg(\big|\nabla
u(x)\big||z_1|,x,x\bigg)d\mathcal{H}^{N-1}(z)\Bigg)dx\leq\\
\liminf\limits_{s\to+\infty}s\mathcal{L}^{2N}\Bigg(\Bigg\{(x,y)\in \Omega\times \Omega\;:\;F\bigg(\frac{\big|u(y)-u(x)\big|}{|y-x|},y,x\bigg)\,\frac{1}{|y-x|^{N}}>s\Bigg\}\Bigg)\leq\\
\limsup\limits_{s\to+\infty}s\mathcal{L}^{2N}\Bigg(\Bigg\{(x,y)\in \Omega\times \Omega\;:\;F\bigg(\frac{\big|u(y)-u(x)\big|}{|y-x|},y,x\bigg)\,\frac{1}{|y-x|^{N}}>s\Bigg\}\Bigg)\\
\leq\frac{1}{N}\int\limits_{\Omega}\Bigg(\int\limits_{S^{N-1}}F\bigg(\big|\nabla
u(x)\big||z_1|,x,x\bigg)d\mathcal{H}^{N-1}(z)\Bigg)dx \,,
\end{multline}
and we obtain
\er{GMT'3jGHKKkkhjjhgzzZZzzZZzzbvq88nkhhjggjgjkpkjljluytytuguutloklljjgjgjhjklljjjkjkhjkkhkhhkhhjlkkhkjljljkjlkkhkllhjhjhhfyfppiooiououiuiuiuhjhjkhkhkjkhhkkhjjyhjggjuyyjghfhjhjghhhzz11}.
\end{proof}

\section{Proof of Theorem \ref{hjkgjkfhjffjhmgg7}}
The next Proposition is proved exactly as a similar statement
in \cite{jmp}; the proof is postponed to  the Appendix; in both cases the key
ingredient is Proposition \ref{hgugghghhffhfhKKzzbvq} which is part of \cite[Proposition 2.4]{jmp}).
\begin{proposition}\label{hgugghghhffhfhKKzzbvqhkjjgg}
Let $\Omega$ be an open set with bounded Lipschitz boundary, $q>1$
and $u\in BV(\Omega,\R^m)\cap L^\infty(\Omega,\R^m)$. Then,
\begin{multline}\label{gghgjhfgggjfgfhughGHGHKKzzjkjkyuyuybvqjhgfhfhgjgjjlhkluyikhhkhkhkjgjhhh}
\lim\limits_{\e\to
0^+}\Bigg\{\int_{S^{N-1}}\int_{\Omega}\frac{\big|u( x+\e\vec
n)-u(x)\big|^q}{\e}\chi_\Omega( x+\e\vec
n)dxd\mathcal{H}^{N-1}(\vec n)\Bigg\}=\\
\bigg(\int_{S^{N-1}}|z_1|d\mathcal{H}^{N-1}(z)\bigg)\Bigg(\int_{J_u\cap
\Omega}\Big|u^+(x)-u^-(x)\Big|^qd\mathcal{H}^{N-1}(x)\Bigg)\,.
\end{multline}
\end{proposition}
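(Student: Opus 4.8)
The plan is to reduce the $N$-dimensional statement to a one-dimensional fact about $BV$ functions by slicing in each direction $\vec n\in S^{N-1}$, and then to reassemble it via the slicing (Vol'pert) structure theorem for $BV$, the area formula for the orthogonal projection of the jump set, Fubini's theorem and the rotational invariance of the surface measure on $S^{N-1}$. Fix $\vec n\in S^{N-1}$, decompose $\R^N=\vec n^\perp\oplus\R\vec n$ and write $x=y+t\vec n$. For $\mathcal{H}^{N-1}$-a.e.\ $y\in\vec n^\perp$ the restriction $v_{y,\vec n}(t):=u(y+t\vec n)$ belongs to $BV\cap L^\infty$ of the corresponding one-dimensional section $I_y$ of $\Omega$ (an at most countable union of open intervals), and the inner integral over $\Omega$ becomes $\frac1\e\int_{I_y}\big|v_{y,\vec n}(t+\e)-v_{y,\vec n}(t)\big|^q\chi_{I_y}(t+\e)\,dt$. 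The one-dimensional input is \rprop{hgugghghhffhfhKKzzbvq} (part of \cite[Proposition~2.4]{jmp}): for $q>1$ and $v\in BV\cap L^\infty$ of an open subset of $\R$,
\[
\lim_{\e\to0^+}\frac{1}{\e}\int\big|v(t+\e)-v(t)\big|^q\chi(t+\e)\,dt=\sum_{t\in J_v}\big|v^+(t)-v^-(t)\big|^q ,
\]
so that the absolutely continuous and Cantor parts of $Dv$ carry no mass in the limit.

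Granting this, for $\mathcal{H}^{N-1}$-a.e.\ $y$ the one-dimensional jump set of $v_{y,\vec n}$ coincides, up to $\mathcal{H}^0$-null sets, with the transversal intersections of the line $y+\R\vec n$ with $J_u$, and the one-sided traces of $v_{y,\vec n}$ there equal $u^+$ and $u^-$ (in one order or the other, which is harmless since only $|u^+-u^-|$ enters); hence the right-hand side above equals $\sum_{x\in J_u\cap\Omega\cap(y+\R\vec n)}|u^+(x)-u^-(x)|^q$. Integrating over $y\in\vec n^\perp$ — legitimate by dominated convergence, the integrand being dominated by $(2\|u\|_{L^\infty})^{q-1}\|D\tilde v_y\|(\R)$, with $\tilde v_y$ the restriction to the line of a fixed $BV$-extension $\tilde u$ of $u$ to $\R^N$, and $y\mapsto\|D\tilde v_y\|(\R)$ belonging to $L^1(\vec n^\perp)$ — and applying the area formula for the projection $\pi_{\vec n}$ restricted to the rectifiable set $J_u$ (whose Jacobian at $x$ equals $|\langle\vec n,\nu_u(x)\rangle|$) yields, for every $\vec n\in S^{N-1}$,
\[
\lim_{\e\to0^+}\int_{\Omega}\frac{|u(x+\e\vec n)-u(x)|^q}{\e}\chi_\Omega(x+\e\vec n)\,dx=\int_{J_u\cap\Omega}|u^+(x)-u^-(x)|^q\,|\langle\vec n,\nu_u(x)\rangle|\,d\mathcal{H}^{N-1}(x).
\]

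It remains to integrate in $\vec n$ and to pass the limit through $\int_{S^{N-1}}$. Here the hypothesis $u\in L^\infty(\Omega,\R^m)$ is used decisively together with the Lipschitz regularity of $\partial\Omega$: taking $\tilde u\in BV(\R^N,\R^m)$ with $\|D\tilde u\|(\R^N)\le C_\Omega\|Du\|(\Omega)$, we have, uniformly in $\vec n\in S^{N-1}$ and $\e>0$,
\[
\int_{\Omega}\frac{|u(x+\e\vec n)-u(x)|^q}{\e}\chi_\Omega(x+\e\vec n)\,dx\le(2\|u\|_{L^\infty})^{q-1}\,\frac{1}{\e}\int_{\R^N}|\tilde u(x+\e\vec n)-\tilde u(x)|\,dx\le(2\|u\|_{L^\infty})^{q-1}\,C_\Omega\,\|Du\|(\Omega).
\]
Dominated convergence then lets the limit enter $\int_{S^{N-1}}$, and Tonelli's theorem together with the identity $\int_{S^{N-1}}|\langle\vec n,\nu\rangle|\,d\mathcal{H}^{N-1}(\vec n)=\int_{S^{N-1}}|z_1|\,d\mathcal{H}^{N-1}(z)$ (valid for every unit vector $\nu$ by rotational invariance of the surface measure) converts the previous display into
\[
\bigg(\int_{S^{N-1}}|z_1|\,d\mathcal{H}^{N-1}(z)\bigg)\int_{J_u\cap\Omega}|u^+(x)-u^-(x)|^q\,d\mathcal{H}^{N-1}(x),
\]
which is the asserted formula.

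The main obstacle is the one-dimensional ingredient \rprop{hgugghghhffhfhKKzzbvq}, and within it the vanishing of the diffuse (absolutely continuous plus Cantor) part. The key point is that for the diffuse part $D^{d}v$ of a one-dimensional $BV$ function one has $\sup_t\|D^{d}v\|([t,t+\e])\to0$ as $\e\to0^+$ — a finite atomless measure puts uniformly small mass on short intervals, which follows by a compactness argument — so that $\frac1\e\int\big|D^{d}v([t,t+\e])\big|^q\,dt\le\big(\sup_t\|D^{d}v\|([t,t+\e])\big)^{q-1}\|D^{d}v\|(\R)\to0$ precisely because $q>1$; the jump part is then handled by noting that, for $\e$ small, a generic increment $v(t+\e)-v(t)$ sees a single jump while the set of $t$ crossing two or more jumps has vanishing length (using $\sum_k|j_k|\le\|Dv\|(\R)<\infty$ and $\sum_k|j_k|^q\le(2\|v\|_{L^\infty})^{q-1}\sum_k|j_k|<\infty$). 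A secondary technical point, deferred to the Appendix exactly as in \cite{jmp}, is the careful treatment of the cut-off $\chi(t+\e)$ near the endpoints of the (possibly infinitely many) components of the one-dimensional sections of $\Omega$; the Lipschitz regularity of $\partial\Omega$ guarantees that these boundary and cross-component contributions are negligible as $\e\to0^+$.
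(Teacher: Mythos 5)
Your argument is correct, and its final assembly --- dominated convergence in $\vec n$ over $S^{N-1}$ using the uniform bound $(2\|u\|_{L^\infty})^{q-1}C_\Omega\|Du\|(\Omega)$, Fubini, and the rotational--invariance identity $\int_{S^{N-1}}|\langle\vec n,\vec\nu\rangle|\,d\mathcal{H}^{N-1}(\vec n)=\int_{S^{N-1}}|z_1|\,d\mathcal{H}^{N-1}(z)$ --- is exactly what the paper does. Where you genuinely diverge is in the directional ingredient
\[
\lim_{\e\to0^+}\int_K\frac{|u(x+\e\vec n)-u(x)|^q}{\e}\,dx=\int_{J_u\cap K}|u^+-u^-|^q\,\big|\vec n\cdot\vec\nu(x)\big|\,d\mathcal{H}^{N-1}(x).
\]
The paper imports this as Proposition~\ref{hgugghghhffhfhKKzzbvq} (part of \cite[Proposition 2.4]{jmp}), whose proof is genuinely $N$-dimensional: mollify $u$, rewrite the increment via the fundamental theorem of calculus and an integration by parts against $Du\cdot\vec n$, and pass to the limit using the one-sided approximate limits of Theorem 3.108 of \cite{amb} together with the fact that the Cantor part charges no $\mathcal{H}^{N-1}$ $\sigma$-finite set. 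You instead re-derive it by one-dimensional slicing: the scalar limit (where $q>1$ kills the diffuse part through $\sup_t\|D^{d}v\|([t,t+\e])\to0$ raised to the power $q-1$), then Vol'pert's structure theorem and the area formula for $\pi_{\vec n}$ restricted to the rectifiable set $J_u$. Both routes rest on the same underlying BV machinery (the matching of slice jumps and traces with $J_u$ and $u^\pm$ is again Theorem 3.108 of \cite{amb}), but yours is more self-contained and makes transparent why only the jump part survives, at the price of the bookkeeping for windows containing several jumps and of the $\mathcal{H}^{N-1}$-a.e.\ identifications on the slices.

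One point to tighten: it is not the Lipschitz regularity of $\partial\Omega$ \emph{per se} that makes the boundary-layer and cross-component contributions negligible. The crude estimate (a layer of width $\e$ has measure $O(\e)$ while the integrand is at most $(2\|u\|_{L^\infty})^q/\e$) only gives a contribution $O(1)$, not $o(1)$. What is actually needed --- and what your domination by $\|D\tilde v_y\|((t,t+\e])$ silently uses --- is an extension $\tilde u\in BV(\R^N,\R^m)\cap L^\infty(\R^N,\R^m)$ with $\|D\tilde u\|(\partial\Omega)=0$, so that the contribution of $\{x\in\Omega:\dist(x,\partial\Omega)<\e\}$ is bounded by $(2\|u\|_{L^\infty})^{q-1}\|D\tilde u\|\big(\{\dist(\cdot,\partial\Omega)\le\e\}\big)$, which tends to $(2\|u\|_{L^\infty})^{q-1}\|D\tilde u\|(\partial\Omega)=0$. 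This is precisely how the paper closes the argument, via an exhaustion $K_n\uparrow\Omega$ and neighborhoods $U_n\downarrow\partial\Omega$; you should state the property $\|D\tilde u\|(\partial\Omega)=0$ of the chosen extension explicitly rather than appeal to Lipschitz regularity alone.
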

\begin{proof}[Proof of Theorem \ref{hjkgjkfhjffjhmgg7}]
The result is a direct consequence of Theorem \ref{hjkgjkfhjff} and
Proposition \ref{hgugghghhffhfhKKzzbvqhkjjgg}.
\end{proof}

\appendix
\section{Appendix}

\begin{lemma}\label{gjyfyfuyyfifgyify}
Let $\Omega\subset\R^N$ be a domain $q\geq 1$, $r\geq 0$ and $u\in
L^p(\Omega,\R^m)$. Next let
$\rho_\e\big(|z|\big):\R^N\to[0,+\infty)$ be radial mollifiers so
that $\int_{\R^N}\rho_\e\big(|z|\big)dz=1$ and for every $r>0$ there
exits $\delta:=\delta_r>0$, such that $\supp{(\rho_\e)}\subset
B_r(0)$ for every $\e\in(0,\delta_r)$. Then
\begin{multline}\label{GMT'3jGHKKkkhjjhgzzZZzzZZzzbvq88nkhhjggjgjkpkjljluytytl;klljkljojkojjo;k;kklklklkiljluikljjhkjh}
\frac{1}{\mathcal{H}^{N-1}(S^{N-1})}\,\liminf\limits_{\e\to
0^+}\Bigg(\int\limits_{S^{N-1}}\int\limits_{\Omega}\chi_{\Omega}(x+\e\vec
n)\frac{\big|u( x+\e\vec
n)-u(x)\big|^q}{\e^r}dxd\mathcal{H}^{N-1}(\vec n)\Bigg)\\
\leq\liminf\limits_{\e\to
0^+}\int\limits_{\Omega}\int\limits_{\Omega}\rho_\e\Big(|y-x|\Big)\frac{\big|u(
y)-u(x)\big|^q}{|y-x|^r}dydx \leq\limsup\limits_{\e\to
0^+}\int\limits_{\Omega}\int\limits_{\Omega}\rho_\e\Big(|y-x|\Big)\frac{\big|u(
y)-u(x)\big|^q}{|y-x|^r}dydx
\\
\leq\frac{1}{\mathcal{H}^{N-1}(S^{N-1})}\,\limsup\limits_{\e\to
0^+}\Bigg(\int\limits_{S^{N-1}}\int\limits_{\Omega}\chi_{\Omega}(x+\e\vec
n)\frac{\big|u( x+\e\vec
n)-u(x)\big|^q}{\e^r}dxd\mathcal{H}^{N-1}(\vec n)\Bigg).
\end{multline}
\end{lemma}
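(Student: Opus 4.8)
The goal is Lemma \ref{gjyfyfuyyfifgyify}, which sandwiches the double-integral mollifier quantity $\int_\Omega\int_\Omega \rho_\e(|y-x|)\frac{|u(y)-u(x)|^q}{|y-x|^r}\,dy\,dx$ between the $\liminf$ and $\limsup$ (divided by $\mathcal H^{N-1}(S^{N-1})$) of the sphere-averaged quantity $\int_{S^{N-1}}\int_\Omega \chi_\Omega(x+\e\vec n)\frac{|u(x+\e\vec n)-u(x)|^q}{\e^r}\,dx\,d\mathcal H^{N-1}(\vec n)$.

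\medskip

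The plan is to pass to polar coordinates in the $z=y-x$ variable, exactly as in the computation \er{GMT'3jGHKKkkhjjhgzzZZzzZZzzbvq88nkhhjggjgjkpkjljluytytl;klljkljojkojjo1} of the proof of Proposition \ref{hjkgjkfhjff1}. Writing $z = t\vec n$ with $t = |z|>0$ and $\vec n\in S^{N-1}$, and using $dz = t^{N-1}\,dt\,d\mathcal H^{N-1}(\vec n)$, Fubini gives
\begin{equation*}
\int\limits_{\Omega}\int\limits_{\Omega}\rho_\e\Big(|y-x|\Big)\frac{\big|u( y)-u(x)\big|^q}{|y-x|^r}\,dy\,dx
= \int\limits_{\mathbb R^+}\rho_\e(t)\,t^{N-1}\,\Phi_u(t)\,dt,
\end{equation*}
where I set
\begin{equation*}
\Phi_u(t):=\int\limits_{S^{N-1}}\int\limits_{\Omega}\chi_{\Omega}(x+t\vec n)\frac{\big|u( x+t\vec n)-u(x)\big|^q}{t^r}\,dx\,d\mathcal H^{N-1}(\vec n).
\end{equation*}
Since $\int_{\mathbb R^+}\rho_\e(t)\,t^{N-1}\,\mathcal H^{N-1}(S^{N-1})\,dt = \int_{\mathbb R^N}\rho_\e(|z|)\,dz = 1$, the measure $d\mu_\e(t):=\mathcal H^{N-1}(S^{N-1})\,\rho_\e(t)\,t^{N-1}\,dt$ is a probability measure on $(0,+\infty)$ whose support, by the hypothesis on $\rho_\e$, concentrates at $0$: for every $r_0>0$ there is $\delta_{r_0}$ with $\mu_\e$ supported in $[0,r_0]$ once $\e<\delta_{r_0}$. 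Therefore
\begin{equation*}
\int\limits_{\Omega}\int\limits_{\Omega}\rho_\e\Big(|y-x|\Big)\frac{\big|u( y)-u(x)\big|^q}{|y-x|^r}\,dy\,dx = \frac{1}{\mathcal H^{N-1}(S^{N-1})}\int\limits_{(0,+\infty)}\Phi_u(t)\,d\mu_\e(t)
\end{equation*}
is a $\mu_\e$-average of $\Phi_u$ over $t$ in a shrinking neighbourhood of $0$. The standard estimate for averages against a probability measure supported near $0$ then yields, for every $\e<\delta_{r_0}$,
\begin{equation*}
\inf_{t\in(0,r_0)}\Phi_u(t)\ \le\ \mathcal H^{N-1}(S^{N-1})\int\limits_{\Omega}\int\limits_{\Omega}\rho_\e\Big(|y-x|\Big)\frac{\big|u( y)-u(x)\big|^q}{|y-x|^r}\,dy\,dx\ \le\ \sup_{t\in(0,r_0)}\Phi_u(t).
\end{equation*}
Letting $\e\to 0^+$ (so one may take $r_0\to 0^+$) gives $\liminf_{t\to 0^+}\Phi_u(t)\le \mathcal H^{N-1}(S^{N-1})\liminf_{\e\to0^+}(\dots)$ on the lower side and $\mathcal H^{N-1}(S^{N-1})\limsup_{\e\to0^+}(\dots)\le \limsup_{t\to0^+}\Phi_u(t)$ on the upper side, which is precisely \er{GMT'3jGHKKkkhjjhgzzZZzzZZzzbvq88nkhhjggjgjkpkjljluytytl;klljkljojkojjo;k;kklklklkiljluikljjhkjh} after identifying $\Phi_u(\e)$ with the sphere-averaged expression. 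The middle inequality $\liminf\le\limsup$ of the double integral is trivial.

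\medskip

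The one point requiring care — the main (mild) obstacle — is the passage from ``averages over $t$ near $0$'' to ``values at $t=\e$'', i.e. making sure the $\inf$/$\sup$ over a window $(0,r_0)$ really does squeeze down to the $\liminf$/$\limsup$ as $\e\to0^+$. This is a purely soft argument: given the window $(0,r_0)$, for $\e<\delta_{r_0}$ we have the sandwich above with $\inf_{(0,r_0)}\Phi_u$ and $\sup_{(0,r_0)}\Phi_u$; taking $\liminf_{\e\to0^+}$ on all three sides and then letting $r_0\to0^+$ converts $\sup_{(0,r_0)}\Phi_u\to\limsup_{t\to0^+}\Phi_u(t)$ and $\inf_{(0,r_0)}\Phi_u\to\liminf_{t\to0^+}\Phi_u(t)$, and the same for $\limsup_{\e\to0^+}$. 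No measurability subtlety arises since $\Phi_u(t)$ is a measurable function of $t$ (it is a partial integral of a jointly measurable nonnegative integrand, via Tonelli), so all the $\inf$, $\sup$, $\liminf$, $\limsup$ are well-defined in $[0,+\infty]$, and the estimates hold verbatim in the extended reals. I would remark that this is exactly the mechanism already used implicitly in \er{GMT'3jGHKKkkhjjhgzzZZzzZZzzbvq88nkhhjggjgjkpkjljluytytl;klljkljojkojjo;k;kklklklkiljluikljhkjhjhmhhgn} and in the displays following it in the proof of Proposition \ref{hjkgjkfhjff1}, so the lemma is in spirit a repackaging of that device with a general mollifier $\rho_\e$ in place of the specific $\eta_\e$.
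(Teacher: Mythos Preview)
Your proof is correct and follows essentially the same approach as the paper: pass to polar coordinates in $z=y-x$ to write the double integral as $\int_0^\infty t^{N-1}\rho_\e(t)\,\Phi_u(t)\,dt$, use that $\int_0^\infty t^{N-1}\rho_\e(t)\,dt=1/\mathcal H^{N-1}(S^{N-1})$ to sandwich between $\inf_{t\in(0,r_0)}\Phi_u(t)$ and $\sup_{t\in(0,r_0)}\Phi_u(t)$, and then let $r_0\to0^+$. Your write-up is slightly more explicit about the passage from the window $\inf/\sup$ to $\liminf/\limsup$, but the argument is the same.
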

\begin{proof}
Obviously, we have
\begin{multline}\label{GMT'3jGHKKkkhjjhgzzZZzzZZzzbvq88nkhhjggjgjkpkjljluytytl;klljkljojkojjoikokhh11}
\int\limits_{\Omega}\int\limits_{\Omega}\rho_\e\Big(|y-x|\Big)\frac{\big|u(
y)-u(x)\big|^q}{|y-x|^r}dydx=\int\limits_{\mathbb{R}^N}\int\limits_{\Omega}\rho_\e\Big(|z|\Big)\chi_{\Omega}(x+z)\frac{\big|u(
x+z)-u(x)\big|^q}{|z|^r}dxdz\\=
\int\limits_{S^{N-1}}\int\limits_{\mathbb{R}^+}\int\limits_{\Omega}t^{N-1}\rho_\e\big(t\big)\chi_{\Omega}(x+t\vec
n)\frac{\big|u( x+t\vec
n)-u(x)\big|^q}{t^r}dxdtd\mathcal{H}^{N-1}(\vec n)
\\=
\int\limits_{0}^{+\infty}\Bigg\{t^{N-1}\rho_\e\big(t\big)\bigg(\int\limits_{S^{N-1}}\int\limits_{\Omega}\chi_{\Omega}(x+t\vec
n)\frac{\big|u( x+t\vec
n)-u(x)\big|^q}{t^r}dxd\mathcal{H}^{N-1}(\vec n)\bigg)\Bigg\}dt.
\end{multline}
Therefore, if $\e\in(0,\delta_r)$ by
\er{GMT'3jGHKKkkhjjhgzzZZzzZZzzbvq88nkhhjggjgjkpkjljluytytl;klljkljojkojjoikokhh11}
we deduce
\begin{multline}\label{GMT'3jGHKKkkhjjhgzzZZzzZZzzbvq88nkhhjggjgjkpkjljluytytl;klljkljojkojjoikokhhkhgjgj}
\frac{1}{\mathcal{H}^{N-1}(S^{N-1})}\,\inf\limits_{t\in(0,r)}\Bigg\{\int\limits_{S^{N-1}}\int\limits_{\Omega}\chi_{\Omega}(x+t\vec
n)\frac{\big|u( x+t\vec
n)-u(x)\big|^q}{t^r}dxd\mathcal{H}^{N-1}(\vec
n)\Bigg\}\\
=\bigg(\int\limits_{0}^{+\infty}t^{N-1}\rho_\e\big(t\big)dt\bigg)\inf\limits_{t\in(0,r)}\Bigg\{\int\limits_{S^{N-1}}\int\limits_{\Omega}\chi_{\Omega}(x+t\vec
n)\frac{\big|u( x+t\vec
n)-u(x)\big|^q}{t^r}dxd\mathcal{H}^{N-1}(\vec n)\Bigg\}\\ \leq
\int\limits_{\Omega}\int\limits_{\Omega}\rho_\e\Big(|y-x|\Big)\frac{\big|u(
y)-u(x)\big|^q}{|y-x|^r}dydx\\
\leq
\bigg(\int\limits_{0}^{+\infty}t^{N-1}\rho_\e\big(t\big)dt\bigg)\sup\limits_{t\in(0,r)}\Bigg\{\int\limits_{S^{N-1}}\int\limits_{\Omega}\chi_{\Omega}(x+t\vec
n)\frac{\big|u( x+t\vec
n)-u(x)\big|^q}{t^r}dxd\mathcal{H}^{N-1}(\vec n)\Bigg\}\\=
\frac{1}{\mathcal{H}^{N-1}(S^{N-1})}\,\sup\limits_{t\in(0,r)}\Bigg\{\int\limits_{S^{N-1}}\int\limits_{\Omega}\chi_{\Omega}(x+t\vec
n)\frac{\big|u( x+t\vec
n)-u(x)\big|^q}{t^r}dxd\mathcal{H}^{N-1}(\vec n)\Bigg\}\,.
\end{multline}
Thus, letting $r\to 0^+$ in
\er{GMT'3jGHKKkkhjjhgzzZZzzZZzzbvq88nkhhjggjgjkpkjljluytytl;klljkljojkojjoikokhhkhgjgj}
we easily deduce
\er{GMT'3jGHKKkkhjjhgzzZZzzZZzzbvq88nkhhjggjgjkpkjljluytytl;klljkljojkojjo;k;kklklklkiljluikljjhkjh}.
\end{proof}

\subsection{The case r=q}

\begin{lemma}\label{gughfgfhfgdgddffddfKKzzbvq}
Let $\Omega\subset\R^N$ be an open set, $q\geq 1$, $u\in
L^q(\Omega,\R^m)$ and $t_1,t_2>0$. Furthermore, let $G\subset\Omega$
be an open subset such that either $G$ is convex or
$t_1<\dist(G,\R^N\setminus\Omega)$. Then, we have
\begin{multline}
\label{gghgjhfgggjfgfhughGHGHGHKKjhggjhggjjhjgghjhhjhkoioji}
\int\limits_{S^{N-1}}\int\limits_{G}\frac{1}{(t_1+t_2)^q}\Big|u\big(x+(t_1+t_2)\vec
n\big)-u(x)\Big|^q\chi_G\big(x+(t_1+t_2)\vec n\big)dxd\mathcal{H}^{N-1}(\vec n) \leq\\
\frac{t_2}{(t_1+t_2)}\int\limits_{S^{N-1}}\int\limits_{\Omega}\frac{\Big|u\big(x+t_2\vec n\big)-u\big(x\big)\Big|^q}{t_2^q}\chi_\Omega\big(x+t_2\vec n\big)dxd\mathcal{H}^{N-1}(\vec n)\\
+
\frac{t_1}{(t_1+t_2)}\int\limits_{S^{N-1}}\int\limits_{\Omega}\frac{\Big|u(x+t_1\vec
n)-u(x)\Big|^q}{t_1^q}\chi_\Omega\big(x+t_1\vec
n\big)dxd\mathcal{H}^{N-1}(\vec n).
\end{multline}
In particular,
\begin{multline}
\label{gghgjhfgggjfgfhughGHGHGHKKjhggjhggjjhjgghjhhjhkoiojioijuoui}
\int\limits_{S^{N-1}}\int\limits_{G}\frac{1}{(t_1+t_2)^q}\Big|u\big(x+(t_1+t_2)\vec
n\big)-u(x)\Big|^q\chi_G\big(x+(t_1+t_2)\vec n\big)dxd\mathcal{H}^{N-1}(\vec n) \leq\\
\max\Bigg\{\int\limits_{S^{N-1}}\int\limits_{\Omega}\frac{\Big|u\big(x+t_2\vec
n\big)-u\big(x\big)\Big|^q}{t_2^q}\chi_\Omega\big(x+t_2\vec
n\big)dxd\mathcal{H}^{N-1}(\vec n)\,,\\
\,\int\limits_{S^{N-1}}\int\limits_{\Omega}\frac{\Big|u(x+t_1\vec
n)-u(x)\Big|^q}{t_1^q}\chi_\Omega\big(x+t_1\vec
n\big)dxd\mathcal{H}^{N-1}(\vec n)\Bigg\}.
\end{multline}
\end{lemma}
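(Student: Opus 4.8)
The plan is to reduce the two inequalities to a pointwise statement, first in the direction $\vec n$ and then in the base point $x$, using only the triangle inequality and the convexity of $s\mapsto s^q$ on $[0,+\infty)$ (valid since $q\ge1$), and then to restore the integrals by Tonelli's theorem. Fix $\vec n\in S^{N-1}$ and let $x\in\R^N$ be such that $x\in G$ and $x+(t_1+t_2)\vec n\in G$. Inserting the intermediate point $x+t_1\vec n$ and applying the triangle inequality gives
\begin{equation*}
\frac{\big|u(x+(t_1+t_2)\vec n)-u(x)\big|}{t_1+t_2}\le\frac{t_2}{t_1+t_2}\,\frac{\big|u(x+(t_1+t_2)\vec n)-u(x+t_1\vec n)\big|}{t_2}+\frac{t_1}{t_1+t_2}\,\frac{\big|u(x+t_1\vec n)-u(x)\big|}{t_1}\,.
\end{equation*}
Since the weights $t_2/(t_1+t_2)$ and $t_1/(t_1+t_2)$ are nonnegative and sum to $1$, Jensen's inequality for the convex, non-decreasing function $s\mapsto s^q$ yields
\begin{equation*}
\frac{\big|u(x+(t_1+t_2)\vec n)-u(x)\big|^q}{(t_1+t_2)^q}\le\frac{t_2}{t_1+t_2}\,\frac{\big|u(x+(t_1+t_2)\vec n)-u(x+t_1\vec n)\big|^q}{t_2^q}+\frac{t_1}{t_1+t_2}\,\frac{\big|u(x+t_1\vec n)-u(x)\big|^q}{t_1^q}\,.
\end{equation*}

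Next I would multiply this inequality by $\chi_G(x)\chi_G\big(x+(t_1+t_2)\vec n\big)$ and integrate in $x$ over $\R^N$ (the values of $u$ outside $\Omega$ are irrelevant because of the cutoffs). The key geometric observation handles the intermediate point: if $G$ is convex, then $x\in G$ together with $x+(t_1+t_2)\vec n\in G$ forces $x+t_1\vec n\in G$, while if $t_1<\dist(G,\R^N\setminus\Omega)$, then $x\in G$ alone forces $x+t_1\vec n\in\Omega$. In both cases, using also $G\subset\Omega$, one gets the pointwise bounds $\chi_G(x)\chi_G\big(x+(t_1+t_2)\vec n\big)\le\chi_\Omega\big(x+t_1\vec n\big)\chi_\Omega\big(x+(t_1+t_2)\vec n\big)$ and $\chi_G(x)\chi_G\big(x+(t_1+t_2)\vec n\big)\le\chi_\Omega(x)\chi_\Omega\big(x+t_1\vec n\big)$. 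Applying the first bound to the term containing $u(x+(t_1+t_2)\vec n)-u(x+t_1\vec n)$ and then performing the translation change of variable $x\mapsto x-t_1\vec n$ (which turns $x+t_1\vec n$ and $x+(t_1+t_2)\vec n$ into $x$ and $x+t_2\vec n$), and applying the second bound to the term containing $u(x+t_1\vec n)-u(x)$, produces, for each $\vec n\in S^{N-1}$,
\begin{equation*}
\int_G\frac{\big|u(x+(t_1+t_2)\vec n)-u(x)\big|^q}{(t_1+t_2)^q}\chi_G\big(x+(t_1+t_2)\vec n\big)\,dx\le\frac{t_2}{t_1+t_2}\int_\Omega\frac{\big|u(x+t_2\vec n)-u(x)\big|^q}{t_2^q}\chi_\Omega\big(x+t_2\vec n\big)\,dx+\frac{t_1}{t_1+t_2}\int_\Omega\frac{\big|u(x+t_1\vec n)-u(x)\big|^q}{t_1^q}\chi_\Omega\big(x+t_1\vec n\big)\,dx\,.
\end{equation*}

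Finally, integrating this estimate over $\vec n\in S^{N-1}$ (Tonelli applies since all integrands are nonnegative and measurable) gives exactly \er{gghgjhfgggjfgfhughGHGHGHKKjhggjhggjjhjgghjhhjhkoioji}; and \er{gghgjhfgggjfgfhughGHGHGHKKjhggjhggjjhjgghjhhjhkoiojioijuoui} is then immediate, since a convex combination of two nonnegative numbers never exceeds their maximum. The only points that genuinely require care — and hence the main obstacle — are the bookkeeping of the characteristic functions in the two cases (convex $G$ versus the distance condition), i.e.\ ensuring that the intermediate point $x+t_1\vec n$ lands in the set it must, and the translation change of variables that reconciles the ``$t_1$-shift followed by $t_2$-shift'' term with the clean form on the right-hand side; everything else is the triangle inequality combined with the convexity of $s\mapsto s^q$.
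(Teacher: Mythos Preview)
Your proof is correct and follows essentially the same approach as the paper: insert the intermediate point $x+t_1\vec n$, apply the triangle inequality and the convexity of $s\mapsto s^q$ to obtain a convex combination, handle the characteristic functions via the convexity of $G$ or the distance hypothesis to ensure $x+t_1\vec n\in\Omega$, then drop extra $\chi_\Omega$ factors, translate $x\mapsto x-t_1\vec n$ in the first term, and integrate over $S^{N-1}$. The second inequality follows exactly as you say, since a convex combination is bounded by the maximum.
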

\begin{proof}
By the triangle inequality, for every $\vec n\in S^{N-1}$ and every
$h_1,h_2>0$ we have
\begin{multline}
\label{gghgjhfgggjfgfhughGHGHGHKKjhggzz}
\int_{G}\frac{1}{(h_1+h_2)^q}\Big|u\big(x+(h_1+h_2)\vec n\big)-u(x)\Big|^q\chi_G\big(x+(h_1+h_2)\vec n\big)dx=\\
\int_{\R^N}\frac{1}{(h_1+h_2)^q}\Big|u\big(x+(h_1+h_2)\vec
n\big)-u(x)\Big|^q\chi_G\big(x\big)\chi_G\big(x+(h_1+h_2)\vec
n\big)dx=
\\ \int_{\R^N}\frac{1}{(h_1+h_2)^q}\Big|u\big(x+(h_1+h_2)\vec n\big)-u(x+h_1\vec n)+u(x+h_1\vec n)-u(x)\Big|^q\chi_G\big(x\big)\chi_G\big(x+(h_1+h_2)\vec n\big)dx\leq\\
\int_{\R^N}\frac{1}{(h_1+h_2)^q}\bigg(\Big|u\big(x+(h_1+h_2)\vec
n\big)-u(x+h_1\vec n)\Big|+\Big|u(x+h_1\vec
n)-u(x)\Big|\bigg)^q\chi_G\big(x\big)\chi_G\big(x+(h_1+h_2)\vec
n\big)dx\\=
\int_{\R^N}\Bigg(\frac{h_2}{(h_1+h_2)}\frac{\Big|u\big(x+(h_1+h_2)\vec
n\big)-u(x+h_1\vec
n)\Big|}{h_2}\\+\frac{h_1}{(h_1+h_2)}\frac{\Big|u(x+h_1\vec
n)-u(x)\Big|}{h_1}\Bigg)^q\chi_G\big(x\big)\chi_G\big(x+(h_1+h_2)\vec
n\big)dx.
\end{multline}
Thus, by \er{gghgjhfgggjfgfhughGHGHGHKKjhggzz} and convexity of
$g(s):=|s|^q$, for every $\vec n\in S^{N-1}$ and every $h_1,h_2>0$
we deduce
\begin{multline}
\label{gghgjhfgggjfgfhughGHGHGHKKjhgg}
\int_{G}\frac{1}{(h_1+h_2)^q}\Big|u\big(x+(h_1+h_2)\vec
n\big)-u(x)\Big|^q\chi_G\big(x+(h_1+h_2)\vec n\big)dx\leq
\\
\int_{\R^N}\Bigg(\frac{h_2}{h_1+h_2}\bigg(\frac{\Big|u\big(x+(h_1+h_2)\vec
n\big)-u(x+h_1\vec
n)\Big|}{h_2}\bigg)^q\\+\frac{h_1}{h_1+h_2}\bigg(\frac{\Big|u(x+h_1\vec
n)-u(x)\Big|}{h_1}\bigg)^q\Bigg)\chi_G\big(x\big)
\chi_G\big(x+(h_1+h_2)\vec n\big)dx=\\
\frac{h_2}{h_1+h_2}\int_{\R^N}\frac{\Big|u\big(x+(h_1+h_2)\vec n\big)-u\big(x+h_1\vec n\big)\Big|^q}{h_2^q}\chi_G\big(x\big)\chi_G\big(x+(h_1+h_2)\vec n\big)dx\\
+ \frac{h_1}{h_1+h_2}\int_{\R^N}\frac{\Big|u(x+\vec n
h_1)-u(x)\Big|^q}{h_1^q}\chi_G\big(x\big)\chi_G\big(x+(h_1+h_2)\vec
n\big)dx .
\end{multline}
However, if $G\subset\Omega$ is convex then $x\in G$ and
$x+(h_1+h_2)\vec n\in G$ implies $x+h_1\vec n\in G$ and
then
\begin{multline}\label{hkjkgkfjfkkkghjggfhfhf}
\chi_G\big(x\big)\chi_G\big(x+(h_1+h_2)\vec
n\big)=\chi_G\big(x\big)\chi_G\big(x+h_1\vec
n\big)\chi_G\big(x+(h_1+h_2)\vec n\big)\\ \leq
\chi_\Omega\big(x\big)\chi_\Omega\big(x+h_1\vec
n\big)\chi_\Omega\big(x+(h_1+h_2)\vec n\big).
\end{multline}
On the other hand, if $h_1<\dist(G,\R^N\setminus\Omega)$, then $x\in
G$ implies $x+h_1\vec n\in \Omega$ and so we also deduce
\er{hkjkgkfjfkkkghjggfhfhf} in that case.
Thus, inserting \er{hkjkgkfjfkkkghjggfhfhf}
into
\er{gghgjhfgggjfgfhughGHGHGHKKjhgg}, in both cases we have
\begin{multline}
\label{gghgjhfgggjfgfhughGHGHGHKKjhggjhggjzz}
\int_{G}\frac{1}{(h_1+h_2)^q}\Big|u\big(x+(h_1+h_2)\vec
n\big)-u(x)\Big|^q\chi_G\big(x+(h_1+h_2)\vec n\big)dx
\leq\\
\frac{h_2}{(h_1+h_2)}\int_{\R^N}\frac{\Big|u\big(x+(h_1+h_2)\vec n\big)-u\big(x+h_1\vec n\big)\Big|^q}{h_2^q}\chi_\Omega\big(x\big)\chi_\Omega\big(x+h_1\vec n\big)\chi_\Omega\big(x+(h_1+h_2)\vec n\big)dx\\
+ \frac{h_1}{(h_1+h_2)}\int_{\R^N}\frac{\Big|u(x+h_1\vec
n)-u(x)\Big|^q}{h_1^q}\chi_\Omega\big(x\big)\chi_\Omega\big(x+h_1\vec
n\big)\chi_\Omega\big(x+(h_1+h_2)\vec n\big)dx.
\end{multline}
Therefore, since $\chi_\Omega\leq 1$ by
\er{gghgjhfgggjfgfhughGHGHGHKKjhggjhggjzz} we infer
\begin{multline}
\label{gghgjhfgggjfgfhughGHGHGHKKjhggjhggj}
\int_{G}\frac{1}{(h_1+h_2)^q}\Big|u\big(x+(h_1+h_2)\vec
n\big)-u(x)\Big|^q\chi_G\big(x+(h_1+h_2)\vec n\big)dx
\leq\\
\frac{h_2}{(h_1+h_2)}\int_{\R^N}\frac{\Big|u\big(x+(h_1+h_2)\vec n\big)-u\big(x+h_1\vec n\big)\Big|^q}{h_2^q}\chi_\Omega\big(x+h_1\vec n\big)\chi_\Omega\big(x+(h_1+h_2)\vec n\big)dx\\
+
\frac{h_1}{(h_1+h_2)}\int_{\R^N}\frac{\Big|u(x+h_1\vec n)-u(x)\Big|^q}{h_1^q}\chi_\Omega\big(x\big)\chi_\Omega\big(x+h_1\vec n\big)dx\\
=\frac{h_2}{(h_1+h_2)}\int_{\R^N}\frac{\Big|u\big(x+h_2\vec n\big)-u\big(x\big)\Big|^q}{h_2^q}\chi_\Omega\big(x\big)\chi_\Omega\big(x+h_2\vec n\big)dx\\
+ \frac{h_1}{(h_1+h_2)}\int_{\R^N}\frac{\Big|u(x+h_1\vec
n)-u(x)\Big|^q}{h_1^q}\chi_\Omega\big(x\big)\chi_\Omega\big(x+h_1\vec
n\big)dx.
\end{multline}
So, 
we deduce
\er{gghgjhfgggjfgfhughGHGHGHKKjhggjhggjjhjgghjhhjhkoioji}. In
particular, by
\er{gghgjhfgggjfgfhughGHGHGHKKjhggjhggjjhjgghjhhjhkoioji}
we finally obtain
\er{gghgjhfgggjfgfhughGHGHGHKKjhggjhggjjhjgghjhhjhkoiojioijuoui}.
%
%
%
%
%
%
\end{proof}

\begin{lemma}\label{gughfgfhfgdgddffddfKKzzbvqhig}
Let $\Omega\subset\R^N$ be an open set, $q\geq 1$, $u\in
L^q(\Omega,\R^m)$ and $t>0$. Furthermore, let $G\subset\Omega$ be an
open subset, such that $\mathcal{L}^N(\partial G)=0$ and, either
$t<\dist(G,\R^N\setminus\Omega)$ or $G$ is convex. Then, we have
\begin{multline}
\label{gghgjhfgggjfgfhughGHGHGHKKjhggjhggjjhjgghjhhjhkoiojiiuyyihjjg}
\int\limits_{S^{N-1}}\int\limits_{G}\frac{\big|u\big(x+t\vec
n\big)-u(x)\big|^q}{t^q}\chi_G\big(x+t\vec
n\big)dxd\mathcal{H}^{N-1}(\vec n)\\ \leq\liminf\limits_{\e\to
0^+}\Bigg(\int\limits_{S^{N-1}}\int\limits_{\Omega}\frac{\big|u(
x+\e\vec n)-u(x)\big|^q}{\e^q}\chi_{\Omega}(x+\e\vec
n)dxd\mathcal{H}^{N-1}(\vec n)\Bigg).
\end{multline}
\end{lemma}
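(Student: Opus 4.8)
The plan is to reduce, via a telescoping estimate in the spirit of Lemma~\ref{gughfgfhfgdgddffddfKKzzbvq}, to the case in which $t$ is an integer multiple of the small parameter $\e$, and then to remove the integrality restriction by a lower--semicontinuity argument for which the hypothesis $\mathcal{L}^N(\partial G)=0$ is precisely what is needed. For the first part I would first prove the following $n$--step inequality: for every positive integer $n$ and every $\e>0$ with $n\e<\dist(G,\R^N\setminus\Omega)$ (and with no restriction on $\e$ when $G$ is convex),
\[
\int_{S^{N-1}}\int_{G}\frac{\big|u(x+n\e\vec n)-u(x)\big|^q}{(n\e)^q}\,\chi_G(x+n\e\vec n)\,dx\,d\mathcal{H}^{N-1}(\vec n)\ \le\ \int_{S^{N-1}}\int_{\Omega}\frac{\big|u(x+\e\vec n)-u(x)\big|^q}{\e^q}\,\chi_\Omega(x+\e\vec n)\,dx\,d\mathcal{H}^{N-1}(\vec n).
\]
Writing $u(x+n\e\vec n)-u(x)=\sum_{j=0}^{n-1}\big(u(x+(j+1)\e\vec n)-u(x+j\e\vec n)\big)$, the triangle inequality together with the convexity of $\xi\mapsto|\xi|^q$ (Jensen's inequality) gives $\big|u(x+n\e\vec n)-u(x)\big|^q\le n^{q-1}\sum_{j=0}^{n-1}\big|u(x+(j+1)\e\vec n)-u(x+j\e\vec n)\big|^q$, so that $\frac{|u(x+n\e\vec n)-u(x)|^q}{(n\e)^q}\le\frac1n\sum_{j=0}^{n-1}\frac{|u(x+(j+1)\e\vec n)-u(x+j\e\vec n)|^q}{\e^q}$. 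Multiplying by $\chi_G(x)\chi_G(x+n\e\vec n)$, integrating over $x\in\R^N$ and $\vec n\in S^{N-1}$, and then performing the change of variables $y=x+j\e\vec n$ in the $j$-th summand, one is reduced to bounding, for each $j$, the integral $\int_{S^{N-1}}\int_{\R^N}\frac{|u(y+\e\vec n)-u(y)|^q}{\e^q}\chi_G(y-j\e\vec n)\chi_G(y+(n-j)\e\vec n)\,dy\,d\mathcal{H}^{N-1}(\vec n)$. On the set where $\chi_G(y-j\e\vec n)\chi_G(y+(n-j)\e\vec n)=1$ both $y$ and $y+\e\vec n$ lie in $\Omega$: when $G$ is convex this is because both points lie on the segment joining $y-j\e\vec n\in G$ and $y+(n-j)\e\vec n\in G$, while in the other case it follows from $\dist(y,G)\le j\e<n\e<\dist(G,\R^N\setminus\Omega)$ and $\dist(y+\e\vec n,G)\le(n-j-1)\e<n\e<\dist(G,\R^N\setminus\Omega)$. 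Hence each such summand is at most $\int_{S^{N-1}}\int_{\Omega}\frac{|u(y+\e\vec n)-u(y)|^q}{\e^q}\chi_\Omega(y+\e\vec n)\,dy\,d\mathcal{H}^{N-1}(\vec n)$, and summing over $j$ and dividing by $n$ yields the displayed inequality.

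Next I would establish the lower semicontinuity in the scale: setting $\Phi(h):=\int_{S^{N-1}}\int_{G}\frac{|u(x+h\vec n)-u(x)|^q}{h^q}\chi_G(x+h\vec n)\,dx\,d\mathcal{H}^{N-1}(\vec n)$ (with $u$ extended by $0$ outside $\Omega$), I claim $\liminf_{h\to h_0}\Phi(h)\ge\Phi(h_0)$ for every $h_0>0$. Given $h_k\to h_0$, continuity of translations in $L^q(\R^N)$ and $\mathcal{H}^{N-1}(S^{N-1})<+\infty$ give, by dominated convergence in the $\vec n$--variable, that $(x,\vec n)\mapsto u(x+h_k\vec n)$ converges to $(x,\vec n)\mapsto u(x+h_0\vec n)$ in $L^q(\R^N\times S^{N-1})$; passing to a subsequence we get a.e.\ convergence. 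Moreover, since $\mathcal{L}^N(\partial G)=0$, Fubini's theorem shows that for a.e.\ $(x,\vec n)$ the point $x+h_0\vec n$ lies off $\partial G$, and there $\chi_G(x+h_k\vec n)\to\chi_G(x+h_0\vec n)$. Thus $\chi_G(x)\chi_G(x+h_k\vec n)|u(x+h_k\vec n)-u(x)|^q\to\chi_G(x)\chi_G(x+h_0\vec n)|u(x+h_0\vec n)-u(x)|^q$ a.e., and Fatou's lemma gives $\liminf_k h_k^q\Phi(h_k)\ge h_0^q\Phi(h_0)$; dividing by the convergent positive factor $h_k^q\to h_0^q>0$ yields $\liminf_k\Phi(h_k)\ge\Phi(h_0)$.

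Finally, to conclude: for $\e>0$ put $n_\e:=\lceil t/\e\rceil$ and $T_\e:=n_\e\e$, so that $T_\e\in[t,t+\e)$ and $T_\e\to t$ as $\e\to0^+$; in particular, in the non-convex case one has $T_\e<\dist(G,\R^N\setminus\Omega)$ for all sufficiently small $\e$. The $n$--step inequality of the first part, applied with $n=n_\e$, gives $\Phi(T_\e)\le\int_{S^{N-1}}\int_{\Omega}\frac{|u(x+\e\vec n)-u(x)|^q}{\e^q}\chi_\Omega(x+\e\vec n)\,dx\,d\mathcal{H}^{N-1}(\vec n)$ for all small $\e$, while the lower semicontinuity of $\Phi$ at $t$ together with $T_\e\to t$ gives $\Phi(t)\le\liminf_{\e\to0^+}\Phi(T_\e)$. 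Combining the two inequalities produces exactly the asserted bound $\Phi(t)\le\liminf_{\e\to0^+}\int_{S^{N-1}}\int_{\Omega}\frac{|u(x+\e\vec n)-u(x)|^q}{\e^q}\chi_\Omega(x+\e\vec n)\,dx\,d\mathcal{H}^{N-1}(\vec n)$. I expect the genuinely delicate point to be this last step: the clean $n$--step estimate is only available when the large scale is an integer multiple of the small scale, so it cannot be used at scale $\e$ for the given $t$ directly; the remedy is to run it at the nearby scale $T_\e=\lceil t/\e\rceil\e\ge t$ and then pass to the limit $T_\e\to t^+$, which is exactly where $\mathcal{L}^N(\partial G)=0$ is needed to guarantee the lower semicontinuity of $\Phi$.
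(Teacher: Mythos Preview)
Your proof is correct and follows essentially the same approach as the paper: both establish an $n$-step telescoping inequality (you directly, the paper by iterating Lemma~\ref{gughfgfhfgdgddffddfKKzzbvq}), then approximate $t$ by an integer multiple of $\e$ and use $\mathcal{L}^N(\partial G)=0$ to pass to the limit in the $\chi_G$ factor. The only cosmetic differences are that the paper takes $j_k=\lfloor t/\e_k\rfloor$ along a sequence $\e_k$ realizing the $\liminf$ and proves that the full limit in \eqref{gghgjhfgggjfgfhughGHGHGHKKjhggjhggjjhjgghjhhjhkoiojikljjlhjghgiojjj} exists (via $L^q$-continuity of translations combined with dominated convergence for the characteristic function), whereas you take $n_\e=\lceil t/\e\rceil$ and argue by Fatou; either variant suffices.
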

\begin{proof}
First of all, in the case of convex $G$, for every $s>0$ and every
$j=1,2,\ldots$, taking $t_1=s$ and $t_2=js$ in
\er{gghgjhfgggjfgfhughGHGHGHKKjhggjhggjjhjgghjhhjhkoiojioijuoui},
with $G$ instead of $\Omega$, gives
\begin{multline}
\label{gghgjhfgggjfgfhughGHGHGHKKjhggjhggjjhjgghjhhjhkoiojioijuouiljjjhkkh1}
\int\limits_{S^{N-1}}\int\limits_{G}\frac{1}{\big((j+1)s\big)^q}\Bigg|u\Big(x+\big((j+1)s\big)\vec
n\Big)-u(x)\Bigg|^q\chi_G\Big(x+\big((j+1)s\big)\vec n\Big)dxd\mathcal{H}^{N-1}(\vec n) \leq\\
\max\Bigg\{\int\limits_{S^{N-1}}\int\limits_{G}\frac{\Big|u\big(x+s\vec
n\big)-u\big(x\big)\Big|^q}{s^q}\chi_G\big(x+s\vec
n\big)dxd\mathcal{H}^{N-1}(\vec n)\,,\\ \,
\int\limits_{S^{N-1}}\int\limits_{G}\frac{\Big|u\big(x+js\vec
n\big)-u\big(x\big)\Big|^q}{(js)^q}\chi_G\big(x+js\vec
n\big)dxd\mathcal{H}^{N-1}(\vec n) \Bigg\}.
\end{multline}
Therefore, using
\er{gghgjhfgggjfgfhughGHGHGHKKjhggjhggjjhjgghjhhjhkoiojioijuouiljjjhkkh1},
by induction, in the case of convex $G$ we prove
\begin{multline}
\label{gghgjhfgggjfgfhughGHGHGHKKjhggjhggjjhjgghjhhjhkoiojioijuouiljjjhkkhkljjjiouioui1}
\int\limits_{S^{N-1}}\int\limits_{G}\frac{1}{\big(js\big)^q}\Bigg|u\Big(x+\big(js\big)\vec
n\Big)-u(x)\Bigg|^q\chi_G\Big(x+\big(js\big)\vec n\Big)dxd\mathcal{H}^{N-1}(\vec n) \leq\\
\int\limits_{S^{N-1}}\int\limits_{G}\frac{\Big|u\big(x+s\vec
n\big)-u\big(x\big)\Big|^q}{s^q}\chi_G\big(x+s\vec
n\big)dxd\mathcal{H}^{N-1}(\vec
n)\leq\\
\int\limits_{S^{N-1}}\int\limits_{\Omega}\frac{\Big|u\big(x+s\vec
n\big)-u\big(x\big)\Big|^q}{s^q}\chi_\Omega\big(x+s\vec
n\big)dxd\mathcal{H}^{N-1}(\vec
n)\quad\quad\quad\quad\forall\,j=1,2,\ldots.
\end{multline}
On the other hand, for every $s>0$, every $j=1,2,\ldots$ and every
open $G_{j+1}\subset G_j\subset\Omega$, such that
$s<\dist(G_{j+1},\R^N\setminus G_j)$, taking $t_1=s$ and $t_2=js$ in
\er{gghgjhfgggjfgfhughGHGHGHKKjhggjhggjjhjgghjhhjhkoiojioijuoui}
gives
\begin{multline}
\label{gghgjhfgggjfgfhughGHGHGHKKjhggjhggjjhjgghjhhjhkoiojioijuouiljjjhkkh}
\int\limits_{S^{N-1}}\int\limits_{G_{j+1}}\frac{1}{\big((j+1)s\big)^q}\Bigg|u\Big(x+\big((j+1)s\big)\vec
n\Big)-u(x)\Bigg|^q\chi_{G_{j+1}}\Big(x+\big((j+1)s\big)\vec n\Big)dxd\mathcal{H}^{N-1}(\vec n) \leq\\
\max\Bigg\{\int\limits_{S^{N-1}}\int\limits_{G_j}\frac{\Big|u\big(x+s\vec
n\big)-u\big(x\big)\Big|^q}{s^q}\chi_{G_j}\big(x+s\vec
n\big)dxd\mathcal{H}^{N-1}(\vec n)\,,\\ \,
\int\limits_{S^{N-1}}\int\limits_{G_j}\frac{\Big|u\big(x+js\vec
n\big)-u\big(x\big)\Big|^q}{(js)^q}\chi_{G_j}\big(x+js\vec
n\big)dxd\mathcal{H}^{N-1}(\vec n) \Bigg\}
\leq\\
\max\Bigg\{\int\limits_{S^{N-1}}\int\limits_{\Omega}\frac{\Big|u\big(x+s\vec
n\big)-u\big(x\big)\Big|^q}{s^q}\chi_{\Omega}\big(x+s\vec
n\big)dxd\mathcal{H}^{N-1}(\vec n)\,,\\ \,
\int\limits_{S^{N-1}}\int\limits_{G_j}\frac{\Big|u\big(x+js\vec
n\big)-u\big(x\big)\Big|^q}{(js)^q}\chi_{G_j}\big(x+js\vec
n\big)dxd\mathcal{H}^{N-1}(\vec n) \Bigg\}.
\end{multline}
Therefore, using
\er{gghgjhfgggjfgfhughGHGHGHKKjhggjhggjjhjgghjhhjhkoiojioijuouiljjjhkkh},
by induction we prove that, for every $s>0$, every $j=1,2,\ldots$
and every $G_j\subset\Omega$, such that
$js<\dist(G_{j},\R^N\setminus \Omega)$, we have
\begin{multline}
\label{gghgjhfgggjfgfhughGHGHGHKKjhggjhggjjhjgghjhhjhkoiojioijuouiljjjhkkhkljjjiouioui}
\int\limits_{S^{N-1}}\int\limits_{G_j}\frac{1}{\big(js\big)^q}\Bigg|u\Big(x+\big(js\big)\vec
n\Big)-u(x)\Bigg|^q\chi_{G_j}\Big(x+\big(js\big)\vec n\Big)dxd\mathcal{H}^{N-1}(\vec n) \leq\\
\int\limits_{S^{N-1}}\int\limits_{\Omega}\frac{\Big|u\big(x+s\vec
n\big)-u\big(x\big)\Big|^q}{s^q}\chi_\Omega\big(x+s\vec
n\big)dxd\mathcal{H}^{N-1}(\vec
n)\quad\quad\quad\quad\forall\,j=1,2,\ldots.
\end{multline}

Next, assume that a sequence $\{\e_k\}_{k=1}^{+\infty}$ satisfies
$\e_k\downarrow 0$ and
\begin{multline}
\label{gghgjhfgggjfgfhughGHGHGHKKjhggjhggjjhjgghjhhjhkoiojiiuyyihjjgjj}
\lim\limits_{k\to
+\infty}\Bigg(\int\limits_{S^{N-1}}\int\limits_{\Omega}\frac{\big|u(
x+\e_k\vec n)-u(x)\big|^q}{\e^q_k}\chi_{\Omega}(x+\e_k\vec
n)dxd\mathcal{H}^{N-1}(\vec n)\Bigg)=\\
\liminf\limits_{\e\to
0^+}\Bigg(\int\limits_{S^{N-1}}\int\limits_{\Omega}\frac{\big|u(
x+\e\vec n)-u(x)\big|^q}{\e^q}\chi_{\Omega}(x+\e\vec
n)dxd\mathcal{H}^{N-1}(\vec n)\Bigg).
\end{multline}
Then, given $t>0$, for every $k\in \mathbb{N}$ consider $j_k\in
\mathbb{N}$ and $r_k\in[0,1)$ such that
\begin{equation}\label{hkjkgkfjfkkkghjggfhfhfhjgjghlkhigiukpoi}
\frac{t}{\e_k}=j_k+r_k\quad\quad\quad\quad\forall\,k\in\mathbb{N}\,,
\end{equation}
so that
\begin{equation}\label{hkjkgkfjfkkkghjggfhfhfhjgjghlkhigiuyigfhjljjjhjhjhhiyipiip}
t=(j_k+r_k)\e_k=j_k\e_k+r_k\e_k\quad\quad\quad\quad\forall\,k\in\mathbb{N}\,.
\end{equation}
In particular, since $\e_k\downarrow 0$ and $r_k\in[0,1)$ we deduce
\begin{equation}\label{hkjkgkfjfkkkghjggfhfhfhjgjghlkhigiuyigfhjljjjhjhjhhiyipiiphuyyu}
\lim_{k\to+\infty}j_k\e_k=t\,.
\end{equation}
Then, by
\er{hkjkgkfjfkkkghjggfhfhfhjgjghlkhigiuyigfhjljjjhjhjhhiyipiiphuyyu},
obviously we have
\begin{multline}
\label{gghgjhfgggjfgfhughGHGHGHKKjhggjhggjjhjgghjhhjhkoiojikljjlhjghgiojjj}
\lim\limits_{k\to+\infty}\int\limits_{S^{N-1}}\int\limits_{G}\frac{1}{(j_k\e_k)^q}\Bigg|u\bigg(x+\big(j_k\e_k\big)\vec
n\bigg)-u(x)\Bigg|^q\chi_G\bigg(x+\big(j_k\e_k\big)\vec
n\bigg)dxd\mathcal{H}^{N-1}(\vec n)=\\
\lim\limits_{k\to+\infty}\int\limits_{S^{N-1}}\int\limits_{G}\frac{1}{t^q}\Bigg|u\bigg(x+\big(j_k\e_k\big)\vec
n\bigg)-u(x)\Bigg|^q\chi_G\bigg(x+\big(j_k\e_k\big)\vec
n\bigg)dxd\mathcal{H}^{N-1}(\vec n)
=\\
\int\limits_{S^{N-1}}\int\limits_{G}\frac{1}{t^q}\Big|u\big(x+t\vec
n\big)-u(x)\Big|^q\chi_G\big(x+t\vec n\big)dxd\mathcal{H}^{N-1}(\vec
n),
\end{multline}
where we include the fact $\mathcal{L}^N(\partial G)=0$ in the proof
of the last equation. However, by either
\er{gghgjhfgggjfgfhughGHGHGHKKjhggjhggjjhjgghjhhjhkoiojioijuouiljjjhkkhkljjjiouioui1}
with $s=\e_k$ and $j=j_k$,
in the case of convex $G$, together with
\er{gghgjhfgggjfgfhughGHGHGHKKjhggjhggjjhjgghjhhjhkoiojiiuyyihjjgjj},
or by
\er{gghgjhfgggjfgfhughGHGHGHKKjhggjhggjjhjgghjhhjhkoiojioijuouiljjjhkkhkljjjiouioui},
with $s=\e_k$, $j=j_k$ and $G_j=G$, in the case of
$t<\dist(G,\R^N\setminus\Omega)$, together with
\er{gghgjhfgggjfgfhughGHGHGHKKjhggjhggjjhjgghjhhjhkoiojiiuyyihjjgjj}
and
\er{hkjkgkfjfkkkghjggfhfhfhjgjghlkhigiuyigfhjljjjhjhjhhiyipiiphuyyu},
we infer,
\begin{multline}
\label{gghgjhfgggjfgfhughGHGHGHKKjhggjhggjjhjgghjhhjhkoiojioijuouiljjjhkkhkljjjiouiouiiggguiou}
\lim\limits_{k\to+\infty}\int\limits_{S^{N-1}}\int\limits_{G}\frac{1}{(j_k\e_k)^q}\Bigg|u\bigg(x+\big(j_k\e_k\big)\vec
n\bigg)-u(x)\Bigg|^q\chi_G\bigg(x+\big(j_k\e_k\big)\vec
n\bigg)dxd\mathcal{H}^{N-1}(\vec n)\\ \leq\lim\limits_{k\to
+\infty}\Bigg(\int\limits_{S^{N-1}}\int\limits_{\Omega}\frac{\big|u(
x+\e_k\vec n)-u(x)\big|^q}{\e^q_k}\chi_{\Omega}(x+\e_k\vec
n)dxd\mathcal{H}^{N-1}(\vec n)\Bigg)=\\
\liminf\limits_{\e\to
0^+}\Bigg(\int\limits_{S^{N-1}}\int\limits_{\Omega}\frac{\big|u(
x+\e\vec n)-u(x)\big|^q}{\e^q}\chi_{\Omega}(x+\e\vec
n)dxd\mathcal{H}^{N-1}(\vec n)\Bigg).
\end{multline}
Therefore, by inserting
\er{gghgjhfgggjfgfhughGHGHGHKKjhggjhggjjhjgghjhhjhkoiojioijuouiljjjhkkhkljjjiouiouiiggguiou}
into
\er{gghgjhfgggjfgfhughGHGHGHKKjhggjhggjjhjgghjhhjhkoiojikljjlhjghgiojjj}
we finally obtain
\er{gghgjhfgggjfgfhughGHGHGHKKjhggjhggjjhjgghjhhjhkoiojiiuyyihjjg}.
\end{proof}

\begin{corollary}\label{gughfgfhfgdgddffddfKKzzbvqhigygygtyuu2}
Let $\Omega\subset\R^N$ be an open set, $q\geq 1$ and $u\in
L^q(\Omega,\R^m)$. Furthermore, let $G\subset\Omega$ be an open
subset, such that $\mathcal{L}^N(\partial G)=0$ and
$h:=\dist(G,\R^N\setminus\Omega)>0$. Then, we have
\begin{multline}
\label{gghgjhfgggjfgfhughGHGHGHKKjhggjhggjjhjgghjhhjhkoiojiiuyyihjjgy7yyujhij2}
\sup\limits_{\e\in(0,h)}\Bigg(\int\limits_{S^{N-1}}\int\limits_{G}\frac{\big|u(
x+\e\vec n)-u(x)\big|^q}{\e^q}\chi_{G}(x+\e\vec
n)dxd\mathcal{H}^{N-1}(\vec n)\Bigg)\\ \leq\liminf\limits_{\e\to
0^+}\Bigg(\int\limits_{S^{N-1}}\int\limits_{\Omega}\frac{\big|u(
x+\e\vec n)-u(x)\big|^q}{\e^q}\chi_{\Omega}(x+\e\vec
n)dxd\mathcal{H}^{N-1}(\vec n)\Bigg).
\end{multline}
\end{corollary}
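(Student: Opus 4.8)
The plan is to obtain this corollary as an immediate consequence of Lemma~\ref{gughfgfhfgdgddffddfKKzzbvqhig}, which already contains all the substantial work. First I would note that, since $h:=\dist(G,\R^N\setminus\Omega)>0$, every $\e\in(0,h)$ satisfies $\e<\dist(G,\R^N\setminus\Omega)$; hence, together with the standing assumption $\mathcal{L}^N(\partial G)=0$, the pair $(G,\e)$ meets the hypotheses of Lemma~\ref{gughfgfhfgdgddffddfKKzzbvqhig} (via its distance alternative, not the convexity one).

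Consequently, for each fixed $\e\in(0,h)$, applying Lemma~\ref{gughfgfhfgdgddffddfKKzzbvqhig} with $t=\e$ yields
\[
\int\limits_{S^{N-1}}\int\limits_{G}\frac{\big|u(x+\e\vec n)-u(x)\big|^q}{\e^q}\chi_{G}(x+\e\vec n)\,dx\,d\mathcal{H}^{N-1}(\vec n)\leq\liminf\limits_{\delta\to 0^+}\Bigg(\int\limits_{S^{N-1}}\int\limits_{\Omega}\frac{\big|u(x+\delta\vec n)-u(x)\big|^q}{\delta^q}\chi_{\Omega}(x+\delta\vec n)\,dx\,d\mathcal{H}^{N-1}(\vec n)\Bigg).
\]
Since the right-hand side of this inequality is a constant independent of $\e$, I would then take the supremum over all $\e\in(0,h)$ on the left-hand side, which gives exactly \eqref{gghgjhfgggjfgfhughGHGHGHKKjhggjhggjjhjgghjhhjhkoiojiiuyyihjjgy7yyujhij2} and completes the proof.

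I do not expect a genuine obstacle here: the corollary is a one-line deduction once Lemma~\ref{gughfgfhfgdgddffddfKKzzbvqhig} is in hand. The only points worth spelling out are that the ``convex $G$'' alternative of the lemma is never invoked — the hypothesis $h>0$ alone makes every $\e<h$ admissible — and that passing to the supremum over $\e$ is legitimate precisely because the bound furnished by the lemma is uniform in $\e\in(0,h)$.
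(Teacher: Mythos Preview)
Your proposal is correct and matches the paper's approach exactly: the corollary is stated immediately after Lemma~\ref{gughfgfhfgdgddffddfKKzzbvqhig} without a separate proof, precisely because it follows by applying that lemma for each $t=\e\in(0,h)$ (via the distance alternative) and then taking the supremum over $\e$. There is nothing to add.
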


\begin{corollary}\label{gughfgfhfgdgddffddfKKzzbvqhigygygtyuu}
Let $\Omega\subset\R^N$ be a convex open domain, such that
$\mathcal{L}^N(\partial\Omega)=0$, $q\geq 1$ and $u\in
L^q(\Omega,\R^m)$. Then, we have
\begin{multline}
\label{gghgjhfgggjfgfhughGHGHGHKKjhggjhggjjhjgghjhhjhkoiojiiuyyihjjgy7yyujhij}
\sup\limits_{\e\in(0,+\infty)}\Bigg(\int\limits_{S^{N-1}}\int\limits_{\Omega}\frac{\big|u(
x+\e\vec n)-u(x)\big|^q}{\e^q}\chi_{\Omega}(x+\e\vec
n)dxd\mathcal{H}^{N-1}(\vec n)\Bigg)\\=\limsup\limits_{\e\to
0^+}\Bigg(\int\limits_{S^{N-1}}\int\limits_{\Omega}\frac{\big|u(
x+\e\vec n)-u(x)\big|^q}{\e^q}\chi_{\Omega}(x+\e\vec
n)dxd\mathcal{H}^{N-1}(\vec n)\Bigg) \\=\liminf\limits_{\e\to
0^+}\Bigg(\int\limits_{S^{N-1}}\int\limits_{\Omega}\frac{\big|u(
x+\e\vec n)-u(x)\big|^q}{\e^q}\chi_{\Omega}(x+\e\vec
n)dxd\mathcal{H}^{N-1}(\vec n)\Bigg).
\end{multline}
In particular, for that case, if
$\rho_\e\big(|z|\big):\R^N\to[0,+\infty)$ are radial mollifiers, so
that $\int_{\R^N}\rho_\e\big(|z|\big)dz=1$ and for every $r>0$ there
exits $\delta:=\delta_r>0$, such that $\supp{(\rho_\e)}\subset
B_r(0)$ for every $\e\in(0,\delta_r)$, then by Lemma
\ref{gjyfyfuyyfifgyify} we have:
\begin{multline}\label{GMT'3jGHKKkkhjjhgzzZZzzZZzzbvq88nkhhjggjgjkpkjljluytytl;klljkljojkojjo;k;kklklklkiljluikljjhkjhhjjhpoi}
\frac{1}{\mathcal{H}^{N-1}(S^{N-1})}\,\lim\limits_{\e\to
0^+}\Bigg(\int\limits_{S^{N-1}}\int\limits_{\Omega}\chi_{\Omega}(x+\e\vec
n)\frac{\big|u( x+\e\vec
n)-u(x)\big|^q}{\e^q}dxd\mathcal{H}^{N-1}(\vec n)\Bigg)\\
=\lim\limits_{\e\to
0^+}\int\limits_{\Omega}\int\limits_{\Omega}\rho_\e\Big(|y-x|\Big)\frac{\big|u(
y)-u(x)\big|^q}{|y-x|^q}dydx
\\
=\frac{1}{\mathcal{H}^{N-1}(S^{N-1})}\,\sup\limits_{\e\in(0,+\infty)}\Bigg(\int\limits_{S^{N-1}}\int\limits_{\Omega}\chi_{\Omega}(x+\e\vec
n)\frac{\big|u( x+\e\vec
n)-u(x)\big|^q}{\e^q}dxd\mathcal{H}^{N-1}(\vec n)\Bigg).
\end{multline}
\end{corollary}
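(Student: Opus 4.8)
The plan is to deduce this Corollary directly from Lemma \ref{gughfgfhfgdgddffddfKKzzbvqhig}, specialized to the choice $G=\Omega$, followed by a single application of Lemma \ref{gjyfyfuyyfifgyify} with $r=q$. Throughout write
\[
\Phi(\e):=\int\limits_{S^{N-1}}\int\limits_{\Omega}\frac{\big|u( x+\e\vec n)-u(x)\big|^q}{\e^q}\,\chi_{\Omega}(x+\e\vec n)\,dx\,d\mathcal{H}^{N-1}(\vec n)\qquad(\e>0).
\]
Since trivially $\liminf_{\e\to0^+}\Phi(\e)\le\limsup_{\e\to0^+}\Phi(\e)\le\sup_{\e\in(0,+\infty)}\Phi(\e)$, the equality \er{gghgjhfgggjfgfhughGHGHGHKKjhggjhggjjhjgghjhhjhkoiojiiuyyihjjgy7yyujhij} will follow as soon as we establish the reverse estimate $\sup_{\e>0}\Phi(\e)\le\liminf_{\e\to0^+}\Phi(\e)$; this remains correct, and non-trivial, even in the case $\sup_{\e>0}\Phi(\e)=+\infty$.

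To obtain this reverse estimate I would apply Lemma \ref{gughfgfhfgdgddffddfKKzzbvqhig} with $G=\Omega$. Its hypotheses are met: $\Omega$ is open with $\mathcal{L}^N(\partial\Omega)=0$, and although $\dist(\Omega,\R^N\setminus\Omega)=0$ renders the distance alternative in that lemma vacuous, the convexity alternative applies, precisely because $\Omega$ is assumed convex. Hence, for every fixed $t>0$, the lemma yields $\Phi(t)\le\liminf_{\e\to0^+}\Phi(\e)$; taking the supremum over all $t\in(0,+\infty)$ gives $\sup_{t>0}\Phi(t)\le\liminf_{\e\to0^+}\Phi(\e)$, which is exactly what was needed. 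In particular $\lim_{\e\to0^+}\Phi(\e)$ exists and coincides with $\sup_{\e>0}\Phi(\e)$.

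For the last displayed assertion \er{GMT'3jGHKKkkhjjhgzzZZzzZZzzbvq88nkhhjggjgjkpkjljluytytl;klljkljojkojjo;k;kklklklkiljluikljjhkjhhjjhpoi} I would feed this information into Lemma \ref{gjyfyfuyyfifgyify} taken with $r=q$: that lemma sandwiches both $\liminf_{\e\to0^+}$ and $\limsup_{\e\to0^+}$ of $\int_\Omega\int_\Omega\rho_\e(|y-x|)\,|u(y)-u(x)|^q\,|y-x|^{-q}\,dy\,dx$ between $\frac{1}{\mathcal{H}^{N-1}(S^{N-1})}\liminf_{\e\to0^+}\Phi(\e)$ and $\frac{1}{\mathcal{H}^{N-1}(S^{N-1})}\limsup_{\e\to0^+}\Phi(\e)$; since the two outer bounds now agree, the mollified double integral converges, with limit $\frac{1}{\mathcal{H}^{N-1}(S^{N-1})}\lim_{\e\to0^+}\Phi(\e)=\frac{1}{\mathcal{H}^{N-1}(S^{N-1})}\sup_{\e>0}\Phi(\e)$, which is the claimed three-way identity.

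There is no real obstacle in this argument: essentially all the work is already carried out inside Lemma \ref{gughfgfhfgdgddffddfKKzzbvqhig} (whose inductive proof rests on the subadditivity estimate of Lemma \ref{gughfgfhfgdgddffddfKKzzbvq}) and inside Lemma \ref{gjyfyfuyyfifgyify}. The only point deserving a word of care is the one flagged above, namely that with the choice $G=\Omega$ one is forced to invoke the convexity hypothesis of Lemma \ref{gughfgfhfgdgddffddfKKzzbvqhig} rather than its distance hypothesis; everything else is a direct quotation of already proved results together with the elementary comparison $\liminf\le\limsup\le\sup$.
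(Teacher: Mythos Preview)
Your proposal is correct and follows exactly the route the paper intends: the corollary is stated in the Appendix immediately after Lemma \ref{gughfgfhfgdgddffddfKKzzbvqhig} (and its companion Corollary \ref{gughfgfhfgdgddffddfKKzzbvqhigygygtyuu2}) without a separate proof, and the second display already says ``by Lemma \ref{gjyfyfuyyfifgyify}'' in its statement. Your observation that with $G=\Omega$ one must invoke the convexity alternative of Lemma \ref{gughfgfhfgdgddffddfKKzzbvqhig} (since $\dist(\Omega,\R^N\setminus\Omega)=0$) is the one non-trivial point, and you handle it correctly.
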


\subsection{The case $r\in(0,q)$}

The following Lemma is a part of the statement, that was proven in
\cite{jmp}:
\begin{lemma}\label{hjhhjKKzzbvq}
Let $\Omega\subset\R^N$ be an open set, $q\geq 1$ and let   $u\in
L^q_{loc}(\Omega,\R^m)$. Then,  for every open
$\Omega_1\subset\subset \Omega_2\subset\subset\Omega$, $\vec k\in
S^{N-1}$ and $\e$ satisfying
\begin{equation}
\label{eq:epsil} 0<\e<
\min{\big\{\dist(\Omega_1,\R^N\setminus\Omega_2),\dist(\Omega_2,\R^N\setminus\Omega)\big\}}\,,
\end{equation}
we have
\begin{equation}\label{gghgjhfgggjfgfhughGHGHKKzzjkjkyuyuybvq}
\int_{\ov\Omega_1}\frac{1}{\e}\Big|u(x+\e\vec k)-u(x)\Big|^qdx \leq
\frac{2^{N+q}}{\mathcal{L}^N({B_1(0)})}\int_{{B_1(0)}}\int_{\ov\Omega_2}\frac{1}{\e|z|}\Big|u(x+\e
z)-u\big(x)\Big|^qdxdz.
\end{equation}
\end{lemma}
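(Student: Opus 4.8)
The plan is to deduce \er{gghgjhfgggjfgfhughGHGHKKzzjkjkyuyuybvq} from a single averaging‑and‑splitting argument, with the domain of the averaging chosen so that the numerical constant comes out to be exactly $2^{N+q}/\mathcal{L}^N(B_1(0))$. Fix $\vec k\in S^{N-1}$ and set $B:=B_{1/2}(\vec k/2)$, the ball of radius $\tfrac12$ centred at $\tfrac12\vec k$. I will use three features of $B$: (a) $B\subset B_1(0)$, since $|z|\le|z-\tfrac12\vec k|+\tfrac12<1$ for $z\in B$; (b) $B$ is invariant under the reflection $z\mapsto\vec k-z$ about its centre, which has unit Jacobian; and (c) $|z|<1$ on $B$ (indeed on all of $B_1(0)$), so the weight $|z|^{-1}$ is $\ge1$ there. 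Note also $\mathcal{L}^N(B)=2^{-N}\mathcal{L}^N(B_1(0))$.

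First I would write the trivial identity $\int_{\ov\Omega_1}|u(x+\e\vec k)-u(x)|^q\,dx=\frac{1}{\mathcal{L}^N(B)}\int_{B}\int_{\ov\Omega_1}|u(x+\e\vec k)-u(x)|^q\,dx\,dz$, insert the intermediate point $x+\e z$ inside the modulus, and apply the elementary convexity bound $|a+b|^q\le2^{q-1}(|a|^q+|b|^q)$ (which for $q=1$ is just the triangle inequality). This bounds $\int_{\ov\Omega_1}|u(x+\e\vec k)-u(x)|^q\,dx$ by $\frac{2^{q-1}}{\mathcal{L}^N(B)}\int_B\int_{\ov\Omega_1}\big(|u(x+\e\vec k)-u(x+\e z)|^q+|u(x+\e z)-u(x)|^q\big)\,dx\,dz$.

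Next I treat the two summands. In the second I simply replace $\ov\Omega_1$ by $\ov\Omega_2$. In the first I substitute $y=x+\e z$; since $\e|z|\le\e<\dist(\Omega_1,\R^N\setminus\Omega_2)$ one has $\ov\Omega_1+\e z\subset\Omega_2$, so the inner integral is at most $\int_{\ov\Omega_2}|u(y+\e(\vec k-z))-u(y)|^q\,dy$ — which is well defined because $\e|\vec k-z|<\dist(\Omega_2,\R^N\setminus\Omega)$ and $u\in L^q_{loc}(\Omega)$ — and then the change of variable $w=\vec k-z$, which by (b) maps $B$ onto itself, turns this summand into exactly the second one. Hence both summands are dominated by $\frac{1}{\mathcal{L}^N(B)}\int_B\int_{\ov\Omega_2}|u(x+\e z)-u(x)|^q\,dx\,dz$, with total prefactor $2^{q-1}\cdot2=2^q$. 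Finally, using $\mathcal{L}^N(B)=2^{-N}\mathcal{L}^N(B_1(0))$, enlarging $B$ to $B_1(0)$ via (a), and inserting the weight $1\le|z|^{-1}$ via (c), I obtain $\int_{\ov\Omega_1}|u(x+\e\vec k)-u(x)|^q\,dx\le\frac{2^{N+q}}{\mathcal{L}^N(B_1(0))}\int_{B_1(0)}\int_{\ov\Omega_2}\frac{1}{|z|}|u(x+\e z)-u(x)|^q\,dx\,dz$; dividing by $\e$ gives \er{gghgjhfgggjfgfhughGHGHKKzzjkjkyuyuybvq}.

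I do not anticipate a real obstacle. The only point needing care is the choice of $B$: averaging naively over $B_1(0)$ would push the reflected ball $B_1(\vec k)$ out to radius $2$ and spoil the constant, whereas $B_{1/2}(\vec k/2)$ — the largest ball centred at $\vec k/2$ contained in $B_1(0)$ — is reflection‑symmetric about $\vec k/2$ and hence reproduces itself under $z\mapsto\vec k-z$. The remaining bookkeeping is the pair of inclusions $\ov\Omega_1+\e z\subset\Omega_2$ and $\ov\Omega_2+\e(\vec k-z)\subset\Omega$, which is precisely what hypothesis \er{eq:epsil} supplies, using $\dist(\ov\Omega_j,\cdot)=\dist(\Omega_j,\cdot)$ since each $\ov\Omega_j$ is compact.
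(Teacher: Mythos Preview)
Your proof is correct and follows essentially the same averaging argument as the proof in \cite{jmp} that the paper cites: split $u(x+\e\vec k)-u(x)$ through the intermediate point $x+\e z$, average $z$ over the ball $B_{1/2}(\vec k/2)$, and use that this ball sits inside $B_1(0)$ with volume $2^{-N}\mathcal L^N(B_1(0))$. The only cosmetic difference is that you exploit the reflection symmetry $z\mapsto\vec k-z$ of $B_{1/2}(\vec k/2)$ to identify the two summands directly, whereas the cited proof instead enlarges both $B_{1/2}(\vec k/2)$ and its reflection to $B_1(0)$ (carrying the weight $1/|z|$ from the start via the preliminary estimate of Lemma~\ref{gughfgfhfgdgddffddfKKzzbvq}); either route yields the same constant $2^{N+q}/\mathcal L^N(B_1(0))$.
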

\begin{corollary}\label{gjggjfhhffhfh}
Let $\Omega\subset\R^N$ be an open set, $q\geq 1$, $r>0$ and $u\in
L^q_{loc}(\Omega,\R^m)$. Next assume that open $\Omega_1\subset
\Omega_2\subset\Omega$ satisfy either $\Omega_1\subset\subset
\Omega_2\subset\subset\Omega$ or $\Omega_1=\Omega_2=\Omega=\R^N$.
Then we have
\begin{multline}\label{gghgjhfgggjfgfhughGHGHKKzzjkjkyuyuybvqjhgfhfhgjgj}
\limsup\limits_{\e\to 0^+}\Bigg(\sup\limits_{\vec k\in
S^{N-1}}\bigg\{\int_{\ov\Omega_1}\frac{1}{\e^r}\Big|u(x+\e\vec
k)-u(x)\Big|^qdx\bigg\}\Bigg) \leq\\
\frac{2^{N+q}}{(N-1+r)\mathcal{L}^N({B_1(0)})}\Bigg(\limsup\limits_{\e\to
0^+}\bigg\{\int_{S^{N-1}}\int_{\ov\Omega_2}\frac{\big|u( x+\e\vec
n)-u(x)\big|^q}{\e^r}dxd\mathcal{H}^{N-1}(\vec n)\bigg\}\Bigg).
\end{multline}
\end{corollary}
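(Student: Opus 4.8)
\emph{Plan.} The whole statement is a change-of-variables consequence of \rlemma{hjhhjKKzzbvq}. The only genuine work is to convert the factor $1/\e$ occurring in \er{gghgjhfgggjfgfhughGHGHKKzzjkjkyuyuybvq} into $1/\e^r$; this is done by writing the ball integral in polar coordinates and performing a radial rescaling, and it is exactly this step that produces the constant $1/(N-1+r)$ and uses the hypothesis $r>0$.

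\textbf{Step 1 (reduce to the lemma inequality).} In the case $\Omega_1\subset\subset\Omega_2\subset\subset\Omega$, I apply \rlemma{hjhhjKKzzbvq} directly: for every $\vec k\in S^{N-1}$ and every $\e$ satisfying \er{eq:epsil},
\[
\int_{\ov\Omega_1}\frac{1}{\e}\big|u(x+\e\vec k)-u(x)\big|^q\,dx\le\frac{2^{N+q}}{\mathcal{L}^N(B_1(0))}\int_{B_1(0)}\int_{\ov\Omega_2}\frac{1}{\e|z|}\big|u(x+\e z)-u(x)\big|^q\,dx\,dz .
\]
In the case $\Omega_1=\Omega_2=\Omega=\R^N$ I first obtain the same inequality with $\R^N$ in place of $\ov\Omega_1$ and $\ov\Omega_2$: apply \rlemma{hjhhjKKzzbvq} to $B_R(0)\subset\subset B_{2R}(0)\subset\subset\R^N$, then enlarge $\ov{B_{2R}(0)}$ to $\R^N$ on the right, and let $R\to+\infty$; the left side increases to $\int_{\R^N}\e^{-1}|u(x+\e\vec k)-u(x)|^q\,dx$ by monotone convergence, and the $\e$-threshold $\dist(B_R(0),\R^N\setminus B_{2R}(0))=R$ tends to $+\infty$, so the inequality holds for all $\e>0$ with no restriction. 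In either case the constraint on $\e$ does not depend on $\vec k$, so I may take $\sup_{\vec k\in S^{N-1}}$ on the left.

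\textbf{Step 2 (polar coordinates and rescaling).} Multiplying the inequality by $\e^{1-r}$ turns the left-hand integrand into $\e^{-r}|u(x+\e\vec k)-u(x)|^q$ and the right-hand one into $\e^{-r}|z|^{-1}|u(x+\e z)-u(x)|^q$. Writing $z=t\vec n$ with $t\in(0,1)$, $\vec n\in S^{N-1}$, $dz=t^{N-1}\,dt\,d\mathcal{H}^{N-1}(\vec n)$, and using $\e^{-r}=t^r(\e t)^{-r}$ together with the substitution "radius $\e t$", the double integral over $B_1(0)$ becomes
\[
\int_0^1 t^{N-2+r}\,\Phi(\e t)\,dt,\qquad \Phi(\sigma):=\int_{S^{N-1}}\int_{\ov\Omega_2}\frac{1}{\sigma^r}\big|u(x+\sigma\vec n)-u(x)\big|^q\,dx\,d\mathcal{H}^{N-1}(\vec n),
\]
where $\Phi$ is precisely the quantity whose $\limsup$ appears on the right of \er{gghgjhfgggjfgfhughGHGHKKzzjkjkyuyuybvqjhgfhfhgjgj}. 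Since $\e t<\e$, bounding $\Phi(\e t)\le G(\e):=\sup_{0<\sigma\le\e}\Phi(\sigma)$ for $t\in(0,1)$ and using $\int_0^1 t^{N-2+r}\,dt=\frac1{N-1+r}$ (finite because $r>0$ forces $N-1+r>0$) gives, for all small $\e$,
\[
\sup_{\vec k\in S^{N-1}}\int_{\ov\Omega_1}\frac{1}{\e^r}\big|u(x+\e\vec k)-u(x)\big|^q\,dx\ \le\ \frac{2^{N+q}}{(N-1+r)\,\mathcal{L}^N(B_1(0))}\,G(\e).
\]

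\textbf{Step 3 (pass to the limit).} Taking $\limsup_{\e\to0^+}$ gives exactly $\limsup_{\e\to0^+}\sup_{\vec k}(\cdots)$ on the left. On the right, $G$ is nondecreasing and $G(\e)=\sup_{0<\sigma\le\e}\Phi(\sigma)$, so $\lim_{\e\to0^+}G(\e)=\limsup_{\e\to0^+}\Phi(\e)$, which is the right-hand side of \er{gghgjhfgggjfgfhughGHGHKKzzjkjkyuyuybvqjhgfhfhgjgj}; this proves the corollary. No step is a real obstacle: the only points needing care are tracking the power of $\e$ through the substitution $z=t\vec n$ (the source of the factor $1/(N-1+r)$ and of the role of $r>0$), checking that $\sup_{\vec k}$ commutes with the inequality because the $\e$-threshold in \er{eq:epsil} is $\vec k$-free, and the auxiliary exhaustion argument needed in the $\R^N$ case since \rlemma{hjhhjKKzzbvq} is stated only for $\subset\subset$ inclusions.
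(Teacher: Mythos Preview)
Your proof is correct and follows essentially the same route as the paper's: apply \rlemma{hjhhjKKzzbvq}, pass to polar coordinates, extract the factor $\int_0^1 t^{N-2+r}\,dt=1/(N-1+r)$, bound $\Phi(\e t)$ by $\sup_{0<\sigma\le\e}\Phi(\sigma)$, and take $\limsup_{\e\to0^+}$. Your exhaustion argument for the $\R^N$ case spells out what the paper merely asserts as ``easily deduced''.
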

\begin{proof}
In the case $\Omega=\R^N$, we easily deduce from
\er{gghgjhfgggjfgfhughGHGHKKzzjkjkyuyuybvq} that for every $\e>0$ we
have
\begin{equation}\label{gghgjhfgggjfgfhughGHGHKKzzjkjkyuyuybvq33}
\int_{\R^N}\frac{1}{\e}\Big|u(x+\e\vec k)-u(x)\Big|^qdx \leq
\frac{2^{N+q}}{\mathcal{L}^N({B_1(0)})}\int_{{B_1(0)}}\int_{\R^N}\frac{1}{\e|z|}\Big|u(x+\e
z)-u\big(x)\Big|^qdxdz.
\end{equation}
Then, by either \er{gghgjhfgggjfgfhughGHGHKKzzjkjkyuyuybvq} for
every $\e$ satisfying \er{eq:epsil} or by
\er{gghgjhfgggjfgfhughGHGHKKzzjkjkyuyuybvq33} for every $\e>0$ we
have
\begin{multline}\label{gghgjhfgggjfgfhughGHGHKKzzjkjkyuyuybvqjhgfhfh}
\sup\limits_{\vec k\in
S^{N-1}}\Bigg\{\int_{\ov\Omega_1}\frac{1}{\e^r}\Big|u(x+\e\vec
k)-u(x)\Big|^qdx\Bigg\} \leq
\frac{2^{N+q}}{\mathcal{L}^N({B_1(0)})}\int_{{B_1(0)}}\int_{\ov\Omega_2}\frac{1}{\e^r|z|}\Big|u(x+\e
z)-u\big(x)\Big|^qdxdz\\
=
\frac{2^{N+q}}{\mathcal{L}^N({B_1(0)})}\int\limits_{S^{N-1}}\int\limits_{0}^{1}\int\limits_{\ov\Omega_2}s^{N-1}\frac{\big|u(
x+\e s\vec n)-u(x)\big|^q}{\e^rs}dxdsd\mathcal{H}^{N-1}(\vec n) \\
=
\frac{2^{N+q}}{\mathcal{L}^N({B_1(0)})}\int\limits_{S^{N-1}}\int\limits_{0}^{\e}\int\limits_{\ov\Omega_2}\frac{t^{N-1}}{\e^{N-1}}\frac{\big|u(
x+t\vec n)-u(x)\big|^q}{\e^rt}dxdtd\mathcal{H}^{N-1}(\vec n) \\
\leq
\frac{2^{N+q}}{\mathcal{L}^N({B_1(0)})}\Bigg(\sup\limits_{t\in(0,\e)}\bigg\{\int_{S^{N-1}}\int_{\ov\Omega_2}\frac{\big|u(
x+t\vec n)-u(x)\big|^q}{t^r}dxd\mathcal{H}^{N-1}(\vec
n)\bigg\}\Bigg)\int\limits_{0}^{\e}\frac{\tau^{N-2+r}}{\e^{N-1+r}}d\tau
\\
=
\frac{2^{N+q}}{(N-1+r)\mathcal{L}^N({B_1(0)})}\Bigg(\sup\limits_{t\in(0,\e)}\bigg\{\int_{S^{N-1}}\int_{\ov\Omega_2}\frac{\big|u(
x+t\vec n)-u(x)\big|^q}{t^r}dxd\mathcal{H}^{N-1}(\vec
n)\bigg\}\Bigg)\,.
\end{multline}
In particular, by \er{gghgjhfgggjfgfhughGHGHKKzzjkjkyuyuybvqjhgfhfh}
we deduce \er{gghgjhfgggjfgfhughGHGHKKzzjkjkyuyuybvqjhgfhfhgjgj}.
\end{proof}

\begin{definition}\label{gjghghghjgghGHKKzzbvqKKkk}
Given a compact set $ \ov U\subset\subset\O$ let
\begin{equation}
\begin{aligned}
B_{u,q,r}\big(\ov U\big):=\limsup\limits_{\e\to
0^+}\sup\limits_{\vec k\in S^{N-1}}\int_{\ov
U}\frac{1}{\e^r}\Big|u(x+\e\vec k)-u(x)\Big|^qdx
\label{GMT'1jGHKKzzbvq}.
\end{aligned}
\end{equation}
Next, given an open set $\Omega\subset\R^N$ define
\begin{equation}
\label{hjhjhghgjkghggghGHKKjjzzbvqjj}
B_{u,q,r}\big(\Omega\big):=\sup\limits_{K\subset\subset\Omega}B_{u,q,r}\big(K\big).
\end{equation}
Finally, set
\begin{equation}
\label{GMT'3jGHKKkkhjjhgzzZZzzbvqkk} \hat
B_{u,q,r}\big(\R^N\big):=\limsup\limits_{\e\to 0^+}\sup\limits_{\vec
k\in S^{N-1}}\int_{\R^N}\frac{1}{\e^r}\Big|u(x+\e\vec
k)-u(x)\Big|^qdx.
\end{equation}
\end{definition}
The following result is known; for the convenience of a reader we
will give its proof.
\begin{lemma}\label{hjgjg}
For any $q\geq 1$ and $r\in(0,q)$, a function $u\in L^q(\R^N,\R^m)$
belongs to $B_{q,\infty}^{r/q}(\R^N,\R^m)$ if and only if $\hat
B_{u,q,r}\big(\R^N\big)<+\infty$. Moreover, for any  open
$\Omega\subset\R^N$, a function $u\in L^q_{loc}(\Omega,\R^m)$
belongs to $\big(B_{q,\infty}^{r/q}\big)_{loc}(\Omega,\R^m)$ if and
only if for every compact $K\subset\subset\Omega$ we have
$B_{u,q,r}\big(K\big)<+\infty$.
\end{lemma}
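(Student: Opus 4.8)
The goal is to characterize membership in the Besov space $B^{r/q}_{q,\infty}$ (respectively its local version) via the one-directional difference quotient quantities $\hat B_{u,q,r}(\R^N)$ and $B_{u,q,r}(K)$. The natural approach is to prove two chains of inequalities: first that the Besov semi-norm controls the ``$\sup$ over directions'' quantity, and conversely that the directional quantity controls an integrated (over the sphere) quantity which in turn dominates the Besov semi-norm up to constants. For the global statement I would argue as follows. If $u\in B^{r/q}_{q,\infty}(\R^N,\R^m)$, then by Definition~\ref{gjghghghjgghGHKKhjhjhjhhzzbvqkkl,.,.} we have $\int_{\R^N}|u(x+h)-u(x)|^q\,dx\leq C\,|h|^r$ for all $h$ (taking $\rho=|h|$ in \eqref{gjhgjhfghffh}); applying this with $h=\e\vec k$ gives $\int_{\R^N}\e^{-r}|u(x+\e\vec k)-u(x)|^q\,dx\leq C$ uniformly in $\vec k\in S^{N-1}$ and $\e>0$, hence $\hat B_{u,q,r}(\R^N)<+\infty$. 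Conversely, if $\hat B_{u,q,r}(\R^N)<+\infty$, then for $\e$ below some threshold the directional integrals are uniformly bounded by some constant $M$; but also for every $h\neq 0$ one can write $h=\e\vec k$ with $\vec k=h/|h|$ and $\e=|h|$, giving $\int_{\R^N}|u(x+h)-u(x)|^q\,dx\leq M|h|^r$ for small $|h|$, and for large $|h|$ the estimate $\int|u(x+h)-u(x)|^q\,dx\leq 2^q\|u\|_{L^q}^q$ together with $|h|^r\geq 1$ handles the remaining range; taking the supremum over $\rho$ and $|h|\leq\rho$ recovers \eqref{gjhgjhfghffh}.

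**The local statement.** Here the argument is more delicate because the definition of $\big(B^{r/q}_{q,\infty}\big)_{loc}(\Omega,\R^m)$ requires, for every compact $K\subset\subset\Omega$, the existence of a \emph{global} Besov function $u_K$ agreeing with $u$ on $K$. For the ``only if'' direction: given $K\subset\subset\Omega$, choose $K'$ with $K\subset\subset K'\subset\subset\Omega$, pick $u_{K'}\in B^{r/q}_{q,\infty}(\R^N,\R^m)$ with $u_{K'}=u$ on $K'$; then for small $\e$ (smaller than $\dist(K,\R^N\setminus K')$) the differences $u(x+\e\vec k)-u(x)$ for $x\in K$ coincide with $u_{K'}(x+\e\vec k)-u_{K'}(x)$, so $B_{u,q,r}(K)\leq \hat B_{u_{K'},q,r}(\R^N)<+\infty$ by the global part. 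For the ``if'' direction, suppose $B_{u,q,r}(K)<+\infty$ for every compact $K\subset\subset\Omega$. Fix a compact $K$; I would choose nested open sets $K\subset\subset\Omega_1\subset\subset\Omega_2\subset\subset\Omega$, apply Corollary~\ref{gjggjfhhffhfh} to bound the directional quantity on $\ov\Omega_1$ by the spherically-integrated quantity on $\ov\Omega_2$ (which is finite since $B_{u,q,r}(\ov\Omega_2)<+\infty$ controls it after integrating over $S^{N-1}$), and then construct $u_K$ by multiplying $u$ by a cutoff $\varphi\in C^\infty_c(\Omega_1)$ equal to $1$ on $K$. The point is to show $\varphi u\in B^{r/q}_{q,\infty}(\R^N,\R^m)$: one splits the difference $(\varphi u)(x+h)-(\varphi u)(x)=\varphi(x+h)\big(u(x+h)-u(x)\big)+u(x)\big(\varphi(x+h)-\varphi(x)\big)$; the first term is controlled using the directional bound on the support of $\varphi$ together with the boundedness of $\varphi$, and the second term is controlled because $\varphi$ is Lipschitz (so $|\varphi(x+h)-\varphi(x)|\leq L|h|$, giving an $|h|^q\leq|h|^r$ contribution for $|h|\leq 1$ since $r<q$) and $u\in L^q_{loc}$ with $\varphi$ compactly supported.

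**Main obstacle.** The technically most subtle point is the ``if'' direction of the local statement, specifically passing from the hypothesis ``$B_{u,q,r}(K)<+\infty$ for all $K$'' — which only controls difference quotients of $u$ itself on compact sets — to the construction of a genuinely \emph{global} Besov representative. This requires both the transition from a single fixed direction to the spherical average (handled by Corollary~\ref{gjggjfhhffhfh}, and crucially using $r<q$ so that the factor $\int_0^\e \tau^{N-2+r}\e^{-(N-1+r)}d\tau$ converges) and the cutoff argument, where one must carefully track that the ``bad'' region where $x$ or $x+h$ lies near $\partial(\supp\varphi)$ contributes only an $O(|h|^r)$ error, again leaning on $r<q<q$ and the $L^q_{loc}$ integrability. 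I would present the global equivalence first as a clean lemma, then derive the local one, keeping the cutoff estimate as the one genuinely new computation.
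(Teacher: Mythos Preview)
Your approach is correct and mirrors the paper's proof: the global equivalence via the small/large $|h|$ split, the local ``only if'' via restricting a global extension, and the local ``if'' via the cutoff-and-product-rule split exploiting $r<q$. One remark: your invocation of Corollary~\ref{gjggjfhhffhfh} is an unnecessary detour (the hypothesis already gives $B_{u,q,r}(\ov\Omega_1)<+\infty$ directly, which is all the cutoff argument needs, and this is exactly how the paper proceeds), and the convergence of $\int_0^\e\tau^{N-2+r}\e^{-(N-1+r)}d\tau=\frac{1}{N-1+r}$ requires only $r>0$, not $r<q$ --- the condition $r<q$ enters solely through the $\e^{q-r}\to 0$ step in the second cutoff term, which you also correctly identify.
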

\begin{proof}
We have,
\begin{multline}\label{gjhgjhfghffhnmmbbbkhjhhkljkjk}
\sup\limits_{\rho\in(0,\infty)}\Bigg(\sup_{|h|=\rho}\int_{\mathbb{R}^N}\bigg(\frac{1}{\rho^r}\big|u(x+h)-u(x)\big|\bigg)^qdx\Bigg)\leq
\sup\limits_{\rho\in(0,\infty)}\Bigg(\sup_{|h|\leq\rho}\int_{\mathbb{R}^N}\bigg(\frac{1}{\rho^r}\big|u(x+h)-u(x)\big|\bigg)^qdx\Bigg)\\=
\sup\limits_{\rho\in(0,\infty)}\sup_{t\in(0,\rho]}\Bigg(\sup_{|h|=t}\int_{\mathbb{R}^N}\bigg(\frac{1}{\rho^r}\big|u(x+h)-u(x)\big|\bigg)^qdx\Bigg)\\
\leq
\sup\limits_{\rho\in(0,\infty)}\Bigg(\sup_{t\in(0,\rho]}\bigg(\sup_{|h|=t}\int_{\mathbb{R}^N}\Big(\frac{1}{t^r}\big|u(x+h)-u(x)\big|\Big)^qdx\bigg)\Bigg)
=\\
\sup\limits_{\rho\in(0,\infty)}\Bigg(\sup_{|h|=\rho}\int_{\mathbb{R}^N}\bigg(\frac{1}{\rho^r}\big|u(x+h)-u(x)\big|\bigg)^qdx\Bigg)
=\\
\sup\limits_{\rho\in(0,\infty)}\Bigg(\sup_{\vec k\in
S^{N-1}}\int_{\mathbb{R}^N}\bigg(\frac{1}{\rho^r}\big|u(x+\rho\vec
k)-u(x)\big|\bigg)^qdx\Bigg).
\end{multline}
On the other hand by the triangle inequality and the convexity of
$g(s):=|s|^q$ for every $\delta>0$ we have,
\begin{multline}\label{gjhgjhfghffhj}
\sup\limits_{\rho\in(0,\delta)}\bigg(\sup_{\vec k\in
S^{N-1}}\int_{\mathbb{R}^N}\frac{1}{\rho^r}\big|u(x+\rho\vec
k)-u(x)\big|^qdx\bigg)\leq\sup\limits_{\rho\in(0,\infty)}\bigg(\sup_{\vec
k\in S^{N-1}}\int_{\mathbb{R}^N}\frac{1}{\rho^r}\big|u(x+\rho\vec
k)-u(x)\big|^qdx\bigg)\leq\\
\sup\limits_{\rho\in(0,\delta)}\bigg(\sup_{\vec k\in
S^{N-1}}\int_{\mathbb{R}^N}\frac{1}{\rho^r}\big|u(x+\rho\vec
k)-u(x)\big|^qdx\bigg)+\sup\limits_{\rho\in[\delta,\infty)}\bigg(\sup_{\vec
k\in S^{N-1}}\int_{\mathbb{R}^N}\frac{1}{\rho^r}\big|u(x+\rho\vec
k)-u(x)\big|^qdx\bigg)\\ \leq
\sup\limits_{\rho\in(0,\delta)}\bigg(\sup_{\vec k\in
S^{N-1}}\int_{\mathbb{R}^N}\frac{1}{\rho^r}\big|u(x+\rho\vec
k)-u(x)\big|^qdx\bigg)\\+\frac{2^{q-1}}{\delta^r}\sup\limits_{\rho\in[\delta,\infty)}\bigg(\sup_{\vec
k\in S^{N-1}}\int_{\mathbb{R}^N}\Big(\big|u(x+\rho\vec
k)\big|^q+\big|u(x)\big|^q\Big)dx\bigg)\\=
\sup\limits_{\rho\in(0,\delta)}\bigg(\sup_{\vec k\in
S^{N-1}}\int_{\mathbb{R}^N}\frac{1}{\rho^r}\big|u(x+\rho\vec
k)-u(x)\big|^qdx\bigg)+\frac{2^{q}}{\delta^r}\big\|u\big\|^q_{L^q(\R^N,\R^d)}.
\end{multline}
Therefore, by \er{gjhgjhfghffhnmmbbbkhjhhkljkjk} and
\er{gjhgjhfghffhj} we have:
\begin{multline}\label{gjhgjhfghffhjj}
\sup\limits_{\e\in(0,\delta)}\bigg(\sup_{\vec k\in
S^{N-1}}\int_{\mathbb{R}^N}\frac{1}{\e^r}\big|u(x+\e\vec
k)-u(x)\big|^qdx\bigg)\leq
\sup\limits_{\rho\in(0,\infty)}\Bigg(\sup_{|h|\leq\rho}\int_{\mathbb{R}^N}\bigg(\frac{1}{\rho^{(r/q)}}\big|u(x+h)-u(x)\big|\bigg)^qdx\Bigg)
\\
\leq \sup\limits_{\e\in(0,\delta)}\bigg(\sup_{\vec k\in
S^{N-1}}\int_{\mathbb{R}^N}\frac{1}{\e^r}\big|u(x+\e\vec
k)-u(x)\big|^qdx\bigg)+\frac{2^{q}}{\delta^r}\big\|u\big\|^q_{L^q(\R^N,\R^d)}.
\end{multline}
Thus by \er{gjhgjhfghffhjj} we clearly obtain that if $u\in
L^q(\mathbb{R}^N,\R^m)$ then
\begin{multline}\label{gjhgjhfghffhhgjgjjk}
\sup\limits_{\rho\in(0,\infty)}\Bigg(\sup_{|h|\leq\rho}\int_{\mathbb{R}^N}\bigg(\frac{1}{\rho^{(r/q)}}\big|u(x+h)-u(x)\big|\bigg)^qdx\Bigg)
<+\infty\quad\text{if and only if}\\ \quad \limsup\limits_{\e\to
0^+}\bigg(\sup_{\vec k\in
S^{N-1}}\int_{\mathbb{R}^N}\frac{1}{\e^r}\big|u(x+\e\vec
k)-u(x)\big|^qdx\bigg)<+\infty.
\end{multline}
So we proved that $u\in L^q(\R^N,\R^m)$ belongs to
$B_{q,\infty}^{r/q}(\R^N,\R^m)$ if and only if we have $\hat
B_{u,q,r}\big(\R^N\big)<+\infty$.

Next, given open $\Omega\subset\R^N$ let $u\in
L^q_{loc}(\Omega,\R^m)$ and $K\subset\subset\Omega$ be a compact
set. Moreover, consider an open set $U\subset\R^N$ such that we have
the following compact embedding: $$K\subset\subset U\subset\ov
U\subset\subset\Omega.$$ Then, assuming
$u\in\big(B_{q,\infty}^{r/q}\big)_{loc}(\Omega,\R^m)$ implies
existence of $\hat u\in B_{q,\infty}^{r/q}(\mathbb{R}^N,\R^m)$ such
that $\hat u(x)= u(x)$ for every $x\in \bar U$, that gives
$$B_{u,q,r}\big(K\big)=B_{\hat u,q,r}\big(K\big)\leq \hat B_{\hat u,q,r}\big(\R^N\big)<+\infty.$$
On the other hand, if we assume
\begin{equation}\label{jjhhkghjgjfhfhfh}
B_{u,q,r}\big(\ov U\big)<+\infty\,,
\end{equation}
then define
\begin{equation}\label{jjhhkghjgjfhfhfhn,nn}
\hat u(x)=\begin{cases}\eta(x)u(x)\quad\forall x\in U\\ 0
\quad\forall x\in \R^N\setminus U,
\end{cases}
\end{equation}
where $\eta(x)\in C^\infty_c\big(U,[0,1]\big)$ is some cut-off
function such that $\eta(x)=1$ for every $x\in K$. Thus in
particular $\hat u(x)= u(x)$ for every $x\in K$ and so, in order to
complete the proof, we need just to show that $\hat u\in
B_{q,\infty}^{r/q}(\mathbb{R}^N,\R^m)$. Thus by
\er{gjhgjhfghffhhgjgjjk} it is sufficient to show:
\begin{equation}\label{gjhgjhfghffhhgjgjjklkhjhhj}
\limsup\limits_{\e\to 0^+}\bigg(\sup_{\vec k\in
S^{N-1}}\int_{\mathbb{R}^N}\frac{1}{\e^r}\big|\hat u(x+\e\vec
k)-\hat u(x)\big|^qdx\bigg)<+\infty.
\end{equation}
However,
since $|\eta|\leq 1$, $\supp\eta\subset\subset U$ and $\eta$ is
smooth we have:
\begin{multline}\label{gjhgjhfghffhhgjgjjklkhjhhjl;klkl;}
\limsup\limits_{\e\to 0^+}\bigg(\sup_{\vec k\in
S^{N-1}}\int_{\mathbb{R}^N}\frac{1}{\e^r}\big|\hat u(x+\e\vec
k)-\hat
u(x)\big|^qdx\bigg)=\\
\limsup\limits_{\e\to 0^+}\bigg(\sup_{\vec k\in
S^{N-1}}\int_{U}\frac{1}{\e^r}\big|\eta(x+\e\vec k)u(x+\e\vec
k)-\eta(x) u(x)\big|^qdx\bigg)
=\\
\limsup\limits_{\e\to 0^+}\bigg(\sup_{\vec k\in S^{N-1}}\int_{
U}\frac{1}{\e^r}\Big|\eta(x+\e\vec k)\big(u(x+\e\vec
k)-u(x)\big)+\big(\eta(x+\e\vec k)-\eta(x)\big) u(x)\Big|^qdx\bigg)
\\
\leq \limsup\limits_{\e\to 0^+}\Bigg(\sup_{\vec k\in S^{N-1}}\int_{
U}\frac{2^{q-1}}{\e^r}\bigg(\Big|\eta(x+\e\vec k)\big(u(x+\e\vec
k)-u(x)\big)\Big|^q+\Big|\big(\eta(x+\e\vec
k)-\eta(x)\big) u(x)\Big|^q\bigg)dx\Bigg)\\
\leq 2^{q-1}\limsup\limits_{\e\to 0^+}\Bigg(\sup_{\vec k\in
S^{N-1}}\int_{U}\frac{1}{\e^r}\bigg(\Big|u(x+\e\vec
k)-u(x)\Big|^q\bigg)dx\Bigg)+\\
 2^{q-1}\limsup\limits_{\e\to 0^+}\Bigg(\e^{q-r}\sup_{\vec k\in
S^{N-1}}\int_{U}\bigg|\frac{\big(\eta(x+\e\vec
k)-\eta(x)\big)}{\e}\bigg|^q\big| u(x)\big|^qdx\Bigg)\\
\leq 2^{q-1}B_{u,q,r}\big(\ov U\big)+2^{q-1}\bigg(\int_{U}\big|
u(x)\big|^qdx\bigg)\big\|\nabla\eta\big\|^q_{L^\infty}\Big(\limsup_{\e\to
0^+}\e^{q-r}\Big)=2^{q-1}B_{u,q,r}\big(\ov U\big)<+\infty.
\end{multline}
\end{proof}

The next Proposition is part of  \cite[Proposition 2.4]{jmp}, see \cite{jmp} for the proof.
\begin{proposition}\label{hgugghghhffhfhKKzzbvq}
Let $\Omega\subset\R^N$ be an open set and let $u\in
BV_{loc}(\Omega,\R^m)\cap L^\infty_{loc}(\Omega,\R^m)$. Then, for
every compact set $K\subset\subset\Omega$
such that $\|Du\|(\partial K)=0$, any $q>1$ and any vector $\vec
k\subset S^{N-1}$ we have
\begin{equation}\label{fgyufghfghjgghgjkhkkGHGHKKzzbvq}
\lim_{\e\to0^+}\int_K\frac{1}{\e}\Big|u(x+\e\vec
k)-u(x)\Big|^qdx=\int_{J_u\cap K}\Big|u^+(x)-u^-(x)\Big|^q\big|\vec
k\cdot\vec\nu(x)\big|d\mathcal{H}^{N-1}(x).
\end{equation}
\end{proposition}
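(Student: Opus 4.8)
The plan is to prove this by one–dimensional slicing in the direction $\vec k$, which reduces the statement to a single–variable fact about $BV$ functions of one real variable. After an orthogonal change of variables we may assume $\vec k = e_1$ and write $x=(t,z)$ with $t\in\R$, $z\in\R^{N-1}$; set $K_z:=\{t\in\R:(t,z)\in K\}$ and $v_z(t):=u(t,z)$. By Fubini,
\[
\int_K \frac{1}{\e}\big|u(x+\e e_1)-u(x)\big|^q\,dx = \int_{\R^{N-1}}\Bigg(\int_{\R}\frac{1}{\e}\big|v_z(t+\e)-v_z(t)\big|^q\,\chi_{K_z}(t)\,dt\Bigg)dz,
\]
and by the slicing theory of $BV$ functions, for $\mathcal H^{N-1}$–a.e.\ $z$ the slice $v_z$ lies in $BV_{loc}(\R)\cap L^\infty_{loc}(\R)$, the map $z\mapsto\|Dv_z\|(K_z^{(\eta)})$ is integrable (here $K^{(\eta)}$ is a fixed bounded neighbourhood of $K$ inside $\Omega$), the jump set of $v_z$ coincides with the slice $\{t:(t,z)\in J_u\}$ with matching jump heights, and $\|Dv_z\|(\partial K_z)=0$. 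This last property uses the hypothesis $\|Du\|(\partial K)=0$, which, since the jump part of $Du$ is mutually absolutely continuous with $\mathcal H^{N-1}$ restricted to $J_u$ and $|u^+-u^-|>0$ there, forces $\mathcal H^{N-1}(J_u\cap\partial K)=0$. Finally, since $u\in L^\infty_{loc}$ one has $|a-b|^q\le(2\|u\|_{L^\infty(K^{(\eta)})})^{q-1}|a-b|$, so together with the $q=1$ BV estimate the inner integrals are dominated, uniformly in $\e\in(0,\eta)$, by a constant times $\|Dv_z\|(K_z^{(\eta)})$, an integrable function of $z$. Hence it suffices to prove the one–dimensional limit slicewise and invoke dominated convergence.

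The one–dimensional statement to prove is: if $v\in BV_{loc}(\R)\cap L^\infty_{loc}(\R)$, $I\subset\R$ is a bounded interval with $\|Dv\|(\partial I)=0$, and $q>1$, then $\tfrac1\e\int_I|v(t+\e)-v(t)|^q\,dt\to\sum_{t_0\in J_v\cap I}|v(t_0^+)-v(t_0^-)|^q$ as $\e\to0^+$ (the sum being finite since $\sum_{J_v\cap I}|h_{t_0}|^q\le(2\|v\|_{L^\infty})^{q-1}\sum_{J_v\cap I}|h_{t_0}|<\infty$, which is where $L^\infty$ and $q>1$ enter). To see this, split $Dv=D^dv+D^jv$ into its diffuse part (absolutely continuous plus Cantor) and its jump part, fix $\delta>0$, choose a threshold $\eta>0$ so that the jumps of $v$ of height $<\eta$ in a neighbourhood of $I$ have total variation $<\delta$, and write $v=v_1+v_2^{\mathrm{big}}+v_2^{\mathrm{small}}$ accordingly ($Dv_1=D^dv$; $v_2^{\mathrm{big}}$ a step function carrying the finitely many large jumps; $v_2^{\mathrm{small}}$ the remaining pure–jump part). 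Using $|a+b+c|^q\le(1+\delta)|b|^q+C_\delta(|a|^q+|c|^q)$ and the reverse inequality with $(1-\delta)$, one estimates the three pieces separately. For the diffuse piece, $|v_1(t+\e)-v_1(t)|\le\|D^dv\|((t,t+\e])$, so $\tfrac1\e\int_I|v_1(t+\e)-v_1(t)|^q\le\big(\sup_t\|D^dv\|((t,t+\e])\big)^{q-1}\|D^dv\|(I^{(\eta)})$, and $\sup_t\|D^dv\|((t,t+\e])\to0$ as $\e\to0$ because $\|D^dv\|$ restricted to a bounded set is a non–atomic finite measure (standard compactness argument). For $v_2^{\mathrm{small}}$, $|v_2^{\mathrm{small}}(t+\e)-v_2^{\mathrm{small}}(t)|\le\mu_s((t,t+\e])$ with $\mu_s$ the mass measure of the small jumps, and the same Hölder splitting gives a bound $\le\mu_s(I^{(\eta)})^q<\delta^q$. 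For $v_2^{\mathrm{big}}$, once $\e$ is smaller than the spacing of the finitely many big jumps each interval $(t,t+\e]$ contains at most one of them, and since $\|Dv\|(\partial I)=0$ none of them sits on $\partial I$, so $\tfrac1\e\int_I|v_2^{\mathrm{big}}(t+\e)-v_2^{\mathrm{big}}(t)|^q\,dt=\sum_{t_i\in I}|h_i|^q$ exactly for all small $\e$. Letting $\e\to0$, then $\delta\to0$ with $\eta=\eta(\delta)\downarrow0$ (so $C_\delta\delta^q\to0$ and $\sum_{|h_i|\ge\eta}|h_i|^q\uparrow\sum_{J_v\cap I}|h_{t_0}|^q$ by monotone convergence), the upper and lower bounds collapse to the asserted limit.

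Reassembling by dominated convergence in $z$ yields
\[
\lim_{\e\to0^+}\int_K\frac1\e\big|u(x+\e\vec k)-u(x)\big|^q\,dx = \int_{\R^{N-1}}\Bigg(\sum_{t:(t,z)\in J_u\cap K}\big|u^+(t,z)-u^-(t,z)\big|^q\Bigg)dz .
\]
Since $J_u$ is countably $\mathcal H^{N-1}$–rectifiable with measure–theoretic unit normal $\vec\nu$, the area formula applied to the orthogonal projection $(t,z)\mapsto z$ restricted to $J_u$, whose tangential Jacobian equals $|\vec k\cdot\vec\nu|$, converts the right–hand side into $\int_{J_u\cap K}|u^+-u^-|^q\,|\vec k\cdot\vec\nu|\,d\mathcal H^{N-1}$, which is the assertion. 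The main obstacle is the one–dimensional jump analysis together with the bookkeeping that matches the slicewise jump sum to the surface integral over $J_u$: one must treat simultaneously the diffuse part, the infinitely many small jumps, and the finitely many large jumps with the constant controlled exactly, and then bring in the $BV$ slicing and rectifiability machinery to produce the factor $|\vec k\cdot\vec\nu|$. (As noted in the text this is Proposition~2.4 of \cite{jmp}, so one may simply cite it; the above is the route I would take to reprove it.)
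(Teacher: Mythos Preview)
Your argument is correct. The paper itself does not prove this proposition but refers to \cite{jmp}, where the result is obtained by a rather different method: there one proves the more general identity
\[
\lim_{t\to 0^+}\frac{1}{t}\int_K W\big(u(x+t\vec k),u(x)\big)\,dx=\int_{J_u\cap K}W\big(u^+,u^-\big)\,|\vec k\cdot\vec\nu|\,d\mathcal H^{N-1}
\]
for any nonnegative $C^1$ function $W$ with $W(a,a)=0$ and $W(a,b)=W(b,a)$, by mollifying $u$, expressing $W(u_\e(x+t\vec k),u_\e(x))$ via the fundamental theorem of calculus, integrating by parts to obtain an integral against the measure $Du\cdot\vec k$, and then passing to the limit using the decomposition of $Du$ into its jump and diffuse parts together with the approximate one-sided limits on $J_u$. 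Your slicing route is more elementary and geometric; it avoids mollification entirely and reduces everything to a transparent one-variable computation followed by the area formula on the rectifiable set $J_u$. What the \cite{jmp} approach buys is generality (arbitrary $W$ of the above type in one stroke), whereas your decomposition $v=v_1+v_2^{\mathrm{big}}+v_2^{\mathrm{small}}$ leans on the specific power structure of $|a-b|^q$ --- in particular, the splitting $|v_1(t+\e)-v_1(t)|^q\le\big(\sup_s\|D^dv\|((s,s+\e])\big)^{q-1}\|D^dv\|((t,t+\e])$, which is precisely where $q>1$ enters. One small remark: the slices $K_z$ are general compact subsets of $\R$, not necessarily intervals, but your one-dimensional argument goes through unchanged, since for the big-jump term the condition $\|Dv_z\|(\partial K_z)=0$ ensures each big jump lies either in the interior of $K_z$ or at positive distance from it, so $\tfrac{1}{\e}|K_z\cap[t_i-\e,t_i)|\to\chi_{K_z}(t_i)$ for every such $t_i$.
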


\begin{proof}[Proof of Proposition \ref{hgugghghhffhfhKKzzbvqhkjjgg}]
For any compact set $K\subset\subset\Omega$, we can choose
$\Omega_1\subset\subset\Omega$, such that $K\subset\subset\Omega_1$
and then, for every small $\e>0$ and any vector $\vec k\subset
S^{N-1}$ we clearly have
\begin{multline}\label{hgjhkjgfgjffzzbvq}
0\leq\frac{1}{\e}\int_K\Big|u(x+\e\vec
k)-u(x)\Big|^qdx\leq2^{q-1}\|u\|^{q-1}_{L^\infty(\ov
\Omega_1)}\int_K\frac{1}{\e}\Big|u(x+\e\vec k)-u(x)\Big|dx\\ \leq
2^{q-1}\|u\|^{q-1}_{L^\infty(\ov \Omega_1)}\|u\|_{BV(\Omega_1)}.
\end{multline}
Thus by dominated convergence, by
\er{fgyufghfghjgghgjkhkkGHGHKKzzbvq} in Proposition
\ref{hgugghghhffhfhKKzzbvq} we get
\begin{multline}\label{fgyufghfghjgghgjkhkkGHGHKKjjjjkjkkjjhjkzzbvq}
\lim\limits_{\e\to 0^+}\Bigg\{\int_{S^{N-1}}\int_{K}\frac{\big|u(
x+\e\vec n)-u(x)\big|^q}{\e}dxd\mathcal{H}^{N-1}(\vec
n)\Bigg\}\\=\int_{S^{N-1}}\Bigg(\int_{J_u\cap
K}\Big|u^+(x)-u^-(x)\Big|^q\big|\vec
n\cdot\vec\nu(x)\big|d\mathcal{H}^{N-1}(x)\Bigg)d\mathcal{H}^{N-1}(\vec
n)=\\
\Bigg(\int_{J_u\cap
K}\Big|u^+(x)-u^-(x)\Big|^q\bigg(\int_{S^{N-1}}\big|\vec
n\cdot\vec\nu(x)\big|d\mathcal{H}^{N-1}(\vec
n)\bigg)d\mathcal{H}^{N-1}(x)\Bigg)=\\
\bigg(\int_{S^{N-1}}|z_1|d\mathcal{H}^{N-1}(z)\bigg)\Bigg(\int_{J_u\cap
K}\Big|u^+(x)-u^-(x)\Big|^qd\mathcal{H}^{N-1}(x)\Bigg)\,.
\end{multline}

 Next, if $\Omega$ is
an open set with bounded Lipschitz boundary and  $u\in
BV(\Omega,\R^m)\cap L^\infty(\Omega,\R^m)$, then
we can extend the function $u(x)$ to all of $\R^N$ in such a way
that $u\in BV(\R^N,\R^m)\cap L^\infty(\R^N,\R^m)$ and $\|D
u\|(\partial\Omega)=0$. Next, consider an increasing sequence of
compact sets $K_n\subset\subset\Omega$ such that $K_n\subset
K_{n+1}$, $\dist(K_n,\R^N\setminus K_{n+1})>0$ and
$\bigcup_{n=1}^{+\infty}K_n=\Omega$. Moreover, consider a decreasing
sequence of open sets $V_n$ such that $\ov\Omega\subset V_n$,
 $V_{n+1}\subset V_{n}$, $\dist(V_{n+1},\R^N\setminus V_{n})>0$ and
$\bigcap_{n=1}^{+\infty}V_n=\ov\Omega$, so that, denoting an open
set $U_n:=V_n\setminus K_n$, we have $U_{n+1}\subset U_{n}$
$\dist(U_{n+1},\R^N\setminus U_{n})>0$ and
$\bigcap_{n=1}^{+\infty}U_n=\partial\Omega$. Then, by
\er{fgyufghfghjgghgjkhkkGHGHKKjjjjkjkkjjhjkzzbvq} for every $n$ we
have
\begin{multline}\label{gghgjhfgggjfgfhughGHGHKKzzjkjkyuyuybvqjhgfhfhgjgjjlhkluyi}
\bigg(\int_{S^{N-1}}|z_1|d\mathcal{H}^{N-1}(z)\bigg)\Bigg(\int_{J_u\cap
K_n}\Big|u^+(x)-u^-(x)\Big|^qd\mathcal{H}^{N-1}(x)\Bigg)=\\
\lim\limits_{\e\to 0^+}\Bigg\{\int_{S^{N-1}}\int_{K_n}\frac{\big|u(
x+\e\vec
n)-u(x)\big|^q}{\e}dxd\mathcal{H}^{N-1}(\vec n)\Bigg\}\leq\\
\limsup\limits_{\e\to
0^+}\Bigg\{\int_{S^{N-1}}\int_{\Omega}\chi_\Omega( x+\e\vec
n)\frac{\big|u( x+\e\vec
n)-u(x)\big|^q}{\e}dxd\mathcal{H}^{N-1}(\vec n)\Bigg\}\leq\\
\limsup\limits_{\e\to
0^+}\Bigg\{\int_{S^{N-1}}\int_{\Omega}\frac{\big|u( x+\e\vec
n)-u(x)\big|^q}{\e}dxd\mathcal{H}^{N-1}(\vec n)\Bigg\}\,.
\end{multline}
On the other hand, as in \er{hgjhkjgfgjffzzbvq},  for every small
$\e>0$ and any vector $\vec k\subset S^{N-1}$ we clearly have
\begin{multline}\label{hgjhkjgfgjffzzbvqhkjghkhh}
0\leq\frac{1}{\e}\int_{\Omega\setminus K_n}\Big|u(x+\e\vec
k)-u(x)\Big|^qdx\leq2^{q-1}\|u\|^{q-1}_{L^\infty(\R^N)}\int_{\Omega\setminus
K_n}\frac{1}{\e}\Big|u(x+\e\vec k)-u(x)\Big|dx\\ \leq
2^{q-1}\|u\|^{q-1}_{L^\infty(\R^N)}\|u\|_{BV(U_n)}.
\end{multline}
In particular,
\begin{multline}\label{hgjhkjgfgjffzzbvqhkjghkhhjhjgjg}
0\leq \limsup\limits_{\e\to
0^+}\Bigg\{\int_{S^{N-1}}\int_{\Omega\setminus K_n}\frac{\big|u(
x+\e\vec n)-u(x)\big|^q}{\e}dxd\mathcal{H}^{N-1}(\vec
n)\Bigg\}\leq\\
2^{q-1}\Big(\mathcal{H}^{N-1}(S^{N-1})\Big)\|u\|^{q-1}_{L^\infty(\R^N)}\|u\|_{BV(U_n)}.
\end{multline}
Thus, since $\bigcap_{n=1}^{+\infty}U_n=\partial\Omega$, letting
$n\to+\infty$ in \er{hgjhkjgfgjffzzbvqhkjghkhhjhjgjg} and using the
fact $\|Du\|(\partial\Omega)=0$ gives
\begin{multline}\label{hgjhkjgfgjffzzbvqhkjghkhhjhjgjgjhghghhjjggj}
0\leq \limsup_{n\to+\infty}\Bigg(\limsup\limits_{\e\to
0^+}\bigg\{\int_{S^{N-1}}\int_{\Omega\setminus K_n}\frac{\big|u(
x+\e\vec n)-u(x)\big|^q}{\e}dxd\mathcal{H}^{N-1}(\vec
n)\bigg\}\Bigg)\leq\\
2^{q-1}\Big(\mathcal{H}^{N-1}(S^{N-1})\Big)\|u\|^{q-1}_{L^\infty(\R^N)}\|Du\|(\partial\Omega)=0.
\end{multline}
Thus, inserting \er{hgjhkjgfgjffzzbvqhkjghkhhjhjgjgjhghghhjjggj}
into \er{gghgjhfgggjfgfhughGHGHKKzzjkjkyuyuybvqjhgfhfhgjgjjlhkluyi}
we finally deduce
\er{gghgjhfgggjfgfhughGHGHKKzzjkjkyuyuybvqjhgfhfhgjgjjlhkluyikhhkhkhkjgjhhh}.
\end{proof}

\end{document}